\Crefname{figure}{Figure}{Figures}
\declaretheorem[name=Theorem, numberwithin=section]{theorem}
\declaretheorem[name=Proposition, sibling=theorem]{proposition}
\declaretheorem[name=Lemma, sibling=theorem]{lemma}
\declaretheorem[name=Corollary, sibling=theorem]{corollary}
\declaretheorem[name=Claim, sibling=theorem]{claim}
\declaretheorem[name=Remark, style=remark, sibling=theorem]{remark}
\newenvironment{claimproof}[1][Proof of the Claim.]{\begin{proof}[#1]}{\end{proof}}
\newcommand{\Bb}{\mathcal{B}}
\newcommand{\Cc}{\mathcal{C}}
\newcommand{\Gc}{\mathcal{G}}
\newcommand{\Hc}{\mathcal{H}}
\newcommand{\Pc}{\mathcal{P}}
\newcommand{\Vc}{\mathcal{V}}
\newcommand{\Xc}{\mathcal{X}}
\newcommand{\G}{\mathbf{G}}
\newcommand{\Nn}{\mathbb{N}}
\newcommand{\Ff}{\mathbb{F}}
\newcommand{\Hbb}{\mathbb{H}}
\newcommand{\Sbb}{\mathbb{S}}
\newcommand{\tC}{\widetilde{C}}
\newcommand{\hHbb}{\widehat{\Hbb}}
\newcommand{\hH}{\widehat{H}}
\newcommand*\sg[1]{\{ #1 \}}
\newcommand{\tw}{\mathrm{tw}}
\newcommand{\diam}{\mathrm{diam}}
\newcommand{\pw}{\mathrm{pw}}
\newcommand{\bn}{\mathrm{bn}}
\newcommand{\hyp}{\mathsf{hypertorso}}
\newcommand{\bag}{\beta}
\newcommand{\lmap}{\lambda}
\newcommand{\rmap}{\rho}
\newcommand{\eqdef}{\coloneqq}
\newcommand{\abstr}[1]{\llbracket #1 \rrbracket}
\newcommand{\Abstr}{\mathbb{A}}
\newcommand{\Fcycles}{$\Ff_2$-cycles\xspace}
\newcommand{\Fcycle}{$\Ff_2$-cycle }
\renewcommand{\hat}[1]{\widehat{#1}}
\let\le\leqslant
\let\ge\geqslant
\let\leq\leqslant
\let\geq\geqslant
\let\emptyset\varnothing
\title{Basis Number of Graphs Excluding Minors}
\author[1]{Colin Geniet\thanks{Supported by the Institute for Basic Science (IBS-R029-C1).}}
\author[2]{Ugo Giocanti\thanks{Supported by the National Science Center of Poland
under grant 2022/47/B/ST6/02837 within the OPUS 24 program.}}
\affil[1]{Discrete Mathematics Group, Institute for Basic Science, Daejeon, South Korea}
\affil[2]{Faculty of Mathematics and Computer Science, Jagiellonian University,
Kraków, Poland}
\date{}
\begin{document}
\maketitle

\begin{abstract}
  The \emph{basis number} of a graph~$G$ is the minimum~$k$ such that the cycle space of~$G$ is generated by a family of cycles using each edge at most~$k$ times.
  A classical result of Mac Lane states that planar graphs are exactly graphs with basis number at most~2,
  and more generally, graphs embeddable on a surface of bounded genus are known to have bounded basis number.
  Generalising this, we prove that graphs excluding a fixed minor~$H$ have bounded basis number.

  Our proof uses the Graph Minor Structure Theorem, which requires us to understand how basis number behaves in tree-decompositions. In particular, we prove that graphs of treewidth~$k$ have basis number bounded by some function of~$k$.
  We handle tree-decompositions using the proof framework developed by Bojańczyk and Pilipczuk in their proof of Courcelle's conjecture. 

  Combining our approach with independent results of Miraftab, Morin and Yuditsky (2025) on basis number and path-decompositions, one can moreover improve our upper bound to a polynomial one: there exists an absolute constant $c>0$ such that every $H$-minor free graph has basis number $O(|H|^c)$.
\end{abstract}
 
\section{Introduction}
Given two graphs $G=(V,E_G),H=(V,E_H)$ with the same vertex set $V$, one can define their \emph{sum} $G\oplus H$ as the graph with vertex set $V$, and whose edge set is the symmetric difference $E_G\Delta E_H$. The \emph{cycle space} of a (finite) graph $G$ is the set of all subgraphs of $G$ that can be expressed as a sum of
cycles of $G$. This corresponds exactly to the set of subgraphs of $G$ whose degrees are all even. 
When~$G$ is planar, a basic observation is that the set of \emph{facial} cycles forms a \emph{cycle basis} of $G$, i.e.\ a generating set of the cycle space.
In particular, this gives a cycle basis $\Bb$ such that each edge of $G$ appears it at most two elements of $\Bb$. Mac Lane proved that this gives in fact a characterisation of planar graphs.
\begin{theorem}[Mac Lane's planarity criterion, \cite{maclane1937planar}]\label{thm:maclane}
  A graph~$G$ is planar if and only if there exists a cycle basis~$\Bb$ of~$G$ for which each edge of $G$ appears in at most two elements of~$\Bb$.
\end{theorem}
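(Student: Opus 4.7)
The plan is to prove the two directions separately. For the forward implication (planar implies $2$-basis), I would start with a planar embedding of~$G$ (reducing to the $2$-connected case via the block decomposition) and take $\Bb$ to be the collection of cycles bounding the inner faces. Euler's formula gives $|\Bb| = |E| - |V| + 1$, matching the dimension of the cycle space. Each edge separates exactly two faces in the embedding, so appears in at most two members of $\Bb$. Linear independence over $\Ff_2$ follows because any non-trivial $\Ff_2$-sum of inner faces corresponds to a non-empty bounded planar region whose topological boundary is a non-empty set of edges.

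For the reverse implication, which carries the main content of the theorem, I would invoke Kuratowski's theorem and reduce to showing that neither $K_5$ nor $K_{3,3}$ admits a $2$-basis. The crux is a double-counting argument: given a hypothetical $2$-basis $\Bb$, set $C \eqdef \sum_{B \in \Bb} B$, which itself lies in the cycle space. Then $\hat\Bb \eqdef \Bb \cup \sg{C}$ has at most $|E| - |V| + 2$ elements, and every edge of~$G$ lies in an even number of them, hence in $0$ or $2$. This yields
\[
    \mathrm{girth}(G) \cdot \bigl(|E| - |V| + 2\bigr) \leq \sum_{B \in \hat\Bb} |B| \leq 2|E|,
\]
which reads $3 \cdot 7 \leq 20$ for $K_5$ and $4 \cdot 5 \leq 18$ for $K_{3,3}$, both false. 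The degenerate case $C = \emptyset$ (where $C$ is not adjoined to~$\Bb$) must be treated separately: one shows that every edge of~$G$ must then lie in exactly two basis elements (otherwise $\Bb$ would be confined to the cycle space of a proper subgraph, contradicting its dimension), and finishes by a finite case analysis on the possible multisets of cycle lengths available in $K_5$ and $K_{3,3}$.

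To complete the application of Kuratowski, I would also verify that admitting a $2$-basis is preserved under vertex deletion, edge deletion, and suppression of degree-two vertices, all by routine $\Ff_2$-elimination on the basis. I expect the main obstacle to be the treatment of the corner case $C = \emptyset$, where the crude counting inequality is not quite tight and must be sharpened by a structural observation specific to $K_5$ and $K_{3,3}$. An alternative approach, closer to Mac Lane's original proof, bypasses Kuratowski entirely by directly building a rotation system: the two members of $\hat\Bb$ through each vertex--edge incidence determine a local cyclic order on the edges at each vertex, and one checks that the resulting combinatorial embedding has Euler characteristic $2$, hence is planar.
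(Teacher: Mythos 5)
The paper does not prove this statement: it is cited as a classical result of Mac~Lane with a reference, so there is no proof in the paper to compare against. I therefore assess your sketch on its own.

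Your forward direction is fine. For the converse, the structure is sound --- both the reduction to Kuratowski obstructions and the bookkeeping that admitting a $2$-basis is inherited under edge deletion, vertex deletion, and degree-two suppression can indeed be carried out by routine $\Ff_2$-elimination, keeping congestion at most $2$ --- and your counting inequality
\[
\mathrm{girth}(G)\cdot\bigl(|E|-|V|+2\bigr)\le 2|E|
\]
is correct when $C\ne 0$, since $\hat\Bb$ then consists of $|E|-|V|+2$ nonzero $\Ff_2$-cycles each of which uses at least $\mathrm{girth}(G)$ edges while every edge lies in exactly $0$ or $2$ of them. The arithmetic $21>20$ and $20>18$ gives the contradiction.

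The genuine gap is exactly the one you flag: the case $C=\bigoplus_{B\in\Bb}B=\emptyset$. Here the argument that every edge lies in exactly two members of $\Bb$ is fine (no edge can lie in zero members, as $K_5$ and $K_{3,3}$ are bridgeless, so $\Bb$ would live in a proper cycle subspace of too small a dimension), but the resulting inequality $\mathrm{girth}(G)\cdot(|E|-|V|+1)\le 2|E|$ is \emph{not} violated: it reads $18\le 20$ for $K_5$ and $16\le 18$ for $K_{3,3}$. So the counting collapses and you are left with a case analysis on length multisets (for $K_5$, the only candidates are five triangles plus a pentagon or four triangles plus two quadrilaterals; for $K_{3,3}$, three quadrilaterals plus a hexagon), which you assert but do not carry out. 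As written, the argument is therefore incomplete; to close it you would need either to actually perform this finite verification or to switch to the rotation-system argument you mention in the last paragraph, which handles both cases uniformly and is closer to Mac~Lane's original proof.
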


More generally, we call \emph{edge-congestion} (or just \emph{congestion}) of a cycle basis~$\Bb$ the smallest integer~$k$ such that each edge of $G$ appears in at most~$k$ cycles of~$\Bb$.
Inspired by \cref{thm:maclane}, Schmeichel~\cite{schmeichel1981basisnumber} defined the \emph{basis number} of a graph~$G$, which we denote by~$\bn(G)$, as the minimum edge-congestion over all cycle bases of~$G$.
He proved that cliques have basis number only~3, yet there are graphs with arbitrarily large basis number,
and also that --- of particular interest to us --- graphs embedded in the orientable surface with genus~$g$ have basis number at most~$2g+2$.
Recently, this result was significantly improved by Lehner and Miraftab.
\begin{theorem}[{\cite[Theorem~4]{LM24}}]\label{thm:bn-surface}
  If $G$ is a graph embeddable in a surface (orientable or not) with genus at most $g$, then $\bn(G)=O(\log^2g)$.
\end{theorem}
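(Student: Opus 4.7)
The plan is to proceed by induction on the Euler genus $g$. The base case $g = 0$ is immediate from \cref{thm:maclane}, which gives $\bn(G) \leq 2$. For the inductive step, the target recursion is
\[
    f(g) \leq f(\lceil g/2 \rceil) + O(\log g),
\]
which unrolls to $f(g) = O(\log^2 g)$ as desired. To achieve this, I would seek a \emph{balanced topological separator} of $G$: a collection $\Cc = \{C_1, \ldots, C_k\}$ of pairwise edge-disjoint cycles in $G$ with $k = O(\log g)$, such that cutting the embedding along $C_1 \cup \cdots \cup C_k$ yields an embedding in a (possibly disconnected) surface of Euler genus at most $\lceil g/2 \rceil$.

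Given such a separator, let $G'$ be the graph obtained from $G$ by cutting along the cycles of $\Cc$: edges and vertices lying on $\Cc$ are duplicated so that $G'$ embeds in the cut surface. By induction, $G'$ admits a cycle basis $\Bb'$ of edge-congestion at most $f(\lceil g/2 \rceil)$. Re-identifying the duplicated edges turns each element of $\Bb'$ into an even subgraph (hence an $\Ff_2$-cycle) of $G$, and together with $C_1, \ldots, C_k$ these elements span the cycle space of $G$: cutting precisely kills the $\Ff_2$-homology classes of the cycles in $\Cc$, and a dimension count confirms that the resulting family has the right cardinality. The congestion of this basis is bounded by $f(\lceil g/2 \rceil) + k \leq f(\lceil g/2 \rceil) + O(\log g)$, since the $C_i$ are pairwise edge-disjoint and each edge of $G$ is used at most once by the cutting cycles.

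The main obstacle is the existence of the balanced topological separator, a fact about the combinatorial topology of embedded graphs. A plausible route is to iterate a short non-contractible cycle argument: via a shortest-paths tree (as in classical algorithms of Erickson and collaborators), one can exhibit a non-contractible cycle whose removal reduces the Euler genus by at least one; iterating $O(\log g)$ times should bring the total remaining genus below $g/2$, provided each step leaves a genus-additive decomposition amenable to further cutting. Extra care is required in the non-orientable setting, where cutting along a 1-sided cycle affects the genus and the cycle-space dimension differently from cutting along a 2-sided one. A naive cutting strategy that reduces the genus by only~$1$ per step leads to $f(g) = O(g)$, matching Schmeichel's classical bound, so the logarithmic bound on the separator size is the crucial quantitative input.
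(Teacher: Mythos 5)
The paper does not prove \cref{thm:bn-surface}: it cites it as \cite[Theorem~4]{LM24}, and everything in the rest of the paper treats it as a black box. There is therefore no in-paper proof to compare your sketch against. Evaluating the sketch on its own terms, the divide-and-conquer shape $f(g) \leq f(\lceil g/2\rceil) + O(\log g)$ is the right recursion to produce an $O(\log^2 g)$ bound, and the idea of cutting along a small collection of graph cycles, recursing, and re-adding the cutting cycles to the basis is a reasonable outline. But the central lemma is both unproved and, as written, self-defeating.

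The lemma you need is that some $O(\log g)$ pairwise edge-disjoint cycles of $G$, cut open, bring the (maximum component) Euler genus down to $\lceil g/2\rceil$. Your proposed route is to iterate ``find a short non-contractible cycle whose removal reduces the Euler genus by at least one''; but each such cut lowers the genus by at most $2$, so after $O(\log g)$ iterations you have reduced the genus by $O(\log g)$, not by $g/2$ --- an exponential shortfall. You acknowledge this yourself when you note that this naive strategy recovers only the linear $O(g)$ bound of Schmeichel. What would actually be needed is a \emph{balanced separating} cycle (a single two-sided curve splitting the surface into two pieces each of genus at most $\lceil g/2\rceil$), and the difficulty is that such a curve may not be realizable by a cycle of $G$; finding a suitable substitute (a short system of graph cycles, or a graph cycle approximating such a curve with controlled congestion) is precisely the technical content the sketch leaves open. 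There is also a quieter gap in the congestion accounting: cutting along a cycle duplicates its edges, so after re-identification an edge lying on a cutting cycle may lie in basis cycles coming from both sides of the cut, giving congestion up to $2f(\lceil g/2\rceil)$ rather than $f(\lceil g/2\rceil)$ on those edges. That would change the recursion to $f(g) \leq 2f(g/2) + O(1)$, which is again linear. Some mechanism for keeping the two sides' bases from both loading the cut edges is needed and is not addressed.
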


A natural question, which also appeared in an early version of \cite{LM24}, is whether the aforementioned results in the planar and more generally in the bounded genus case generalise to proper minor-closed classes: does every graph excluding a fixed minor $H$ have basis number upper bounded by a constant that only depends of $H$? Our main result is a positive answer to this question.

\begin{restatable}{theorem}{minorbn}\label{thm: main-minor}
  There exists a function $f_{\ref{thm: main-minor}}:\mathbb N\to \mathbb N$ such that for any graph~$H$, any $H$-minor free graph~$G$ satisfies $\bn(G) \leq f_{\ref{thm: main-minor}}(|H|)$.
\end{restatable}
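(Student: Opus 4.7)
My plan follows the roadmap suggested by the abstract, with three main ingredients: (1) a bound on the basis number of bounded-treewidth graphs, (2) a compositional statement for basis number across tree-decompositions, and (3) the Graph Minor Structure Theorem (GMST) to reduce to almost-embeddable pieces handled via \Cref{thm:bn-surface}.

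For (1), I would proceed bottom-up on a rooted tree-decomposition $(T,\beta)$ of width $k$. At each node $t$, I maintain a cycle basis of the subgraph $G_t$ ``below'' $t$, together with auxiliary bookkeeping that records, for each pair of vertices in the adhesion $A_t = \beta(t)\cap\beta(\text{parent})$, a path in $G_t$ connecting them (when one exists). Merging a node with its children produces only a bounded number (depending on $k$) of new fundamental cycles, namely those closed through the newly introduced vertex-set $\beta(t)$; since every new cycle is supported in a bounded number of pieces whose interface has size $\leq k$, the per-edge congestion stays bounded by some $g_1(k)$.

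For (2), I would prove a compositional lemma: if $G$ admits a tree-decomposition with adhesions of size $\leq k$ whose torsos (adhesions turned into cliques) have basis number $\leq b$, then $\bn(G)\leq g_2(b,k)$. This is the place where the Bojańczyk--Pilipczuk proof framework enters. The idea is to define, for each piece hanging off an adhesion $A$, a finite-dimensional ``cycle abstraction'' --- essentially the $\Ff_2$-span in $\Ff_2^{\binom{A}{2}}$ of the traces on $A$ of cycles of $G_t$, together with chosen representatives realising each virtual edge by a genuine path in the piece. One verifies that this abstraction composes along the tree, and then translates a cycle basis of each torso into an actual cycle basis of $G$ by substituting the chosen path-representatives for the virtual edges. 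Congestion is controlled because each adhesion only contributes a bounded number of path-realisations, each of which is then counted locally inside the child subtree.

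For (3), apply GMST: every $H$-minor free graph has a tree-decomposition with adhesions bounded by $f_1(|H|)$ into torsos that are $f_2(|H|)$-almost embeddable in surfaces of genus $\leq f_3(|H|)$. Each almost-embeddable piece decomposes as a bounded-genus graph plus $O(f_2(|H|))$ apex vertices plus $O(f_2(|H|))$ vortices of bounded depth. \Cref{thm:bn-surface} bounds the basis number of the bounded-genus part by $O(\log^2 f_3(|H|))$; vortices have bounded pathwidth, hence bounded basis number by~(1); apex vertices can be absorbed by a standard argument producing a constant-congestion family of cycles through each apex, yielding a bound of the form $\bn(\text{torso})\leq h(|H|)$. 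Plugging this into the compositional lemma from~(2) gives the announced bound $\bn(G)\leq f_{\ref{thm: main-minor}}(|H|)$.

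The main obstacle is step~(2). A naive approach --- fixing one path in each piece for each virtual edge of its torso --- fails because a single edge of $G$ may be shared by many path-realisations required by distant adhesions further up the decomposition tree, leading to a congestion blow-up. The Bojańczyk--Pilipczuk abstraction framework is tailor-made to avoid precisely such blow-ups: it forces all relevant cycle-theoretic information to be encoded in a bounded-size invariant that composes cleanly along the tree, so that the choice of path-realisations can be made coherently and globally.
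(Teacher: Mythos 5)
Your roadmap (treewidth $\to$ compositional lemma over tree-decompositions $\to$ GMST) matches the paper's high-level structure, and you correctly identify the central obstacle --- that fixing one path-realisation per virtual edge lets congestion blow up through ancestor adhesions. However, the mechanism you propose to overcome it does not work, and you are missing the actual key moves.

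\textbf{The abstraction is the wrong object, and no abstraction by itself solves the congestion problem.} You propose a ``cycle abstraction'' (the $\Ff_2$-span of traces of cycles on the adhesion) and claim that the Bojańczyk--Pilipczuk framework lets you ``choose the path-realisations coherently and globally.'' This is not what the BP framework does, and it is not what the paper does either. The paper's invariant is a \emph{connectivity abstraction} of a bi-interface graph --- a record of which pairs of interface vertices are joined by an internal path --- not a cycle-space trace. The reason this is the right invariant is that Simon's Factorisation Forests Theorem together with the idempotence condition yields a purely combinatorial statement about \emph{paths}: in an idempotent block, any path between adhesion vertices can be rerouted into two adjacent bags (cf.\ \cref{clm:idempotent-short-paths}). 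It is this locality of paths, not anything cycle-theoretic, that keeps the congestion of the path system bounded. A cycle-space invariant neither composes into that lemma nor gives you path locality, and you do not supply any substitute mechanism.

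\textbf{More fundamentally, you never restructure the decomposition, but this is unavoidable.} Theorem \ref{thm:bn-tree-decomp-paths} (the paper's version of your step (2)) needs a path system of bounded congestion capturing the adhesions. But such a path system simply does not exist for the \emph{given} tree-decomposition in general --- a long cycle with a width-2 path-decomposition is already a counterexample. The paper's resolution is to replace the decomposition by a new one: for path-decompositions, Simon's theorem produces the replacement; for tree-decompositions, one invokes (a generalisation of) \cite[Lemma~5.9]{mimi}, which furnishes a quotient decomposition whose torsos have bounded \emph{pathwidth} and which comes equipped with a bounded-congestion path family. Your proposal treats the decomposition as fixed and expects the abstraction to rescue the path choices; that cannot work.

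\textbf{Your step (3) reproduces the ``false but enlightening'' argument from Section~\ref{sec: minors}.} You glue each vortex along the boundary of its face and invoke \cref{lem:bn-connected-separator}. But that lemma requires the separator to be connected, and the boundary of a face in a non-cellular embedding (and $G_0$ may well be disconnected or non-cellularly embedded) need not be connected --- it may even be a union of walks lying in different components of $G_0$. Passing to a cellular supergraph does not help because basis number is not monotone under taking subgraphs. The paper instead builds a connected spanning subgraph $\Gamma$ of the union of vortex boundaries inside $G_0$, applies \cref{lem:bn-almost-connected-separator}, and then controls the basis number of the vortex side by bounding its \emph{treewidth} via the dual-diameter argument (\cref{coro: tw-dual}, \cref{clm: almost-embedd-dual}). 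None of this is present in your sketch, and without it the vortex-gluing step is unsound.
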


Our proof gives a double exponential bound $f_{\ref{thm: main-minor}}(t)=2^{2^{O(t^2)}}$.
As discussed in Section~\ref{sec: polynomial} below,
Miraftab, Morin, and Yuditsky~\cite{miraftab2026pathwidth} independently proved one of the key results we use to establish \cref{thm: main-minor},
with a significantly better bound than ours.
Using their result, the bound~$f_{\ref{thm: main-minor}}$ can be improved to a polynomial function.

A class of graphs is called \emph{monotone} if it is closed under taking subgraphs.
Let us observe that \cref{thm: main-minor} can be rephrased as a characterisation of bounded basis number among monotone classes of graphs.
\begin{corollary}\label{cor:monotone-bn}
  Let~$\Cc$ be a monotone class of graphs.
  Then~$\Cc$ has bounded basis number if and only if all graphs in $\Cc$ exclude some fixed graph~$H$ as a minor.
\end{corollary}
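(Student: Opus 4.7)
The direction $\Leftarrow$ is immediate from \cref{thm: main-minor}: if all $G \in \Cc$ are $H$-minor-free, then $\bn(G) \leq f_{\ref{thm: main-minor}}(|H|)$. For the converse, I would prove the contrapositive; supposing $\bn \leq K$ uniformly on $\Cc$, I would show that the minor closure of $\Cc$ also satisfies $\bn \leq K$, and then invoke Schmeichel's theorem (recalled in the introduction) on the existence of graphs of arbitrarily large basis number to conclude that some graph $H$ lies outside this minor closure, whence $\Cc$ is $H$-minor-free for that $H$.

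The key input is a \emph{contraction-monotonicity lemma}: $\bn(G/e) \leq \bn(G)$ for every edge $e$. Given a cycle basis $\Bb$ of $G$ with congestion $k$, one sends each $C \in \Bb$ to its image $\pi(C)$ in $G/e$, which has edge support $C \setminus \{e\}$ and lies in the cycle space of $G/e$, so the congestion bound is inherited edge by edge. The only subtle case is when $C$ visits both endpoints of $e$ without using $e$: then $\pi(C)$ is a figure-eight rather than a simple cycle, but a routine basis-exchange replaces it by exactly one of its two constituent cycles, preserving linear independence and without increasing the congestion of any edge. Iterating yields $\bn(G'/F) \leq \bn(G')$ whenever $F \subseteq E(G')$.

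To close the contrapositive, let $H$ be any minor of some $G \in \Cc$. Writing $H = G'/F$ with $G' \subseteq G$, monotonicity of $\Cc$ gives $G' \in \Cc$, hence $\bn(G') \leq K$; the contraction lemma then gives $\bn(H) \leq K$. Thus the minor closure of $\Cc$ has basis number at most $K$ throughout, and is therefore a proper subclass of the class of all graphs by Schmeichel; picking any $H$ outside it makes $\Cc$ an $H$-minor-free class.

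The only nontrivial step is the contraction-monotonicity lemma, and specifically its figure-eight case analysis; this is standard folklore but does require a short verification. The monotonicity hypothesis on $\Cc$ is genuinely essential, as illustrated by the non-monotone class $\{K_n : n \in \Nn\}$, which has basis number constantly $3$ by Schmeichel's clique computation yet contains every graph as a minor.
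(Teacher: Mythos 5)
Your high-level strategy coincides with the paper's: the $\Leftarrow$ direction is immediate from \cref{thm: main-minor}, and $\Rightarrow$ combines contraction-monotonicity of basis number with Schmeichel's examples of graphs of unbounded basis number. Where the paper simply cites the contraction-monotonicity lemma from \cite{bazargani2024planar}, you sketch a proof of it, and the sketch misidentifies the subtle point. The figure-eight case is not actually a problem here: in the $\Ff_2$-cycle-space framework the paper uses, a figure-eight \emph{is} a valid $\Ff_2$-cycle (all degrees are even), so nothing needs repairing, and in fact the proposed replacement of a figure-eight by one of its two constituent circuits need not preserve the span --- the dropped circuit may carry a dimension not otherwise covered. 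The real difficulty, which the sketch does not mention, arises when $u$ and $v$ have a common neighbour $w$: in the \emph{simple-graph} contraction $G/e$ the edges $uw$ and $vw$ are identified, so $\pi(C)$ does \emph{not} have edge support $C \setminus \{e\}$, and the congestion of $\pi(\Bb)$ at the merged edge $[uv]w$ can reach $2\bn(G)$, one contribution from each of $uw$ and $vw$. Thus ``congestion is inherited edge by edge'' fails precisely where it matters.

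The gap is repairable. One standard fix is to take a \emph{tidy} minor model of $H$ in $G$: disjoint connected branch sets, each carrying only a spanning tree, joined by exactly one edge of $G'$ per edge of $H$. Contracting those tree edges one at a time never creates parallel edges or loops, so each contraction $G'/F$ is genuinely of the form you describe, $\pi$ is a congestion-preserving isomorphism of cycle spaces, and no figure-eight discussion is needed at all. Alternatively, one can simply cite the monotonicity of basis number under contraction as the paper does. The aside on $\{K_n : n \in \Nn\}$ illustrating why the monotonicity hypothesis is essential is correct and a nice touch.
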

Indeed, \cref{thm: main-minor} proves the right-to-left implication.
Conversely, if~$\Cc$ is a monotone graph class with bounded basis number,
then its \emph{minor closure} (i.e.\ the class of all minors of graphs in $\Cc$) also has bounded basis number,
since contracting edges cannot increase the basis number (see e.g.\ \cite[Lemma~3.1]{bazargani2024planar}).
Thus, as there exist some graphs with arbitrarily large basis number \cite{schmeichel1981basisnumber}, the minor closure of~$\Cc$ cannot contain all graphs, i.e.~$\Cc$ excludes some fixed graph~$H$ as a minor.

Our approach to prove \cref{thm: main-minor} uses the Graph Minor Structure Theorem of Robertson and Seymour~\cite{RSXVI}, a cornerstone of the Graph Minors series.
Avoiding any formal statement for now (we refer to Section \ref{sec: minors} for definitions and for a more detailed statement), it shows that for any fixed~$H$,
all $H$-minor free graphs can be constructing thanks to the following procedure:
\begin{enumerate}
  \item taking graphs embedded on a fixed surface,
  \item glueing a bounded number of \emph{vortices of bounded width} inside the faces,
  \item adding a bounded number of \emph{apices} (i.e.\ vertices with arbitrary adjacencies to the rest of the graph),
  \item and finally combining the graphs obtained in the first three steps by doing some \emph{clique-sums of bounded order}.
\end{enumerate}
To prove \cref{thm: main-minor}, we thus want to show that each of these steps preserves bounded basis number.
\Cref{thm:bn-surface} handles the case of graphs embedded on a surface,
and it is simple to show that adding~$\ell$ apices increases the basis number by at most~$2\ell$ (see Lemma \ref{lem:bn-edits}).
Glueing vortices requires some more involved topological arguments, but our approach is conceptually quite natural, so we will not discuss it here.
What remains is thus to understand how basis number behaves with respect to clique-sums of bounded order,
or equivalently tree-decompositions of bounded adhesion (see \cref{sec: prelis} for definitions related to tree-decompositions).

First, we show that graphs with bounded treewidth have bounded basis-number.
\begin{restatable}{theorem}{twbn}\label{thm:bn-treewidth}
  There exists a function $f_{\ref{thm:bn-treewidth}}:\mathbb N\to \mathbb N$ such that for every $k\geq 0$, every graph with treewidth~$k$ has basis number at most~$f_{\ref{thm:bn-treewidth}}(k)$.
\end{restatable}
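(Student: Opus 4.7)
The plan is to proceed by induction along a tree-decomposition $(T, \bag)$ of $G$ of width $k$, rooted at an arbitrary node $r$. For each node $t$, write $G_t$ for the subgraph of $G$ induced by the vertices appearing in bags of the subtree rooted at $t$, and $A_t \eqdef \bag(t) \cap \bag(\mathrm{parent}(t))$ for the adhesion to the parent (with $A_r = \emptyset$). The naive inductive statement $\bn(G_t) \le f(k)$ is too weak to propagate up the tree: when merging children at $t$, one must close off cycles that traverse each child subtree through its adhesion, which requires paths living inside those subtrees. I would therefore strengthen the induction and construct simultaneously, for each $t$, a cycle basis $\Bb_t$ of $G_t$ together with a collection $\Pc_t$ of paths in $G_t$, one for each pair of vertices of $A_t$ lying in the same connected component of $G_t$, while maintaining the invariant that every edge of $G_t$ belongs to at most $f(k)$ elements of $\Bb_t \cup \Pc_t$.

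For the inductive step at a node $t$ with children $t_1, \ldots, t_m$, I would introduce the \emph{torso} $\tau_t$: the auxiliary graph on vertex set $\bag(t)$ whose edges are those of $G[\bag(t)]$ together with a virtual edge $\{u,v\}$ for every pair $u,v$ in some adhesion $A_{t_i}$ that is connected in $G_{t_i}$. Since $|\bag(t)| \le k+1$, the torso has bounded size and therefore admits a cycle basis $\Bb^{\tau_t}$ and a path system on $A_t$ of congestion bounded by some function of $k$. The basis $\Bb_t$ is then obtained by taking the union of the inductive $\Bb_{t_i}$ with the expansions $\tC$ of the cycles $C \in \Bb^{\tau_t}$, where each virtual edge $\{u,v\} \in A_{t_i}$ is replaced by the corresponding path $P^{t_i}_{u,v} \in \Pc_{t_i}$; the path system $\Pc_t$ is defined analogously. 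Checking that $\Bb_t$ spans the cycle space of $G_t$ should reduce to a standard decomposition argument: any cycle of $G_t$ restricts to an interface structure on each $G_{t_i}$ whose odd-degree vertices lie in $A_{t_i}$ and can be corrected by a union of paths from $\Pc_{t_i}$, leaving on each side a genuine cycle of $G_{t_i}$ (expressible in $\Bb_{t_i}$) and a residual sum on $\bag(t)$ that is itself a cycle in $\tau_t$ (hence in $\Bb^{\tau_t}$).

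The hard part will be bounding the congestion uniformly in the depth of the tree. A direct bookkeeping shows that an edge $e$ deep inside a subtree $G_{t_i}$ may be reused at every ancestor: paths of $\Pc_{t_i}$ containing $e$ are pulled into the virtual-edge expansions at the parent $t$, then into the paths of $\Pc_t$ which propagate further up, and so on. This would yield a congestion bound growing either additively or multiplicatively with the depth of $T$, hence unbounded. To circumvent this, I would adopt the framework of Bojańczyk and Pilipczuk used in their proof of Courcelle's conjecture, and associate to each subtree $G_t$ a bounded-size \emph{abstraction} (reflected by the notation $\hyp$ and $\abstr{\cdot}$ among the paper's macros) that captures the cycle-theoretic behavior of $G_t$ relative to $A_t$. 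Because only finitely many such abstractions exist for each $k$, both the torso cycle basis $\Bb^{\tau_t}$ and the path system $\Pc_t$ should be choosable in a way compatible with the abstractions, so that each edge of $G$ participates in only a bounded number of ancestor-level expansions. This type-based control appears to be the technical heart of the proof, and it should deliver a bound $f(k)$ depending on $k$ alone.
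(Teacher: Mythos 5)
Your proposal correctly identifies the starting point (combine torso bases inductively, maintaining a path system $\Pc_t$ with one path per pair of adhesion vertices), correctly identifies that the naive bookkeeping fails because an edge gets re-expanded at every ancestor, and correctly points to the Boja\'nczyk--Pilipczuk machinery as the rescue. However, the paragraph that is supposed to be ``the technical heart of the proof'' is left as a hope (``should be choosable in a way compatible with the abstractions''), and the mechanism you sketch there is not the one that actually works --- so this is a genuine gap rather than a compressed version of the paper's argument.

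Two concrete discrepancies. First, you describe the abstraction $\abstr{\cdot}$ as ``capturing the cycle-theoretic behavior of $G_t$ relative to $A_t$.'' In the paper (\cref{sec: Simon}), $\abstr{\cdot}$ is a purely \emph{connectivity} abstraction: it records only which interface vertices are linked by a path avoiding the other interface vertices, and it is a semigroup of size $2^{O(k^2)}$ under glueing. No cycle information is encoded. Second, and more importantly, the paper does not perform the congestion control by choosing $\Bb^{\tau_t}$ and $\Pc_t$ ``compatibly with types'' in your inductive scheme. Instead, it decouples the problem: \cref{thm:bn-tree-decomp-paths} is a \emph{non-inductive} lemma which says that once you have a tree-decomposition whose torsos have bounded basis number together with a path system of bounded congestion capturing the adhesions, the basis number of $G$ is bounded; the entire difficulty is then transferred to constructing such a decomposition and path system. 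This is done in two separate stages with two different mechanisms, neither of which is an induction up the original tree-decomposition in the style you propose. (a) For \emph{path}-decompositions, Simon's Factorisation Forests Theorem gives a factorisation tree of depth $2^{O(k^2)}$; the induction is on that depth, and the key point is that in an \emph{idempotent} factorisation one can reroute any adhesion-to-adhesion path to stay within two consecutive bags (Claim~\ref{clm:idempotent-short-paths}), so the paths are \emph{local} and never propagate up. (b) For \emph{tree}-decompositions, the heavy lifting is done by a cited technical result, \cite[Lemma~5.9]{mimi} (our \cref{lem: fameux59}), which directly produces, for bounded treewidth graphs, a quotient tree-decomposition with bounded-pathwidth torsos \emph{together with} a path system satisfying an explicit load bound $|\mathrm{load}_x| \le 2k^3$; this is what yields bounded congestion, and there is no ``type-based choice of basis'' involved. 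Without knowing that the abstraction is a semigroup on which Simon's theorem can act, and without the locality phenomenon in the idempotent case, the sentence ``each edge of $G$ participates in only a bounded number of ancestor-level expansions'' does not follow from ``only finitely many abstractions exist'' --- a graph can have many repeated types along a long root-to-leaf path, and finiteness of types alone gives no bound on how often an edge is re-used across those occurrences.

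So: you found the right literature and the right obstruction, but the resolution is different from what you guessed, and the part you defer to future work is precisely where the actual content lies.
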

Here again, our proof gives the double exponential bound $f_{\ref{thm:bn-treewidth}}(k)=2^{2^{O(k^2)}}$,
but this can be improved to a polynomial thanks to~\cite{miraftab2026pathwidth}, see \cref{sec: polynomial}.

\Cref{thm:bn-treewidth} does not apply to tree-decompositions obtained through the Graph Minor Structure Theorem,
as the latter only have bounded adhesions, but not necessarily bounded width.
Our second main result is the following, whose statement is more technical, but which gives some sufficient conditions for a tree-decomposition with bounded adhesion to preserve the property of having bounded basis number. This will allow to handle the last part of our proof of Theorem \ref{thm: main-minor}.

\begin{restatable}{theorem}{tdbn}\label{thm: main-td}
 There exists a function $f_{\ref{thm: main-td}}: \mathbb N^2 \to \mathbb N$ such that the following holds.
 Let~$\Gc$ be a monotone class of graphs such that each graph in $\Gc$ has basis number at most~$b$,
 and let~$G$ be a graph with a tree-decomposition with adhesion at most~$k$ and whose torsos are all in~$\Gc$.
 Then
 \[ \bn(G) \le f_{\ref{thm: main-td}}(b,k). \]
\end{restatable}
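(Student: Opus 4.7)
The plan is to proceed by bottom-up induction on a rooted tree decomposition, following the state-compositional approach of Bojańczyk and Pilipczuk. Fix an arbitrary root $r$ of~$T$ and, for each node~$t$, let $G_t^\downarrow$ denote the subgraph of~$G$ induced by the union of bags in the subtree at~$t$, and let $A_t \eqdef \bag(t) \cap \bag(\mathrm{parent}(t))$ (with $A_r = \emptyset$). The inductive invariant at each node~$t$ will produce simultaneously (a) a cycle basis $\Cc_t$ of $G_t^\downarrow$ and (b) for every pair $u,v \in A_t$ lying in a common connected component of $G_t^\downarrow$, a designated $u$-$v$ path $P_{uv}^t \subseteq G_t^\downarrow$, with the edge-congestions of both $\Cc_t$ and the path family $\{P_{uv}^t\}$ bounded by a universal function $f(b,k)$. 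Applied at the root this yields the desired basis of~$G$, since $A_r = \emptyset$ forces $G_r^\downarrow = G$.

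At a leaf~$t$, the subgraph $G_t^\downarrow = G[\bag(t)]$ lies in~$\Gc$ by monotonicity and so admits a cycle basis of congestion at most~$b$, and the adhesion paths can be chosen directly inside this bag. At an internal node~$t$ with children $s_1, \dots, s_m$, the combination is organised around the \emph{partial torso} $H_t \eqdef G[\bag(t)] \cup \bigcup_i K[A_{s_i}]$, which is a subgraph of the full torso and hence lies in~$\Gc$ with basis number at most~$b$. Taking a cycle basis $\Bb_t$ of~$H_t$ of congestion~$\le b$, we define $\Cc_t$ as the union $\bigcup_i \Cc_{s_i}$ together with, for each $C \in \Bb_t$, a \emph{lifted} cycle $\tilde C$ in $G_t^\downarrow$ obtained by replacing every virtual edge $uv \in K[A_{s_i}]$ used by~$C$ with the inductively designated child path $P_{uv}^{s_i}$. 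A Mayer-Vietoris-style dimension count shows that $\Cc_t$ indeed spans $Z(G_t^\downarrow)$, and the new paths $\{P_{uv}^t\}$ are assembled analogously by concatenating a path inside $G[\bag(t)]$ with at most one inherited child path.

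The main obstacle is bounding the congestions of $\Cc_t$ and $\{P_{uv}^t\}$ uniformly in the depth of~$T$: a real edge~$e$ lying deep inside some $G_{s_i}^\downarrow$ picks up congestion from $\Cc_{s_i}$, and further congestion at node~$t$ from every lifted cycle $\tilde C$ whose replacement path $P_{uv}^{s_i}$ traverses~$e$, and iterating this up the tree a naive accounting would give a depth-dependent bound. Preventing this accumulation is the place where the Bojańczyk-Pilipczuk framework really enters: the ``state'' carried up the tree (a cycle basis together with an adhesion-path family) must be chosen so that combining any two compatible states costs only an $O_{b,k}(1)$ additive increase in congestion. This is arranged by an algebraic matching of the bases with respect to the cycle space $Z(K[A_{s_i}])$ of each adhesion clique, so that virtual edges cancel in a controlled manner and each combination step charges only an additive $O(k^2)$ to edges essentially local to the current bag. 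Making this compositional bookkeeping quantitative, via the algebra of cycle spaces over~$\Ff_2$ and the bounded size of adhesions, is the technical heart of the proof.
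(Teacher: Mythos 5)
Your proposal correctly identifies the two operative ingredients at a high level — combining local cycle bases of torsos and replacing virtual adhesion edges with genuine paths — and correctly isolates the central danger, namely depth-dependent congestion accumulation. But the mechanism you offer to prevent that accumulation does not work, and this is precisely the hard part of the theorem.

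Concretely, consider a cycle~$C_n$ of length~$n$ with a path-decomposition of width~$2$ (bags of consecutive pairs, arranged as a path with the natural rooting). Every adhesion is a single vertex except at the root, every torso is a triangle or an edge, so local bases are trivial. In your scheme the adhesion-path family $\{P_{uv}^t\}$ must, at the node~$t$ adjacent to the root, supply a $u$-$v$ path inside~$G_t^\downarrow$ between the two endpoints of the root adhesion, and this path is forced to be the entire $(n-1)$-edge arc. More generally, the inherited paths grow with the subtree and each deep edge is touched by the paths of all its ancestors, so any scheme that lifts cycles through arbitrarily nested inherited paths accumulates congestion linearly in depth. Your claim that ``each combination step charges only an additive $O(k^2)$ to edges essentially local to the current bag'' is not true for the paths $P_{uv}^t$ themselves, and the phrase ``algebraic matching of the bases with respect to the cycle space $Z(K[A_{s_i}])$'' is not a mechanism — $\Ff_2$-linear algebra over adhesion cliques cannot shorten a replacement path. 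The paper even notes explicitly (just after \Cref{thm:bn-tree-decomp-paths-simple}) that the required path system does \emph{not} exist for the given tree-decomposition even for a long cycle.

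The actual proof circumvents this by \emph{changing the decomposition}. The abstract combination step you describe is essentially \Cref{thm:bn-tree-decomp-paths}, which takes as a \emph{hypothesis} that a bounded-congestion path system capturing adhesions exists. The genuinely hard work is to manufacture such a path system. The paper does this in two stages: first for path-decompositions of bounded adhesion, by encoding the bi-interface connectivity pattern in a finite semigroup and applying Simon's Factorisation Forests Theorem (this yields, after deleting a bounded set of persistent vertices, paths confined to $O(1)$ consecutive bags, hence of bounded congestion); and second by invoking (a generalisation of) Bojańczyk and Pilipczuk's Lemma~5.9, which produces a \emph{different} tree-decomposition of~$G$ — with potentially much larger bags — whose torsos decompose into path-decompositions of bounded adhesion and whose adhesions are captured by an explicitly constructed bounded-congestion path family. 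Your proposal keeps the original decomposition and would therefore fail on the long-cycle example; the missing idea is to re-decompose the graph so that a globally bounded-congestion path system actually exists.
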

Once again, our bound is a double exponential $f_{\ref{thm: main-td}}(b,k)=b\cdot 2^{2^{O(k^2)}}$, and improves to a polynomial thanks to~\cite{miraftab2026pathwidth}.

Our proofs of \cref{thm:bn-treewidth,thm: main-td} are based on techniques developed by Boja\'nczyk and Pilipczuk~\cite{mimi} in their proof of Courcelle's conjecture on MSO-definable tree-decompositions, see \cref{sec:proof-overview} for an overview.

\subsection{Polynomial bounds}
\label{sec: polynomial}
One of the steps of our proof consists in dealing separately with the case of graphs of bounded pathwidth, and more generally with path-decompositions of bounded adhesion.

Independently from our work, Miraftab, Morin, and Yuditsky proved that graphs of pathwidth~$k$ in fact have linear basis number.
\begin{theorem}[\cite{miraftab2026pathwidth}]\label{thm:pathwidth-linear}
  Any graph with pathwidth~$k$ has basis number at most~$4k$.
\end{theorem}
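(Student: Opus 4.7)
My plan is to prove Theorem~\ref{thm:pathwidth-linear} by an inductive construction along a path-decomposition. Fix a path-decomposition $(B_1, \ldots, B_n)$ of $G$ with $|B_i| \le k+1$, write $S_i = B_i \cap B_{i+1}$ for the adhesions (of size at most $k$), and let $G_i = G[B_1 \cup \cdots \cup B_i]$. I will build a cycle basis of $G$ from small ``local'' pieces attached to consecutive bags, rather than from fundamental cycles of a spanning tree --- since any spanning tree already forces congestion $\Omega(k^2)$ in the worst case, while we want a linear bound.

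The first step is a \emph{locality reduction}: I would show that $C(G)$ is generated by cycles whose vertex set is contained in $B_i \cup B_{i+1}$ for some $i$. This is established by a rerouting argument. Given a cycle $C$ that visits bags over a range $[a, b]$ with $b - a \ge 2$, its intersection with $G_a$ is a disjoint union of paths with endpoints in the adhesion $S_a$; pairing up these endpoints and picking matching paths inside $B_a \cup B_{a+1}$ gives a local cycle $C'$ such that $C + C'$ is supported over a strictly smaller range. Iterating decomposes $C$ as a sum of local cycles.

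The second step builds a generating set of local cycles with low congestion. For each $i$, consider $G^i = G[B_i \cup B_{i+1}]$, a graph on at most $2k+2$ vertices. Its cycle space contains $C(G[B_i])$ and $C(G[B_{i+1}])$ as subspaces. I would choose a basis $\Bb^i$ for the quotient $C(G^i) / (C(G[B_i]) + C(G[B_{i+1}]))$, consisting of ``transition cycles'' that genuinely cross between the two bags; a dimension count shows at most $2k$ such cycles suffice, since the quotient is controlled by how the vertices of $B_{i+1} \setminus B_i$ attach to $S_i$. Combining these quotient bases with cycle bases of each $G[B_i]$ (which are graphs on at most $k+1$ vertices, hence have basis number at most $3$ by Schmeichel) yields a basis $\Bb$ of $C(G)$.

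For the congestion analysis, each edge of $G$ belongs to $G^i$ for at most two values of $i$ and to $G[B_j]$ for at most one relevant $j$, so its total congestion in $\Bb$ is a sum of $O(1)$ local contributions. The main obstacle will be obtaining the exact constant $4k$ rather than a weaker $O(k)$ or $O(k^2)$: the naive transition basis could send many cycles through the same edge of $S_i$. The fix is to pick the transition cycles as a ``fan'' structure rooted at a distinguished vertex of $S_i$ --- essentially extending a spanning tree of $G[B_{i+1}]$ restricted to $S_i$ --- so that each edge inside $S_i$ is used by at most a bounded number of transition cycles. Once this structural choice is made, a direct accounting of the $\le 3$ contributions from single-bag bases and the $\le $ a few contributions from each of the two adjacent transition bases should total to at most $4k$.
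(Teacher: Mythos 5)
Your proposed first step---the locality reduction---is where the argument breaks. The claim that the cycle space of~$G$ is generated by cycles supported on a single union $B_i \cup B_{i+1}$ is false in general. Take $G$ to be a long cycle $C_n$ with $n>6$, which has pathwidth~$2$. Its cycle space is one-dimensional, spanned by $C_n$ itself; but $C_n$ has $n$ vertices, while any $B_i \cup B_{i+1}$ in a width-$2$ path-decomposition has at most~$6$. Since the only nonzero $\Ff_2$-cycle in $C_n$ is $C_n$ itself, \emph{no} nonzero local cycle exists, so local cycles cannot generate the cycle space. The rerouting step you sketch implicitly requires that, for each pair of adhesion vertices appearing as endpoints of a component of $C \cap G_a$, there is a connecting path inside $G[B_a \cup B_{a+1}]$---but such a path need not exist, and the long-cycle example shows it can fail. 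This is exactly the obstruction flagged in \cref{sec:proof-overview}: the ``path system'' hypothesis of \cref{thm:bn-tree-decomp-paths-simple} fails for a long cycle under every bounded-width decomposition.

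Note also that \cref{thm:pathwidth-linear} is cited from Miraftab, Morin and Yuditsky~\cite{miraftab2026pathwidth}; this paper does not reprove it. The paper's own route to bounded basis number from bounded pathwidth (\cref{thm:bn-path-decomp}) uses Simon's Factorisation Forests Theorem to extract an idempotent structure which, after deleting a bounded number of persistent vertices, \emph{does} supply the connecting paths your rerouting needs, but this only yields a double-exponential bound $2^{2^{O(k^2)}}$. The linear bound $4k$ requires the genuinely different technique of~\cite{miraftab2026pathwidth}, which is not reproduced here, so the step in your plan that ``a dimension count shows at most $2k$ transition cycles suffice'' would need to be rebuilt from scratch even if locality could somehow be restored.
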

It turns out that the bounded pathwidth case is the only reason for the double-exponential~$2^{2^{O(k^2)}}$ bound in our proof of \cref{thm:bn-treewidth}.
Indeed, following the techniques of~\cite{mimi}, the first half of our proof shows that graphs of bounded pathwidth have bounded basis number,
using a Ramsey-like result from semigroup theory known as Simon's Factorisation Forests Theorem.
The double-exponential bound on basis number in \cref{thm:bn-treewidth} already appears at this step.
The second half of our proof --- generalising from bounded pathwidth to bounded treewidth --- only increases the basis number polynomially.

The work of Miraftab, Morin, and Yuditsky thus solves this bottleneck,
and combining \cref{thm:pathwidth-linear} with the second half of our proof of \cref{thm:bn-treewidth}, one can obtain a general polynomial bound on basis number in terms of the treewidth.
\begin{corollary}[\cite{miraftab2026pathwidth} and our results]\label{cor:bn-treewidth-polynomial}
  Any graph with treewidth~$k$ has basis number at most~$O(k^5)$.
\end{corollary}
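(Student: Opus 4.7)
The plan is to bound $\bn(G)$ via \Cref{thm: main-td} applied to a standard tree-decomposition, using \Cref{thm:pathwidth-linear} to control the basis numbers of the torsos.

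Let $G$ have treewidth $k$, and fix a tree-decomposition $(T,\bag)$ of $G$ of width~$k$. Its adhesions have size at most $k+1$, and every torso has at most $k+1$ vertices and hence pathwidth at most~$k$. Let $\Gc$ denote the monotone class of graphs of pathwidth at most~$k$: by \Cref{thm:pathwidth-linear}, every $H \in \Gc$ satisfies $\bn(H) \le 4k$, so every torso of $(T,\bag)$ has basis number at most $b := 4k$. Applying \Cref{thm: main-td} to $(T,\bag)$ with this choice of $\Gc$ then yields
\[ \bn(G) \;\le\; f_{\ref{thm: main-td}}(4k,\, k+1). \]

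To reach the $O(k^5)$ bound I would argue that $f_{\ref{thm: main-td}}(b,k)$ may be taken of the form $O(b \cdot k^c)$ for a small constant~$c$. The excerpt already flags that the only source of the double-exponential factor $2^{2^{O(k^2)}}$ in the present paper's bound comes from the pathwidth base case (the ``first half'' of the proof of \Cref{thm:bn-treewidth}), while the combinatorial content of \Cref{thm: main-td} itself --- the reduction from bounded pathwidth to general tree-decompositions of bounded adhesion --- only multiplies the basis number polynomially in~$k$ and linearly in~$b$. Substituting the linear bound of \Cref{thm:pathwidth-linear} for the pathwidth case directly into that second-half reduction therefore makes $f_{\ref{thm: main-td}}$ polynomial, and plugging in $b = 4k$ should give $O(k^5)$.

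The main obstacle is isolating this polynomial dependence by inspection of the proof of \Cref{thm: main-td}. Each step in which cycle bases of two sides of an adhesion of size $\le k+1$ are combined --- typically by rerouting cycles through a small interface, and by merging bases of subtree pieces at a separator --- must be shown to multiply the current edge-congestion by at most a polynomial in~$k$, and not to blow up the linear dependence in the input congestion~$b$. Tracking these contributions through the Bojańczyk--Pilipczuk style tree processing then pins down the exponent~$c$; the content of \Cref{cor:bn-treewidth-polynomial} is that $c=4$ suffices, so that plugging $b = 4k$ into $O(b \cdot k^{4})$ gives the final bound $O(k^{5})$.
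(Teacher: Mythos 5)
Your high-level plan---combine \Cref{thm:pathwidth-linear} with the ``second half'' of the treewidth proof---is in the right spirit, but the specific route you take is not the paper's, and as written it does not quite produce the stated $O(k^5)$.

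The paper does not go through \Cref{thm: main-td} for this corollary. It proves it as the proof of \Cref{thm:bn-treewidth} with \Cref{thm:pathwidth-linear} substituted in at the base case: apply \Cref{lem:magic-decomp} to a connected graph of treewidth $k$ to obtain a tree-decomposition $(T,\beta)$ whose torsos have pathwidth at most $3k+1$ and whose adhesions are captured by a path family of congestion $O(k^4)$; then the torsos have basis number at most $4(3k+1) = 12k+4$ by \Cref{thm:pathwidth-linear}, and \Cref{thm:bn-tree-decomp-paths} gives $\bn(G) \le (2\cdot O(k^4)+1)(12k+5) = O(k^5)$.

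Your proposal detours through the more general \Cref{thm: main-td} with $\Gc$ the class of pathwidth-$\le k$ graphs and $b = 4k$. The problem is that the paper's improved bound is $f_{\ref{thm: main-td}}(b,k) = O\bigl((b + k\log^2 k)\cdot k^4\bigr)$, not $O(b\cdot k^4)$ as you assert. The additive $k\log^2 k$ is genuinely there: the proof of \Cref{thm: main-td} only knows (via \Cref{lem:magic-decomp-general}) that each torso has a path-decomposition with bounded adhesion and parts in $\Gc^{+2k}$, and it must invoke \Cref{thm:pd-linear} to reassemble those parts, which costs an additive $O(k\log^2 k)$ on top of $b$. Since your $b = 4k$ is dominated by $k\log^2 k$, plugging in gives $f_{\ref{thm: main-td}}(4k,k+1) = O(k\log^2 k \cdot k^4) = O(k^5 \log^2 k)$, which is a genuine loss of the $\log^2 k$ factor over the claimed bound. (Your closing sentence also invokes the corollary itself to pin down the exponent, which is circular.) To recover $O(k^5)$ you need to exploit, as \Cref{lem:magic-decomp} does, that in the bounded-treewidth setting the torsos of the auxiliary decomposition have pathwidth $O(k)$ outright, so one can apply \Cref{thm:pathwidth-linear} directly and never needs \Cref{thm:pd-linear} with its $\log^2 k$ overhead.
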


The proof technique of Miraftab, Morin and Yuditsky generalises to graphs admitting some path-decompositions with bounded adhesion, whose parts have bounded basis number, with the following polynomial bound.
\begin{theorem}
 [\cite{miraftab2026pathwidth}]\label{thm:pd-linear}
 Let $b,k\in \mathbb N$ and let $G$ be a graph admitting a path-decomposition of adhesion $k$, in which each part has basis number at most $b$. Then 
 $$\bn(G)\leq b+O(k\log^2 k).$$
\end{theorem}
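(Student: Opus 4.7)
The plan is to construct a cycle basis of $G$ by combining the given cycle bases of the torsos with a small number of transversal cycles coming from the adhesion structure of the path-decomposition. Let $(V_1, \dots, V_n)$ denote the path-decomposition with adhesions $S_i \eqdef V_i \cap V_{i+1}$ of size at most $k$, and let $H_i$ be the torso on $V_i$, admitting a cycle basis $\Bb_i$ of congestion at most $b$. For each adhesion $S_i$, I would fix a connected subgraph $L_i \subseteq G[V_1 \cup \dots \cup V_i]$ spanning $S_i$; such a subgraph exists by standard connectivity properties of path-decompositions. Each virtual edge $uv$ appearing in a torso $H_j$ with $u,v \in S_i$ is then replaced in every cycle of $\Bb_j$ by a canonical $uv$-path inside $L_i$, turning $\Bb_j$ into a family $\Bb_j'$ of genuine cycles of $G$.

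The union $\bigcup_j \Bb_j'$ captures the ``local'' cycles of $G$, but does not yet span the full cycle space: one also needs transversal cycles that use both sides of each adhesion. To this end, for each $i$, I would fix a symmetric right-side linkage $R_i \subseteq G[V_{i+1} \cup \dots \cup V_n]$ spanning $S_i$, and add at most $|S_i| - 1 \le k-1$ fundamental cycles supported in $L_i \cup R_i$. A standard argument, analogous to the one used to handle tree-decompositions in \cref{thm: main-td}, shows that $\bigcup_j \Bb_j'$ together with these transversal cycles forms a cycle basis of $G$.

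The heart of the proof, and the main obstacle, lies in the congestion analysis. An edge $e \in E(G)$ lies in exactly one part $V_j$ and picks up congestion at most $b$ from $\Bb_j'$ (beyond the virtual-edge replacements). The remaining congestion comes from the canonical paths used to replace virtual edges, which live in the various linkages $L_i$ and $R_i$. To reach the target bound $O(k \log^2 k)$ rather than a naive $O(k^2)$, the linkages must be chosen with a hierarchical structure: within each $S_i$, a balanced nested family of spanning trees of depth $O(\log k)$ distributing usage evenly among its edges, combined with a divide-and-conquer scheme along the path $i = 1, \dots, n$ ensuring that each edge of $G$ participates in only $O(\log k)$ of the linkages. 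The two $\log k$ factors then multiply to give $\log^2 k$. Making this quantitative --- in particular ensuring that the hierarchical linkages are compatible with the canonical-path construction without introducing additional overhead --- is where I expect the real technical work to lie, and presumably where the Miraftab--Morin--Yuditsky techniques from the pathwidth case (Theorem~\ref{thm:pathwidth-linear}) must be exploited in a more refined way.
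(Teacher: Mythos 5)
The paper does not prove \cref{thm:pd-linear}: it is cited directly from \cite{miraftab2026pathwidth}, and the paper's own result on path-decompositions, \cref{thm:bn-path-decomp}, achieves only $b \cdot 2^{2^{O(k^2)}}$ via bi-interface graphs and Simon's Factorisation Forests Theorem --- a completely different route. So there is no in-paper proof to check your argument against; it has to stand on its own.

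Taken on its own, your proposal has a genuine gap, and one that you acknowledge yourself: the entire congestion analysis, including the hierarchical linkage construction that is supposed to produce the $O(k\log^2 k)$ term, is left as an expectation rather than carried out. Beyond that, there is a structural obstacle that the hierarchical idea does not remove. Your substitution scheme --- replace each virtual edge of $\Bb_j$ by a canonical path in some linkage $L_i$ --- is essentially the mechanism of \cref{thm:bn-tree-decomp-paths}, and that mechanism yields a bound of the form $(2c+1)(b+1)$, i.e.\ \emph{multiplicative} in $b$. If a virtual edge $uv$ of $S_i$ appears in up to $b$ cycles of $\Bb_i$ and another $b$ of $\Bb_{i+1}$, its replacement path acquires congestion $\Theta(b)$ from that single pair, and an edge of $L_i$ lying on many of the $\binom{k}{2}$ canonical paths for $S_i$ collects congestion $\Theta(bk^2)$ before any balanced-tree or divide-and-conquer trick is applied. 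Nested spanning trees and a logarithmic-depth recursion over the index $i$ can plausibly shave $k^2$ down to $k\log^2 k$, but they do nothing to detach the factor of $b$, since that factor comes from re-using the \emph{same} path for every cycle of $\Bb_j$ through a given virtual edge. To reach the additive form $b + O(k\log^2 k)$ you need the contribution of the bases $\Bb_j$ and the contribution of the path system to be essentially disjoint --- which is exactly the missing ingredient you defer to Miraftab--Morin--Yuditsky. As written, your construction would at best recover a multiplicative bound. (A minor further point: the hypothesis bounds the basis number of the \emph{parts} $G[\beta(t)]$, not the torsos $H_i$; passing to torsos costs an extra $O(\log^2 k)$ by \cref{lem:bn-edits}\eqref{eq:bn-edge-add}, which is harmless but should be stated.)
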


Again, 
combining Theorem \ref{thm:pd-linear} with our proof of Theorem \ref{thm: main-td}, one can obtain the improved polynomial bound $f_{\ref{thm: main-td}}(b,k)=O((b+k\log^2k) \cdot k^4)$.

Combining this with the recent polynomial bounds in the Graph Minor Structure Theorem obtained by Gorksy, Seweryn, and Wiederrecht~\cite{GSW25},
this in turn implies that one can obtain a polynomial bound for \cref{thm: main-minor}.

\begin{corollary}
 \label{coro: poly-minor}
  There exists a constant $c\leq 32210$ such that for every fixed graph $H$ and every $H$-minor free graph $G$, we have
  $$\bn(G)=O(|H|^c).$$
\end{corollary}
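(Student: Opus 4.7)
The plan is to combine four ingredients: the polynomial version of the Graph Minor Structure Theorem due to Gorsky, Seweryn and Wiederrecht~\cite{GSW25}; the bound $\bn(G) = O(\log^2 g)$ for graphs of genus~$g$ (\cref{thm:bn-surface}); the polynomial-in-$k$ strengthening $f_{\ref{thm: main-td}}(b,k) = O((b + k\log^2 k) \cdot k^4)$ of \cref{thm: main-td} (obtained by plugging \cref{thm:pd-linear} into our proof in place of Simon's Factorisation Forests Theorem); and the apex/vortex arguments used to prove \cref{thm: main-minor}, which must be checked to give polynomial dependencies. The strategy is then to reproduce the proof of \cref{thm: main-minor} while tracking all bounds polynomially in $|H|$.

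First, I would invoke the polynomial GMST of~\cite{GSW25}: every $H$-minor free graph~$G$ admits a tree-decomposition of adhesion~$k = \mathrm{poly}(|H|)$ whose torsos are each obtained from a graph embedded on a surface of Euler genus~$g = \mathrm{poly}(|H|)$ by attaching at most~$a = \mathrm{poly}(|H|)$ apices and at most~$v = \mathrm{poly}(|H|)$ vortices each of width at most~$w = \mathrm{poly}(|H|)$. Crucially, every exponent appearing here is absolute (independent of~$H$), which is what makes it possible to end up with a universal constant~$c$ in the conclusion.

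Next, I would bound the basis number of each torso~$T$. By \cref{thm:bn-surface} the underlying surface part has basis number $O(\log^2 g) = O(\log^2|H|)$. Attaching the~$v$ vortices, each of width~$w$, should increase the basis number by a polynomial function of~$v$ and~$w$, following the topological argument sketched in the introduction for the proof of \cref{thm: main-minor} (and which must be shown to be polynomial in these parameters). Finally, by \cref{lem:bn-edits}, adding the~$a$ apices costs an additional $2a$. Thus each torso has basis number $b = \mathrm{poly}(|H|)$.

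With $b$ and the adhesion $k$ both polynomial in~$|H|$, applying the improved \cref{thm: main-td} with $f_{\ref{thm: main-td}}(b,k) = O((b + k\log^2 k)\cdot k^4)$ to the tree-decomposition given by the GMST yields a basis number bounded by a polynomial in~$|H|$. The constant $c \leq 32210$ is obtained by multiplying out the polynomial exponents coming from~\cite{GSW25} for $k$, $g$, $a$, $v$, $w$ with the exponent~$4$ coming from \cref{thm: main-td} and with the exponent from the vortex analysis. The main obstacle is purely bookkeeping: writing every step so that the exponent is explicit, and in particular checking that the vortex step (the one part of the proof of \cref{thm: main-minor} not already stated with a polynomial bound elsewhere in this introduction) indeed incurs only a polynomial cost in the vortex parameters; beyond that, combining the ingredients is mechanical.
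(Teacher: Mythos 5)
Your plan is correct and is essentially the same as the paper's proof: the paper proves Theorem~\ref{thm: main-minor} by invoking the polynomial Graph Minor Structure Theorem of~\cite{GSW25} to get $a,k = O(|H|^{2300})$, $g = O(|H|^2)$, bounds each torso's basis number via Theorem~\ref{thm: bn-almost} (which reduces the vortex part to a treewidth bound and then applies Corollary~\ref{cor:bn-treewidth-polynomial} to get a polynomial cost, so the vortex step you were worried about does indeed incur only polynomial cost), and finally applies the improved bound $f_{\ref{thm: main-td}}(b,k) = O((b + k\log^2 k)\cdot k^4)$, giving $O(|H|^{(2 + 2\times 2300)\times 5 + 4\times 2300}) = O(|H|^{32210})$. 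Your proposal correctly identifies all four ingredients and the remaining work really is bookkeeping.
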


\subsection{Overview of the proof of Theorem \ref{thm: main-td}}\label{sec:proof-overview}
A very natural approach to prove \cref{thm:bn-treewidth,thm: main-td} is the following.
Consider a tree-decomposition~$(T,\beta)$ of~$G$, and for each bag~$\bag(t)$ of~$(T,\beta)$, choose a basis~$\Bb_t$ for~$G[\bag(t)]$ with small congestion.
One then wants to combine all the~$\Bb_t$ into a low congestion basis for~$G$.
It quickly becomes apparent that for this approach to work, any cycle~$C$ going through~$\bag(t)$ should project to some cycle inside~$\bag(t)$,
and to this end one should not consider the subgraph of $G$ induced by~$\bag(t)$, but rather its \emph{torso},
where one adds new edges corresponding to all connections made by paths in~$G$ outside~$G[\bag(t)]$.

Now imagine that~$C$ is a cycle in the basis~$\Bb_t$ which uses one of these edges~$e$ created in the torso of~$t$.
We want to use~$C$ as part of a basis of the cycle space of~$G$,
and this requires replacing~$e$ (which does not exist in~$G$), by some path in~$G$.
Naturally, this may happen for any edge~$e$ added in any torso of~$(T,\beta)$,
and one should care about the congestion of the family of paths used to replace all these edges.
This leads to the following result (see \cref{thm:bn-tree-decomp-paths} for a more precise statement),
whose proof uses elementary reasoning on cycle bases.
\begin{theorem}\label{thm:bn-tree-decomp-paths-simple}
  Let~$G$ be a graph, and $(T,\beta)$ a tree-decomposition of~$G$ with bounded adhesion, whose torsos have bounded basis number.
  For each edge~$e$ created in each torso of~$(T,\beta)$, pick a path in~$G$ joining the endpoints of~$e$,
  and assume that the family of all these paths has bounded congestion.
  Then~$G$ has bounded basis number.
\end{theorem}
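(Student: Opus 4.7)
The plan is to assemble a cycle basis of~$G$ by combining cycle bases of the torsos, each lifted to~$G$ through the given paths for virtual edges. Let~$b$ be a bound on the basis number of the torsos and~$c$ a bound on the congestion of the path family~$\{P_e\}$. For each $t \in V(T)$, fix a cycle basis~$\Bb_t$ of the torso~$\hat{G}_t$ of congestion at most~$b$. For each cycle $C \in \Bb_t$, define its \emph{lift}~$\tilde{C}$ as the $\Ff_2$-sum of the real edges of~$C$ together with the paths~$P_e$ for each virtual edge $e$ of~$C$; a direct parity check shows that $\tilde{C}$ belongs to the cycle space of~$G$.

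I would then show that the family $\tilde{\Bb} = \bigcup_{t \in V(T)} \{\tilde{C} : C \in \Bb_t\}$ spans the cycle space of~$G$, by induction on $|V(T)|$. Stripping a leaf~$t$ with parent~$s$ and adhesion $K = \beta(t) \cap \beta(s)$, any cycle~$Z$ of~$G$ splits as $Z = Z_1 + Z_2$, where~$Z_1$ is a cycle of~$\hat{G}_t$ obtained by closing the arcs of~$Z$ inside $\beta(t) \setminus \beta(s)$ through virtual edges on the clique~$K$, and~$Z_2$ is a cycle of $G - (\beta(t) \setminus \beta(s))$ together with the same virtual edges on~$K$. The piece~$Z_1$ lies in the span of~$\Bb_t$, whose lifts are in~$\tilde{\Bb}$, and~$Z_2$ is handled by the inductive hypothesis applied to the restricted tree-decomposition.

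The main obstacle is bounding the congestion of~$\tilde{\Bb}$. An edge $f \in E(G)$ contributes either as a real edge of some cycle $C \in \Bb_t$ (at most~$b$ per torso containing~$f$) or as an edge of some path~$P_e$ replacing a virtual edge $e \in C$, which altogether contributes at most~$bc$. The first kind is the delicate one, because a single real edge may belong to many torsos simultaneously --- for example, the edge~$uv$ in a tree-decomposition of $K_{2,n} + uv$ lies in every bag, so a naïve union of bases would give congestion linear in~$|V(T)|$. To sidestep this, I would root~$T$ and, for each non-root node~$t$ with parent adhesion~$K_t$, replace~$\Bb_t$ by a sub-basis $\Bb_t^{\mathrm{rel}} \subseteq \Bb_t$ that projects to a basis of the quotient of the cycle space of~$\hat{G}_t$ by the cycle space of the clique on~$K_t$. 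Because $K_t \subseteq \beta(\mathrm{parent}(t))$, the cycles discarded in the quotient are already expressible via the parent's lifted basis, so a dimension count
\[
  \sum_{t \in V(T)} \bigl( \dim \text{cycle space of } \hat{G}_t - \dim \text{cycle space of the clique on } K_t \bigr) = \dim \text{cycle space of } G
\]
shows that the refined family still spans. Being a sub-multiset of~$\Bb_t$, $\Bb_t^{\mathrm{rel}}$ inherits congestion at most~$b$, and a final adjustment of each representative modulo clique cycles of~$K_t$ restricts its adhesion-edge usage to a fixed spanning tree of~$K_t$, yielding a total congestion bounded by a function of~$b$,~$c$, and the adhesion size~$k$.
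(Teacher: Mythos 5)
Your lifting idea and the spanning argument are on the right track, but the congestion bound at the end is asserted rather than proved, and it is in fact false for the construction you describe. The key difficulty you correctly identify --- a real edge~$uv$ lying in every bag --- is not fixed by passing to the sub-bases~$\Bb_t^{\mathrm{rel}}$ nor by the spanning-tree adjustment.

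Take~$G = K_{2,n} + uv$ (vertices $u,v,w_1,\dots,w_n$; edges $uw_i$, $vw_i$, and~$uv$), with the path-shaped tree-decomposition whose bags are $\{u,v,w_i\}$. Every adhesion is $\{u,v\}$, every torso is the triangle $uvw_i$, and there are no virtual edges. For each non-root~$t_i$, the ``clique on~$K_{t_i}$'' is the single edge~$uv$, whose cycle space is trivial, so the quotient is the whole one-dimensional cycle space of the triangle and $\Bb_{t_i}^{\mathrm{rel}} = \Bb_{t_i}$. The spanning tree of~$K_{t_i}$ is~$\{uv\}$ itself, so the ``final adjustment'' changes nothing. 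Your proposed generating family is then the~$n$ triangles $uvw_1,\dots,uvw_n$, which has congestion~$n$ on the edge~$uv$; nothing in~$b$, $c$, or~$k$ bounds this. (The same phenomenon occurs for virtual edges if a single path~$P_e$ is shared across all torsos containing~$e$: take $K_{2,n}$ without~$uv$, set $P_{uv} = uw_1v$, and the edges~$uw_1,vw_1$ are used $\Theta(n)$ times.)

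The idea you are missing is that the paths must be used to replace \emph{all} adhesion-pair edges of each torso cycle, real or virtual, and with a \emph{different} path for each occurrence: when lifting $C \in \Bb_t$ and hitting an adhesion-pair edge~$e$, one substitutes $e \oplus P_{t_e,u,v}$ where~$t_e$ is specifically~$t$ or a child of~$t$. This locality is what makes each path~$P_{t',u,v}$ appear only in lifts coming from~$\Bb_{t'}$ or its parent's basis, giving at most $2b$ uses per path, hence $2bc$ uses per edge from this source; an edge used as a ``core'' edge of a lift then lies in no adhesion and hence in a unique bag, contributing at most~$b$. The price is that the family no longer spans on its own, and one must add the ``path-switching'' cycles $P_{t_i,u,v} \oplus P_{t_{i+1},u,v}$ (and $\{uv\} \oplus P_{t_1,u,v}$ when $uv \in E(G)$) to absorb the discrepancies between different path choices for the same pair. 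These extra cycles have congestion at most~$2c+1$. Note also that the precise statement in the paper (Theorem~3.1) explicitly takes one path~$P_{t,u,v}$ per node~$t$ per pair~$\{u,v\}$ in the adhesion of~$t$, not one path per virtual edge --- the looser wording of the informal version is easy to misread, but your reading makes the claim unprovable by the approach you take.
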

Unfortunately, the assumptions from \cref{thm:bn-tree-decomp-paths-simple} fail even in some very simple cases:
in a long cycle, one can observe that no tree-decomposition of bounded width satisfies these hypotheses.
Nevertheless, this path system assumption turns out to be extremely similar to the conditions used by Bojańczyk and Pilipczuk in their proof of Courcelle's conjecture~\cite{mimi}.
Using the proof framework they developed, we are able to reduce \cref{thm:bn-treewidth,thm: main-td} to \cref{thm:bn-tree-decomp-paths-simple}.

This is done in two stages.
The first only considers path-decompositions, and applies Simon's Factorisation Forests Theorem
to obtain some regularity conditions on the connections between the different adhesions of the path-decomposition.
After removal of a bounded number of vertices, these conditions give the path system needed to apply \cref{thm:bn-tree-decomp-paths-simple}. Note that in view of the results from \cite{miraftab2026pathwidth}, this approach gives suboptimal bounds.

For the second stage, we use a technical result Bojańczyk and Pilipczuk, namely \cite[Lemma~5.9]{mimi}.
This result implies that any graph~$G$ of treewidth~$k$ has a tree-decomposition with bounded adhesions, whose torsos have bounded pathwidth, and with a path system as required by \cref{thm:bn-tree-decomp-paths-simple}. Together with the previous results, this immediately implies \cref{thm:bn-treewidth}. To prove \cref{thm: main-td}, we need to generalise \cite[Lemma~5.9]{mimi} to tree-decompositions with small adhesions but unbounded width.
Though quite technical, the ideas remain fundamentally the same as in~\cite{mimi}.

\paragraph{Structure of the paper}
We introduce in \cref{sec: prelis} all basic definitions and properties related to graphs, basis numbers and tree-decompositions.
Then, in \cref{sec: td1}, we prove \cref{thm:bn-tree-decomp-paths}, which gives some simple sufficient condition for a tree-decomposition of bounded adhesion to preserve basis number. This result will be extensively used in all subsequent sections.

\Cref{sec: pw} proves \cref{thm: main-td} in the special case of path-decompositions using Simon's Forest Factorisation Theorem,
and in particular shows that graphs of bounded path-width have bounded basis number.
Then, in \cref{sec:tw} we combine the results of the previous two sections with \cite[Lemma 5.9]{mimi} to prove \cref{thm:bn-treewidth}.
To further obtain \cref{thm: main-td}, we prove in \cref{sec: td2} a generalisation of \cite[Lemma 5.9]{mimi} to tree-decompositions of unbounded width.

We then explain in \cref{sec: minors} how to combine \cref{thm: main-td} and the Graph Minor Structure Theorem in order to derive a proof of \cref{thm: main-minor}.
This part still requires some technical work and the use of known results on graphs embedded in surfaces to handle vortices.
Finally, we give in \cref{sec: ccl} some examples of graph families with unbounded basis number, and conclude with some related questions and possible future directions.

\section{Preliminaries}
\label{sec: prelis}
We introduce in Section \ref{sec: graphs} some basic notions on graphs, and give then in Section \ref{sec: bn} the definition of the basis number of a graph, together with some basic first properties. Section \ref{sec: prelis-td} defines standard notions related to tree-decompositions, as well as some additional terminology from~\cite{mimi}.

\subsection{Graphs}
\label{sec: graphs}
In the whole paper, if $k$ is a non-negative integer, we let $[k]$ denote the set $\sg{1,\ldots,k}$ of integers.

Unless stated otherwise, all the graphs we will consider are finite, unoriented and without loops or multi-edges.
A \emph{path} $P$ in $G$ is a sequence $v_1\ldots v_k$ of pairwise distinct vertices such that for each $i\in [k-1]$, we have $v_iv_{i+1}\in E(G)$. Throughout the paper, a path $P$ will be identified with the subgraph of~$G$ consisting of the $v_i$'s and the edges $v_iv_{i+1}$. The \emph{length} of $P$ is its number $k-1$ of edges. If $v_1=x$ and $v_k=y$, then we say that $P$ is a \emph{$xy$-path}.
We let $d_G(\cdot, \cdot)$ denote the shortest path metric in $G$,  and let $\diam(G)$ denote the \emph{diameter} of the graph~$G$, that is, the largest possible distance between two vertices. 

For every graph $G$, and every subset $X\subseteq V(G)$ of vertices, we let $G[X]\eqdef (X,E(G)\cap \binom{X}{2})$ denote the subgraph of $G$ induced by $X$, and we set $G-X\eqdef G[V(G)\setminus X]$. Similarly, for every $F\subseteq E(G)$, we set $G-F\eqdef  (V(G),E(G)\setminus F)$, and for simplicity, for every $v\in V(G), e\in E(G)$ we write $G-v\eqdef G-\sg{v}$ and $G-e\eqdef G-\sg{e}$.
A \emph{spanning subgraph} of~$G$ is a subgraph containing all vertices of~$G$.

For every two graphs $G_1, G_2$, whose vertex and edge sets possibly intersect, their union is the graph $G_1\cup G_2\eqdef (V(G_1)\cup V(G_2), E(G_1)\cup E(G_2))$,
and similarly the intersection is $G_1 \cap G_2 \eqdef (V(G_1) \cap V(G_2), E(G_1) \cap E(G_2))$.

\begin{remark}
\label{rem: connecting-forest}
Let $G$ be a connected graph. For every family $H_1,\ldots, H_k$ of pairwise vertex-disjoint subgraphs of $G$, there exists some subforest $T$ of $G$ such that the graph $T\cup H_1\cup \cdots \cup H_k$ is connected, and such that the cycles from $G$ are exactly the ones of $H_1\cup \cdots \cup H_k$. We call such a forest $T$ a \emph{connecting forest} for $H_1,\ldots,H_k$ in~$G$.
\end{remark}

The \emph{square} of a graph $G$ is the graph $G^2$ with vertex set $V(G)$ and which contains an edge between every two vertices at distance at most $2$ in $G$. Note that we always have $\diam(G)\leq 2\cdot \diam(G^2)$.

If $G$ is a graph, for every $u\in V(G)$, we let $N_G[u]\eqdef \sg{v\in V(G): uv\in E(G)}\cup \sg{u}$ denote the \emph{closed neighbourhood} of $u$ in $G$, and for every subset $X\subseteq V(G)$, we set $N_G[X]\eqdef \bigcup_{u\in X}N_G[u]$. 
A \emph{dominating set} in a graph $G$ is a subset $D\subseteq V(G)$ such that $V(G)=N_G[D]$.

\begin{remark}
 \label{rem: dominating-diameter}
 Let $G$ be a connected graph and $D\subseteq V(G)$ be a dominating set of $G$. Then $\diam(G)\leq 3|D|-1$.
\end{remark}
\begin{proof}
Let $x,y\in V(G)$ and consider a shortest $xy$-path $P$ in $G$. Then note that for every $u\in D$, $P$ must intersect $N_G[u]$ in at most $3$ vertices. This implies that~$P$ has at most $3|D|$ vertices.
\end{proof}

In a graph~$G$, a \emph{separation} is a pair~$(X,Y)$ of subsets of vertices such that $V(G) = X \cup Y$,
and there are no edges joining~$X \setminus Y$ and~$Y \setminus X$, i.e.\ these two sets are separated by~$X \cap Y$.
The \emph{order} of the separation is~$|X \cap Y|$.

\subsection{Basis number}
\label{sec: bn}
Given a (finite) graph~$G$, we are interested in subsets of~$E(G)$.
They form an $\Ff_2$-vector space, where the sum operation is the symmetric difference, denoted by~$X \oplus Y$.
We frequently identify subgraphs of~$G$ with their set of edges.
We call \emph{$\Ff_2$-cycle} a subgraph of $G$ whose vertices all have even degree,
and reserve the name \emph{cycle} for the graph theoretic notion, i.e.\ connected subgraphs whose vertices all have degree~2.
Any $\Ff_2$-cycle can be obtained as disjoint union of cycles.
The set of all $\Ff_2$-cycles is a vector subspace of~$\Ff_2^{E(G)}$ called the \emph{cycle space} of~$G$.
A basis for the cycle space is called a \emph{cycle basis}.
A well-known and easy fact is that the cycle space of a finite connected graph $G$ has dimension exactly $|E(G)|-|V(G)|+1$.

The (edge-)congestion of a family~$\Xc = \{X_1,\dots,X_n\}$ of subgraphs of~$G$ is
\[ \max_{e \in E(G)} |\{i : e \in X_i\}|. \]
The \emph{basis number} of~$G$, denoted~$\bn(G)$, is the minimum edge-congestion of a cycle basis of~$G$.
One can always find a cycle basis with minimum congestion consisting only of cycles.

The next result is a fundamental property of basis number (see for example \cite[Lemma 3.8]{bazargani2024planar} in the case where $G_X\cap G_Y$ is spanning).
\begin{lemma}\label{lem:bn-union}
  Let~$G$ be a graph with two subgraphs~$G_X,G_Y$ such that $G = G_X \cup G_Y$, and $G_X \cap G_Y$ is connected.
  If~$\Bb_X,\Bb_Y$ are cycle bases of~$G_X$ and~$G_Y$ respectively, then~$\Bb_X \cup \Bb_Y$ generates the cycle space of~$G$.
  In particular,
  \[ \bn(G) \le \bn(G_X) + \bn(G_Y). \]
\end{lemma}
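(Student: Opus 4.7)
The plan is to show the generation claim edge by edge on $\Ff_2$-cycles, then deduce the congestion bound as an immediate consequence. Fix any $\Ff_2$-cycle $C$ of $G$. I would first split $E(C)$ between the two subgraphs by picking arbitrarily, for each edge of $C$ lying in $E(G_X) \cap E(G_Y)$, whether to assign it to $G_X$ or $G_Y$, so as to obtain edge sets $E(C_X) \subseteq E(G_X)$ and $E(C_Y) \subseteq E(G_Y)$ that are disjoint with $E(C_X) \cup E(C_Y) = E(C)$. Then $C = C_X \oplus C_Y$ as elements of $\Ff_2^{E(G)}$.

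The key issue is that $C_X$ and $C_Y$ need not be $\Ff_2$-cycles individually. However, at any vertex $v \in V(G_X) \setminus V(G_Y)$, no edge of $C_Y$ is incident to $v$, so $\deg_{C_X}(v) = \deg_C(v)$ is even, and symmetrically for $v \in V(G_Y) \setminus V(G_X)$. The parity defect is thus concentrated on $S \eqdef V(G_X) \cap V(G_Y) = V(G_X \cap G_Y)$, and since $\deg_{C_X}(v) + \deg_{C_Y}(v) = \deg_C(v)$ is even at each $v \in S$, the sets $S' \eqdef \{v \in S : \deg_{C_X}(v) \text{ is odd}\}$ and $\{v \in S : \deg_{C_Y}(v) \text{ is odd}\}$ coincide, and $|S'|$ is even.

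The main step, which exploits the connectivity hypothesis, is to build a ``correction'' subgraph $D$ of $G_X \cap G_Y$ that has odd degree exactly at the vertices of $S'$. Pair up the even number of vertices of $S'$ arbitrarily as $(u_1,v_1),\ldots,(u_r,v_r)$; for each pair, choose a $u_iv_i$-path $P_i$ in the connected graph $G_X \cap G_Y$, and set $D \eqdef P_1 \oplus \cdots \oplus P_r$. A straightforward parity check shows that $D$ has odd degree exactly at $S'$. Therefore $C_X \oplus D$ is an $\Ff_2$-cycle of $G_X$, $C_Y \oplus D$ is an $\Ff_2$-cycle of $G_Y$, and $(C_X \oplus D) \oplus (C_Y \oplus D) = C_X \oplus C_Y = C$. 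Since $\Bb_X$ and $\Bb_Y$ generate the cycle spaces of $G_X$ and $G_Y$, we conclude that $\Bb_X \cup \Bb_Y$ generates $C$, and hence the whole cycle space of $G$.

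For the quantitative statement, choose $\Bb_X$ and $\Bb_Y$ with respective congestions $\bn(G_X)$ and $\bn(G_Y)$, and extract from $\Bb_X \cup \Bb_Y$ a basis $\Bb$ of the cycle space of $G$. Any edge $e \in E(G)$ lies in at most $\bn(G_X)$ members of $\Bb_X$ (if $e \in E(G_X)$, and in zero otherwise) and at most $\bn(G_Y)$ members of $\Bb_Y$, so the congestion of $\Bb$ is at most $\bn(G_X) + \bn(G_Y)$. The only real difficulty is the construction of the correction $D$, and even this is essentially routine once one notes that $V(G_X \cap G_Y) = S$ contains the full parity-violation set $S'$ and $G_X \cap G_Y$ is connected.
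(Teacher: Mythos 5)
Your proof is correct and follows essentially the same argument as the paper's: split $C$ into $C_X \oplus C_Y$ supported on $G_X$ and $G_Y$, observe the parity defect lies in $V(G_X \cap G_Y)$ and has even size, and fix it with a sum of paths in the connected intersection. The only cosmetic difference is that you spell out the parity bookkeeping and the final congestion extraction a bit more explicitly than the paper does.
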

\begin{proof}
  Take~$C$ an $\Ff_2$-cycle in~$G$, and partition it into $C = C_X \oplus C_Y$ such that $C_X \subseteq G_X$ and $C_Y \subseteq G_Y$ (these are not necessarily $\Ff_2$-cycles).
  Let~$U$ be the set of vertices with odd degree in~$C_X$.
  Clearly~$U$ is also equal to the set of odd-degree vertices in~$C_Y$, hence $U \subseteq V(G_X \cap G_Y)$. Moreover, $|U|$ must be even.

  Since~$G_X \cap G_Y$ is connected, there is some subgraph $H \subseteq G_X \cap G_Y$
  such that~$U$ is exactly the set of odd-degree vertices of~$H$:
  if the vertices of~$U$ are~$u_1,\dots,u_{2n}$ ordered arbitrarily,
  then pick an $u_{2i-1}u_{2i}$-path~$P_i$ in $G_X \cap G_Y$ for each $i \in [n]$, and define $H = \bigoplus_{i=1}^n P_i$.
  Now $C'_X \eqdef C_X \oplus H$ and $C'_Y \eqdef C_Y \oplus H$ are \Fcycles in~$G_X$ and~$G_Y$ respectively, and $C = C'_X \oplus C'_Y$, proving the result.
\end{proof}

In the remainder of \cref{sec: bn}, we establish a number of consequences of \cref{lem:bn-union},
starting with small graph modifications that preserve basis number.

\begin{lemma}\label{lem:bn-edits}
  For any graph~$G$, we have
  \begin{align}
    \label{eq:bn-vertex-add} \bn(G) & \le \bn(G-v) + 2 && \text{for any vertex~$v$} \\
    \label{eq:bn-edge-add} \bn(G) & \le \bn(G-A) + O(\log^2 |A|) && \text{for any subset~$A$ of edges} \\
    \label{eq:bn-edge-del} \bn(G-A) & \le (|A|+1)\cdot \bn(G) && \text{for any subset~$A$ of edges}
  \end{align}
\end{lemma}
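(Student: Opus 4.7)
The three inequalities will all be proved by applying \cref{lem:bn-union} (for the first two) or by a Gaussian elimination argument on cycle bases (for the third). The common template for (1) and (2) is the following: write $G$ as a union $G_X \cup G_Y$, where $G_X$ is obtained from $G$ by deleting the vertex/edges being added, and $G_Y$ is a small ``patch'' graph containing the added elements together with a tree in $G_X$ that makes the intersection $G_X \cap G_Y$ connected. The heart of the proof is then to show that $G_Y$ has small basis number, bounded by a constant for (1) and by $O(\log^2|A|)$ for (2).

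For the vertex bound~\eqref{eq:bn-vertex-add}, first reduce to the case where $G - v$ is connected, handling each connected component of $G - v$ separately otherwise. Let $T_v \subseteq G - v$ be a subtree spanning the neighborhood $N(v)$ (which exists by connectedness), and define $G_Y$ as the union of $T_v$, the vertex $v$, and the star of edges from $v$ to $N(v)$. Then $G_X \cap G_Y = T_v$ is a tree, hence connected, and $G_Y$ admits a straightforward planar embedding (place $v$ in the centre and $T_v$ around it with its edges routed so that each tree edge of $T_v$ bounds at most two faces together with star edges from $v$). By \cref{thm:maclane}, $\bn(G_Y) \le 2$, and \cref{lem:bn-union} concludes.

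For the edge bound~\eqref{eq:bn-edge-add}, again WLOG $G-A$ is connected (otherwise work on components). Let $F \subseteq G-A$ be a Steiner tree spanning $V(A)$ in $G-A$, and set $G_Y = (V(A) \cup V(F), A \cup E(F))$. Then $G_X \cap G_Y = F$ is a connected tree, so \cref{lem:bn-union} applies and gives $\bn(G) \le \bn(G-A) + \bn(G_Y)$. It remains to bound $\bn(G_Y)$: contracting all edges of $F$ (which leaves the cycle space, and hence the basis number, unchanged) yields a multigraph $G_Y / F$ with at most $|A|$ edges. Any multigraph with $m$ edges embeds on a surface of genus $O(m)$, so $G_Y / F$ has genus $O(|A|)$, and \cref{thm:bn-surface} gives $\bn(G_Y) = O(\log^2 |A|)$.

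For the edge-removal bound~\eqref{eq:bn-edge-del}, let $\Bb$ be a cycle basis of $G$ with congestion $b = \bn(G)$, and consider the $|\Bb| \times |A|$ incidence matrix $M_A$ of $\Bb$ restricted to $A$-columns. I would perform simultaneous Gaussian elimination on $M_A$: select at most $r \le |A|$ pivot rows $C_1^*,\dots,C_r^*$ whose $A$-restrictions form a basis of the image, and replace every non-pivot $C \in \Bb$ by $C' = C \oplus \sum_{i \in S_C} C_i^*$, where $S_C \subseteq [r]$ is the unique set such that $C'$ has no $A$-edge. The $C'$'s then form a basis of $Z(G-A)$, and each is an $\Ff_2$-sum of at most $|A|+1$ elements of $\Bb$. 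The main obstacle is the congestion bookkeeping: naively a pivot $C_i^*$ can appear in the correction set $S_C$ of many non-pivot cycles, which would give a bound depending on $|\Bb|$ rather than on $|A|$. The key point (and the delicate step) is that by choosing the pivots carefully and exploiting the parallel structure of the elimination, the total contribution of each pivot cycle to the final congestion on any fixed edge $e' \notin A$ can be controlled so that each new basis element contributes an $\Ff_2$-sum of at most $|A|+1$ terms whose $e'$-usage is bounded termwise by the original congestion~$b$, giving the final bound $(|A|+1)\,b$.
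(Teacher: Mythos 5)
Your proofs of \eqref{eq:bn-vertex-add} and \eqref{eq:bn-edge-add} follow the paper's template closely: decompose $G$ as the union of $G-v$ (respectively $G-A$) and a small patch $G_Y$ whose intersection with the rest is a tree, bound $\bn(G_Y)$ via planarity or genus, and apply \cref{lem:bn-union}. Your patch for \eqref{eq:bn-vertex-add} (apex over a Steiner tree rather than over a spanning forest) is a harmless variant and the planarity claim is correct, since any tree is outerplanar.

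Your justification of \eqref{eq:bn-edge-add} contains a false intermediate claim: contracting a spanning tree does \emph{not} leave the basis number unchanged. For the theta graph (two vertices joined by three parallel edges), the basis number is $2$ --- any basis consists of two $2$-cycles, and $2+2>3$ edges forces a repetition --- while contracting the spanning tree yields a bouquet of two loops, whose basis number is $1$. The paper avoids this entirely by bounding the genus of $G_Y$ directly (draw $F$ in the sphere, add one handle per edge of $A$). Your argument is easily repaired by observing that forest contraction preserves \emph{genus} rather than basis number, so $G_Y$ itself has genus $O(|A|)$, and one applies \cref{thm:bn-surface} to $G_Y$ rather than to $G_Y/F$.

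For \eqref{eq:bn-edge-del} there is a genuine gap. You correctly flag the obstruction --- a pivot $C_i^*$ may enter the correction set $S_C$ of arbitrarily many non-pivots, giving congestion on the order of $|\Bb|$ --- but the assertion that ``choosing the pivots carefully and exploiting the parallel structure'' resolves it is a placeholder, not a proof, and with pivots drawn from $\Bb$ it is in general false: after row reduction a pivot need not correspond to a single edge of $A$, and then the number of $C$ with $i\in S_C$ is not controlled by $b$. The paper's resolution is precisely to \emph{not} take pivots from $\Bb$. After reducing to the case where $G-A$ is connected, fix for each $e\in A$ a path $P_e$ in $G-A$ joining the endpoints of $e$, and use $D_e\eqdef\{e\}\oplus P_e$ as the ``pivot''. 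Each $D_e$ contains exactly one $A$-edge, namely $e$, so $D_e$ is applied to exactly the cycles $C\in\Bb$ with $e\in E(C)$, of which there are at most $b$; consequently a fixed edge $e'\in E(G-A)$ can lie in $\tC=C\oplus\bigoplus_{e\in A\cap E(C)}(P_e\oplus\{e\})$ only because $e'\in C$ (at most $b$ choices) or $e'\in P_e$ for some $e\in A\cap E(C)$ (at most $b$ choices per $e$, hence at most $b|A|$ in total), yielding $(|A|+1)b$. This single-$A$-edge choice of pivots living outside $\Bb$ is the missing idea; once you add it your Gaussian-elimination viewpoint collapses to the paper's path-replacement argument.
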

\begin{proof}
\begin{enumerate}[label=(\arabic*)]
 \item Pick~$F$ a spanning forest of~$G$, and consider the following two subgraphs.
   First set $G_1 \eqdef  F \cup (G-v)$. This consists exactly of~$G-v$, plus for each connected component~$X$ of~$G-v$, a single edge from~$v$ to~$X$.
   These additional edges do not create any new cycle, thus $\bn(G_1) = \bn(G-v)$.
   Next define~$G_2$ by adding to~$F$ all edges incident to~$v$. This is a planar graph, hence $\bn(G_2) \le 2$.
   Clearly $G = G_1 \cup G_2$, while $G_1 \cap G_2 = F$ is connected. \Cref{lem:bn-union} then gives
   \[ \bn(G) \le \bn(G_1) + \bn(G_2) \le \bn(G-v) + 2. \]

 \item Without loss of generality, we may assume that~$G$ is connected.
   Furthermore, if~$G-A$ is not connected, then there is an edge $e \in A$ joining two components of~$G-A$.
   For $A' \eqdef A \setminus \{e\}$, we then have $\bn(G-A) = \bn(G - A')$.
   By induction on the size of~$A$, we may assume $\bn(G) \le \bn(G-A') + O(\log^2|A'|)$, which implies the desired bound.

   We thus assume that~$G-A$ is connected too.
   Pick~$F$ a spanning tree of~$G-A$ and define $H \eqdef  F \cup G[A]$. Then~$H$ can be embedded on a surface of genus at most~$|A|$, by drawing~$F$ on the sphere and adding a handle to draw each edge of $A$.
   By \cref{thm:bn-surface}, this gives~$\bn(H) = O(\log^2 |A|)$.
   Now $G-A$ and~$H$ are two subgraphs whose union is~$G$, and whose intersection~$F$ is connected.
   By \cref{lem:bn-union}, we obtain $\bn(G) \le \bn(G-A) + O(\log^2 |A|)$.

 \item Once again, without loss of generality we may assume that $G$ is connected,
   and if~$G-A$ is not connected, then picking $e \in A$ between two components and defining $A' \eqdef A \setminus \{e\}$
   yields $\bn(G-A) = \bn(G-A')$, allowing to conclude by induction on the size of~$A$.

  We may thus assume now that each edge $e\in A$ is not a cutedge of $G-(A\setminus \sg{e})$, i.e.\ there exists a path $P_e$ in $G-A$ connecting the two endpoints of $e$. For each \Fcycle $C$ of $G$, we let $\tC$ denote the subgraph of~$G-A$ obtained from $C$ after replacing every edge $e\in F\cap E(C)$ by the path~$P_e$. More formally, $\tC=C\oplus (\bigoplus_{e\in A\cap E(C)} (P_e\oplus \sg{e}))$. Note that as every path $P_e$ avoids $A$, $\tC$ is a \Fcycle in $G-A$, and that for every two \Fcycles $C,C'$ in $G$, the mapping $f:C\mapsto \tC$ satisfies  $f(C\oplus C')=f(C)\oplus f(C')$.
 Let $\Bb$ denote a cycle basis of $G$ with congestion $\bn(G)$. 
 We set $\Bb'\eqdef \sg{\tC: C\in \Bb}$, and observe that the equality $f(C\oplus C')=f(C)\oplus f(C')$ implies that
 $\Bb'$ generates the cycle space of $G-A$.
 
 We claim that $\Bb'$ has congestion at most $(|A|+1)\cdot \bn(G)$. Indeed, note that for every edge $e'\in E(G-A)$, 
 if $\tC$ is a \Fcycle in $\Bb'$ containing~$e'$, then either $C$ also contains~$e'$, or $C$ contains some edge $e\in A$, and $e'$ is on the path $P_e$. 
 In particular, as $\Bb$ has edge-congestion $\bn(G)$, this implies that $e'$ is contained in at most $(|A|+1)\cdot \bn(G)$ \Fcycles of $\Bb'$, as desired. \qedhere
\end{enumerate}
\end{proof}
Note that \cite[Open Problem 3.7]{bazargani2024planar} asks whether~$(3)$ holds with only an additive increase to the basis number.
Unlike the previous three operations, deleting a vertex can increase the basis number arbitrarily.
\begin{remark}
\label{rem: G-v}
  For any~$k \in \Nn$, there exists a graph~$G$ with $\bn(G) \le 3$, and a vertex $v \in V(G)$ such that $\bn(G-v) \ge k$.
\end{remark}
\begin{proof}
  Take a cubic graph~$H$ with basis number at least~$k$ (it suffices to pick~$G$ with sufficiently large girth, see \cref{prop: expanders}).
  Define~$G$ by adding to~$H$ a vertex~$v$ connected to all of~$V(H)$.
  Then the set of all triangles of the form~$vab$ for $ab \in E(H)$ generates the cycle space of~$G$,
  and since~$H$ is cubic, each edge of the form~$av$ appears in at most~3 such triangles, hence this family has congestion~3.
\end{proof}

\Cref{lem:bn-union} can also be phrased in terms of separators, which proves to be more convenient in the context of tree-decompositions.
First, it directly implies the following for connected separators.
\begin{lemma}\label{lem:bn-connected-separator}
  Consider a graph~$G$ and a separation~$(X,Y)$ with $G[X \cap Y]$ connected.
  If~$\Bb_X,\Bb_Y$ are cycle bases of~$G[X]$ and~$G[Y]$ respectively, then~$\Bb_X \cup \Bb_Y$ generates the cycle space of~$G$.
  In particular,
  \[ \bn(G) \le \bn(G[X]) + \bn(G[Y]). \]
\end{lemma}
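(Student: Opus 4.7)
The plan is to apply \cref{lem:bn-union} directly with $G_X \eqdef G[X]$ and $G_Y \eqdef G[Y]$. The two hypotheses to verify are that $G = G_X \cup G_Y$ and that $G_X \cap G_Y$ is connected. The second is immediate since $G_X \cap G_Y = G[X \cap Y]$, which is connected by assumption. For the first, the vertex sets satisfy $V(G) = X \cup Y$ by definition of a separation; and since $(X,Y)$ is a separation there are no edges of $G$ between $X \setminus Y$ and $Y \setminus X$, so every edge of $G$ has both endpoints in $X$ or both endpoints in $Y$, that is, lies in $G[X] \cup G[Y]$.

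Given these two facts, \cref{lem:bn-union} applied to $G_X,G_Y$ yields that $\Bb_X \cup \Bb_Y$ generates the cycle space of $G$ whenever $\Bb_X, \Bb_Y$ are cycle bases of $G[X], G[Y]$. For the congestion bound, choose $\Bb_X$ and $\Bb_Y$ realising $\bn(G[X])$ and $\bn(G[Y])$ respectively. An edge $e \in E(G)$ lies either only in $G[X]$, only in $G[Y]$, or in $G[X \cap Y]$; in the last case it is used by at most $\bn(G[X])$ elements of $\Bb_X$ and at most $\bn(G[Y])$ elements of $\Bb_Y$, and in the first two cases only by cycles in one of the families. Hence $\Bb_X \cup \Bb_Y$ has congestion at most $\bn(G[X]) + \bn(G[Y])$, giving the claimed inequality. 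There is no real obstacle here: the lemma is essentially just a reformulation of \cref{lem:bn-union} in the language of separations, tailored for later use with tree-decompositions.
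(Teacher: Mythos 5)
Your proof is correct and takes exactly the same route as the paper, which observes that this lemma is the special case of \cref{lem:bn-union} obtained by setting $G_X = G[X]$ and $G_Y = G[Y]$. The verifications you supply (that no edges of $G$ cross between $X\setminus Y$ and $Y\setminus X$, so $G = G[X] \cup G[Y]$, and that $G[X] \cap G[Y] = G[X\cap Y]$) are precisely what is needed and are left implicit in the paper.
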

Indeed, \cref{lem:bn-connected-separator} is a special case of  \cref{lem:bn-union} when taking $G_X = G[X]$ and $G_Y = G[Y]$.
For non-connected separators, next lemma shows that one can still obtain a bound depending on the separator size.
\begin{lemma}[\cite{miraftab2026pathwidth}]\label{lem:bn-small-separator}
  For any graph~$G$ and separation $(X,Y)$ of order $|X \cap Y| = k$,
  \[ \bn(G) \le \bn(G[X]) + \bn(G[Y]) + O(\log^2 k).\]
\end{lemma}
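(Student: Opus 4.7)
The plan is to apply \cref{lem:bn-union} twice, inserting Steiner-tree ``bridges'' to compensate for the possibly disconnected separator $Z \eqdef X \cap Y$. The first move is to reduce to the case where $G$, $G[X]$, and $G[Y]$ are all connected: the reduction for $G$ is immediate, and disconnected $G[X]$ or $G[Y]$ can be handled by adding at most $k-1$ artificial edges on $Z$ at the start and paying an $O(\log^2 k)$ additive cost via \eqref{eq:bn-edge-add}.

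Once connectedness is assumed, let $Z_1,\dots,Z_c$ be the components of $G[Z]$ (with $c\le k$), pick one representative $z_i\in Z_i$, and let $T_X\subseteq G[X]$ and $T_Y\subseteq G[Y]$ be Steiner trees spanning $\{z_1,\dots,z_c\}$. I would then apply \cref{lem:bn-union} to $G_X^\star \eqdef G[X]\cup T_Y$ and $G_Y^\star \eqdef G[Y]\cup T_X$: since $T_X,T_Y\subseteq G$ we get $G_X^\star\cup G_Y^\star=G$, and the intersection contains $G[Z]\cup T_X\cup T_Y$, which is connected because $T_X$ and $T_Y$ both join all $z_i$'s and $G[Z]$ connects each $Z_i$ internally. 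This step gives $\bn(G)\le\bn(G_X^\star)+\bn(G_Y^\star)$.

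The next step is to bound $\bn(G_X^\star)=\bn(G[X]\cup T_Y)$ (the argument for $G_Y^\star$ being symmetric). I would fix a spanning tree $F_X$ of $G[X]$ and apply \cref{lem:bn-union} again with $G_1=G[X]$ and $G_2=F_X\cup T_Y$. Their intersection contains the spanning tree $F_X$, so is connected, and the lemma gives
\[ \bn(G[X]\cup T_Y)\le\bn(G[X])+\bn(F_X\cup T_Y). \]
What remains is to bound $\bn(F_X\cup T_Y)=O(\log^2 k)$. Since $F_X$ and $T_Y$ are both trees and share at most $|V(T_Y)\cap Z|\le k$ vertices, a direct computation with Euler's formula shows that $F_X\cup T_Y$ is connected with cyclomatic number at most $k-1$. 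Hence it embeds in an orientable surface of genus at most $k-1$, and \cref{thm:bn-surface} yields $\bn(F_X\cup T_Y)=O(\log^2 k)$. Combining everything produces the desired inequality.

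The hard part is precisely that last step: getting an additive (rather than multiplicative) $O(\log^2 k)$ loss essentially forces the use of the sophisticated Lehner--Miraftab genus bound (\cref{thm:bn-surface}), since a more naive path-replacement approach would only yield a multiplicative blow-up comparable to \eqref{eq:bn-edge-del}. The connectedness reduction is technically delicate because \cref{lem:bn-union} rigidly requires a connected intersection, but it is handled by the same edge-addition trick that underlies \eqref{eq:bn-edge-add}.
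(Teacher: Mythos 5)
Your core argument (steps 2--5, under the assumption that $G$, $G[X]$, $G[Y]$ are all connected) is correct and close in spirit to the paper's: both proofs insert tree-like ``glue'' so that \cref{lem:bn-union} applies, and both invoke \cref{thm:bn-surface} on a small-cyclomatic-number auxiliary graph to get the additive $O(\log^2 k)$. But your opening reduction is a genuine gap, and since the rest of your argument (Steiner trees $T_X,T_Y$ and a spanning tree $F_X$ \emph{inside} $G[X]$) really does require $G[X]$ and $G[Y]$ to be connected, the proof does not go through as written.

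The problem is the direction of the inequality in \eqref{eq:bn-edge-add}. If you add a set $A$ of $\le k-1$ edges on $Z$ to form $G' = G + A$, then \eqref{eq:bn-edge-add} controls $\bn(G')$ in terms of $\bn(G)$: it gives $\bn(G') \le \bn(G) + O(\log^2 k)$. But you need the reverse, $\bn(G) \le \bn(G') + O(\log^2 k)$, which is edge \emph{deletion} and falls under \eqref{eq:bn-edge-del}, where the cost is multiplicative (a factor of $|A|+1$), not additive. So even if you establish the lemma for $G'$ with its connected sides $G'[X],G'[Y]$, you cannot transfer the bound back to $G$ with only an additive loss. (Replacing each added edge by a real path of $G$ in every cycle of a basis of $G'$ runs into exactly the multiplicative blow-up you yourself flag at the end of your write-up.) Adding the artificial edges only to the side graphs $G[X]$, $G[Y]$ does not help either, because then $G_X^\star \cup G_Y^\star$ is a strict supergraph of $G$, and \cref{lem:bn-union} would produce a basis of that supergraph, not of $G$.

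The paper sidesteps the issue by never assuming $G[X]$ or $G[Y]$ connected: it takes a spanning tree $F$ of the whole graph $G$, sets $G_X \eqdef G[X] \cup F$ and $G_Y \eqdef G[Y] \cup F$, so the intersection contains $F$ and is automatically connected; then it deletes a carefully chosen set $A$ of $k-1$ tree edges so that each component of $F - A$ contains exactly one vertex of $Z$. Because every $F$-edge leaving $X$ must pass through $Z$, these components attach to $G[X]$ pendantly and create no new cycles, giving $\bn(G[X] \cup (F-A)) = \bn(G[X])$; adding back the $k-1$ edges of $A$ costs only $O(\log^2 k)$ by \eqref{eq:bn-edge-add}, and this time the edge addition goes in the useful direction. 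If you want to keep your Steiner-tree structure, you would need to replace $T_X$, $T_Y$, and $F_X$ by suitable \emph{forests} and then deal with the disconnected intersections (e.g.\ via \cref{lem:bn-almost-connected-separator}), which amounts to reproving the paper's version.
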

We thank Pat Morin for suggesting the use of \cref{thm:bn-surface}, improving a bound we had in an earlier version of \cref{lem:bn-small-separator} (see also Lemmas~8 and~9 in~\cite{miraftab2026pathwidth}).

\begin{proof}
  Without loss of generality, we may assume that $G$ is connected.
  Take~$F$ a spanning tree of~$G$, and define $G_X \eqdef G[X] \cup F$ and $G_Y \eqdef G[Y] \cup F$.
  We claim that $\bn(G_X) \le \bn(G[X]) + O(\log^2 k)$ and similarly for~$G_Y$, which implies the result by \cref{lem:bn-union}.

  Let~$A$ be a set of~$k-1$ edges such that the~$k$ vertices of~$X \cap Y$ are all in different components of~$F-A$.
  Then $G[X] \cup (F-A)$ consists of~$G[X]$ plus~$k$ disjoint trees branching out from each of the vertices of~$X \cap Y$.
  These trees do not create any additional cycle, hence $\bn(G[X] \cup (F-A)) = \bn(G[X])$.
  To obtain~$G_X$, it suffices to add back the missing edges of~$A$, which by \cref{lem:bn-edits} case~\eqref{eq:bn-edge-add} increases the basis number by at most~$O(\log^2 k)$,
  proving $\bn(G_X) \le  \bn(G[X]) + O(\log^2 k)$ as desired.
\end{proof}

Finally, we will need a variant of \cref{lem:bn-union} with several connected components.
\begin{lemma}\label{lem:bn-almost-connected-separator}
  Consider a graph~$G$ and two subgraphs~$G_X,G_Y$ such that $G = G_X \cup G_Y$,
  and such that for each connected component~$U$ of~$G_X$, the graph $G_X[U] \cap G_Y$ is connected.
  If~$\Bb_X,\Bb_Y$ are cycle bases of~$G_X$ and~$G_Y$ respectively, then~$\Bb_X \cup \Bb_Y$ generates the cycle space of~$G$.
  In particular,
  \[ \bn(G) \le \bn(G_X) + \bn(G_Y). \]
\end{lemma}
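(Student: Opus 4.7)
The plan is to mimic the proof of \cref{lem:bn-union} almost verbatim, adapting the path construction so that we stay inside a single connected component of~$G_X$ at a time. Concretely, I take an $\Ff_2$-cycle~$C$ of~$G$ and split it as $C = C_X \oplus C_Y$ with $C_X \subseteq G_X$ and $C_Y \subseteq G_Y$ (for instance by assigning each edge of~$C$ to~$G_X$ if possible, to~$G_Y$ otherwise). As before, the odd-degree vertices of~$C_X$ coincide with those of~$C_Y$ (since their symmetric difference~$C$ is an $\Ff_2$-cycle), so this common set~$U$ is contained in $V(G_X) \cap V(G_Y)$.

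The key new observation is that, since $C_X \subseteq G_X$, every connected component of~$C_X$ lies inside a single connected component of~$G_X$. For each component~$W$ of~$G_X$, the induced subgraph $C_X[W]$ is a finite graph, hence its number of odd-degree vertices~$U_W \eqdef U \cap W$ is even. These odd-degree vertices lie in $W \cap V(G_Y) = V(G_X[W] \cap G_Y)$, which by hypothesis is connected. I can therefore pair up the vertices of~$U_W$ arbitrarily and connect each pair by a path inside~$G_X[W] \cap G_Y$, and let~$H_W$ be the symmetric difference of these paths; then~$H_W$ is a subgraph of~$G_X \cap G_Y$ whose odd-degree vertex set is exactly~$U_W$.

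Finally, setting $H \eqdef \bigoplus_W H_W$ (summed over all components~$W$ of~$G_X$), the fact that the~$H_W$ live on pairwise vertex-disjoint components ensures that the odd-degree vertices of~$H$ are exactly~$U$. Then $C'_X \eqdef C_X \oplus H$ and $C'_Y \eqdef C_Y \oplus H$ are $\Ff_2$-cycles in~$G_X$ and~$G_Y$ respectively, and $C = C'_X \oplus C'_Y$. Writing each of~$C'_X$ and~$C'_Y$ as a sum of elements of~$\Bb_X$ and~$\Bb_Y$ shows that $\Bb_X \cup \Bb_Y$ generates the cycle space of~$G$, and the congestion bound $\bn(G) \le \bn(G_X) + \bn(G_Y)$ follows.

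The only delicate point is the one highlighted above, namely keeping the correction~$H_W$ inside a single component~$W$ of~$G_X$: if one naively tried to pair up odd-degree vertices across different components of~$G_X$, the connecting paths would have no reason to exist in $G_X \cap G_Y$. The component-wise even-parity argument is what allows the pairing to be done componentwise, and is the whole reason the hypothesis on each $G_X[U] \cap G_Y$ (rather than on $G_X \cap G_Y$ globally) is sufficient.
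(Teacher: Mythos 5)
Your proof is correct, and it works via a genuinely different (though closely related) route than the paper's. The paper decomposes $\Bb_X$ into cycle bases $\Bb_X^i$ of the individual components $G_X^i$ of $G_X$, observes that $G_X^i \cap \big(G_Y \cup \bigcup_{j<i} G_X^j\big) = G_X^i \cap G_Y$ is connected, and then runs a short induction over $i$ that applies \cref{lem:bn-union} as a black box at each step. You instead open up the proof of \cref{lem:bn-union} and re-run its correction argument componentwise: split the odd-degree set $U$ of $C_X$ into the slices $U_W = U \cap W$, use the parity of $C_X[W]$ to see that each $|U_W|$ is even, build the fixing subgraph $H_W$ inside the connected graph $G_X[W]\cap G_Y$, and sum. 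Both approaches turn on the same structural fact (that the separator can be treated one $G_X$-component at a time, and each slice is connected), but yours is a direct adaptation that is self-contained once the idea of \cref{lem:bn-union}'s proof is in hand, while the paper's is a one-line induction that defers all the work to \cref{lem:bn-union}. Either is a perfectly good proof; the paper's is slightly terser given the lemma, yours makes more explicit why the per-component connectedness hypothesis (rather than connectedness of $G_X \cap G_Y$ globally) is exactly what is needed.
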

\begin{proof}
  Assume without loss of generality that all the graphs of
  $\Bb_X$ are connected.
  Consider $G_X^1,\dots,G_X^m$ the connected components of~$G_X$. For each $i\in [m]$, let $\Bb_X^i$ denote the set of cycles of $\Bb_X$ included in the component $G_X^i$.
  Clearly the basis~$\Bb_X$ is the disjoint union of cycle bases~$\Bb_X^i$ of~$G_X^i$ for each $i \in [m]$.
  Since $G_X^1,\dots,G_X^m$ are disjoint subgraphs, we have for any~$i \in [m]$
  \[ G_X^i \cap \Big(G_Y \cup \bigcup_{j<i} G_X^j\Big) = G_X^i \cap G_Y, \]
  and the right hand side is connected by assumption.
  An immediate induction using \cref{lem:bn-union} then gives that $\Bb_Y \cup \left(\bigcup_{j \le i} \Bb_X^i\right)$ generates the cycle space of $G_Y \cup \left(\bigcup_{j<i} G_X^j\right)$, proving the result when $i=m$.
\end{proof}

\subsection{Tree-decompositions}
\label{sec: prelis-td}
For tree-decompositions, we will use some of the terminology from \cite{mimi}.
A \emph{tree-decomposition} of a graph~$G$ is a pair~$(T,\beta)$, where $T$ is a rooted tree and
$\beta: V(T)\to 2^{V(G)}$ is a mapping that associates
for each \emph{node}~$t \in V(T)$, some subset~$\bag(t)$ of $V(G)$ called the \emph{bag} of~$t$, so that the following properties hold:
\begin{enumerate}[label=($\mathcal T$\arabic*)]
  \item\label{item: td1} $V(G)=\bigcup_{t\in V(T)}\beta(t)$,
  \item\label{item: td2} for each edge~$e=uv$ in~$G$, $\sg{u,v}$ is contained in some bag of~$T$, and
  \item\label{item: td3} for each vertex~$v$ of~$G$, the set of nodes of~$T$ whose bags contain~$v$ induces a connected subtree of~$T$.
\end{enumerate}

\begin{remark}\label{rmk:td-connected-subgraph}
  The three conditions imply that
  for any non-empty connected subgraph $H \subseteq G$, the subtree $T_X \eqdef \{t \in V(T) : \bag(t) \cap V(H) \neq \emptyset\}$ of $T$ induced by the nodes of $T$ whose bags contain some vertex of~$H$ induces a non-empty connected subtree of~$T$.
\end{remark}

The \emph{width} of a tree-decomposition is the maximum size of its bags, minus~1, and
the \emph{treewidth} of~$G$, denoted~$\tw(G)$ is the minimum width of a tree-\break decomposition of~$G$.

For a node~$t \in V(T)$, the \emph{adhesion} of~$t$ is the intersection of the bag of~$t$ with the bag of its parent (recall that $T$ is rooted).
The adhesion of the root of~$T$ is defined as being the empty set.
We say that the tree-decomposition \emph{has adhesion at most~$k$} if the adhesion of each node has size at most~$k$.

The \emph{margin} of a node~$t$ consists of its bag minus its adhesion.
The \emph{component} of~$t$ is the union of the margins of all descendants of~$t$ in~$T$ (including~$t$ itself). Equivalently, the component of~$t$ is the union of the bags of all descendants of~$t$, minus the adhesion of~$t$.
The \emph{part} of $t$ is the induced subgraph $G[\beta(t)]$, and the \emph{torso} of~$t$ is the graph obtained from the part of~$t$ after cliquifying each adhesion,
that is, adding edges~$uv$ for each pair~$u,v$ contained in the adhesion of either~$t$ or one of its children.
The \emph{marginal graph} of~$t$ is the torso of~$t$ with the adhesion vertices of~$t$ removed.
In other words, it is the subgraph induced by the margin of~$t$, with the adhesions of children of~$t$ cliquified.

When there is ambiguity regarding the tree-decomposition~$(T,\beta)$ considered,
we will also talk about $(T,\beta)$-bag, $(T,\beta)$-adhesion, etc., or simply of $T$-bag, $T$-adhesion, etc., when the context is clear.

\begin{remark}\label{rmk:torso-adh}
  Consider~$(T,\beta)$ a tree-decomposition, $t$ a node of~$T$, and~$u,v \in \bag(t)$.
  If the pair~$\{u,v\}$ is contained in \emph{any} adhesion of~$(T,\beta)$, then~$uv$ is an edge in the torso of~$t$.
  Indeed, in this case, $\{u,v\}$ is both in~$\bag(t)$ and some different bag, which implies it is in the adhesion of~$t$ or one of its children.
\end{remark}

A \emph{path-decomposition} is a special case of tree-decomposition where the tree~$T$ is a path.
If $(P, \beta)$ is a path-decomposition, with $P=v_1,\ldots,v_p$, then we respectively call $\beta(v_1)$ and $\beta(v_p)$ the \emph{leftmost} and \emph{rightmost} bags of $(P,\beta)$.
The pathwidth of~$G$, denoted~$\pw(G)$, is the minimum width of a path-decomposition.

A tree-decomposition~$(T,\beta)$ is called \emph{sane} if the following holds at every node~$t \in V(T)$:
\begin{enumerate}
  \item \label{sane:margin} the margin of~$t$ is non-empty,
  \item \label{sane:component} the component of~$t$ is connected, and
  \item \label{sane:adhesion} each vertex in the adhesion of~$t$ has a neighbour in the component of~$t$.
\end{enumerate}
\begin{lemma}[{\cite[Lemma~2.8]{mimi}}]
  \label{lem:sane}
  For any tree-decomposition~$(T,\beta)$ of a connected graph~$G$, there is a sane tree-decomposition~$(T',\beta')$ of~$G$
  such that each bag of~$(T',\beta')$ is contained in some bag of~$(T,\beta)$.
\end{lemma}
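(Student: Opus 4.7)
The plan is to transform the given tree-decomposition $(T,\beta)$ into a sane one by iteratively applying local modifications, each of which preserves validity of the tree-decomposition and the subset property (all bags stay contained in bags of $(T,\beta)$). Three reduction rules suffice, roughly corresponding to failures of conditions~\eqref{sane:margin}, \eqref{sane:adhesion}, and~\eqref{sane:component} in the definition of sanity. Throughout, let $A_t \eqdef \beta(t) \cap \beta(\mathrm{parent}(t))$ and $C_t$ denote the adhesion and component of $t$.

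First, if a node $t$ has empty margin, i.e.\ $\beta(t) \subseteq \beta(\mathrm{parent}(t))$ (or $\beta(t)=\emptyset$ when $t$ is the root), I delete $t$ and reattach its children to the parent of $t$: no edge of $G$ is lost, and every vertex-subtree remains connected. Second, if some $v \in A_t$ has no neighbour in $C_t$, I remove $v$ from every bag in the subtree rooted at $t$. Any $G$-edge incident to $v$ whose other endpoint lies in this subtree would have that endpoint either in $A_t$ (already covered by the parent's bag) or in $C_t$ (excluded by hypothesis), so no edge is lost; moreover the vertex-subtree of $v$ restricted to the subtree of $t$ is itself a connected sub-subtree of $T$, so removing it from the $v$-subtree preserves~\ref{item: td3}. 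Third, if $G[C_t]$ has connected components $X_1,\dots,X_m$ with $m\geq 2$, I split the subtree of $t$ into $m$ parallel copies $T^1,\dots,T^m$ all attached to $\mathrm{parent}(t)$: every node $s$ in the original subtree of $t$ is duplicated into $s^i$ in $T^i$, with bag $\beta'(s^i) \eqdef \beta(s)\cap(A_t \cup X_i)$. Since $X_i$ and $X_j$ share no edge when $i\neq j$ and every vertex of $\beta(s)\setminus A_t$ lies in some $X_i$, each $G$-edge covered inside the original subtree remains covered in exactly one copy, and all new bags are subsets of old bags.

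If none of these three reductions applies anywhere, all three sanity conditions hold and we are done. The main obstacle is termination, since the splitting rule can increase the number of nodes. I would handle this by a top-down recursion: at the current node $t$, iterate pruning and splitting until $G[C_t]$ is connected and every vertex of $A_t$ has a neighbour in $C_t$, and only then recurse into the (possibly new) children. Each application of the splitting rule replaces $C_t$ by strictly smaller components $X_i$, and each pruning strictly decreases $|A_t|$, so this procedure terminates after finitely many steps. A final bottom-up sweep of the empty-margin rule removes any leftover nodes with empty margin, yielding the required sane tree-decomposition $(T',\beta')$.
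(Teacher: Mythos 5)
The paper states this lemma only as a citation of \cite[Lemma~2.8]{mimi} and does not reproduce the proof, so there is no internal proof to compare against. Your iterative-cleanup argument (deleting empty-margin nodes, pruning adhesion vertices with no neighbour in the component, and splitting the subtree when the component is disconnected) is the natural route for such a statement, and as far as I can verify it is correct. A few spots could be tightened. For Rule~2, the reason condition~\ref{item: td3} survives is that the part of the $v$-subtree lying inside the subtree of $t$ is a \emph{downward-closed} subtree containing $t$ (as $v\in\beta(t)$), so deleting it leaves the ancestor side of the $v$-subtree, which is connected; just saying it is ``a connected sub-subtree'' is not by itself enough, since removing an arbitrary connected subtree from a tree may disconnect it. For Rule~1 at the root, an empty root bag forces, by connectivity of $G$ and condition~\ref{item: td3}, that at most one child carries a non-empty subtree, so one can re-root there; this case deserves a word. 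For termination, the lexicographic measure $(|C_t|,|A_t|)$ at the node currently being stabilised, combined with the fact that the tree depth never increases, makes your argument airtight — the phrasing ``iterate pruning and splitting at $t$'' should be read as recursing into the copies after a split rather than continuing at the (no longer existing) node $t$. Finally, it is worth observing — since \cref{rem: sane-adhesion} relies on it — that all three of your rules keep every adhesion of the new decomposition contained in an adhesion of the old one: Rule~1 leaves children's adhesions unchanged (by \ref{item: td3}), Rule~2 only removes vertices from adhesions, and the bags $\beta(s)\cap(A_t\cup X_i)$ in Rule~3 yield $A_{s^i}\subseteq A_s$; so your construction already delivers that strengthened form.
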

In particular, \cref{lem:sane} shows that one can always find a sane tree-\break decompositions of minimal width.

\begin{remark}
 \label{rem: sane-adhesion}
 In fact, the proof from \cite[Lemma~2.8]{mimi} can be reproduced exactly as it is  to show that we can moreover assume that all the adhesion sets of the obtained sane tree-decomposition $(T',\beta')$ are subsets of adhesion sets of the initial tree-decomposition $(T,\beta)$.
 For technical reasons, this additional property will turn out to be useful later.
 In particular, note that this implies that adhesions in~$(T', \beta')$ are no larger than the ones of~$(T,\beta)$,
 and by \cref{rmk:torso-adh} that torsos of~$(T',\beta')$ are subgraphs of torsos of $(T,\beta)$.
\end{remark}

\section{Adhesions captured by a path system}
\label{sec: td1}
This section proves Theorem \ref{thm:bn-tree-decomp-paths}, which gives some sufficient conditions on a tree-decomposition~$(T,\beta)$ ensuring that
a bound on the basis number of each torso of~$(T,\beta)$ yields a bound on the basis number of the whole graph.
Note that the conditions of Theorem \ref{thm:bn-tree-decomp-paths} cannot always be satisfied: for example, no tree-decomposition of bounded width of a long cycle satisfies these properties.
In \cref{sec: pw,sec:tw,sec: td2}, we will adapt the machinery of~\cite{mimi} to reduce \cref{thm: main-td} to cases where \cref{thm:bn-tree-decomp-paths} can be applied.

Let~$(T,\beta)$ be a tree-decomposition of a graph~$G$.
We say that a multi-set of paths~$\Pc$ of~$G$ \emph{captures the adhesions of~$(T,\beta)$} if the following holds:
$\Pc$ consists of paths~$P_{t,u,v}$, where~$t$ ranges over all nodes of~$(T,\beta)$, $u,v$ ranges over all pairs of vertices in the adhesion of~$t$, and~$P_{t,u,v}$ is a $uv$-path in~$G$.
The index~$t$ in~$P_{t,u,v}$ is only here to indicate that~$\Pc$ must contain as many $uv$-paths as there are nodes~$t$ whose adhesion contains~$u,v$; there is no requirement relating~$t$ and~$P_{t,u,v}$, the latter can be any $uv$-path of $G$.
It is also possible for~$P_{t,u,v}$ and~$P_{t',u,v}$ to be the same path for~$t \neq t'$. 

As long as~$G$ is connected, finding a family of paths that captures the adhesions of~$(T,\beta)$ is trivial.
We will be interested in finding such families with bounded congestion.
Here, the congestion of the family~$\Pc$ is
\[ \max_{e \in E(G)} |\{(t,u,v) : e\in P_{t,u,v} \}|, \]
meaning that we count paths with multiplicity:
when two paths~$P_{t,u,v},P_{t',u,v}$ in~$\Pc$ are equal, they both contribute to the congestion of~$\Pc$.
\begin{theorem}\label{thm:bn-tree-decomp-paths}
  Let~$G$ be a graph and~$(T,\beta)$ a tree-decomposition of~$G$ such that
  \begin{enumerate}
    \item for each~$t \in V(T)$, the torso of~$t$ has basis number at most~$b$, and
    \item \label{item:path-system} there is a family of paths~$\Pc$ of $G$ capturing the adhesions of~$T$ with edge congestion at most~$c$.
  \end{enumerate}
  Then $\bn(G) \le (2c+1)(b+1)$.
\end{theorem}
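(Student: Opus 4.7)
The plan is to construct a cycle-generating family $\Bb$ for $G$ by lifting, via the path system $\Pc$, low-congestion cycle bases of each torso, and to bound the congestion of $\Bb$ by $(2c+1)(b+1)$. For each node $t \in V(T)$, fix a cycle basis $\Bb_t$ of the torso $H_t$ with edge-congestion at most $b$. For each edge $uv$ of $H_t$ whose endpoints lie in some adhesion incident with $t$ (i.e.\ in the adhesion of $t$ or of a child of $t$), designate a $uv$-path $P^*_t(uv) \in \Pc$: take $P_{t,u,v}$ if $\{u,v\}$ is contained in the adhesion of $t$, and otherwise $P_{s,u,v}$ for a child $s$ of $t$ whose adhesion contains $\{u,v\}$; each $P \in \Pc$ then serves as $P^*_t$ in at most two torsos. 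For each $C \in \Bb_t$, define the lifted \Fcycle $\tilde C$ of $G$ by symmetric-differencing $C$ with $\{e\} \oplus P^*_t(e)$ for each adhesion-edge $e \in C$, where ``adhesion-edge'' means an edge of $H_t$ with both endpoints in some adhesion at $t$ --- so real $G$-edges that happen to sit in an adhesion clique are also treated as virtual. Let $\Bb$ consist of these lifted cycles, together with one auxiliary ``adhesion cycle'' $\{uv\} \oplus P^*(u,v)$ for each pair $\{u,v\}$ lying in some adhesion with $uv \in E(G)$.

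For the congestion bound, fix $e = xy \in E(G)$ and expand the XOR defining $\tilde C$. The key cancellation is that when $e \in C$ and $e$ is itself an adhesion-edge of $H_t$, the terms $[e \in C]$ and $[e \in \{e\}]$ (coming from the substitution of $e$) cancel mod~$2$, so only the substituted-path contributions survive. Hence the non-cancelling ``direct'' contribution of $e$ arises only at nodes $t$ where $e$ is not an adhesion-edge, which, since $T_x \cap T_y$ is a connected subtree of $T$, forces $T_x \cap T_y = \{t\}$ and bounds the direct count by $b$. Each $P \in \Pc$ is used as $P^*_t$ in at most $2$ torsos, each such substitution affects at most $b$ basis cycles, and at most $c$ paths of $\Pc$ contain $e$, so the path contribution is at most $2bc$. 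The auxiliary adhesion cycles contribute at most $1 + c$ further (the one indexed by $(x,y)$ if applicable, plus the other pairs whose paths $P^*(u,v)$ happen to pass through $e$). The total is $b + 2bc + c + 1 \le (2c+1)(b+1)$.

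For generation, I decompose any \Fcycle $D$ of $G$ across the bags: for each $t$, let $D_t \eqdef D \cap G[\beta(t)]$. Its odd-degree vertices pair up within common adhesions at $t$, because each connected component of $D - G[\beta(t)]$ lies in one subtree of $T - t$ and hence has both endpoints in a common adhesion of $t$. Adding virtual edges for these pairs yields an \Fcycle $\hat D_t$ of $H_t$, expressible over $\Bb_t$ as $\bigoplus_{C \in S_t} C$. Lifting each sum and accumulating across all $t$ recovers $D$: the substituted paths telescope between adjacent torsos (the same path $P_{s,\cdot}$ is designated at both $s$ and its parent), and the auxiliary adhesion cycles restore any real edge of $D$ that was cancelled by coinciding with an added virtual edge.

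The main obstacle will be the generation step. The substitution of \emph{every} adhesion-edge is what drives the key cancellation in the congestion bound, but it causes some lifted cycles to become trivial in degenerate cases (when the designated path $P^*(u,v)$ traces exactly the remainder of a basis cycle through $uv$), leaving the lifted family alone unable to generate the full cycle space. The auxiliary adhesion cycles plug this gap, and the main consistency check is that their contribution is precisely the $O(c)$ slack available in the bound: $(2c+1)(b+1) - (b + 2bc + c + 1) = c$.
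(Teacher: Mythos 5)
Your construction of the lifted cycles and your congestion accounting closely parallel the paper's type-$2$ cycles and its core/substitute split, and the direct-contribution argument via $T_x \cap T_y$ is sound. The gap is in the generation argument, at the claim that ``the same path $P_{s,\cdot}$ is designated at both $s$ and its parent''. This fails in general: at a node $s$ with $\{u,v\}$ in its adhesion you designate $P_{s,u,v}$, but at its parent $t$ you designate $P_{t,u,v}$ if $\{u,v\}$ lies in the adhesion of $t$, or $P_{s',u,v}$ for a possibly different child $s'$ — and the torso $H_t$ has only one slot for the virtual edge $uv$ even when several distinct paths $P_{\cdot,u,v}$ want to represent it, so no globally consistent choice exists. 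A concrete failure: let $G$ be the union of three internally disjoint paths $uav$, $ubv$, $ucv$ (so $uv \notin E(G)$), with rooted path-decomposition with bags $\{a,u,v\}$, $\{u,v,b\}$, $\{u,v,c\}$ and paths $P_{t_2,u,v}=ubv$, $P_{t_3,u,v}=ucv$. Every torso is a triangle through the virtual edge $uv$, with $b=2$ and $c=1$. Your three lifted cycles are $\tilde C_1 = \{au,av,ub,vb\}$, $\tilde C_2 = \emptyset$, $\tilde C_3 = \emptyset$, and there are no auxiliary cycles since $uv\notin E(G)$. Your family spans a one-dimensional subspace of the two-dimensional cycle space; the $\theta$-cycle $\{ub,vb,uc,vc\}$ is never produced.

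What is missing are the ``switching'' cycles $P_{t_i,u,v} \oplus P_{t_{i+1},u,v}$ linking consecutive designations for a fixed pair $\{u,v\}$, which the paper includes among its type-$1$ cycles alongside $\{uv\}\oplus P_{t_1,u,v}$. These reconcile the torsos' differing path choices, and the generation argument then proceeds cleanly via the linear projection from the cycle space of the glued torso graph $G^+$ onto the cycle space of $G$, rather than by trying to decompose $D$ bag by bag. The paper arranges the type-$1$ family so that each path appears in at most two of these cycles and each real adhesion edge in at most one, giving type-$1$ congestion at most $2c+1$ — exactly the room left after the $(2c+1)b$ type-$2$ budget. Your accounting reserves only $c+1$ for auxiliary cycles; the switching cycles only fit within the bound under the paper's tighter packing, in which the auxiliary cycle is the first link of the switching chain rather than a separate family.
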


\begin{proof}
  For each node~$t$ of~$T$, consider a cycle basis~$\Bb_t$ for the torso of~$t$ with congestion at most~$b$.
  We construct a generating set~$\Bb$ of the cycle space of~$G$ as follows. We recall that the tree $T$ is rooted.
  \begin{enumerate}
    \item \label{item:path-switching} For each pair of vertices~$u,v$, consider all the nodes (when they exist) $t_1,\dots,t_k\in V(T)$ whose adhesion contains~$\{u,v\}$, ordered arbitrarily.
      Then we add to~$\Bb$ the \Fcycles $P_{t_i,u,v} \oplus P_{t_{i+1},u,v}$ for each~$i<k$.
      If~$uv$ is an edge in~$G$, then we also add the \Fcycle $\{uv\} \oplus P_{t_1,u,v}$.
    \item \label{item:fixed-cycles} For each node~$t \in V(T)$ and each \Fcycle $C \in \Bb_t$,
      we add to~$\Bb$ one \Fcycle $\tC$ of $G$ obtained from~$C$ as follows.
      First, for each $t\in V(T)$ and each edge $e=uv$ in the adhesion of $t$, we set $C_{e,t}\eqdef e\oplus P_{t,u,v}$. Note that~$e$ is an edge of the torso of $t$, but not necessarily of $G$, hence $C_{e,t}$ is not necessarily an \Fcycle in $G$.
      Now, if~$e$ is an edge of~$C$ which is contained in the adhesion of some node of $T$, then~$e$ is in the adhesion of either~$t$ or of some child~$t'$ of~$t$.
      Choose~$t_e$ to be one such node, choosing arbitrarily when several options are available.
      Finally, $\tC$ is defined as the sum of~$C$ together with all $C_{e,t_e}$, for each edge~$e$ of~$C$ contained in some adhesion of~$(T,\beta)$.
      We insist that this sum ranges over all edges~$e$ contained in some adhesion, not just the ones that are present in the torso but missing in~$G$.
      Each edge $e\in E(C)\setminus E(G)$ appears only in~$C$ and~$C_{e,t_e}$ in this sum, hence~$\tC$ is an \Fcycle of~$G$.
  \end{enumerate}
  We say that the \Fcycles of~$\Bb$ added in these two steps are respectively of \emph{type}~1 and~2, and for each $i\in \sg{1,2}$, we let $\Bb^{i}$ denote the set of \Fcycles of type $i$ in~$\Bb$, so that $\Bb=\Bb^1\cup \Bb^2$.

  \paragraph{Congestion}
  Let us first bound the congestion of~$\Bb$, considering the two types of \Fcycles separately. 
  Observe that for every fixed triple $(t,u,v)$ such that $t\in V(T)$ and $u,v$ belong to the adhesion of~$t$, the path $P_{t,u,v}$ is only used in the definition of at most two \Fcycles of type~1. Moreover, 
  each edge $e=uv\in E(G)$ included in some adhesion of $(T,\beta)$, is used in the definition of exactly one of the \Fcycles of type~1 of the form $e\oplus P_{t,u,v}$ (namely when $t=t_1$). As $\Pc$ has congestion at most $c$, 
  this implies that the congestion of $\Bb^1$ is at most~$2c+1$.

  Now, consider an edge~$e$ of $G$, and assume that it appears in an \Fcycle $\tC$ of type~2 constructed from some \Fcycle $C \in \Bb_t$, for some $t\in V(T)$.
  This can occur for two different reasons:
  \begin{enumerate}[label = (\roman*)]
    \item\label{item:congestion-core} 
      Either~$e$ is an edge of~$C$ that appears in none of the \Fcycles $C_{e', t_{e'}}$ for any edge $e'$ of $C$ belonging to an adhesion of $(T,\beta)$.
      Then we say that~$e$ is a \emph{core edge of~$\tC$}.
      In particular, it implies that~$e$ does not belong to any adhesion of~$(T,\beta)$, and that~$\beta(t)$ is the only bag of $(T,\beta)$ that contains~$e$.

    Now if a different cycle~$\tC' \in \Bb$ coming from $C' \in \Bb_{t'}$ also uses~$e$ as a core edge,
    then the above implies $t = t'$, and the cycles $C,C'$ in~$\Bb_t$ both use~$e$.
    As $\Bb_t$ has edge-congestion at most~$b$, it follows that at most~$b$ \Fcycles $\tC$ of type~2 contain~$e$ as a core edge.

  \item\label{item:congestion-substitute} Otherwise~$e$ is used in~$\tC$ as part of some \Fcycle $C_{e', t_{e'}}$, and we say that~$e$ is a \emph{substitute edge in~$\tC$}.
      In this case, $e$ must also appear in some path~$P_{t',u,v}$ which is involved in the definition of $\tC$.
    Since the family~$\Pc$ has congestion~$c$ it suffices to prove that each path~$P_{t',u,v}$ is used in the definition of at most~$2b$ \Fcycles of type~2 to obtain that~$e$ is used at most~$2cb$ times as substitute edge.
  \end{enumerate}
  Thus we only need to show that the number of times that a path~$P_{t',u,v}$ is used in the definition of some \Fcycle $\tC$ of type~2 is at most~$2b$.
  If $\tC \in \Bb$ was obtained from~$C \in \Bb_t$, then the construction of~$\tC$ guarantees
  that~$\tC$ only uses paths~$P_{t',u,v}$ where~$t'$ is either~$t$ itself or one of its children.
  Thus conversely, a given path~$P_{t',u,v}$ is only used for \Fcycles of type~2 coming from~$\Bb_{t'}$ or~$\Bb_t$, where~$t$ is the parent of~$t'$ in~$T$.
  That is, $P_{t',u,v}$ is only used to replace an occurrence of~$uv$ in some \Fcycle of either~$\Bb_t$ or~$\Bb_{t'}$.
  Thus~$P_{t',u,v}$ is used in the definition of at most~$2b$ \Fcycles of type~2.

  To summarize, each edge $e\in E(G)$ is contained in at most~$b$ cycles of~$\Bb^2$ as a core edge, and at most~$2bc$ cycles as a substitute edge,
  proving that~$\Bb^2$ has congestion at most $2bc+b$.
  Recall also that~$\Bb^1$ has congestion at most~$2c+1$.
  Combining the two, we find that $\Bb = \Bb^1 \cup \Bb^2$ has congestion at most $2bc+2c+b+1 = (2c+1)(b+1)$.

  \paragraph{Generating cycles}
  Let us now prove that~$\Bb$ indeed generates the cycle space of~$G$. The arguments we present are standard, and similar to the ones used in~\cite{HamannPlanar} and \cite[Section 5.3]{EGL23}.

  As an intermediate step, consider the graph~$G^+$ obtained by glueing all the torsos of~$T$,
  i.e.~$G^+$ has the same vertices as~$G$, and $xy \in E(G^+)$ whenever~$xy$ is an edge in the torso of some~$t \in T$.
  Further, define $\Bb^+ \eqdef \bigcup_{t \in T} \Bb_t$, which is a family of cycles in~$G^+$.

  \begin{claim}
  \label{clm: gen-torso}
    The family~$\Bb^+$ generates the cycle space of~$G^+$.
  \end{claim}
  \begin{claimproof}
    The graph~$G^+$ is obtained from the collection of torsos of~$T$ by iteratively glueing them along their adhesions, which are cliques.
    \cref{lem:bn-connected-separator} shows that when glueing two graphs~$G_1,G_2$ into~$G$ along a clique,
    one may generate the cycle space of~$G$ by taking the union of cycle bases of~$G_1$ and~$G_2$.
    Repetitively applying this lemma to the construction of~$G^+$,
    we then obtain that the cycle space of~$G^+$ is generated by the union of cycle bases of all torsos, that is~$\Bb^+$.
  \end{claimproof}

  Consider the linear projection~$f : \Ff_2^{E(G^+)} \to \Ff_2^{E(G)}$ which maps any edge of~$E(G)$ to itself,
  and maps an edge $uv \in E(G^+) \setminus E(G)$ to~$P_{t,u,v}$ for some arbitrary choice of a node~$t\in V(T)$ whose adhesion contains~$uv$.
  Clearly, $f$ maps \Fcycles to \Fcycles, and its restriction to~$\Ff_2^{E(G)}$ is the identity map,
  hence~$f$ maps the cycle space of~$G^+$ to the cycle space of~$G$.
  Since~$\Bb^+$ generates the cycle space of~$G^+$, it follows that~$f(\Bb^+)$ generates the cycle space of~$G$.
  Thus, to conclude, it suffices to show that~$\Bb$ generates~$f(\Bb^+)$.

  \begin{claim}\label{clm: Bgenerates}
    For any~$t \in T$ and \Fcycle $C \in \Bb_t$, the family~$\Bb$ generates~$f(C)$.
  \end{claim}
  \begin{claimproof}
    First, note that for all vertices $u,v\in V(G)$ and all nodes $t,t'\in V(T)$ such that $u,v$ belong to the adhesions of $t$ and $t'$, the \Fcycles of type $1$ in $\Bb$ generate the cycle $P_{t,u,v}\oplus P_{t',u,v}$, and also the \Fcycle $\sg{uv}\oplus P_{t,u,v}$ when $uv\in E(G)$.

    Consider now~$C \in \Bb_t$, and the corresponding \Fcycle $\tC \in \Bb$ of type~2.
    If~$uv$ is an edge of~$C$ contained in some adhesion, then in~$\tC$, $uv$ is replaced by some path~$P_{t',u,v}$.
    In~$f(C)$ on the other hand, $uv$ is either kept as is (if it is an edge of~$G$), or replaced by a possibly different path~$P_{t'',u,v}$.
    Thus, with regards to the handling of~$uv$, $\tC$ and~$f(C)$ differ by either $P_{t',u,v} \oplus P_{t'',u,v}$ or $\sg{uv} \oplus P_{t',u,v}$,
    which are generated by \Fcycles of type~1.
    Applying this reasoning to all such edges~$uv$ of $C$, we obtain that~$\tC$ can be transformed into~$f(C)$ by adding to it \Fcycles of type~1, proving the claim.
  \end{claimproof}
  Claim \ref{clm: Bgenerates} thus implies that~$\Bb$ generates~$f(\Bb^+)$, which by Claim \ref{clm: gen-torso} generates the cycle space of~$G$. This implies that
  $\Bb$ indeed generates the cycle space of $G$, and concludes the proof of \cref{thm:bn-tree-decomp-paths}.
\end{proof}

\section{Path-decompositions}
\label{sec: pw}
The main result of this section bounds the basis number of any graph with a path-decompositions which has both bounded adhesions, and parts of bounded basis number.
\begin{theorem}\label{thm:bn-path-decomp}
  Let $b,k\in\mathbb N$,~$G$ be a graph and $(P,\beta)$ be a path-decomposition of~$G$ with adhesion at most~$k$.
  Suppose that for any bag~$\beta(t)$ of~$(P,\beta)$, and any subset~$A \subseteq \beta(t)$, the subgraph~$G[A]$ has basis number at most~$b$, with $b \ge 1$.
  Then $\bn(G) \le b \cdot 2^{2^{O(k^2)}}$.
\end{theorem}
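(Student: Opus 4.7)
The plan is to combine Simon's Factorisation Forests Theorem with \cref{thm:bn-tree-decomp-paths}. Concretely, I would build --- possibly after removing a set $X$ of vertices whose size is bounded by a function of $k$ --- a family of paths capturing the adhesions of the resulting path-decomposition with congestion $2^{2^{O(k^2)}}$. Granting this, \cref{thm:bn-tree-decomp-paths} applied to $G - X$ yields $\bn(G - X) \le b \cdot 2^{2^{O(k^2)}}$, where the torso basis number is bounded by $b + O(\log^2 k)$ via \cref{lem:bn-edits}~\eqref{eq:bn-edge-add} (since cliquifying each of the at most two adhesions of a bag adds $O(k^2)$ edges). Re-adding the vertices of $X$ one by one via \cref{lem:bn-edits}~\eqref{eq:bn-vertex-add} costs an additive $2|X|$ that is absorbed into the final bound.

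To build the path system, I would introduce a finite monoid $M$ whose elements are ``interface types'' of segments of $(P,\beta)$: a segment with boundary adhesions $S_L, S_R$ of size at most $k$ is encoded by a bounded invariant on $S_L \cup S_R$ recording its internal connectivity pattern (for instance, a graph on at most $2k$ vertices describing which pairs are joined by a path through the segment). The product in $M$ corresponds to glueing two consecutive segments along their common adhesion, and one has $|M| \le 2^{O(k^2)}$. Applying Simon's Factorisation Forests Theorem to the word $w \in M^{+}$ associated with the bag sequence of $(P,\beta)$ yields a factorization forest $F$ of depth at most $3|M|$. Each internal node of $F$ is either \emph{binary}, concatenating two children, or \emph{idempotent}, concatenating many children all evaluating to a common idempotent $e \in M$. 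I would then construct the path system recursively over $F$: at a binary node, paths are obtained by composing those of the two children across the shared adhesion; at an idempotent node, one exploits the idempotency of $e$ together with a small set of ``pivot'' vertices extracted per idempotent level to redirect paths through a single representative sub-block.

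The main obstacle --- and the source of the double-exponential dependence on $k$ --- lies in the idempotent case. A naive implementation would use each edge of a representative sub-block once per child of the idempotent node, yielding congestion linear in the unbounded number of children; avoiding this requires the regularity provided by Simon's theorem, and is precisely where one must carefully delete the pivot vertices and exploit the stronger hypothesis that $\bn(G[A]) \le b$ for \emph{every} subset $A$ of every bag (rather than only for bags themselves), because merging several equivalent sub-blocks into a single representative naturally produces an induced subgraph on a subset of a bag. Once the idempotent case is settled, each of the $3|M|$ levels of $F$ multiplies the congestion of the path system by some absolute constant $C$, so the final family has congestion at most $C^{3|M|} = 2^{O(|M|)} = 2^{2^{O(k^2)}}$, and \cref{thm:bn-tree-decomp-paths} closes the argument.
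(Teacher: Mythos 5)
Your plan correctly identifies the central ingredients: Simon's Factorisation Forests Theorem over a connectivity-abstraction semigroup of size $2^{O(k^2)}$, removal per level of a small set of problematic ``pivot'' vertices (the paper calls them persistent), and \cref{thm:bn-tree-decomp-paths} as the engine that converts a low-congestion path system into a basis-number bound. The final arithmetic $C^{3|M|} = 2^{2^{O(k^2)}}$ is also consistent. However, the recursion you describe is organised differently from the paper's, and that reorganisation has a concrete gap.

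The paper does \emph{not} build a single global path system for the original path-decomposition and then apply \cref{thm:bn-tree-decomp-paths} once. Instead, it recurses directly on basis numbers over the factorisation forest (\cref{lem:bn-hrank}). At each \emph{idempotent} step it invokes \cref{thm:bn-tree-decomp-paths} \emph{locally}, on the coarse path-decomposition whose bags are the idempotent sub-blocks: the torsos of that coarse decomposition have basis number bounded by the induction hypothesis, and the path system has congestion only $O(k^2)$, thanks to the shortening argument of \cref{clm:idempotent-short-paths}, which reroutes any boundary-adhesion path to stay inside just two consecutive sub-blocks. At each \emph{binary} step it uses the small-separator bound \cref{lem:bn-small-separator} and needs no paths at all.

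Your approach, by contrast, must also produce paths for the single boundary adhesion of every binary node of the forest, and this is where the argument breaks down. The shortening that makes the idempotent case work relies crucially on idempotence, and there is no binary analogue: two vertices of a binary node's boundary adhesion need not even be joined by a path lying inside that node's segment, and if you allow the path to wander outside the segment, a single edge of $G$ can lie on the boundary-adhesion paths of arbitrarily many binary nodes, so the congestion of the global path system is no longer controlled by the forest depth. Your description of the idempotent step (``redirect paths through a single representative sub-block'') is also looser than what is actually needed: the paper keeps all sub-blocks and routes each boundary-adhesion path only through the two children adjacent to that boundary, precisely so that each edge is hit by at most $O(k^2)$ new paths at each level. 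To repair the binary case you would essentially need to reintroduce the small-separator argument, at which point you are back to the paper's recursion on basis numbers rather than on path systems.
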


In a path-decomposition of width~$k$, adhesions have size at most~$k$, and bags and their induced subgraphs have order at most~$k+1$, and thus they all have basis number at most $b\eqdef 2k$ by Lemma \ref{lem:bn-edits}.
Thus \cref{thm:bn-path-decomp} immediately implies the following for bounded pathwidth graphs.
\begin{corollary}\label{coro:bn-pathwidth}
  For every $k\geq 0$, every graph with pathwidth at most~$k$ has basis number~$2^{2^{O(k^2)}}$.
\end{corollary}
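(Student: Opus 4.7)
The plan is to apply Theorem \ref{thm:bn-tree-decomp-paths} to $(P,\beta)$ itself, viewed as a tree-decomposition. This requires two things: (i) each torso of $(P,\beta)$ has basis number bounded by a function of $b$ and $k$, and (ii) there exists a path system capturing all adhesions of $(P,\beta)$ with bounded congestion. For (i), each torso is obtained from $G[\beta(t)]$ by cliquifying at most two adhesions of size at most $k$, which adds at most $O(k^2)$ edges; since $\bn(G[\beta(t)]) \leq b$ by hypothesis, Lemma \ref{lem:bn-edits}\eqref{eq:bn-edge-add} yields a torso basis number at most $b + O(\log^2 k)$. The entire difficulty lies in~(ii).

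To build the path system, I would use Simon's Factorisation Forests Theorem. First, to each interval $[i,j]$ of positions along $P = t_1 \cdots t_p$, associate a \emph{type} $\tau([i,j])$ encoding the connectivity information between the left boundary $\sigma_{i-1}$ and the right boundary $\sigma_j$ realized inside $G_{[i,j]} \eqdef G[\bigcup_{i \leq \ell \leq j} \beta(t_\ell)]$ — for instance, an equivalence relation on $\sigma_{i-1} \sqcup \sigma_j$ together with the information of which classes reach the rest of $G_{[i,j]}$. Since boundaries have at most $k$ vertices, a suitable choice of type lives in a set of cardinality at most $2^{2^{O(k^2)}}$, and concatenation of intervals turns this set into a finite semigroup $S$.

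Simon's theorem then supplies a factorization tree $F$ of the sequence $(\tau([i,i]))_{i=1}^p$ of depth $O(|S|)$, in which every internal node is either \emph{binary} (at most two children) or \emph{idempotent} (all children sharing a common type $e$ with $e^2 = e$). I would construct the path system by recursing on $F$: at a binary node, paths for pairs whose endpoints lie in different children cross the interface between the two children, which involves only the $O(k)$ vertices of the separating adhesion; at an idempotent node with children $I_1, \ldots, I_m$, the equation $e^2 = e$ implies that the reachability pattern between the outer boundaries stabilizes after a bounded prefix/suffix of children, so paths for pairs in adhesions spread across many children can be rerouted through a bounded number of children only, keeping the per-level congestion bounded. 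Iterating over the $O(|S|)$ levels of $F$ produces a global congestion $c \leq 2^{2^{O(k^2)}}$. This recursion, together with the careful choice of types, is the main technical obstacle and amounts to adapting the Ramsey-style routing arguments of~\cite{mimi} from MSO-compositionality to the edge-congestion setting.

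With these two ingredients in hand, Theorem \ref{thm:bn-tree-decomp-paths} applied with basis number bound $b' = b + O(\log^2 k)$ and path-system congestion $c = 2^{2^{O(k^2)}}$ gives $\bn(G) \leq (2c+1)(b'+1) \leq b \cdot 2^{2^{O(k^2)}}$, as required.
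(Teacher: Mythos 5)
Your overall plan --- to apply \cref{thm:bn-tree-decomp-paths} directly to the given path-decomposition $(P,\beta)$ and construct a \emph{single} path system of bounded congestion capturing all its adhesions --- cannot work. As the paper points out at the start of \cref{sec: td1}, the hypotheses of \cref{thm:bn-tree-decomp-paths} are simply not always satisfiable: for the cycle $C_n$ with the standard width-$2$ path-decomposition (bags $\{v_1,v_2,v_n\},\{v_2,v_3,v_n\},\dots$), the adhesions are the pairs $\{v_i,v_n\}$, and any choice of $v_iv_n$-paths in $C_n$ has total length $\Omega(n^2)$, forcing congestion $\Omega(n)$ on $n$ edges. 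No amount of rerouting at idempotent nodes helps, because the problematic vertex $v_n$ is in \emph{every} bag, so every $P_{t_i,v_i,v_n}$ must reach it and therefore must traverse one of the two edges incident to $v_n$. Your claim that ``paths for pairs in adhesions spread across many children can be rerouted through a bounded number of children'' is exactly what fails here.

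The missing idea is the handling of \emph{persistent vertices}: vertices present in both interfaces of a bi-interface factor (in the cycle example, $v_n$). The paper never builds a path system for $(P,\beta)$ itself; instead it recurses on the factorisation forest via \cref{lem:bn-hrank}, and at each level \emph{removes} all persistent vertices before reasoning (the binary case \cref{lem:bn-binary} uses \cref{lem:bn-small-separator} rather than any path system, and the idempotent case \cref{lem:bn-idempotent} applies \cref{thm:bn-tree-decomp-paths} only to the residual graph $\hat G$, inside each connected component, where the adhesions $\hat I_i$ have become pairwise disjoint, which is what makes the short rerouting of \cref{clm:idempotent-short-paths} yield $O(k^2)$ congestion). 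The removed persistent vertices are then added back at the very end via \cref{lem:bn-edits} at a cost of $+2k$. Without this removal step, the idempotence of the connectivity abstraction gives you no control over paths whose endpoints include a persistent vertex, and your per-level congestion estimate collapses. (As a secondary point: you are proving the more general \cref{thm:bn-path-decomp}; the paper derives \cref{coro:bn-pathwidth} from it simply by noting that parts of a width-$k$ path-decomposition have order at most $k+1$, hence basis number at most $2k$ by \cref{lem:bn-edits}.)
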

\Cref{thm:bn-path-decomp,coro:bn-pathwidth} have been proved independently by Miraftab, Morin and Yuditsky~\cite{miraftab2026pathwidth}, who moreover improved the bounds to polynomial ones, see \cref{thm:pathwidth-linear} and \cref{thm:pd-linear}.
In later sections, we will prefer to refer to their stronger results to obtain much better bounds,
but nonetheless include this section for completeness, and also since our approaches differ significantly.

Our proof follows  closely the framework of \cite[Section~4]{mimi}. The key idea is that in a path-decomposition of adhesion~$k$,
one can encode with a semigroup of bounded size all the relevant information concerning how the at most~$2k$ vertices in the two adhesions of a given bag connect to each other.
Using Simon's Factorisation Forests Theorem~\cite{simon1990factorization}, one designs a bounded depth induction in which a given path-decomposition is decomposed into simpler (sub-)path-decompositions.
We will show in Section \ref{sec: proof-pw} that at each step of this induction, after ignoring a bounded number of problematic vertices,
one can construct a family of paths identifying the adhesions of the relevant path-decomposition
such that each path of the family intersects few bags, implying that the constructed family has bounded congestion.
This will allow us to apply \cref{thm:bn-tree-decomp-paths} in order to derive \cref{thm:bn-path-decomp}.

\subsection{Simon's Factorisation Forests Theorem}
\label{sec: Simon}
Let us recall the key notions defined in~\cite{mimi}.

\paragraph{Bi-interface graphs}
A \emph{bi-interface graph}~$\G$ of \emph{arity~$k$} consists of an underlying graph~$G$
and two partial injective maps $\lmap,\rmap : [k] \to V(G)$, called \emph{left and right interface maps}. 
For each $i \in [m]$, the vertices~$\lmap(i),\rmap(i)$ are called the \emph{$i$-th left and right interface vertices} respectively, and the images of $\lmap$ and $\rmap$ are called the \emph{left and right interfaces} respectively. 
We require that when $\lmap(i)$ and $\rmap(j)$ are defined for some $i,j\in [k]$ such that $\lmap(i) = \rmap(j)$, then~$i=j$.

Consider~$\G_1,\G_2$ two bi-interface graphs with same arity and interface \break maps~$\lmap_i,\rmap_i$ for~$i=1,2$.
The \emph{glueing} $\G_1 \odot \G_2$ is the bi-interface graph obtained as follows:
for the underlying graph of $\G_1\odot\G_2$, take the disjoint union of the underlying graphs of~$\G_1$ and~$\G_2$,
and identify the right interface of~$\G_1$ with the left interface of~$\G_2$,
i.e.\ identify vertices~$\rmap_1(i)$ with~$\lmap_2(i)$ whenever both are defined.
Finally, take~$\lmap_1$ and~$\rmap_2$ as left and right interface maps (see Figure \ref{fig: glueing}).

\begin{figure}[htb]
  \centering  
  \includegraphics[scale=0.65]{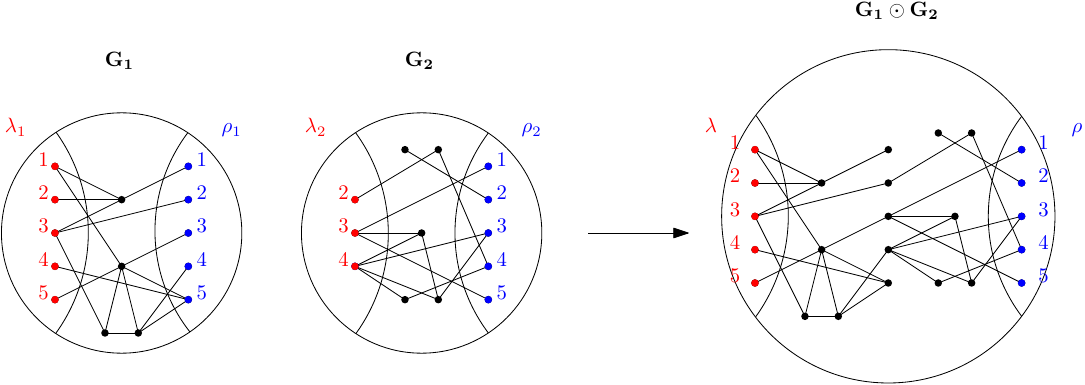}
  \caption{Glueing of two bi-interface graphs. On the left two bi-interval graphs $\G_1, \G_2$ of arity $5$. On the right their glueing. The numbers correspond to the preimages of the interface vertices by the interface maps.}  
  \label{fig: glueing}
\end{figure}

This glueing operation is associative, giving the set of bi-interface graphs of arity~$k$ the structure of a semigroup.
The next lemma shows that decomposing a bi-interface graph into a sequence of glueings is essentially the same as decomposing its underlying graph with a path-decomposition.
\begin{lemma}[{cf.\ \cite[Lemma~4.6]{mimi}}]
  \label{lem:path-decomp-glueing}
  For a graph~$G$ and a family~$X_1,\dots,X_m$ of subsets of vertices of $G$, the following are equivalent:
  \begin{itemize}
    \item $X_1,\dots,X_m$ (in that order) are the bags of a path-decomposition of~$G$ with adhesion at most~$k$.
    \item $G$ is the underlying graph of some glueing $\G_1 \odot \dots \odot \G_m$ of bi-interface graphs with arity~$k$,
      so that for each $i\in [m]$, $G[X_i]$ is the underlying graph of $\G_i$.
  \end{itemize}
\end{lemma}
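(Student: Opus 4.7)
The plan is to prove the equivalence by establishing each direction separately. The backward direction is essentially bookkeeping about the glueing operation, while the forward direction requires a small colouring argument.

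For the backward direction, suppose $G$ is the underlying graph of $\G_1 \odot \cdots \odot \G_m$, with $G[X_i]$ the underlying graph of~$\G_i$. Axioms~\ref{item: td1} and~\ref{item: td2} follow immediately, since every vertex and every edge of~$G$ originates from some~$\G_i$ and therefore lies in the corresponding~$X_i$. For axiom~\ref{item: td3}, suppose $v \in X_i \cap X_j$ with $i<j$. Then~$v$ arises from a vertex of~$\G_i$ and a vertex of~$\G_j$ that are identified through a chain of glueings. The only identifications performed by the glueing are of the form $\rmap_\ell(n) = \lmap_{\ell+1}(n)$ between consecutive bi-interface graphs, and the defining constraint on bi-interface graphs (if $\lmap(n) = \rmap(n')$ then $n=n'$) forces any such chain reaching from~$\G_i$ to~$\G_j$ to use a single fixed index $n \in [k]$ throughout. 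Hence, for each $i < \ell \leq j$, $v = \rmap_{\ell-1}(n) = \lmap_\ell(n)$ is identified with a vertex of~$\G_\ell$, so~$v$ lies in every intermediate bag. Finally, $X_i \cap X_{i+1}$ is precisely the set of vertices identified via the partial maps $\rmap_i$ and $\lmap_{i+1}$ on~$[k]$, which bounds the adhesion size by~$k$.

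For the forward direction, given a path-decomposition with bags $X_1, \ldots, X_m$ and adhesions $S_i \eqdef X_i \cap X_{i+1}$ each of size at most~$k$, the task is to build bi-interface graphs~$\G_i$ with underlying graph~$G[X_i]$ whose glueing reproduces~$G$. The main step is to choose a labeling $\ell: \bigcup_i S_i \to [k]$ that is injective on each~$S_i$. Given such~$\ell$, define $\rmap_i$ and $\lmap_{i+1}$ as the common partial inverse of $\ell|_{S_i}$ (viewed in the appropriate vertex sets), with $\lmap_1$ and $\rmap_m$ left empty. Injectivity of~$\ell$ on $S_{i-1} \cup S_i$ yields the bi-interface constraint within each~$\G_i$, while the agreement $\rmap_i = \lmap_{i+1}$ on $S_i$ guarantees that the glueing identifies exactly the vertices shared between consecutive bags, so the underlying graph of $\G_1 \odot \cdots \odot \G_m$ is~$G$.

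To produce~$\ell$, I use axiom~\ref{item: td3}: for each vertex~$v$, the set of indices~$i$ with $v \in S_i$ is a (possibly empty) contiguous interval. Thus requiring~$\ell$ to be injective on every~$S_i$ amounts to properly colouring an interval graph with clique number at most~$k$, and since interval graphs are perfect, $k$~colours suffice. The only genuinely non-trivial ingredient of the proof is this colouring observation; everything else is a direct unpacking of the definitions of bi-interface graphs and their glueings.
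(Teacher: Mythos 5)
Your proof is correct and follows essentially the same two-step structure as the paper: verify the path-decomposition axioms from the glueing in one direction, and construct consistent injective labelings of the adhesions into $[k]$ in the other. The only real difference is in the forward direction, where the paper simply observes that one can pick injective maps $f_i : A_i \to [k]$ agreeing on consecutive overlaps (which can be done greedily left-to-right, extending $f_{i-1}$ restricted to $A_{i-1}\cap A_i$ injectively to $A_i$), whereas you build a single global map $\ell$ by properly colouring the interval graph whose intervals are the sets $\{i : v\in S_i\}$; the two are interchangeable, and your interval-graph phrasing is a clean way to justify the existence step that the paper leaves as an ``observe.''
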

\begin{proof}
  If~$G$ is the underlying graph of $\G_1 \odot \dots \odot \G_m$ and~for each $i\in [m]$, ~$X_i$ is the set of vertices coming from~$\G_i$,
  then it is easy to check that defining~$X_i$ to be the bag of the $i$-th node gives a path-decomposition of~$G$.
  The adhesion~$X_i \cap X_{i+1}$ is contained within the right interface of~$\G_i$ (and the left interface of~$\G_{i+1}$), hence it has size at most~$k$.

  Conversely, assume that $X_1,\dots,X_m$ corresponds to the sequence of the bags of a path-decomposition of $G$,
  and that the adhesion $A_i \eqdef X_{i} \cap X_{i+1}$ has size at most~$k$ for each $i\in [m-1]$. 
  Observe that one can define injective maps $f_i : A_i \to [k]$ for~each $i\in [m-1]$ so that whenever $x \in A_i \cap A_{i+1}$ for some $i\in [m-2]$, we have $f_i(x) = f_{i+1}(x)$. We also let $f_0, f_m$ denote the maps with signature $\emptyset\to [k]$.
  For each $i\in [m]$,
  define now the bi-interface graph~$\G_i$ with underlying graph $G[\bag(t_i)]$,
  where the left and right interface maps are given by the partial inverses of~$f_i$ and~$f_{i+1}$ respectively.
  Then one may check that~$G$ is the underlying graph of $\G_1 \odot \dots \odot \G_m$.
\end{proof}

In a bi-interface graph~$\G$ with underlying graph $G$ and  interface maps~$\lambda,\rho$,
a vertex~$v\in V(G)$ is called \emph{persistent} if it is both a left and a right interface vertex,
i.e.\ $\lambda(i) = \rho(i) = v$ for some~$i \in [k]$.
Observe that if~$v$ is persistent in~$\G$ and $\G = \G_1 \odot \G_2$, then~$v$ is also persistent in~$\G_1,\G_2$.

For every bi-interface graph $\G$ with underlying subgraph $G$ and every $X\subseteq V(G)$, we let $\G[X]$ denote denote the bi-interface graph with underlying subgraph $G[X]$, and whose interface maps are the restriction on the interface maps of $G$ on $X$. We also set $\G-X\eqdef \G[V(G)\setminus X]$. 
We denote by~$\hat{\G}$ the bi-interface graph $\G-\Pi$, where $\Pi$ is the set of the persistent vertices of~$\G$.

\paragraph{Connectivity abstraction}
Given a bi-interface graph~$\G$ with underlying graph $G$ and interface maps $\lmap, \rmap$, its \emph{abstraction}~$\abstr{\G}$ is the bi-interface graph defined as follows.
Denote by $I(\G) \eqdef \lmap([k]) \cup \rmap([k])$ the set of interface vertices of~$\G$.
Then the underlying graph of $\abstr{\G}$ is the graph with vertex set~$I(\G)$, where we add an edge~$xy$ whenever $G$ contains an $xy$-path whose internal vertices are not in~$I(\G)$.
The interface maps of $\abstr{\G}$ are given by $\lambda, \rho$.

Let $\G_1, \G_2$ be two bi-interface graphs, and for each $i\in \sg{1,2}$, let $G_i, \lmap_i, \rmap_i$ denote respectively the underlying graph and the interface maps of $\G_i$. $\G_1, \G_2$ are \emph{isomorphic} if $\lmap_1$ and $\lmap_2$ (resp.\ $\rmap_1$ and $\rmap_2$) are defined on the same values, and if there exists a graph isomorphism $f:G_1\to G_2$ such that for each $i\in [k]$ such that $\lmap_1(i)$ is defined, $f(\lmap_1(i))=\lmap_2(i)$ and for each $i\in [k]$ such that $\rmap_1(i)$ is defined, $f(\rmap_1(i))=\rmap_2(i)$. 

We denote by~$\Abstr_k$ the set of abstractions of bi-interface graphs of arity~$k$, considered up to isomorphism.
Observe that $|\Abstr_k| = 2^{O(k^2)}$.
For~$\alpha,\beta \in \Abstr_k$, we define their product
\[ \alpha \cdot \beta \eqdef \abstr{\alpha \odot \beta}. \]
It is routine to check that this product is associative, hence~$(\Abstr_k,\cdot)$ is a semigroup, and that~$\abstr{\cdot}$ is a semigroup homomorphism, i.e.\ for every bi-interface graphs $\G_1, \G_2$ we have\
\begin{equation}
  \abstr{\G_1 \odot \G_2} = \abstr{\G_1} \cdot \abstr{\G_2}.
\end{equation}

\paragraph{Factorisation forests}
A key idea of~\cite{mimi} is to use Simon's Factorisation Forests Theorem, a result from semigroup theory, in order to find some regular structure in a given path-decomposition.
The setting is as follows.
One considers a (possibly infinite) alphabet~$\Sigma$, a finite semigroup~$(S,\cdot)$, and a map $h : \Sigma \to S$.
We denote by~$\Sigma^+$ the set of finite non-empty sequences of elements of~$\Sigma$, called \emph{words}, and denote by~$\cdot$ the concatenation operation, which makes~$\Sigma^+$ a free semigroup.
The map~$h$ naturally extends to a semigroup homomorphism $(\Sigma^+,\cdot) \to (S,\cdot)$.

Given a word $w \in \Sigma^+$, the Factorisation Forests Theorem gives a way to split~$w$ down to single letters in a bounded number of steps, subject to some regularity conditions over~$h$.
Two ways of splitting~$w$ are considered:
\begin{description}
  \item[binary factorisation] $w = w_1 \cdot w_2$ for some $w_1,w_2\in \Sigma^+$
  \item[$h$-idempotent factorisation] $w = w_1 \cdot \ldots \cdot w_m$ for some $w_1,\ldots,w_m\in \Sigma^+$ such that all factors have the same image $e = h(w_i)$, which furthermore is idempotent, meaning $e \cdot e = e$.
\end{description}
The \emph{$h$-rank} of a word $w \in \Sigma^+$ is defined recursively as follows:
letters $a \in \Sigma$ have $h$-rank $0$,
and a word $w \in \Sigma^+ \setminus \Sigma$ has $h$-rank at most~$d$ if there is either a binary or an $h$-idempotent factorisation $w = w_1 \ldots w_m$ in which each factor~$w_i$ has $h$-rank at most~$d-1$.
This can be rephrased in terms of factorisation trees of depth~$d$, in which the leaves are letters, and each internal node corresponds to a binary or idempotent factorisation.

\begin{theorem}[Factorisation Forests Theorem~\cite{simon1990factorization,kufleitner2008height}]
  \label{thm:simon}
  Consider a finite semigroup~$(S,\cdot)$, an alphabet~$\Sigma$, and a semigroup homomorphism $h : (\Sigma^+,\cdot) \to (S,\cdot)$.
  Then any word $w \in \Sigma^+$ has $h$-rank at most~$3|S|$.
\end{theorem}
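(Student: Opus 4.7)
The plan is to prove the Factorisation Forests Theorem by induction on the size of the finite semigroup $S$. The base case $|S|=1$ is immediate: the unique element is idempotent, so the decomposition of $w$ into single letters is already an $h$-idempotent factorisation, giving $h$-rank at most~$1$.

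For the inductive step, I would exploit the structure of finite semigroups through Green's $\mathcal{J}$-relation. Recall that $s \mathcal{J} t$ iff $s$ and $t$ generate the same two-sided ideal, and $\mathcal{J}$-classes are partially ordered in a finite semigroup. Given $w \in \Sigma^+$, consider the $\mathcal{J}$-classes of images $h(u)$ over all non-empty factors $u$ of $w$, and fix a $\mathcal{J}$-minimal such class $J$. Two cases then arise. If some proper factor of $w$ has image strictly below $J$, then splitting $w$ at a corresponding position produces two subwords each lying in a proper subsemigroup of $S$; the induction hypothesis on each side, combined via a single binary step, yields the desired $h$-rank bound. Otherwise, every non-trivial factor of $w$ has image inside $J$; invoking the classical structure theorem for $\mathcal{J}$-classes of finite semigroups (the $\mathcal{H}$-classes within a regular $\mathcal{J}$-class are groups, all isomorphic to each other), one can regroup $w$ into maximal consecutive chunks with a common idempotent image, producing an $h$-idempotent factorisation whose individual pieces again lie in a subsemigroup on which to recurse.

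The main obstacle will be obtaining the sharp bound $3|S|$: a naive combination of the two cases gives only a bound of the form $9|S|$, as in Simon's original argument. To sharpen the constant, binary steps must be charged to strict descents in the $\mathcal{J}$-order, idempotent steps to exploration within a fixed $\mathcal{J}$-class, and the group structure on $\mathcal{H}$-classes must be leveraged to control how much each such exploration contributes. I would follow the bookkeeping of Kufleitner to balance these contributions and recover the constant $3$. For the applications in this paper, however, the precise value of the constant is inessential: \cref{thm:simon} is invoked only as a black box through the semigroup~$\Abstr_k$, and any $O(|S|)$ bound would yield the same double-exponential dependence on~$k$ in our downstream results.
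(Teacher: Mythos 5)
This theorem is cited in the paper from \cite{simon1990factorization,kufleitner2008height} and invoked purely as a black box; the paper contains no proof of it, so there is no paper proof against which to compare your attempt.

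Taken as a stand-alone proof sketch, yours has a genuine logical gap in the case split of the inductive step. You fix $J$ to be a $\mathcal{J}$-minimal class among the images $h(u)$ over all non-empty factors $u$ of $w$, and then open the first case with ``if some proper factor of $w$ has image strictly below~$J$''. By your own choice of $J$ this case is vacuous: no factor can have image strictly $\mathcal{J}$-below a $\mathcal{J}$-minimal one. In fact, for every factor $u$ of $w$ one has $h(w) \le_{\mathcal{J}} h(u)$ (write $w = xuy$, so that $h(w) = h(x)h(u)h(y) \in S^1 h(u) S^1$), so your minimal class $J$ is forced to be the $\mathcal{J}$-class of $h(w)$ itself, and the recursion you describe never fires. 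Reading ``minimal'' as ``maximal'' does not rescue it either, since splitting $w$ at a position witnessing a strict drop in $\mathcal{J}$ gives no control on the images of the two halves, and in particular they need not live in a proper subsemigroup. The standard argument (Colcombet, Kufleitner) instead inducts on the number of $\mathcal{J}$-classes weakly above the class of $h(w)$, cutting $w$ at the positions where the prefix product first drops into that class, and then --- this is where the real content lies --- handles the residual word, all of whose infix products stay in one $\mathcal{J}$-class, by a separate analysis of $\mathcal{R}$- and $\mathcal{L}$-classes and the group structure on the $\Hc$-classes. Your phrase ``regroup $w$ into maximal consecutive chunks with a common idempotent image'' glosses over exactly this crux: it is not clear how to arrange that all chunks map to the \emph{same} idempotent (which is what an $h$-idempotent factorisation requires), nor why such a regrouping terminates at bounded depth. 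Your closing observation is, however, correct and worth stressing: for the purposes of this paper only an $O(|S|)$ bound matters (via the semigroup $\Abstr_k$), so the bookkeeping needed to reach the constant~$3$ is immaterial here.
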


We will apply \cref{thm:simon} to glueings of bi-interface graphs, with the connectivity abstraction as homomorphism. More precisely, in what follows, we fix the alphabet~$\Sigma$ as the set of all bi-interface graphs of arity~$k$ (for arbitrary fixed~$k$).
Thus a word in~$\Sigma^+$ is a sequence $\G_1 \cdots \G_m$, which thanks to \cref{lem:path-decomp-glueing} can be seen as a path-decomposition of $\G_1 \odot \dots \odot \G_m$.
The role of the finite semigroup~$(S,\cdot)$ will be played by~$(\Abstr_k,\cdot)$, and we fix the semigroup homomorphism $h:\Sigma^+\to (\Abstr_k,\cdot)$ by setting $h(\G) \eqdef \abstr{\G}$ for every bi-interface graph $\G$.
Throughout the rest of this section, $h$-rank and idempotence conditions are understood relative to this specific homomorphism.

\subsection{Proof of Theorem~\ref{thm:bn-path-decomp}}\label{sec: proof-pw}
Following the techniques of~\cite{mimi}, we will prove \cref{thm:bn-path-decomp} by induction on the $h$-rank of the word of $\Sigma^+$ associated to the given path-decomposition.

We first extend the definition of the basis number on bi-interface graphs, by setting $\bn(\G)\eqdef \bn(G)$ for every bi-interface graph~$\G$ with underlying graph~$G$.
For brevity, throughout this section we adopt the convention that a bi-interface graph is denoted in bold-face as~$\G$,
and the corresponding normal font~$G$ refers to its underlying graph.

Recall that a vertex in~$\G$ is \emph{persistent} if it is present in both interfaces, and that~$\hat{\G}$ is obtained from~$\G$ after removing all persistent vertices.
We show that binary and idempotent factorisations both preserve bounded basis number, when ignoring persistent vertices.
The idempotent case is the core of this proof.%
\begin{lemma}[Binary case]
  \label{lem:bn-binary}
  Let $\G_1,\G_2$ be two bi-interface graphs of arity~$k$, with $\bn(\hat{\G_1}), \bn(\hat{\G_2}) \le b$, and $\G = \G_1 \odot \G_2$. Then we have
  \[ \bn(\hat{\G}) \le 2b + O(k). \]
\end{lemma}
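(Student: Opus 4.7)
The plan is to exhibit $\hat{\G}$ as the union of two induced subgraphs coming from $\G_1$ and $\G_2$ across a separator of size at most $k$, and then conclude with \cref{lem:bn-small-separator}. Write $\lmap_i,\rmap_i$ for the interface maps of $\G_i$, and $\Pi,\Pi_1,\Pi_2$ for the sets of persistent vertices of $\G,\G_1,\G_2$ respectively.

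The first step is to verify that $\Pi \subseteq \Pi_1 \cap \Pi_2$. The left and right interfaces of $\G$ are $\lmap_1$ and $\rmap_2$, so any $v \in \Pi$ lies in both $V(G_1)$ and $V(G_2)$. By the definition of $\odot$ (disjoint union, then identification of $\rmap_1(i)$ with $\lmap_2(i)$), the only shared vertices are glued interface vertices, so $v \in \rmap_1 \cap \lmap_2$; combined with $v \in \lmap_1 \cap \rmap_2$, this makes $v$ persistent in both factors. In particular $\Pi \subseteq V(G_1) \cap V(G_2)$.

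Set $U_i \eqdef V(G_i) \setminus \Pi$ for $i = 1,2$. I would then check that $(U_1, U_2)$ is a separation of $\hat{G}$ of order at most $k$: the union covers $V(\hat{G})$ since $\Pi \subseteq V(G_1) \cap V(G_2)$; no edge of $G$ joins $V(G_1) \setminus V(G_2)$ to $V(G_2) \setminus V(G_1)$; and $U_1 \cap U_2 \subseteq V(G_1) \cap V(G_2)$ is contained in the glued interface, which has at most $k$ vertices. Using $\Pi \subseteq \Pi_i$, the graph $\hat{G}[U_i]$ is obtained from $\hat{\G_i}$ by adding back the vertices of $\Pi_i \setminus \Pi$, of which there are at most $|\Pi_i| \le k$, so \cref{lem:bn-edits}~\eqref{eq:bn-vertex-add} applied at most $k$ times yields $\bn(\hat{G}[U_i]) \le b + 2k$.

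Applying \cref{lem:bn-small-separator} then gives
\[ \bn(\hat{\G}) \le (b+2k) + (b+2k) + O(\log^2 k) = 2b + O(k), \]
as required. The only real subtlety is the inclusion $\Pi \subseteq \Pi_1 \cap \Pi_2$, which simultaneously ensures that the separator $U_1 \cap U_2$ has order at most $k$ (not $2k$) and that $\hat{G}[U_i]$ can be obtained from $\hat{\G_i}$ only by adding vertices.
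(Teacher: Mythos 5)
Your proof is correct and follows essentially the same route as the paper: observe that persistent vertices of $\G$ are persistent in both factors, remove them to obtain graphs differing from $\hat{\G_1},\hat{\G_2}$ by at most $k$ extra vertices (handled by \cref{lem:bn-edits}~\eqref{eq:bn-vertex-add}), and conclude with \cref{lem:bn-small-separator} across a separator of size at most $k$. One small inaccuracy to be aware of (present in spirit in the paper's proof too, since it is harmless for the bound): $\hat{G}[U_i]$ may contain a few extra edges beyond what is obtained from $\hat{\G_i}$ by adding back vertices, namely edges of $G_{3-i}$ between glued-interface vertices that are not persistent in $\G_i$; adding a vertex back only restores edges incident to that vertex, not edges between two vertices already present. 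This is repaired by one further application of \cref{lem:bn-edits}~\eqref{eq:bn-edge-add} to the at most $\binom{k}{2}$ extra edges, costing $O(\log^2 k)$, which is absorbed into the $O(k)$ term.
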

\begin{proof}
  Let~$\Pi$ denote the set of persistent vertices in~$\G$, and for each $i\in \sg{1,2}$,  define $\G'_i \eqdef \G_i - \Pi$.
  Note that every vertex in~$\Pi$ is also persistent in both~$\G_1$ and~$\G_2$.
  Thus, for each $i\in \sg{1,2}$, the underlying graph~$G'_i$ contains~$\hat{G_i}$ as an induced subgraph.
  On the other hand, the vertices missing in~$\hat{\G_i}$ compared to~$\G'_i$ are persistent interface vertices of $\G'_i$, of which there are at most~$k$.
  Thus one can obtain~$G'_i$ from~$\hat{G_i}$ by adding back at most~$k$ vertices.
  It follows by \cref{lem:bn-edits} that for each $i\in \sg{1,2}$, 
  \[ \bn(G'_i) \le \bn(\hat{G_i}) + 2k\leq b+2k. \]
  Finally, we have $\hat{\G} = \G'_1 \odot \G'_2$,
  which means that the underlying graph~$\hat{G}$ has a separation of order at most~$k$ whose two sides are~$G'_1$ and~$G'_2$.
  By \cref{lem:bn-small-separator}, we then obtain
  \begin{align*}
    \bn(\hat{G}) & \le \bn(G'_1) + \bn(G'_2) + O\left(\log^2k\right) \\
                 & \le 2b + O(k). \qedhere
  \end{align*}
\end{proof}

\begin{lemma}[Idempotent case]
  \label{lem:bn-idempotent}
  Consider bi-interface graphs $\G_1,\dots,\G_m$ of arity~$k$, and $\G = \G_1 \odot \dots \odot \G_m$.
  Assume that all $\G_i$ have the same connectivity abstraction $\abstr{\G_1} = \dots = \abstr{\G_m}$, which furthermore is idempotent in $(\Abstr_k, \cdot)$. 
  Moreover, assume that $\bn(\hat{\G_i}) \le b$ for each~$i\in [m]$.
  Then
  \[ \bn(\hat{\G}) \le O\left(b \cdot k^2 \log^2k\right). \]
\end{lemma}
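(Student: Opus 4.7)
The plan is to view the glueing $\G_1 \odot \dots \odot \G_m$ as a path-decomposition of $\hat{G}$ and apply \cref{thm:bn-tree-decomp-paths}. By \cref{lem:path-decomp-glueing}, the underlying graph $G$ admits a path-decomposition whose $i$-th bag is $V(G_i)$ with adhesion at most $k$; restricting the bags to $V(G_i) \setminus \Pi$, where $\Pi$ is the set of vertices persistent in $\G$, yields a path-decomposition $(P,\beta)$ of $\hat{G}$ whose adhesions still lie in the interfaces of the $\G_i$ and hence have size at most $k$.

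For the torso bound, the key observation is that every vertex persistent in $\G$ is also persistent in each $\G_i$ (since $\lmap_\G = \lmap_1$ and $\rmap_\G = \rmap_m$ force identifications all the way through the glueing), so $\Pi$ is contained in the set $\Pi_i$ of persistent vertices of $\G_i$. Hence the part at bag~$i$ is $\hat{G_i}$ plus at most $|\Pi_i \setminus \Pi| \le 2k$ additional interface vertices. By \cref{lem:bn-edits}, adding these vertices increases the basis number by $O(k)$, and subsequently cliquifying the at most two incident adhesions of $(P,\beta)$ (adding $O(k^2)$ edges) raises it by a further $O(\log^2 k)$. Thus each torso has basis number at most $b + O(k)$.

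The heart of the argument is producing a family of paths capturing the adhesions of $(P,\beta)$ with congestion $O(k \log^2 k)$. Here the idempotence of $\alpha \eqdef \abstr{\G_i}$ is essential: for any $i \le j$, we have $\abstr{\G_i \odot \dots \odot \G_j} = \alpha^{j-i+1} = \alpha$, so every non-trivial connection between interface vertices that is realised by a long glueing is already realised locally inside a single factor. For each pair $\{u,v\}$ in the $i$-th adhesion of $(P,\beta)$, I plan to construct a $uv$-path $P_{i,u,v}$ contained in $G_i \cup G_{i+1}$ and avoiding $\Pi$ by lifting the corresponding connection in $\alpha$ to a concrete path. Since each such path spans only two consecutive bags, every edge of $G_j$ participates in at most $O(k^2)$ of them; a more careful construction, routing the paths at each adhesion through a tree-like structure and combining this with the surface-embedding trick of \cref{lem:bn-small-separator}, brings the congestion down to $O(k \log^2 k)$. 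Applying \cref{thm:bn-tree-decomp-paths} with $b' = b + O(k)$ and $c = O(k \log^2 k)$ then gives $\bn(\hat{G}) \le (2c+1)(b'+1) = O(b \cdot k^2 \log^2 k)$.

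The main obstacle is the path-system construction: while idempotence immediately grants the required \emph{abstract} connectivity between interface vertices, turning it into concrete paths of $\hat{G}$ which avoid the vertices of $\Pi$, stay localised near their defining adhesion, and collectively achieve $O(k \log^2 k)$ congestion is delicate. I expect this step to follow the Bojańczyk--Pilipczuk scheme from~\cite{mimi}, adapted to the cycle-space framework developed in earlier sections of this paper, together with the separator-based basis-number bounds of \cref{lem:bn-small-separator,lem:bn-almost-connected-separator}.
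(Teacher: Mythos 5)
Your high-level plan --- turn the glueing into a path-decomposition of $\hat{G}$, build a local path system, and apply \cref{thm:bn-tree-decomp-paths} --- is the same as the paper's. But the proposal has a genuine gap right where you flag it. The crucial step is not merely that idempotence gives \emph{abstract} connectivity; it is the following precise statement (cf.~\cref{clm:idempotent-short-paths}): if $x,y\in I_i$ are joined by a path~$Q$ in $G$, then they are joined by a path~$Q'$ contained in~$X_{i-1}\cup X_i$ satisfying $V(Q')\cap I_i = V(Q)\cap I_i$. Preserving the trace on $I_i$ is exactly what guarantees that if $Q$ avoids $\Pi$ (which sits inside every adhesion~$I_i$), so does $Q'$, hence $Q'$ lives in $\hat{G}$. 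Your phrase ``lifting the corresponding connection in $\alpha$ to a concrete path'' omits this: an abstract edge $uv\in\alpha$ only promises \emph{some} $uv$-path with interior disjoint from the interface, but that path might run through a persistent vertex, which is forbidden. The correct argument must start from a path in $\hat{G}$ (which exists because $u,v$ lie in the same component $C$) and re-route it segment by segment using idempotence, not conjure a path from scratch out of $\alpha$.

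A second issue is the endgame arithmetic. You bound the torsos by $b+O(k)$ because you add back up to~$2k$ vertices; combined with the ``naive'' congestion $O(k^2)$ this gives $O(k^2(b+k))=O(bk^2+k^3)$, which is \emph{not} bounded by $O(bk^2\log^2k)$ when $b$ is small relative to $k/\log^2 k$. You try to repair this by bringing the congestion down to $O(k\log^2 k)$ via ``routing through a tree-like structure'' and \cref{lem:bn-small-separator}, but that lemma concerns basis numbers across a separator, not path congestion, and no mechanism for the improvement is given; I do not believe this step works as stated. The paper avoids both problems at once: idempotence forces $\Pi_i = \Pi\cap X_i$ for every $i$, so $\hat{\G} = \hat{\G_1}\odot\cdots\odot\hat{\G_m}$ holds exactly and no vertices need to be re-added. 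Each torso is then $\hat{G_i}[C]$ plus at most $2\binom{k}{2}$ \emph{edges}, which by \cref{lem:bn-edits}(2) costs only $+O(\log^2 k)$, and with $c\le 2k^2$ the product $(2c+1)(b+O(\log^2 k)+1) = O(bk^2\log^2 k)$ follows. So: prove the re-routing claim with the $V(Q)\cap I_i$ invariant, use idempotence to get $\Pi_i=\Pi\cap X_i$ so your torsos are exactly $\hat{G_i}[C]$ up to added edges, and the congestion $O(k^2)$ is then already good enough.
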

\begin{proof}
  Assume that in~$\G_i$, the~$j$-th interface vertex, say~$v$, is persistent (i.e.\ the~$j$-th left and~$j$-th right interface vertices are equal).
  Observe that this fact is encoded in the connectivity abstraction~$\abstr{\G_i}$.
  Then by idempotence, the~$j$-th interface vertices of all~$\G_{i'}$ are also persistent.
  In turn, this implies that the~$j$-th interface vertex of~$\G$ is persistent, and corresponds to~$v$.
  It follows from these remarks that
  \begin{equation}
    \label{eq:del-persistent}
    \hat{\G} = \hat{\G_1} \odot \dots \odot \hat{\G_m}.
  \end{equation}
  In a sense, \eqref{eq:del-persistent} tells us that we can reason entirely in the graphs~$\hat{\G_i}$ without persistent vertices.
  Let~$\Pi$ denote the set of persistent vertices of~$\G$.

  For each $i\in [m]$, we let~$X_i$ be the vertex set of the underlying graph~$G_i$, and
  for each $i\in \sg{2,\ldots,m}$, we set $I_i \eqdef X_{i-1} \cap X_i$.
  In the corresponding path-decompositions, $X_i$ is a bag and~$I_i$ an adhesion.
  Further, let~$\hat{X_i},\hat{I_i}$ be the corresponding sets with persistent vertices~$\Pi$ removed.
  Note that the subsets~$\hat{I_i}$ are pairwise disjoint, and~$\hat{X_i},\hat{X_j}$ are disjoint unless $|i-j| \le 1$.

  The following is the key fact, and the entire reason for using connectivity abstractions and Simon's idempotence condition.
  \begin{claim}[{Cf.~\cite[Lemma~4.10]{mimi}}]
    \label{clm:idempotent-short-paths}
    Let $i\in \sg{2,\ldots,m}$. If~$x,y \in I_i$ are connected by a path~$Q$ in~$G$, then they are also connected by a path~$Q'$ which is contained in $X_{i-1} \cup X_i$,
    and which furthermore satisfies $V(Q) \cap I_i = V(Q') \cap I_i$.
  \end{claim}
  \begin{claimproof}
    Define $\mathbf{L} \eqdef \G_1 \odot \dots \odot \G_{i-1}$ and $\mathbf{R} \eqdef \G_i \odot \dots \odot \G_m$.
    The idempotence assumption implies that
    \[ \abstr{\mathbf{L}} = \abstr{\mathbf{R}} = \abstr{\G_{i-1}} = \abstr{\G_i}. \]
    It follows that, if two vertices~$x,y \in I_i$ can be connected by a path in~$\mathbf{L}$ (resp.~$\mathbf{R}$),
    then they can also be connected by a path in~$\G_{i-1}$ (resp.~$\G_i$).
    The same remains true if these paths are forbidden to have internal vertices in~$I_i$.

    Consider now an $xy$-path~$Q$.
    It can be split into a sequence of subpaths $P_1,\dots,P_t$ where each~$P_j$ has its endpoints in~$I_i$ and its internal vertices in either~$\mathbf{L} \setminus I_i$ or~$\mathbf{R} \setminus I_i$.
    Then, each~$P_j$ can be replaced by an equivalent path in~$G_{i-1}$ or~$G_i$, giving a path with the desired properties.
  \end{claimproof}

  We will use \cref{clm:idempotent-short-paths} to construct a path family  with low congestion capturing the adhesions of the given path-decomposition, so as to apply \cref{thm:bn-tree-decomp-paths}.
  Precisely, consider a connected component~$C$ of the underlying graph~$\hat{G}$.
  Using \cref{lem:path-decomp-glueing}, we obtain a path-decomposition $(P,\beta)$ of~$\hat{G}[C]$
  with bags
  \begin{equation}
    \label{eq:path-decomp-component}
    \hat{X_1} \cap C, \dots, \hat{X_m} \cap C,
  \end{equation}
  and whose adhesions are the sets~$\hat{I_i} \cap C$ for $1<i\le m$. In particular, all adhesions have size at most~$k$.

  Let $1<i\le m$ and consider any pair $x,y \in \hat{I_i} \cap C$ of distinct vertices.
  By definition, they are connected by a path~$Q$ in~$\hat{G}$,
  thus \cref{clm:idempotent-short-paths} gives that they are also connected by a path~$Q'$ in $X_{i-1} \cup X_i$ with $V(Q') \cap I_i = V(Q) \cap I_i$.
  Note that the persistent vertices of~$\G$ are all contained in~$I_i$.
  Thus, since~$Q$ avoids all persistent vertices, so does~$Q'$, meaning that~$Q'$ is in fact contained in~$\hat{G}$, and thus in the connected component~$C$.

  Therefore, for any $1<i\le m$ and any~$x,y$ in the adhesion~$I_i \cap C$,
  there is a path~$P_{i,x,y}$ connecting~$x$ to $y$ in~$\hat{G}$, which furthermore is contained in $\hat{X_{i-1}} \cup \hat{X_i}$. We let $\Pc$ denote the family of such paths~$P_{i,x,y}$.
  Clearly, $\Pc$ captures the adhesions of $(P,\beta)$.
  Recall that $\hat{X_i},\hat{X_j}$ are disjoint whenever $|i-j| \ge 2$.
  Thus, for every $1\leq j\le m$, the only paths of~$\Pc$ that may intersect $\hat{X_j}$ are of of the form~$P_{i,x,y}$ for some $j-1 \le i \le j+2$. As for each~$1<i\leq m$, $\Pc$ contains at most~$\binom{k}{2}$ paths of the form~$P_{i,x,y}$, the edge congestion of~$\Pc$ is then at most~$4\binom{k}{2} \le 2k^2$.
  
  In order to apply Theorem \ref{thm:bn-tree-decomp-paths}, it remains to bound the basis-number of the torsos of $(P,\beta)$. For this, we fix $i\in [m]$ and let $H_i$ denote the torso associated to the bag $\hat{X_i}\cap C$ of $(P,\beta)$. 
  First, note that $H_i$ is obtained from the graph $\hat{G_i}[C]$ by adding at most $2\binom{k}{2} \le k^2$ edges between interface vertices.
  Moreover, since~$C$ is a connected component of~$\hat{G}$,
  the restriction~$\hat{G_i}[C]$ is the union of some connected components of~$\hat{G_i}$, which immediately gives
  \[ \bn(\hat{G_i}[C]) \le \bn(\hat{G_i}) \le b. \]
  Thus, by \cref{lem:bn-edits}, after adding the~$k^2$ edges between interface vertices we have $\bn(H_i)\leq b + O\left(\log^2k\right)$.
  
  We have thus shown that~$(P, \beta)$ is a path-decomposition of~$\hat{G}[C]$ whose torsos have basis number at most $b+O\left(\log^2k\right)$,
  and whose adhesions are captured by a path system of congestion~$2k^2$.
  By \cref{thm:bn-tree-decomp-paths}, this implies that
  \begin{align*}
    \bn(\hat{G}[C]) & = (4k^2 + 1)\left(b + O(\log^2k)\right) \\
                    & = O\left(b\cdot k^2 \log^2k\right).
  \end{align*}
  Since this holds for each connected component~$C$ of~$\hat{G}$, we obtain the same bound for~$\bn(\hat{G})$ itself.
\end{proof}

With these two lemmas, we obtain a bound by induction on the $h$-rank.
\begin{lemma}
  \label{lem:bn-hrank}
  Consider $\G = \G_1 \odot \dots \odot \G_m$, where $\G_1,\dots,\G_m$ are bi-interface graphs with arity~$k$,
  such that the word $\G_1 \cdots \G_m$ has $h$-rank at most~$d$.
  Assume that for each~$i$, $\bn(\hat{\G_i}) \le b$, and $b \ge 1$. Then $\bn(\hat{\G}) \le b \cdot 2^{O(d \log k)}$.
\end{lemma}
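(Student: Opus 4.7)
The plan is to proceed by induction on $d$, feeding the already-established Lemmas \ref{lem:bn-binary} and \ref{lem:bn-idempotent} into the recursive structure provided by the definition of $h$-rank.

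For the base case $d = 0$, the word $\G_1 \cdots \G_m$ is a single letter, so $m = 1$ and $\hat{\G} = \hat{\G_1}$, giving $\bn(\hat{\G}) \le b$, which matches the claimed bound.

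For the inductive step, by the definition of $h$-rank, there is either a binary or an $h$-idempotent factorisation $\G_1 \cdots \G_m = u_1 \cdots u_r$ (with $r = 2$ or $r = m$ respectively), such that each factor~$u_j$ has $h$-rank at most~$d-1$. Writing $\mathbf{U}_j$ for the bi-interface graph obtained by glueing the letters of~$u_j$ in order, the inductive hypothesis applied to each $u_j$ yields $\bn(\hat{\mathbf{U}_j}) \le b \cdot 2^{c(d-1)\log k}$ for some absolute constant $c > 0$ fixed once and for all. In the binary case, Lemma~\ref{lem:bn-binary} applied to $\G = \mathbf{U}_1 \odot \mathbf{U}_2$ gives
\[ \bn(\hat{\G}) \;\le\; 2 \cdot b \cdot 2^{c(d-1)\log k} + O(k). \]
In the idempotent case, all $\mathbf{U}_j$ share the same idempotent connectivity abstraction (since $h$ is a semigroup homomorphism, $\abstr{\mathbf{U}_j} = h(u_j) = e$ with $e \cdot e = e$), so Lemma~\ref{lem:bn-idempotent} applied to the sequence $\mathbf{U}_1, \dots, \mathbf{U}_r$ gives
\[ \bn(\hat{\G}) \;\le\; O\!\left( b \cdot 2^{c(d-1)\log k} \cdot k^2 \log^2 k \right). \]

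In either case, since $b \ge 1$, the bound is dominated by $b \cdot 2^{c(d-1)\log k} \cdot C \cdot k^2 \log^2 k$ for some absolute constant $C$. Since $C \cdot k^2 \log^2 k = 2^{O(\log k)}$, we may choose the constant $c$ in the induction hypothesis large enough (independent of $d$, $k$, $b$) so that $C \cdot k^2 \log^2 k \le 2^{c \log k}$, yielding $\bn(\hat{\G}) \le b \cdot 2^{c d \log k}$ and closing the induction. The resulting bound is $b \cdot 2^{O(d \log k)}$, as required.

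The only subtlety in this proof is the bookkeeping of the constants: one must fix the absolute constant hidden in $2^{O(d \log k)}$ at the start and check that both the binary recurrence ($f(d) \le 2 f(d-1) + O(k)$) and the idempotent recurrence ($f(d) \le O(f(d-1) \cdot k^2 \log^2 k)$) close under this choice. The condition $b \ge 1$ is essential here to absorb the additive $O(k)$ term from the binary case into the multiplicative bound.
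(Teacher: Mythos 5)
Your proof is correct and follows essentially the same approach as the paper: both do an induction on the $h$-rank $d$, using \cref{lem:bn-binary} and \cref{lem:bn-idempotent} to close a multiplicative recurrence of the form $f(d) \le f(d-1) \cdot \mathrm{poly}(k)$, with $b \ge 1$ serving to absorb the additive $O(k)$ term from the binary case. The only (inconsequential) difference is presentational: the paper packages the polynomial factor into a single $f(k)$ chosen to dominate both cases, whereas you carry the $O(\cdot)$ constants explicitly and fix $c$ at the end.
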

\begin{proof}
  Let~$f(k)$ denote the polynomial bound implicitly given by \cref{lem:bn-binary,lem:bn-idempotent}.
  We prove by induction on~$d$ that $\bn(\hat{\G}) \le b \cdot (f(k))^d$.
  Since~$f$ is polynomial, this gives $\bn(\hat{\G}) \le b \cdot 2^{O(d \log k)}$.
  In the base case $d=0$, the word $\G_1 \cdots \G_m$ consists of a single letter, i.e.~$m=1$ and $\G = \G_1$.
  We thus have $\bn(\hat{\G})\leq b$ as desired.

  Assume now that~$d>0$.
  By definition, the word $w \eqdef \G_1 \cdots \G_m$ has a factorisation $w = w_1 \cdots w_\ell$ that is either binary or $h$-idempotent,
  in which each~$w_j$ has $h$-rank at most~$d-1$.
  If $w_j = \G_{i_j} \cdots \G_{i_{j+1}-1}$, consider the glueing $\G'_j \eqdef \G_{i_j} \odot \dots \odot \G_{i_{j+1}-1}$.
  Note that $\G = \G'_1 \odot \dots \odot \G'_\ell$.
  Since~$\G'_j$ has $h$-rank at most~$d-1$,
  the induction hypothesis gives $\bn(\hat{\G'_j}) \le b \cdot (f(k))^{d-1}$ for each $j\in [\ell]$.
  By \cref{lem:bn-binary,lem:bn-idempotent} (for the appropriate choice of~$f$), if $\bn(\hat{\G'_j}) \le b'$ for all~$j$, then $\bn(\hat{\G}) \le b' \cdot f(k)$.
  Thus $\bn(\hat{G}) \le (f(k))^d$ as desired.
\end{proof}

We finally obtain the main result of this section.
\begin{proof}[Proof of \cref{thm:bn-path-decomp}]
  Consider~$G$ with a path-decomposition~$(P,\beta)$, with adhesion at most~$k$, and call $t_1,\ldots,t_m$ the nodes of~$P$, appearing in this order.
  Assume that for any~$i$ and any subset $A \subseteq \beta(t_i)$, we have $\bn(G[A]) \le b$.

  By \cref{lem:path-decomp-glueing}, $G$ is the underlying graph of a glueing $\G=\G_1 \odot \dots \odot \G_m$ of bi-interface graphs with arity~$k$,
  such that for each $i\in [m]$, the underlying graph of~$\G_i$ is~$G[\beta(t_i)]$.
  In particular, for each $i\in [m]$, we have~$\bn(\hat{\G_i}) \le b$.
  \Cref{lem:bn-hrank} then gives $\bn(\hat{\G}) \le b \cdot 2^{O(d \log k)}$, where~$d$ denotes the $h$-rank of~$\G_1 \cdots \G_m$.
  As~$G$ has at most $k$ vertices which are persistent in $\G$, adding them back using \cref{lem:bn-edits} increases the basis number by at most~$2k$.
  Finally, by \cref{thm:simon}, we have $d\leq 3|\Abstr_k|=2^{O(k^2)}$, and thus
  \[ \bn(G) \le b \cdot 2^{O(d \log k)} + 2k \le b\cdot 2^{2^{O(k^2)}}. \qedhere \]
\end{proof}

\section{The bounded treewidth case}
\label{sec:tw}
This section proves that bounded treewidth graphs have bounded basis number.%
\twbn*
The proof of \cref{thm:bn-treewidth} relies on a technical result of Bojańczyk and Pilipczuk, namely \cite[Lemma~5.9]{mimi},
which implies that any bounded treewidth graph has a tree-decomposition satisfying condition~\ref{item:path-system} of \cref{thm:bn-tree-decomp-paths}, in which torsos have bounded pathwidth (see Lemma \ref{lem:magic-decomp}).
\Cref{sec: td2} will prove technical generalisations of these results, needed to establish the more general \cref{thm: main-td}.

Theorem \ref{thm:bn-treewidth} easily follows from the next result, combined with everything we proved so far (and Theorem \ref{thm:pathwidth-linear} for a polynomial bound). 
\begin{lemma}\label{lem:magic-decomp}
  Let~$G$ be a connected graph with treewidth~$k$.
  Then~$G$ has a tree-decomposition~$(T,\beta)$ whose torsos have pathwidth at most~$3k+1$,
  and whose adhesions are captured by a family~$\Pc$ of paths with congestion~$O(k^4)$.
\end{lemma}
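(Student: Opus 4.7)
The plan is to invoke \cite[Lemma 5.9]{mimi} of Bojańczyk and Pilipczuk and verify that its conclusions yield both the pathwidth and the congestion bounds claimed here. I would start with an optimal tree-decomposition $(T_0, \beta_0)$ of $G$ of width $k$, which by \cref{lem:sane} may be taken to be sane, so that each bag has size at most $k+1$ and each adhesion is contained in some bag of $(T_0, \beta_0)$. Their construction, applied to $(T_0,\beta_0)$, associates to each node $t$ of a derived tree $T$ a connected sub-structure of $T_0$, whose $(T_0, \beta_0)$-bags together cover $\beta(t)$, and simultaneously produces a path system capturing adhesions that arises from routing through the $(T_0,\beta_0)$-bags.

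For the pathwidth bound $3k+1$ on the torso of a node $t\in V(T)$, I would assemble a path-decomposition from the $(T_0,\beta_0)$-bags along the sub-structure associated to $t$: these bags, of size at most $k+1$ each, already form a path-decomposition of the subgraph induced on the margin of $t$, of width $k$. To recover the torso, one augments each bag of this decomposition with the at most $k$ adhesion vertices linking $t$ to its parent (contributing one factor of $k$), and with the children-adhesions that must be cliquified. Using sanity (\cref{rem: sane-adhesion}) and \cref{rmk:torso-adh}, each children-adhesion lies in some single $(T_0,\beta_0)$-bag on the sub-structure, so its cliquification only adds edges (not vertices) within one bag of the path-decomposition. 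Bookkeeping the three contributions—the running $(T_0,\beta_0)$-bag, its predecessor in the path-decomposition overlap, and the adhesion of $t$—gives the bound $3k+1$.

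For the path system $\Pc$ with congestion $O(k^4)$, I would build each $P_{t,u,v}$ by routing inside the subgraph of $G$ covered by the sub-structure of $T_0$ associated to $t$: the vertices $u,v$ both lie in the adhesion of~$t$, hence in a common $(T_0,\beta_0)$-bag, and connectivity within that sub-structure (which is inherited from $G$ since adhesion vertices cannot disconnect $G$ along the tree) provides the desired path. To bound congestion, observe that any fixed edge $e\in E(G)$ lies in only $O(k)$ adhesions of $(T,\beta)$ (by \cref{rmk:td-connected-subgraph}, since the subtree of $T$ whose bags contain $e$ is connected and the torso-pathwidth bound controls the number of new adhesions crossed), each such adhesion supplies $O(k^2)$ candidate pairs $u,v$, and a single edge can be traversed by $O(k)$ such paths per adhesion (again governed by how many bags of $(T_0,\beta_0)$ containing $e$ must be crossed), totalling $O(k^4)$.

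The main obstacle is extracting the precise output of \cite[Lemma 5.9]{mimi} in a form that simultaneously ensures a path-decomposition structure on each torso and provides a low-congestion path system in the original graph $G$. The Bojańczyk-Pilipczuk construction was tailored to their proof of Courcelle's conjecture, so one must trace through their argument to see that the ``sub-paths'' of $T_0$ used to define $(T,\beta)$ actually produce both features at once. Once the construction is in hand, the pathwidth calculation is immediate and the congestion estimate reduces to the counting above; the quantitative constants ($3k+1$ and $O(k^4)$) are then forced by the sanity preprocessing and the combinatorics of adhesions.
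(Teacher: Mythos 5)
Your instinct to invoke \cite[Lemma~5.9]{mimi} is right, but both quantitative conclusions are not established by what you describe, and in fact the key mechanism is missing.

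First, the pathwidth bound. You attempt to re-derive a path-decomposition of the marginal graph directly from the $(T_0,\beta_0)$-bags, asserting it has width~$k$ and then trying to reach~$3k+1$ by ad-hoc bookkeeping. But the sub-structure of~$T_0$ associated to a node of~$T/X$ is not a path, and the marginal graph's pathwidth of~$2k+1$ is a non-trivial \emph{conclusion} of the Bojańczyk--Pilipczuk construction (item~\ref{item:margin-pw} of \cref{lem: fameux59}), not something that follows from a single chain of bags. The intended computation is simply: marginal graph has pathwidth~$\le 2k+1$, add back the~$\le k$ adhesion vertices of~$x$, get the torso with pathwidth~$\le 3k+1$.

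Second, and more seriously, the path system and its congestion. You propose to build $P_{t,u,v}$ by ``routing inside the subgraph covered by the sub-structure,'' ignoring the explicit path families~$\{\Pc_x\}_{x\in X}$ that \cref{lem: fameux59} produces. This throws away the single ingredient that makes the congestion controllable, namely condition~\ref{item:paths-congestion} bounding~$|\mathrm{load}_x|\le 2k^3$. Your heuristic that ``a fixed edge lies in only $O(k)$ adhesions of~$(T,\beta)$'' is not true in general — in a decomposition of bounded adhesion a pair of vertices may sit in arbitrarily many adhesions — and your claim that ``a single edge can be traversed by $O(k)$ such paths per adhesion'' has no justification without the load bound. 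The actual argument takes the paths $P_{x,u}\in\Pc_x$ (all sharing a source in the margin of~$x$, targeting the adhesion vertices), sets $Q_{x,u,v}\subseteq P_{x,u}\cup P_{x,v}$, and then shows via the load condition that each vertex appears as a non-target in at most $2k^3+k$ paths of $\bigcup_y\Pc_y$, whence each edge is in at most $4k^3+2k$ such paths and, multiplying by the at most~$k$ pairs using each~$P_{x,u}$, lands at~$O(k^4)$. Without using the $\Pc_x$ and $\mathrm{load}_x$, the~$O(k^4)$ is unsupported.
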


\begin{proof}[Proof of \Cref{thm:bn-treewidth} using \Cref{lem:magic-decomp}]
 Let $(T,\beta)$ be the tree-decomposition given by \cref{lem:magic-decomp}.
 The torsos of~$(T,\beta)$ have pathwidth at most~$3k+1$, hence basis number at most~$12k+4$ by \cref{thm:pathwidth-linear}.
 With the path system of congestion~$O(k^4)$, \cref{thm:bn-tree-decomp-paths} then gives $\bn(G) = O(k^5)$.
\end{proof}

The remainder of this section gives the precise statement of \cite[Lemma~5.9]{mimi},
and shows through routine checks that it implies \cref{lem:magic-decomp}.
As the statement is quite technical, we will faithfully reproduce it below.
One definition is needed first.

Recall that the tree-decompositions we consider are rooted.
Let $(T,\beta)$ be a tree-decomposition of a graph $G$, and let $X\subseteq V(T)$ be a subset of nodes that contains the root $r$ of $T$.
The quotient tree~$T/X$ is the tree with nodes~$X$ such that the ancestor--descendant relation in~$T/X$ is the same as in~$T$, restricted to~$X$.
The \emph{quotient} of the decomposition $(T,\beta)$ with respect to $X$ is the tree-decomposition $(T/X, \beta_X)$ of $G$, where for each $t\in X$, $\beta_X(t)$ is the union of all bags $\beta(s)$ from $(T, \beta)$ such that $s$ is a descendant of $t$ (including $t$ itself), and such that $t$ is the closest ancestor of $s$ in $X$ (see Figure \ref{fig: quotient}).
Observe that for any~$x \in X$, the $T$-adhesion of~$x$ and its $T/X$-adhesion are equal,
and similarly the $T$-component and $T/X$-component are equal.
 
\begin{figure}[htb]
  \centering  
  \includegraphics[scale=0.63]{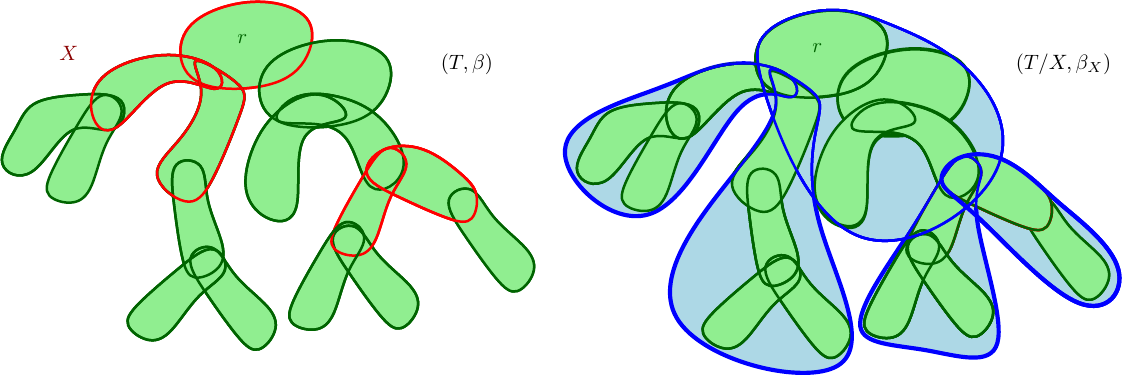}
  \caption{Left: In green, a tree-decomposition $(T,\beta)$, and in red, the bags corresponding to a subset $X\subseteq V(T)$. Right: In blue, the quotient $(T/X, \beta_X)$.}  
  \label{fig: quotient}
\end{figure}

\begin{lemma}[{\cite[Lemma~5.9]{mimi}}]
  \label{lem: fameux59}
  Let~$(T, \beta)$ be a width~$k$ sane tree-decomposition of a connected graph~$G$.
  Then one can find a set of nodes~$X$ in~$T$, which includes the root of~$T$,
  and families of paths $\{\Pc_x\}_{x \in X}$, such that every $x \in X$ satisfies the following.
  \begin{enumerate}
    \item \label{item:margin-pw} The $T/X$-marginal graph of~$x$ has pathwidth at most~$2k+1$.
    \item Every element $P \in \Pc_x$ is a path in~$G$ that satisfies:
      \begin{enumerate}
        \item \label{item:path-component} except for its endpoints, $P$ visits only vertices in the $T$-component of~$x$, and
        \item if $y \in X$ is a strict descendant of~$x$, then the restriction of~$P$ to the $T$-component of~$y$ yields a subpath of~$P$.
      \end{enumerate}
    \item \label{item:path-endpoints} All paths in~$\Pc_x$ have the same source, which belongs to the $T$-margin of~$x$.
      Conversely, each vertex of the $T$-adhesion of~$x$ is the target of some path from~$\Pc_x$.
    \item \label{item:paths-congestion} The following set has size at most~$2k^3$:
      \begin{align*}
      \mathrm{load}_x \eqdef \{(P,y): ~& P \in \Pc_y,\\
      & \text{$y \in X$ is a strict ancestor of~$x$} \\ & \text{and~$P$ intersects the $T$-component of~$x$}\}.
      \end{align*}
  \end{enumerate}
\end{lemma}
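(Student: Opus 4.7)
The plan is to construct $X$ and the path families $\{\Pc_x\}_{x \in X}$ by a top-down recursion on~$T$. Insert the root of~$T$ into~$X$, construct its path family, and then recurse into a carefully chosen set of descendants that become its $T/X$-children. At a node $x \in X$, saneness guarantees that the $T$-margin of~$x$ is nonempty, the $T$-component~$C_x$ is connected, and each vertex of the $T$-adhesion of~$x$ has a neighbour in~$C_x$. Fix any vertex $s_x$ in the $T$-margin; then for every vertex~$v$ in the $T$-adhesion of~$x$ (at most $k+1$ such vertices), choose an $s_x v$-path whose internal vertices are contained in~$C_x$. The resulting family~$\Pc_x$ satisfies conditions~\ref{item:path-endpoints} and~\ref{item:path-component} by construction.

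The delicate step is deciding which strict descendants of~$x$ to insert into~$X$ so as to meet the three remaining conditions. For~\ref{item:margin-pw}, I would arrange the $T/X$-children of~$x$ in a linear order induced by~$\Pc_x$: order them according to the first moment a path of~$\Pc_x$ enters their $T$-components. With this ordering, the $T/X$-marginal graph of~$x$ admits a natural path-decomposition whose bags have the form $\beta(x) \cup (\text{adhesion of one $T/X$-child})$, of size at most $2(k+1)$, giving width~$2k+1$. To obtain this clean linear structure, we must promote into~$X$ every subtree whose omission would leave material outside both~$\beta(x)$ and the adhesion of any single $T/X$-child.

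Condition~(2b) is preserved by a similar promotion rule: if a path in~$\Pc_y$ of an ancestor $y \in X$ of~$x$ enters and leaves a subtree more than once, then the root of that subtree is promoted into~$X$, reducing the number of crossings. Finally, condition~\ref{item:paths-congestion} is enforced by triggering the promotion whenever the load of a candidate node would reach~$2k^3$, stopping the accumulation of ancestor paths at deeper nodes. The three rules must be applied in a coordinated fashion and the process must terminate; this is ensured by a natural potential function, e.g.\ the number of edges of~$G$ not yet assigned to an $X$-node, which strictly decreases at each promotion.

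The hard part is showing that the load bound~$2k^3$ is achievable without blowing up the pathwidth. Each ancestor~$y$ contributes at most $|\Pc_y| \le k+1$ paths, each of which must enter~$C_x$ through the $T$-adhesion of the $T/X$-child of~$y$ containing~$x$, which has size at most $k+1$. A pigeonhole argument bounding how many ancestor-path pairs can share an adhesion vertex, combined with the promotion rule triggered at load~$2k^3$, should yield the desired inequality. Reconciling the three coexisting promotion rules --- pathwidth control, subpath restriction along ancestor paths, and the load bound --- while keeping all constants tight, is the main technical obstacle and the reason the statement of the lemma is so involved.
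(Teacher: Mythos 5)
Your proposal is a top-down recursion with ad hoc ``promotion'' rules, whereas the paper (following \cite{mimi}) goes through a completely different machine: the incremental prefix-extension lemma (Lemma~5.2 of \cite{mimi}, generalised here as \cref{lem: fameux52-general}), which is built on thin networks, cutedge sequences, and Menger's theorem. The paper does not reproduce the final assembly from Lemma~5.2 to Lemma~5.9 --- it defers it to \cite{mimi} --- but the key mechanisms it does expose make clear that your sketch cannot be patched into a proof as stated. Three concrete problems.

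First, the pathwidth argument is wrong. You propose a path-decomposition of the $T/X$-marginal graph of~$x$ with bags of the form $\beta(x) \cup (\text{adhesion of one $T/X$-child})$. But the $T/X$-bag of~$x$ is the \emph{union} of all $T$-bags $\beta(s)$ for descendants~$s$ of~$x$ whose closest $X$-ancestor is~$x$, so the marginal graph typically contains many vertices outside~$\beta(x)$. Your proposed bags simply do not cover it. The actual bound comes from the prefix $Z$ returned by Lemma~5.2: its hypertorso admits a path-decomposition of adhesion at most~$2k$ whose bags lie inside original $T$-bags, and that decomposition transfers to the marginal graph.

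Second, promotion does not repair condition~(2b). If a path $P \in \Pc_y$ enters and leaves a descendant's $T$-component several times, inserting the root of that subtree into~$X$ changes which nodes are in~$X$ but does not change~$P$; the restriction of~$P$ to that component is still disconnected, so~(2b) is still violated. What is actually needed is to \emph{build} the paths so that they cross each descendant component at most once. In \cite{mimi} this is achieved structurally: the two paths come from Menger's theorem applied to a thin network, and they only repeat cutedges, which (after the prefix is grown to eliminate cutedges in $\mathfrak{e}(T,\beta)$) exactly correspond to the child adhesions; this is what prevents multiple crossings.

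Third, the load bound and termination are unsubstantiated. ``Trigger a promotion when the load would reach~$2k^3$'' is circular: you need to show that the promotion rule is always \emph{available} (i.e.\ that there is a node to promote that actually lowers the contribution of ancestor paths), and that the combined effect of three simultaneous promotion rules converges without one rule undoing the progress of another. Your proposed potential function (edges ``not yet assigned to an $X$-node'') is not defined, and it is not monotone under promotion in any obvious way. In \cite{mimi} the bound $2k^3$ is not obtained by a stopping rule; it is a consequence of the combinatorics of the two Menger paths and the cutedge sequence, with the prefix-extension process of Lemma~5.2 providing both the existence of the next $X$-layer and the termination argument (the prefix grows inside a finite tree).

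In short, the proposal correctly identifies the three constraints to be balanced, but it replaces the paper's core technical device --- the prefix/thin-network/Menger machinery --- with promotion heuristics that do not, as stated, enforce any of the three conditions, and it provides no working termination argument.
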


\begin{proof}[Proof of \cref{lem:magic-decomp}]
  Consider~$G$ a connected graph of treewidth~$k$.
  Then by \cref{lem:sane}, $G$ has a sane tree-decomposition $(T,\beta)$ of width~$k$.
  Applying \cref{lem: fameux59} to~$(T,\beta)$ yields a set of nodes~$X$ and path families~$\sg{\Pc_x}_{x \in X}$.
  Without loss of generality, each~$\Pc_x$ consists of at most~$k$ paths~$P_{x,u}$ where~$u$ ranges over the adhesion of~$x$, and the target of~$P_{x,u}$ is~$u$.
  Our goal is to show that~$(T/X,\beta_X)$ satisfies the conclusion of \cref{lem:magic-decomp}.

  First, recall that for every $x\in X$, the $T/X$-marginal graph of $x$ is obtained from its $T/X$-torso by removing the at most~$k$ vertices from the adhesion of~$x$.
  \Cref{lem: fameux59} gives that the marginal graph of~$x$ has pathwidth at most~$2k+1$,
  and thus after adding back the adhesion vertices, the $T/X$-torso of~$x$ has pathwidth at most~$3k+1$ as desired.

  We now construct a family $\Pc$ of paths with bounded congestion that captures the adhesions of $(T/X,\beta_X)$.
  For every node $x\in X$, and every pair $u,v$ of distinct vertices in the $T/X$-adhesion of $x$,
  consider $P_{x,u},P_{x,v} \in \Pc_x$ the two paths having respectively $u$ and $v$ as targets.
  By condition~\labelcref{item:path-endpoints} of \cref{lem: fameux59}, $P_{x,u}$ and $P_{x,v}$ have the same source.
  Thus we can choose some $uv$-path~$Q_{x,u,v}$ contained in~$P_{x,u} \cup P_{x,v}$.
  We let $\Pc$ denote the family of all the paths $Q_{x,u,v}$ for each $x\in X$ and every pair $u,v$ of distinct vertices in the $T/X$-adhesion of~$x$ (this family of paths is seen as a multiset, as in \cref{sec: td1}).
  Clearly, $\Pc$ captures the adhesions of $(T/X, \beta_X)$. We will now bound the congestion of $\Pc$.

  \begin{claim}
  \label{clm:load}
    For any vertex~$v \in V(G)$, there are at most~$2k^3+k$ paths in $\bigcup_{y \in X} \Pc_y$ containing~$v$ as a non-target vertex.
  \end{claim}
  \begin{claimproof}
    Consider the subtree $T_v$ of~$T/X$ consisting of all the nodes $z\in X$ whose bag~$\bag(z)$ contains~$v$, and call~$x$ its root (i.e.\ its vertex that is closest to the root of $T/X$).
    Note that~$v$ belongs to the $T/X$-margin of~$x$, and that $v$ belongs to the adhesions of all strict descendants of $x$ in $T_v$.
    We claim that if for some $y\in X$, some path~$P \in \Pc_y$ contains~$v$ and~$v$ is not the target of~$P$, then~$y$ is an ancestor of~$x$ (with possibly $y=x$).
    Indeed, if $v$ is not the target of $P$, then Conditions~\labelcref{item:path-component} and~\labelcref{item:path-endpoints} of \cref{lem: fameux59} give that $v$ belongs to the component of~$y$. This implies that every bag of $(T/X, \beta_X)$ that contains~$v$ is a descendant of~$y$, which is in particular the case of~$x$.

    Thus, if~$P \in \Pc_y$ contains~$v$ as a non-target vertex, either~$x=y$ or~$y$ is a strict ancestor of~$x$, and in the latter case we have~$(P,y) \in \textrm{load}_x$.
    As there are at most~$2k^3$ pairs in~$\textrm{load}_x$, and at most~$k$ paths in~$\Pc_x$, this proves the claim.
  \end{claimproof}
  
  It follows from Claim \ref{clm:load} that each edge~$uv \in E(G)$ is contained in at most~$4k^3+2k$ paths in $\bigcup_{y \in X} \Pc_Y$.
  Indeed, each path~$P$ using the edge~$uv$ contains either~$u$ or~$v$ as a non-target vertex.

  Finally, observe that for each $x\in X$, each path~$P_{x,u} \in \Pc_x$ is used by at most~$k$ paths in~$\Pc$,
  namely those of the form~$Q_{x,u,v}$ with~$v$ in the adhesion of~$x$.
  It follows that each edge is used by at most~$k \cdot (4k^3 + 2k)$ paths in~$\Pc$, concluding the proof that $\Pc$ has congestion $O(k^4)$.
\end{proof}

\section{Tree-decompositions with large bags}
\label{sec: td2}
We now generalise the ideas of \cref{sec:tw} to show that under appropriate hypotheses, tree-decompositions with bags of unbounded size (but bounded adhesions) preserve bounded basis number.
Recall that a class of graphs $\Gc$ is \emph{monotone} if it is closed under taking subgraphs.

\tdbn*

In what follows, for a class of graphs $\Gc$ and some $k\geq 0$, we let $\Gc^{+k}$ denote the class of those graphs $G$ for which there exists a subset $A$ of vertices (called \emph{apices}) of size at most $k$, such that $G-A\in \Gc$.

To prove \cref{thm: main-td}, we want to generalise \cref{lem:magic-decomp} by allowing bags of unbounded size, and thus also \cite[Lemma~5.9]{mimi}.
Though technical, the proof is a fairly direct adaptation of the arguments of \cite[Section~5]{mimi},
by replacing conditions of the form `the tree-decomposition~$(T,\beta)$ has bounded width'
by `torsos of~$(T,\beta)$ are in~$\Gc^{+O(k)}$'.
Precisely, we generalise \cref{lem:magic-decomp} as follows.
\begin{lemma}\label{lem:magic-decomp-general}
  Let $\Gc$ be a monotone class of graphs and $k\in \mathbb N$.
  Consider~$G$ a connected graph that admits a tree-decomposition with adhesion~$k$ and whose torsos are all in $\Gc$.
  Then~$G$ has a tree-decomposition~$(T,\beta)$ such that each torso has a path-decomposition with adhesion~$3k$ and parts in $\Gc^{+2k}$,
  and such that the adhesions of $(T,\beta)$ are captured by a family~$\Pc$ of paths with congestion~$O(k^4)$.
\end{lemma}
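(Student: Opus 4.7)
The plan is to generalise \cite[Lemma~5.9]{mimi} to the setting of tree-decompositions with bounded adhesion but possibly unbounded width, whose torsos lie in a monotone class $\Gc$, and then derive the statement from this generalised lemma in exactly the same way that \cref{lem:magic-decomp} was derived from \cref{lem: fameux59}.

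More precisely, I would first prove the following analogue of \cref{lem: fameux59}: given a sane tree-decomposition $(T_0, \beta_0)$ of a connected graph $G$ with adhesion at most $k$ and all torsos in $\Gc$, one can find a set of nodes $X \subseteq V(T_0)$ containing the root, and path families $\{\Pc_x\}_{x \in X}$, satisfying conditions \labelcref{item:path-component,item:path-endpoints,item:paths-congestion} (and all their subconditions) of \cref{lem: fameux59} verbatim, with condition~\labelcref{item:margin-pw} replaced by the structural statement that \emph{the $T_0/X$-marginal graph of $x$ admits a path-decomposition of adhesion at most $2k$ whose parts lie in $\Gc^{+k}$}. The recursive construction in \cite[Section~5]{mimi} uses the bounded width hypothesis only to control the pathwidth of the marginal graph, by exhibiting a path-decomposition whose bags are of the form ``one bag of $(T_0, \beta_0)$ plus an $O(k)$-sized set of interface vertices''. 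In our setting each such bag is a subgraph of a single torso of $(T_0, \beta_0)$, which lies in $\Gc$, augmented by at most $O(k)$ vertices that play the role of apices; by monotonicity of $\Gc$, every part ends up in $\Gc^{+k}$. The rest of the argument is purely combinatorial: the construction of the families $\Pc_x$ and the load bound $|\mathrm{load}_x| \le 2k^3$ carry over unchanged.

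Once this generalisation is in place, the proof mirrors the deduction of \cref{lem:magic-decomp} from \cref{lem: fameux59}. First, apply \cref{lem:sane} together with \cref{rem: sane-adhesion} to the input tree-decomposition of $G$ to obtain a sane tree-decomposition $(T_0, \beta_0)$ with adhesion at most $k$ whose torsos are subgraphs of the original torsos, and hence in $\Gc$ by monotonicity. Applying the generalised \cref{lem: fameux59} yields a set $X$ and path families $\{\Pc_x\}_{x\in X}$. Set $(T, \beta) \coloneqq (T_0/X, (\beta_0)_X)$. For each $x \in X$, the torso of $x$ in $(T, \beta)$ is obtained from its $T_0/X$-marginal graph by adding back its at most $k$ adhesion vertices (and cliquifying each adhesion at $x$); inserting those vertices into every bag of the path-decomposition of the marginal graph produces a path-decomposition of the torso with adhesion at most $3k$ and parts in $\Gc^{+2k}$, as required. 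The path family $\Pc$ capturing the adhesions of $(T, \beta)$ and the $O(k^4)$ congestion bound are then produced word-for-word as in the proof of \cref{lem:magic-decomp}: for every $x \in X$ and every pair $u,v$ in the adhesion of $x$, pick a $uv$-path $Q_{x,u,v} \subseteq P_{x,u} \cup P_{x,v}$, and control the congestion using the load bound together with the analogue of \cref{clm:load}, whose proof only relies on the bounded adhesion hypothesis.

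The main obstacle is the generalisation of \cref{lem: fameux59} itself: while the high-level recursive construction of \cite{mimi} carries over, one has to verify carefully that every stage of the recursion introduces only $O(k)$ ``apex-like'' vertices on top of a single input torso when assembling the path-decomposition of the marginal graph. Once this bookkeeping is established, the monotonicity of $\Gc$ does the rest, and the remainder of the proof is a routine transcription of the original argument with ``bounded pathwidth of the marginal graph'' systematically replaced by ``admits a path-decomposition with bounded adhesion whose parts lie in $\Gc^{+O(k)}$''.
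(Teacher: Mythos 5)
Your plan is essentially the same as the paper's: reduce to a sane tree-decomposition via \cref{lem:sane} and \cref{rem: sane-adhesion}, generalise \cite[Lemma~5.9]{mimi} to adhesion-$k$ decompositions with torsos in a monotone class, take the quotient decomposition, add back adhesion vertices to pass from marginal graphs to torsos, and reuse the path-system construction from \cref{lem:magic-decomp} verbatim. This matches the paper's argument step by step.

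There is one point where your writeup glosses over a genuine subtlety that the paper handles explicitly. You formulate the generalised \cite[Lemma~5.9]{mimi} directly as ``the $T_0/X$-marginal graph of $x$ admits a path-decomposition with parts in $\Gc^{+k}$'' and justify this by ``each such bag is a subgraph of a single torso of $(T_0,\beta_0)$, augmented by $O(k)$ apices; by monotonicity, every part ends up in $\Gc^{+k}$.'' But the marginal graph is not a subgraph of $G$: it contains extra edges coming from cliquifying the adhesions of children in the quotient decomposition $(T_0/X,\beta_X)$. Monotonicity of $\Gc$ alone therefore does not place the induced parts in $\Gc$ --- one must first check that those extra edges are already present in the torsos of $(T_0,\beta_0)$. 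The paper's \cref{lem: fameux59-general} sidesteps this by phrasing the conclusion purely in terms of the vertex-set system $\Vc$ (subsets of bags of $(T_0,\beta_0)$), keeping the recursion entirely combinatorial; the translation to $\Gc^{+k}$ is then done once in the deduction of \cref{lem:magic-decomp-general}, using \cref{rmk:torso-adh}: every adhesion of the quotient decomposition is an adhesion of the original one, so any cliquification edge of $H[A]$ with both endpoints in a bag $\beta_0(t)$ is already an edge of the $(T_0,\beta_0)$-torso of $t$, and $H[A]$ is genuinely a subgraph of that torso. You should either incorporate this observation into your recursion (which is more awkward, since the recursion itself never sees the quotient's cliquification) or adopt the paper's two-step formulation. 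Beyond that, you correctly identify that the real work is the generalisation of \cite[Lemma~5.9]{mimi}, which in the paper requires carefully reworking the thinness invariant and the substitution lemma \cite[Lemma~5.8]{mimi}; your proposal only sketches why this should go through.
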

\begin{proof}[Proof of \Cref{thm: main-td} using \Cref{lem:magic-decomp-general}]
  In the tree-decomposition~$(T,\beta)$ given by \cref{lem:magic-decomp-general}, consider a torso~$H$.
  It has a path-decomposition with parts in~$\Gc^{+2k}$ and adhesion at most~$3k$.
  By \cref{lem:bn-edits}, graphs in~$\Gc^{+2k}$ have basis number at most~$b+4k$,
  hence \cref{thm:pd-linear} gives
  \[ \bn(H) \le b + 4k + O(k\log^2k) \le b + O(k\log^2k). \]
  Thus~$(T,\bag)$ is a path-decomposition with adhesions captured by a path system of congestion~$O(k^4)$, and torsos of basis number~$b+O(k\log^2k)$.
  \Cref{thm:bn-tree-decomp-paths} then gives $\bn(G) = O((b+k\log^2k)\cdot k^4)$.
\end{proof}

\Cref{lem:magic-decomp-general} follows from the next technical lemma, which is a generalisation of \cite[Lemma~5.9]{mimi}.
The statement requires a couple of simple definitions about set systems.

For a fixed ground set~$V$, a family $\Vc\subseteq 2^V$ of subsets of $V$ is \emph{downwards-closed} if it is closed under taking subsets.
Moreover, for every integer $k\geq 0$, we let $\mathcal V^{+k}$ denote the set of subsets $U$ of $V$ for which there exists some subset $A\subseteq U$ of size at most $k$ such that $U\setminus A\in \Vc$.

\begin{lemma}[{Generalisation of \cite[Lemma~5.9]{mimi}}]
 \label{lem: fameux59-general}
  Let~$(T, \beta)$ be a sane tree-decomposition of a connected graph~$G$ with adhesion~$k$, and let 
  $$\Vc\eqdef \sg{U\subseteq V(G): \exists t\in V(T), U\subseteq \beta(t)}$$
  denote the set of all subsets of bags of $(T,\beta)$.
  Then one can find a set of nodes~$X$ in~$T$ which includes the root of~$T$,
  and families of paths $\{\Pc_x\}_{x \in X}$, such that for every $x \in X$, the family $\Pc_x$ satisfies properties \ref{item:path-component} to \ref{item:paths-congestion} from Lemma \ref{lem: fameux59}, together with the following property.
  \begin{enumerate}
    \item[1']\label{item:margin-path-decomposition} The $T/X$-marginal graph of~$x$ admits a path-decomposition of adhesion at most $2k$, in which every bag belongs to $\Vc^{+k}$.
  \end{enumerate}
\end{lemma}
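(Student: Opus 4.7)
The plan is to transcribe the proof of \cite[Lemma~5.9]{mimi} essentially verbatim, observing that the bounded-width hypothesis of the original statement enters the argument only at a single point: the bound on the size of each bag in the path-decomposition of the $T/X$-marginal graph of a node $x \in X$. All other ingredients — namely the selection of $X$ via Simon's Factorisation Forests Theorem applied to the connectivity abstractions along descending paths in $T$, the construction of the path families $\Pc_x$, and the verification of the path-system conditions~\ref{item:path-component}--\ref{item:paths-congestion} — rely only on the bounded adhesion hypothesis and so transfer without modification. Hence the work is to reuse their construction wholesale and only re-examine the condition on bags.

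Recall that in \cite{mimi}, the tree-decomposition is analysed by viewing it as a sequence of glueings of bi-interface graphs of arity~$k$ indexed along descending paths in~$T$; Simon's theorem applied to the semigroup $(\Abstr_k, \cdot)$ yields a bounded-depth factorisation forest, and the set $X$ consists of the distinguished witness nodes arising from idempotent factorisations. The family $\Pc_x$ then consists of paths routed through the $T$-component of $x$ to the vertices of the $T$-adhesion of $x$, as guaranteed by the idempotence property. By construction, the resulting path-decomposition of the $T/X$-marginal graph of $x$ orders its bags along the factorisation path, and each bag has the form $\beta(t) \cup A$ for some node $t$ in the $T$-component of $x$ together with a set $A$ of at most $k$ interface vertices that track the relevant position in the factorisation. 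Since the $T$-adhesion of $x$ is shared by every bag of this path-decomposition, it contributes uniformly to the apex set. Hence every bag lies in $\Vc^{+k}$, and the adhesion between two consecutive bags, being contained in the union of the two successive interface sets, has size at most $2k$. This is precisely the new condition 1'.

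The main obstacle is careful bookkeeping: one must verify that, in the construction of \cite{mimi}, the augmentations added to each $\beta(t)$ amount to at most $k$ genuinely new vertices (as opposed to the potentially larger set comprising both the interface of the factorisation and the adhesion of $x$). This requires a line-by-line re-examination of Section~5 of \cite{mimi}, distinguishing between (i) vertices already in the chosen $\beta(t)$, (ii) vertices of the $T$-adhesion of $x$ shared by every bag, and (iii) interface vertices introduced by the idempotent factorisation. Once the right partition is identified, the bounds on apex size and adhesion follow directly. The congestion bound of property~\ref{item:paths-congestion} is unaffected, as it is a purely combinatorial consequence of the bounded depth of the factorisation forest and the size of $\Abstr_k$, and is reproved as in \cite{mimi} without change.
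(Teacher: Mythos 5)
Your high-level plan — adapt the proof of [mimi, Lemma~5.9] by replacing bounded-width conditions with the $\Vc^{+k}$ condition — matches the paper's strategy, but the description of how the original proof works is not accurate, and as a result you have misidentified where the difficulty lies.

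First, the role of Simon's Factorisation Forests Theorem is misplaced. In [mimi] (and in this paper), Simon's theorem enters only in the treatment of \emph{path}-decompositions (Section~4 here, Section~4 of [mimi]); it is not the mechanism behind Lemma~5.9, and the set~$X$ is not chosen as ``witness nodes of idempotent factorisations.'' Lemma~5.9 is proven through a recursive tree construction whose local engine is Lemma~5.2: given two vertices $u,v$ in the root bag, one iteratively grows a prefix $Z$ of $T$, proving that the hypertorso $\hyp(T,Z)$ remains \emph{thin} after each absorption of a boundary node; thinness is shown to yield the required path-decomposition (Lemma~5.7). The set $X$ and the path families $\Pc_x$ are then assembled from these prefixes, not from a factorisation forest.

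Second, and more importantly, the claim that ``the bounded-width hypothesis enters the argument only at a single point'' so that everything else transfers ``without modification'' is not correct. The substitution lemma (Lemma~5.8 of [mimi], Lemma~\ref{lem: 58-general} here) genuinely breaks: in the original proof, at several places one inserts \emph{all} of $V(K)$ into \emph{every} bag of an existing path-decomposition to build the path-decompositions of the new bridges and appendices. When bags have bounded size this is harmless, but here $K$ is a hypertorso whose vertex set can be arbitrarily large, so this operation would destroy the $\Vc^{+k}$ condition. The paper's fix is to place $V(K)$ as a single new bag of the decomposition and to add only a fixed-size subset (a single cutedge $f_j$, of size at most $k$) to the remaining bags. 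This is a concrete local repair to the argument that your sketch does not identify. The rest of your bookkeeping — bags of the form $\beta(t) \cup A$ with $|A| \le k$, ``the $T$-adhesion of $x$ shared by every bag'' — is a guess that does not correspond to how the bags of $\hyp(T,Z)$ are actually formed (note also that the $T/X$-\emph{marginal} graph has the adhesion of $x$ removed, so it cannot sit inside the bags). To make the argument rigorous you need the $(\Vc,k)$-thinness invariant and the re-proved substitution lemma, not just a line-by-line audit of where $k+1$ appears.
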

\begin{proof}[Proof of \Cref{lem:magic-decomp-general} using \cref{lem: fameux59-general}]
  The proof is largely the same as that of \cref{lem:magic-decomp}, but there are a few subtleties that require additional care.

  Firstly, we may assume that the given tree-decomposition is sane.
  Indeed, given any tree-decomposition~$(T,\bag)$, \cref{lem:sane,rem: sane-adhesion} give a sane tree-decomposition~$(T',\bag')$
  such that bags and adhesions of~$(T',\bag')$ are subsets of bags and adhesions respectively of~$(T,\bag)$.
  This implies that torsos of~$(T',\bag')$ are subgraphs of torsos of~$(T,\bag)$, hence are in~$\Gc$.

  Assuming now~$(T,\bag)$ to be sane, we apply \cref{lem: fameux59-general}, yielding nodes $X \subseteq T$ and path systems~$\{\Pc_x\}_{x \in X}$.
  Consider a $T/X$-marginal graph~$H$.
  We claim that for any $A \in \Vc$, we have $H[A] \in \Gc$.
  First, by definition~$A$ is contained in a bag~$\bag(t)$ for some~$t \in T$.
  Secondly, if~$uv$ is an edge of~$H[A]$ that is not in~$G$, then the pair~$\{u,v\}$ is contained in some adhesion of~$(T/X,\bag_X)$, which is also an adhesion of~$(T,\bag)$.
  \Cref{rmk:torso-adh} then shows that~$uv$ is also an edge in the $(T,\bag)$-torso of~$t$.
  This proves that~$H[A]$ is a subgraph of the torso of~$t$, and is in~$\Gc$.
  Now by assumption~$H$ has a path-decomposition with bags in~$\Vc^{+k}$, which means parts in~$\Gc^{+k}$.
  Recall that~$H$ is a $T/X$-marginal graph, i.e.\ a $T/X$-torso with the adhesion vertices removed.
  Adding back the~$k$ adhesion vertices, we obtain that each $T/X$-torso has a path-decomposition with adhesion at most~$3k$ and parts in~$\Gc^{+2k}$, as desired.

  It only remains to construct the path system~$\Pc$; we omit this part of the proof as it is exactly the same as in the case of \cref{lem:magic-decomp}.
\end{proof}

The rest of this section is dedicated to the proof of \cref{lem: fameux59-general}.
In \Cref{sec:hypergraph-defs}, we introduce the general definitions and notations from~\cite{mimi} used in the proof, mainly related to hypergraphs.
\Cref{sec: fameux59} explains how to generalise \cite[Lemma~5.9]{mimi}, keeping the same conventions and proof structure as in~\cite{mimi}.
We present the modified definitions and statements, but omit or only sketch many proofs, whenever they are the same as the original.
A single important technical lemma requires additional care, namely \cref{lem: 58-general} (corresponding to \cite[Lemma~5.8]{mimi}), in which careful modifications of path-decompositions occur.
\Cref{sec:thinness-substitution} is dedicated to the proof of the latter.

\subsection{Definitions}
\label{sec:hypergraph-defs}
\paragraph{Hypergraphs} A \emph{hypergraph} is a pair $H=(V(H), E(H))$, where $V(H)$ denote the \emph{vertices} of $H$, and $E(H)$ is a multiset of subsets of $V(H)$, called the \emph{hyperedges} of $H$. Note that in $E(H)$, we allow repetitions. We only consider finite hypergraphs here.
An hypergraph $H'$ is a subhypergraph of $H$ if $V(H') \subseteq V(H)$ and $E(H')\subseteq  E(H)$.
For a subset $X\subseteq V(H)$ of vertices, $H[X]$ denotes the subhypergraph \emph{induced} by $X$, whose vertex set is $X$, and which contains all the hyperedges $e$ of $H$ such that $e\subseteq X$.

A \emph{path} in $H$ is a sequence $P=(u_1, e_1, u_2, \ldots, u_p, e_p, u_{p+1})$ such that the $u_i$'s are pairwise distinct vertices, the $e_i$'s are pairwise distinct hyperedges, and for every $i\in \sg{1,\ldots, p}$, we have $u_i, u_{i+1}\in e_i$. When such a path exists, we say that $u_1$ and $u_{p+1}$ are \emph{connected} in $H$, and call $P$ a \emph{$u_1u_{p+1}$-path}. The notions of connectedness and connected components naturally generalise in hypergraphs.
Let $A,B$ be two subsets of vertices of a hypergraph $H$. We say that a subset~$X$ of vertices of $H$ \emph{separates} $A$ from $B$ if every path in $H$ connecting a vertex from $A\setminus X$ to a vertex from $B\setminus X$ contains a vertex in $X$. 

Tree-decompositions are defined for hypergraphs in the same way as for graphs, except that we ask instead of Item \ref{item: td2} that every hyperedge must be included in some bag. For a tree-decomposition $(T,\beta)$ of a hypergraph $H$, the \emph{part} of a node $t\in V(T)$ is the induced sub-hypergraph~$H[\beta(t)]$.

\paragraph{Hypertorsos, prefixes}
Let $(T,\beta)$ be a tree-decomposition of a graph (or hypergraph)~$G$.
For a node $t \in T$, the \emph{hypertorso} of~$t$ is defined as the hypergraph obtained from the part~$G[\beta(t)]$ by adding one hyperedge~$e_z$ equal to the adhesion of~$z$ for each child~$z$ of~$t$ (recall that~$T$ is rooted).
Note that several children of~$t$ can have the same adhesion set, hence the hypertorso can have multi-hyperedges.
Also, the hypertorso of~$t$ does not have an hyperedge for the adhesion of~$t$, only for the adhesions of its children.
The point is that, unlike the standard notion of torso, there is a one-to-one correspondence between hyperedges of the hypertorso that are not in~$G$, and adhesions of children of~$t$.
This is the entire motivation for using hypergraphs in this proof.

A \emph{prefix} of $T$ is a subset $Z$ of nodes of $T$ which is closed under taking ancestors. For a prefix $Z$ of $T$, let $\partial Z$ denote the set of nodes of $T-Z$ having their parent in~$Z$.
Let $\bag(Z) = \bigcup_{z \in Z} \bag(z)$ denote the union of bags of~$Z$.
We more generally define the hypertorso of the prefix~$Z$ as the hypergraph $\hyp(T,Z)$ obtained from the induced subgraph~$G[\bag(Z)]$
by adding one hyperedge $e_z$ equal to the adhesion of~$z$ in $(T,\beta)$ for each $z\in \partial Z$.

We denote by $\mathfrak{e}(T,\beta)$ the multiset of hyperedges $e\subseteq V(G)$ of all hypertorsos of $(T,\beta)$ that are not edges of~$G$.
In other words, $\mathfrak{e}(T,\beta)$ is the multiset of adhesions~$e_z$ of~$(T,\beta)$, seen as hyperedges.

\subsection{Adapting Lemma \ref{lem: fameux59}}
\label{sec: fameux59}
We sketch the proof of \cref{lem: fameux59-general} in the remainder in this section, following closely \cite[Section 5]{mimi}.
The first step is the following lemma, which allows to find two paths between two specified vertices
while in a sense spreading out the path congestion between different subtrees.

\begin{lemma}[{Generalisation of \cite[Lemma 5.2]{mimi}}]
\label{lem: fameux52-general}
  Let $(T, \beta)$ be a sane tree-decomposition of a connected graph $G$ with adhesion at most $k$,
  and $$\Vc\eqdef \sg{U\subseteq V(G): \exists t\in V(T), U\subseteq \beta(t)}$$ be the set of all subsets of the bags of $(T,\beta)$.
  Let $u,v$ be vertices in the root bag of $(T,\beta)$. Then, there exists a non-empty prefix $Z$ of $T$ with the following properties.
 \begin{enumerate}
  \item\label{item: hypertorso} $\hyp(T,Z)$ has a path-decomposition with adhesion at most~$2k$ whose bags are all in $\Vc^{+k}$, and
  \item\label{item: 2paths} there are two $uv$-paths~$P_1,P_2$ in $\hyp(T,Z)$ such that no edge of $\mathfrak{e}(T,\beta)$ belongs to both~$P_1$ and~$P_2$.
 \end{enumerate}
\end{lemma}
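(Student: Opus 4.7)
The plan is to construct the prefix $Z$, the path-decomposition of $\hyp(T, Z)$, and the two paths $P_1, P_2$ together, by a greedy downward exploration of $T$ starting from the root. Initialize $Z_0 = \sg{r}$, where $r$ is the root: then $\hyp(T, Z_0)$ is $G[\bag(r)]$ augmented with one hyperedge $e_z$ for every child $z$ of $r$, and it admits the trivial one-bag path-decomposition whose only bag $\bag(r)$ lies in $\Vc$, trivially satisfying condition~\ref{item: hypertorso}. We then iteratively test and possibly enlarge $Z$.

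At each stage, given the current prefix $Z$, I would check whether there exist two $uv$-paths in $\hyp(T, Z)$ sharing no hyperedge of $\mathfrak{e}(T, \beta)$. Treating $G$-edges as ``free'' and each hyperedge of $\mathfrak{e}(T, \beta)$ as having capacity~$1$ (equivalently, contracting all $G$-edges and applying undirected Menger), the existence of such a pair is equivalent to the non-existence of a single hyperedge in $\mathfrak{e}(T, \beta)$ separating $u$ from $v$ in $\hyp(T, Z)$. If two such paths exist, we return the current $Z$ together with its path-decomposition and these paths. Otherwise, a single bottleneck hyperedge $e_z \in \mathfrak{e}(T, \beta)$ exists for some node $z \in \partial Z$: we grow $Z$ to $Z \cup Z'$, where $Z'$ is a prefix of the subtree of $T$ rooted at $z$ obtained by recursively applying the lemma to that subtree, with $u, v$ replaced by two well-chosen vertices $u', v'$ in the adhesion of $z$ so that two almost edge-disjoint $u'v'$-paths in the sub-hypertorso, concatenated with $uu'$- and $vv'$-connections already present in $\hyp(T, Z)$, yield two almost edge-disjoint $uv$-paths in the enlarged hypertorso, resolving the bottleneck at $e_z$.

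The two path-decompositions are then glued along the adhesion of $z$ (size $\le k$): the rightmost bag of the old one is extended to include this adhesion as up to $k$ ``apex'' vertices (keeping it in $\Vc^{+k}$), and the leftmost bag of the new one is similarly adjusted. The resulting adhesion at the seam is exactly the adhesion of $z$, bounded by $k$; combined with the recursively obtained adhesions of size at most $2k$ inside each piece, the overall path-decomposition has adhesion at most $2k$, and all bags remain in $\Vc^{+k}$, giving condition~\ref{item: hypertorso}. Termination is immediate since each expansion strictly enlarges the explored part of the finite tree $T$.

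The main obstacle, as in the proof of \cite[Lemma~5.2]{mimi}, will be the Menger-type claim in the mixed graph/hypergraph $\hyp(T, Z)$ with asymmetric capacities: we must argue cleanly that whenever two $uv$-paths sharing no hyperedge of $\mathfrak{e}(T, \beta)$ do not exist, the obstruction concentrates in a single adhesion hyperedge $e_z$, the expansion of which strictly improves the connectivity between $u$ and $v$. A secondary but delicate bookkeeping obstacle is maintaining, throughout the construction, the invariant that the rightmost bag of the current path-decomposition contains in its entirety the adhesion of whichever node in $\partial Z$ we may open next, so that the gluing step stays consistent. Both of these are direct adaptations of the corresponding steps in~\cite{mimi}: the difference is that ``bounded width'' there is replaced here by ``parts in $\Vc$ plus up to $k$ apex vertices'', consistently tracked as $\Vc^{+k}$ membership.
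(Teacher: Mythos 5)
Your overall iterative structure (start from $Z=\{r\}$, find two almost-disjoint $uv$-paths via Menger, otherwise pick a bottleneck hyperedge $e_z$ and open up the node $z$, terminate by finiteness of $T$) matches the paper. However, there is a genuine gap in the construction of the path-decomposition of $\hyp(T,Z)$, which is where the real work in this lemma lives.

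You propose to glue the path-decomposition of the old hypertorso with a path-decomposition of the newly opened subtree \emph{end-to-end}, ``extending the rightmost bag'' of the old one by the adhesion of $z$ as $k$ apices. This does not give a valid path-decomposition. The vertices of $e_z$ may already occur in a block of bags $B_i,\ldots,B_j$ somewhere in the \emph{middle} of the old path-decomposition (namely wherever the bridge containing $e_z$ is decomposed), with $j$ strictly smaller than the index of the rightmost bag. Forcing them into the rightmost bag breaks the connectedness condition for those vertices unless you also insert them into every bag in between, which would destroy the invariant that all bags lie in $\Vc^{+k}$. The ``invariant that the rightmost bag contains the adhesion of whichever node in $\partial Z$ we may open next'' that you flag as a ``secondary bookkeeping obstacle'' is in fact unachievable: distinct nodes in $\partial Z$ have incomparable adhesions, each pinned to a different location in the decomposition, and you cannot pre-load all of them into a single terminal bag of size $O(k)$.

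The paper resolves this by \emph{not} gluing end-to-end. Instead it maintains the structural invariant of $(\Vc,k)$-thinness of the associated network $\Hbb_Z$ (with the $u,v$ as source and sink). Thinness decomposes the hypergraph relative to its cutedge sequence into bridges $V_i$ and appendices $W_i$, each equipped with a path-decomposition whose \emph{endpoints} are aligned with the neighbouring cutedges, so that they can be concatenated in the cutedge order (\cref{lem: 57-generalisation}). The substitution step $\Hbb_Z[e \to K]$, where $K$ is the hypertorso of the newly opened node, then corresponds to splicing $K$ into the decomposition \emph{at the position of the cutedge $e$}, possibly splitting one bridge into several along new cutedges of $K$; the content of \cref{lem: 58-general} is exactly that this preserves thinness. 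Your recursive call to the lemma on the subtree at $z$ is also unnecessary: the paper only adds one node at a time, because the bottleneck cutedge $e_z$ corresponds to a node $z\in\partial Z$, and the substitution-preserves-thinness lemma keeps the invariant without any nested induction. Without the thinness invariant (or some equivalent device that controls \emph{where} in the path-decomposition the cutedge lives), the gluing step cannot be made sound.
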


We note that the two paths from Condition~\ref{item: 2paths} might be equal, in which case they must be paths from~$G[Z]$.
In particular, if $z$ is the root of $T$, observe that if $u$ and $v$ are already connected by a path included in $G[\beta(z)]$, then the set $Z\eqdef \sg{z}$ satisfies trivially the properties of Lemma \ref{lem: fameux52-general}.

\paragraph{Networks, cutedges}
As in \cite{mimi}, a \emph{network} $\Hbb$ is defined as a connected hypergraph $H$ with two distinguished vertices: a \emph{source} $s$ and a \emph{sink} $t$. A \emph{cutedge} in $\Hbb$ is an hyperedge $e\in E(H)$ appearing in every $st$-path. Note that every cutedge must have multiplicity $1$ in $E(H)$.

The following is a simple consequence of Menger's theorem.
\begin{lemma}[{\cite[Lemma~5.5]{mimi}}]
 \label{lem:menger-two-paths}
 In a network $\Hbb$ with source $s$ and sink $t$, there are two $st$-paths $P_1,P_2$
 such that an edge~$e$ is a cutedge of $\Hbb$ if and only if both~$P_1$ and~$P_2$ use~$e$.
\end{lemma}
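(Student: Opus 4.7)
The plan is to exploit the structure induced by the cutedges and apply Menger's theorem (in its hypergraph form) inside each ``block'' of hyperedges between consecutive cutedges.

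First, I would observe that the cutedges $e_1,\dots,e_r$ of $\Hbb$ admit a natural linear order: each cutedge splits $\Hbb$ into an $s$-side and a $t$-side, and for any two cutedges one must lie on one particular side of the other. This yields a total order and, correspondingly, decomposes the non-cutedges into blocks $B_0, B_1, \dots, B_r$, where $B_0$ is the connected component of $\Hbb$ minus the cutedges containing $s$, $B_r$ the one containing $t$, and $B_i$ for $0<i<r$ lies strictly between $e_i$ and $e_{i+1}$.

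Second, for each block $B_i$ I would form an auxiliary network by adjoining a super-source connected to the vertices of $e_i$ sitting in $V(B_i)$ and a super-sink connected to the vertices of $e_{i+1}$ sitting in $V(B_i)$ (and using $s$ or $t$ themselves at the boundary cases $i=0$ or $i=r$). The key observation is that this auxiliary network contains no cutedge: if some hyperedge $f$ of $B_i$ were to separate the super-source from the super-sink, then every $st$-path of $\Hbb$ — which must enter $B_i$ through $e_i$ and leave through $e_{i+1}$ — would be forced to traverse $f$, so $f$ itself would be a cutedge of $\Hbb$, contradicting the enumeration. By Menger's theorem applied to each auxiliary network, one obtains two edge-disjoint paths between the super-source and the super-sink, and removing the artificial edges gives two paths in $B_i$ from vertices of $e_i$ to vertices of $e_{i+1}$ that share no hyperedge of $B_i$.

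Finally, I would stitch the block paths together through the cutedges: $P_1$ starts at $s$, runs through $B_0$ along its first path to a vertex of $e_1$, traverses $e_1$ into $B_1$, follows the first path of $B_1$, and so on until reaching $t$; $P_2$ is constructed analogously using the second path of each block. By construction, every cutedge $e_i$ is used by both $P_1$ and $P_2$, while every non-cutedge lies inside some single block and is traversed by at most one of the two paths, which is exactly the claimed equivalence. The main delicate point I expect is the correct formulation of Menger's theorem for hypergraphs with multi-vertex ``boundary interfaces'' $e_i,e_{i+1}$; this is handled by the standard super-source/super-sink reduction, and a small additional check ensures that the concatenated walks are genuine paths, since the endpoints produced by Menger naturally lie on different sides of each cutedge.
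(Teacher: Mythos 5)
The paper does not reproduce a proof of this statement --- it is cited verbatim as \cite[Lemma~5.5]{mimi} --- so there is no in-paper argument to compare against. Your Menger-based plan is sound in outline and matches the paper's remark that this is ``a simple consequence of Menger's theorem''. However, one step is stated too loosely and would fail if read literally: you define $B_i$ (for $0<i<r$) as ``\emph{the} connected component'' lying between $e_i$ and $e_{i+1}$. After removing the cutedges there may be \emph{several} $(e_i,e_{i+1})$-bridges, together with appendices hanging off a single cutedge; this is exactly the dichotomy recorded in the paper's summary of \cite[Lemma~5.4]{mimi}. If $B_i$ were taken to be a single bridge, your ``no cutedge in the auxiliary network'' argument would be wrong: a hyperedge $f$ could separate $e_i$ from $e_{i+1}$ \emph{within one bridge} while $st$-paths in $\Hbb$ route through a different bridge, so $f$ need not be a cutedge of $\Hbb$. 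The fix is to take $B_i$ to be the \emph{union} of all $(e_i,e_{i+1})$-bridges; with that change, the contradiction you draw is correct, since every $st$-path must cross from $e_i$ to $e_{i+1}$ inside one of those bridges (appendices and already-used cutedges are dead ends). The remaining points --- the hyperedge-disjoint form of Menger via a super-source/super-sink (or flow) reduction, and vertex-distinctness of the stitched paths, which holds because the sets $B_0,\dots,B_r$ are pairwise vertex-disjoint --- are handled correctly.

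As an aside, the block decomposition and the ordering lemma \cite[Lemma~5.3]{mimi} can both be avoided: duplicate every cutedge of $\Hbb$ to form a hypergraph in which no single hyperedge separates $s$ from $t$; apply the hyperedge form of Menger once to obtain two hyperedge-disjoint $st$-paths there; note that each path must use at least one copy of each cutedge $e$ (otherwise $e$ would not be a cutedge of $\Hbb$), hence by disjointness each uses exactly one copy; identifying the two copies of each cutedge recovers $P_1,P_2$ in $\Hbb$ with precisely the required property.
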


By \cite[Lemma~5.3]{mimi}, for every network $\Hbb$, there is an ordering $e_1,\ldots,e_p$ of its cutedges such that for every $st$-path $P$, the cutedges appear in $P$ in this order. Then $(e_1,\ldots,e_p)$ is called the \emph{cutedge sequence} of $\Hbb$.

Let $\Hbb$ be a network with hypergraph $H$, cutedge sequence $(e_1, \ldots, e_p)$, source~$s$ and sink~$t$,
and denote $e_0\eqdef \sg{s}$ and $e_{p+1}\eqdef \sg{t}$.
By \cite[Lemma~5.4]{mimi}, every component $C$ of $H-\sg{e_1,\ldots, e_{p}}\eqdef (V(H), E(H)\setminus\sg{e_1,\ldots, e_{p}})$ is of exactly one of the following two types:
\begin{enumerate}[label = (\arabic*)]
 \item\label{item: bridges} either $C$ intersects exactly two of the hyperedges $e_0,\ldots,e_{p+1}$, in which case there exists some $i\in \sg{0,\ldots,p}$ such that these two hyperedges are $e_i, e_{i+1}$.
   Such a component is called an \emph{$(e_i,e_{i+1})$-bridge}.
 \item\label{item: appendices} or $C$ intersects exactly one of the hyperedges $e_0,\ldots,e_{p+1}$, in which case it must be one of the cutedges $e_i$ for some $i \in\sg{1,\ldots,p}$.
   Such components are called \emph{$e_i$-appendices}.
\end{enumerate}

\paragraph{Thin networks}
Let us now generalise the key invariant used in the proof of~\cite[Lemma~5.2]{mimi}, called \emph{$k$-thinness} (cf.~\cite[Definition~5.6]{mimi}).
Let~$U$ be a fixed ground set, $\Vc$ be a family of subsets of~$U$, and~$\Hbb$ be a network with hypergraph~$H$, cutedge sequence $(e_1,\ldots,e_p)$, source~$s$ and sink~$t$. We also set $e_0\eqdef \sg{s}$ and $e_{p+1}\eqdef \sg{t}$. For each $i\in \sg{0,\ldots,k}$, let~$V_i$ be the union of the vertex sets of all $(e_i, e_{i+1})$-bridges, and for each $i\in \sg{1,\ldots,p}$, let~$W_i$ be the union of the vertex sets of all the $e_i$-appendices.

We say that $\Hbb$ is \emph{$(\Vc,k)$-thin} if $V(H)\subseteq U$,
all the hyperedges of $H$ have size at most $k$, and:
\begin{enumerate}[label=(\alph*)]
 \item\label{item: linearV} for each $i\in \sg{0,\ldots,p}$, the hypergraph $H[V_i]$ has a path-decomposition of adhesion at most $2k$, whose leftmost bag contains $V_i\cap e_i$, whose rightmost bag contains $V_i\cap e_{i+1}$, and whose bags all belong to $\Vc^{+k}$;
 \item\label{item: linearW} for each $i\in \sg{1,\ldots,p}$, the hypergraph $H[W_i]$ has a path-decomposition of adhesion at most $k$, whose leftmost bag contains $W_i\cap e_i$, and whose bags all belong to $\Vc$.
\end{enumerate}

The next lemma shows that thinness implies the path-decomposition condition of \cref{lem: fameux52-general}.

\begin{lemma}[{Generalisation of \cite[Lemma~5.7]{mimi}}]\label{lem: 57-generalisation}
  Consider a ground set~$U$, a family~$\Vc$ of subsets of $U$ such that $\emptyset\in \Vc$,
  and a $(\Vc,k)$-thin network~$\Hbb$. Then~$\Hbb$ has a path-decomposition of adhesion at most~$2k$ whose bags all belong to~$\Vc^{+k}$.
\end{lemma}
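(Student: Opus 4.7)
The plan is to stitch together the path-decompositions supplied by the thinness conditions~\ref{item: linearV} and~\ref{item: linearW}, using the cutedges as glue between consecutive pieces. Let $(e_1, \ldots, e_p)$ be the cutedge sequence of $\Hbb$ with $e_0 \eqdef \{s\}$ and $e_{p+1} \eqdef \{t\}$, and let $V_0, \ldots, V_p$ and $W_1, \ldots, W_p$ be the vertex sets of the bridges and appendices. For each $i \in \{0, \ldots, p\}$, condition~\ref{item: linearV} gives a path-decomposition $\Pi_i$ of $H[V_i]$ of adhesion at most $2k$ with bags in $\Vc^{+k}$, whose leftmost and rightmost bags $L_i, R_i$ contain $V_i \cap e_i$ and $V_i \cap e_{i+1}$ respectively; for each $i$ with $W_i \neq \emptyset$, condition~\ref{item: linearW} gives a path-decomposition $\Xi_i$ of $H[W_i]$ of adhesion at most $k$ with bags in $\Vc$, whose leftmost bag contains $W_i \cap e_i$.

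The construction I will use concatenates these pieces in the order $\Pi_0, B_1, \Pi_1, B_2, \ldots, B_p, \Pi_p$, where the interlude $B_i$ is obtained from $\Xi_i$ by adding $e_i$ to every bag (or, if $W_i = \emptyset$, $B_i$ is a single bag equal to $e_i$). Since $|e_i| \leq k$ and $\emptyset \in \Vc$, every bag of $B_i$ remains in $\Vc^{+k}$. The internal adhesions of $\Xi_i$ grow by at most $|e_i| \leq k$, staying within the $2k$ budget, while adhesions between consecutive blocks factor through $e_i$: the overlap between $R_{i-1}$ and the first bag of $B_i$ lies in $R_{i-1} \cap e_i \subseteq V_{i-1} \cap e_i$, of size at most $k$, since $V_{i-1}$ and $W_i$ are disjoint as vertex sets of distinct components of $H - \{e_1, \ldots, e_p\}$. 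The symmetric transition into $\Pi_i$ works identically.

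Coverage of vertices and hyperedges will be routine: the components $V_0, \ldots, V_p, W_1, \ldots, W_p$ partition $V(H)$ (any isolated vertex of $H - \{e_1, \ldots, e_p\}$ that lies only on cutedges being classified as an appendix), each non-cutedge hyperedge is internal to one such component and covered by the corresponding $\Pi_j$ or $\Xi_j$, and each cutedge $e_i$ appears as a subset of every bag of $B_i$.

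The main obstacle, and the reason for this specific augmentation, will be the subtree condition. Consider $v \in V_j$ with $v \in e_{j+1}$: since $R_j \ni v$, the contiguous range of bags of $\Pi_j$ containing $v$ extends all the way to $R_j$, and then $v$ must appear continuously through the following interlude $B_{j+1}$. This is precisely why every bag of $\Xi_{j+1}$ gets augmented with $e_{j+1}$, rather than merely gluing $\Xi_{j+1}$ to $\Pi_j$ and $\Pi_{j+1}$ via single $e_{j+1}$ bags on either side: the naive approach would lose $v$ in the middle of $\Xi_{j+1}$. The symmetric case $v \in V_j \cap e_j$ is handled by the augmentation of $B_j$, while vertices of some $W_j$ or lying only on cutedges have their bag ranges confined to a single block, making their contiguity automatic.
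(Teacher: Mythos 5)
Your proof is correct and takes essentially the same route the paper (tersely) indicates: concatenating the path-decompositions of $H[V_i]$ with those of $H[W_i\cup e_i]$, where adding $e_i$ to every bag of the decomposition of $H[W_i]$ produces exactly a decomposition of $H[W_i\cup e_i]$, and the hypothesis $\emptyset\in\Vc$ is used precisely to handle the case $W_i=\emptyset$ by inserting the single bag $e_i\in\Vc^{+k}$. The contiguity argument you give, resting on the fact that a vertex of $V_j$ can only lie on $e_j$ or $e_{j+1}$ (since its component is a bridge meeting no other cutedge), is the key point and is sound.
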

We omit the proof, which immediately generalises the one of \cite[Lemma~5.7]{mimi}, and simply consists in properly concatenating the path-decompositions of the hypergraphs $H[V_i]$ with the ones of the hypergraphs $H[W_i\cup e_i]$.
The hypothesis that $\emptyset\in \Vc$ is required in order to allow using any set of size at most~$k$ (and in particular any cutedge) as bag in the path-decomposition.

\paragraph{Substitution}
Let $H, K$ be hypergraphs and $e$ be an hyperedge of $H$, such that $V(H)\cap V(K)=e$.
Define $H[e\rightarrow K]$ to be the hypergraph with vertex set $V(K)\cup V(H)$, whose hyperedges are all hyperedges of~$H$ and~$K$, except~$e$ (if $e$ occurs multiple times in $E(H)\cup E(K)$, then we only remove one of its occurrences).
When~$\Hbb$ is a network with underlying hypergraph~$H$,
we also write $\Hbb[e \to K]$ for the network with hypergraph $H[e \to K]$ and with the same source and sink as~$\Hbb$.

The most technical part of the proof is the following lemma, showing that under appropriate conditions, substitution preserves thinness.
\begin{restatable}[{Generalisation of \cite[Lemma 5.8]{mimi}}]{lemma}{thinsubst}
 \label{lem: 58-general}
 Let~$\Vc$ be a downwards-closed family of subsets of a fixed ground set $U$.
 Let~$\Hbb$ be a $(\Vc,k)$-thin network with underlying hypergraph~$H$, and~$K$ a hypergraph such that~$V(K)\in\Vc$, whose hyperedges all have order at most $k$.
 Moreover, assume that there exists a cutedge~$e$ of~$\Hbb$ such that $V(H)\cap V(K)=e$.
 Then, $\Hbb[e\rightarrow K]$ is also $(\Vc,k)$-thin.
\end{restatable}
We will detail the proof of \cref{lem: 58-general} in \cref{sec:thinness-substitution},
as it requires some minor modifications compared to the bounded treewidth case.
For now, we sketch how to use it to prove \cref{lem: fameux52-general}.
The proof is essentially the same as the one of \cite[Lemma~5.2]{mimi}.

\begin{proof}[Proof of Lemma \ref{lem: fameux52-general}]
  Recall that we are given a sane tree-decomposition $(T,\beta)$ with adhesions of size~$k$, and two vertices~$u,v$ in its root bag.
  Further, $\Vc$ is the set of all subsets of the bags of~$(T,\beta)$.

  To any non-empty prefix $Z$ of $T$, one associates the network~$\Hbb_Z$ with underlying hypergraph $\hyp(T,Z)$, source~$u$, and sink~$v$.
  Note that~$u,v$ are indeed vertices of~$\Hbb_Z$ since the root of~$T$ belongs to~$Z$.
  The construction incrementally builds a non-empty prefix~$Z$ of~$T$,
  maintaining the invariant that~$\Hbb_Z$ is $(\Vc,k)$-thin.
  By \cref{lem: 57-generalisation}, this implies that $\hyp(T,Z)$ has a path-decomposition satisfying Condition~\eqref{item: hypertorso} in the statement.

  Initially, $Z \eqdef \sg{r}$ contains only the root~$r$ of~$T$.
  Thus the vertex set of $\hyp(T,Z)$ is $\beta(r)$, which is in~$\Vc$, so~$\Hbb_Z$ is trivially $(\Vc,k)$-thin.
  Given now any prefix~$Z$ satisfying the previous conditions,
  one can apply \cref{lem:menger-two-paths} to the associated network to find two $uv$-paths~$P_1,P_2$ such that the only edges used by both~$P_1$ and~$P_2$ are the cutedges.
  If no edge of $\mathfrak{e}(T,\bag)$ is used by both~$P_1$ and~$P_2$, then Condition~\eqref{item: 2paths} holds, and we are done. 

  We now assume that this is not the case, meaning that there is some hyperedge $e \in \mathfrak{e}(T,\bag)$ which is a cutedge of $\Hbb_Z$.
  By definition of $\mathfrak{e}(T,\bag)$, this hyperedge~$e$ corresponds to the adhesion of some node~$z\in \partial Z$.
  Consider then $Z' \eqdef Z \cup \sg{z}$, which is still a prefix of~$T$.
  Also, let~$K$ denote the hypertorso of~$z$ in $(T,\beta)$.
  One may then check that the network $\Hbb_{Z'}$ is exactly $\Hbb_Z[e\rightarrow K]$.
  It follows by \cref{lem: 58-general} that $\Hbb_{Z'}$ is also $(\Vc,k)$-thin, allowing to continue with~$Z'$.
  As $T$ is finite, this process must eventually halt with a prefix~$Z$ satisfying Condition~\eqref{item: 2paths}, proving the result.
\end{proof}

Finally, the proof of \cref{lem: fameux59-general} using \cref{lem: fameux52-general},
while highly non-trivial, is exactly the same as the proof of \cite[Lemma~5.9]{mimi}. We thus omit it.

\subsection{Substitutions in thin networks}
\label{sec:thinness-substitution}
This section proves \cref{lem: 58-general}, which we restate here.
\thinsubst*

The proof is similar to \cite[Lemma 5.8]{mimi}, the only significant difference being that~$K$ no longer has bounded size.
In some cases, their proof added all of~$K$ to all bags of some path-decomposition,
which for us would break the condition that bags are in~$\Hc^{+O(k)}$.
In these cases, we will instead add~$K$ in a new bag, and add some fixed-size subset of~$K$ to all other bags.

\begin{proof}
  In the network~$\Hbb$, denote by~$s,t$ the source and sink, and by $(e_1,\dots,e_p)$ the cutedge sequence, as well as $e_0 \eqdef \sg{s}$ and $e_{p+1} \eqdef \sg{t}$.
  Let $\ell\in \sg{1,\ldots,p}$ be the index of~$e$, i.e.\ $e=e_{\ell}$.
  Call $\hH\eqdef H[e\rightarrow K]$ the substitution we are interested in,
  and $\hHbb \eqdef \Hbb[e \rightarrow K]$ the corresponding network (i.e.\ the network with hypergraph~$\hH$, source~$s$, and sink~$t$).
  As in the definition of thinness, for each $i\in \sg{0,\ldots,p}$, let $V_i$ denote the union of the vertex sets of all $(e_i, e_{i+1})$-bridges in $\Hbb$, and for each $i\in \sg{1,\ldots,p}$, let $W_i$ denote the union of the vertex sets of all $e_i$-appendices in $\Hbb$.
 
 As shown in \cite{mimi}, the cutedge sequence of $\hHbb$ is of the form
 \[(e_1,\ldots, e_{\ell-1}, f_1,\ldots, f_q, e_{\ell+1},\ldots, e_{p}),\]
 where $f_1,\ldots, f_q$ are hyperedges of $K$ (with potentially $q=0$ if there is no such cutedge). We treat separately the special case $q=0$ at the end of the proof.
 The proof from \cite[Lemma 5.8]{mimi} gives that for any $i\in \sg{0, \ldots, p}\setminus \sg{\ell-1, \ell}$, the $(e_i,e_{i+1})$ bridges of $\hHbb$ are exactly the $(e_i, e_{i+1})$-bridges of~$\Hbb$,
 and that for each $i\in \sg{1,\ldots, p}\setminus \sg{\ell}$, the $e_i$-appendices of $\hHbb$ are exactly the $e_i$-appendices of~$\Hbb$. Furthermore, the only other possible cutedge components of $\hHbb$ are of the following types (see Figure \ref{fig: L58}):
 \begin{itemize}
  \item for each $j\in \sg{1,\ldots,q-1}$, all $(f_j, f_{j+1})$-bridges of $\hHbb$ are included in $W_{\ell}\cup V(K)$;
  \item for each $j\in \sg{1,\ldots,q}$, all $f_j$-appendices of $\hHbb$ are included in $W_{\ell}\cup V(K)$;
  \item all $(e_{\ell-1}, f_1)$-bridges are included in $V_{\ell-1}\cup W_{\ell}\cup V(K)$,
    and conversely all of~$V_{\ell-1}$ is covered by $(e_{\ell-1}, f_1)$-bridges;
  \item all $(f_q, e_{\ell+1})$-bridges are included in $V_{\ell}\cup W_{\ell}\cup V(K)$,
    and conversely all of~$V_\ell$ is covered by $(f_q,e_{\ell+1})$-bridges.
 \end{itemize}
 For each $j\in \sg{0, \ldots , q}$, let $X_j$ denote the union of the vertex sets of the 
 $(f_j, f_{j+1})$-bridges in $\hHbb$, where we set $f_0 \eqdef  e_{\ell-1}$, and $f_{q+1} \eqdef  e_{\ell+1}$, and for each $j\in \sg{1,\ldots, q}$, let $Y_j$ denote the union of the vertex sets of the $f_j$-appendices in~$\hHbb$ (see Figure \ref{fig: L58}).

 Let us now check that these sets satisfy the thinness conditions.
 First note that~$\hH$ has no hyperedge of order more than~$k$, since neither~$H$ nor~$K$ do.
 For any $i\in \sg{0,\ldots,p}\setminus \sg{\ell-1, \ell}$,
 since the union of all $(e_i,e_{i+1})$-bridges is the same in~$\Hbb$ and~$\hHbb$, and since $\Hbb$ is $(\Vc, k)$-thin, 
 the subhypergraph induced by this union also has a path-decomposition satisfying Property~\ref{item: linearV} of thinness.
 For the same reason, the union of $e_i$-appendices of $\hHbb$ for $i\in \sg{1,\ldots,p}\setminus \sg{\ell}$ has a path-decomposition with Property~\ref{item: linearW}.

 It remains to exhibit path-decompositions of the hypergraphs $\hH[X_j], \hH[Y_j]$ satisfying the properties from the definition of $(\Vc, k)$-thin hypergraphs.
 
\begin{figure}[htb]
  \centering  
  \includegraphics[scale=0.68]{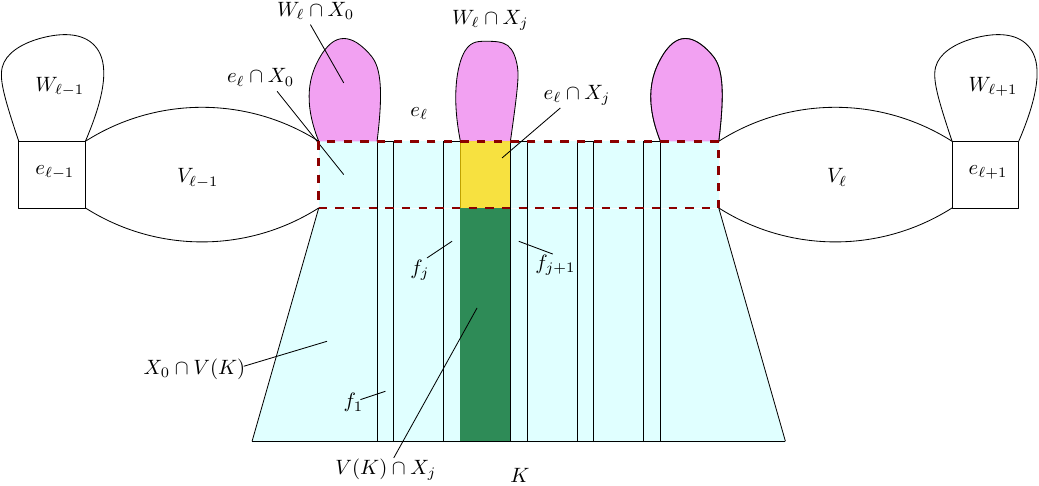}
  \caption{Configuration in the proof of Lemma \ref{lem: 58-general}. The dotted rectangle represents $e_{\ell}$, the region in light blue represents the vertices of $K$, and the purple regions represent $W_{\ell}$. Note that the hyperedges $f_j$ are not necessarily disjoint in general.}  
  \label{fig: L58}
\end{figure}

We start by considering some $j\in \sg{1,\ldots, q-1}$, and show that $\hH[X_j]$ has a path-decomposition satisfying \ref{item: linearV}.
Since~$\Hbb$ is $(\Vc,k)$-thin, there is a path-decomposition~$P$ of~$H[W_\ell]$ of adhesion at most~$k$,
whose leftmost bag contains $W_\ell \cap e_{\ell}$, and whose bags all belong to $\Vc$.
As $V(K)\cap V(H)=e_{\ell}$, the set $e_\ell$ separates~$V(K)$ from~$W_\ell$ in~$\hH$ (since $W_\ell \subseteq V(H)$), so we can add~$V(K)$ as a bag to the left of~$P$ to obtain a path-decomposition~$P'$ of $\hH[W_\ell \cup V(K)]$.
This first bag~$V(K)$ is in~$\Vc$ by assumption, and its adhesion is $e_\ell \cap W_\ell$, which has size at most~$k$.
Finally, since~$X_j$ is contained in~$W_\ell \cup V(K)$, we can restrict each bag of~$P'$ to its intersection with~$X_j$, yielding a path-decomposition~$P''$ of~$\hH[X_j]$, with adhesion at most~$k$ and bags in~$\Vc$ (we use here that~$\Vc$ is downwards-closed).
By construction, the leftmost bag of~$P''$ is $V(K) \cap X_j$, which contains~$f_j \cap X_j$.
Finally, we add $f_{j+1} \cap X_j$, which has size at most~$k$, to all bags of~$P''$.
The resulting decomposition of~$\hH[X_j]$ has adhesion at most~$2k$, bags in~$\Vc^{+k}$, and contains~$f_j \cap X_j$ in the leftmost bag and $f_{j+1} \cap X_j$ in the rightmost one, as required by \ref{item: linearV}.

Next we show that $\hH[X_0]$ (and symmetrically $\hH[X_{q}]$) admits a path-\break decomposition satisfying \ref{item: linearV}.
Recall that we have $X_0 \subseteq V_{\ell-1} \cup W_\ell \cup V(K)$, and~$V_{\ell-1} \subseteq X_0$.
Since~$\Hbb$ is $(\Vc,k)$-thin, there is a path-decomposition~$P_1$ of~$H[V_{\ell-1}]$ with adhesion at most~$2k$, bags in~$\Vc^{+k}$, and containing $e_{\ell-1} \cap V_{\ell-1}$ and $e_\ell \cap V_{\ell-1}$ in the leftmost and rightmost bags respectively.
This is also a decomposition of~$\hH[V_{\ell-1}]$, since the latter and~$H[V_{\ell-1}]$ can only differ by hyperedges contained in~$e_\ell$,
and $e_\ell \cap V_{\ell-1}$ appears in a bag of the decomposition.
Next, we consider the path-decomposition~$P'$ of $\hH[W_{\ell}\cup V(K)]$ with adhesion~$k$ and bags in~$\Vc$ from the previous paragraph, whose leftmost bag is~$V(K)$, and thus contains~$e_{\ell}$. We let $P_2$ be the path-decomposition of $\hH[W_{\ell}\cup V(K)]$ obtained after adding $f_1$ in every bag of $P'$. Then~$P_2$ as adhesion at most~$2k$, bags in~$\Vc^{+k}$,
its leftmost bag is~$V(K)$, and its rightmost bag contains~$f_1$.
Since the vertex set of~$e_\ell$ separates~$V_{\ell-1}$ from $W_\ell \cup V(K)$ in~$\hH$,
and the rightmost bag of~$P_1$ contains $V_{\ell-1} \cap e_\ell$, it follows that concatenating~$P_1$ with~$P_2$ yields a path-decomposition~$P$ of $\hH[V_{\ell-1} \cup W_\ell \cup V(K)]$.
Clearly, the bags of~$P$ are all in~$\Vc^{+k}$, and the leftmost and rightmost bags contain~$e_{\ell-1} \cap V_{\ell-1}$ and~$f_1$ respectively.
The adhesion between~$P_1$ and~$P_2$ is contained in~$e_\ell$, so all adhesions of~$P$ have size at most~$2k$.
Finally, if we restrict the decomposition~$P$ by keeping for each bag its intersection with~$X_0$, we obtain a path-decomposition that satisfies condition \ref{item: linearV}
(note that the fact that the leftmost bag contains~$e_{\ell-1} \cap X_0$ follows from the equality $e_{\ell-1} \cap X_0 = e_{\ell-1} \cap V_{\ell-1}$).

Finally, for $j\in \sg{1,\ldots, q}$, we show that $\hH[Y_j]$ has a path-decomposition satisfying \ref{item: linearW}.
Once again, consider the path-decomposition~$P'$ of $\hH[V(K)\cup W_\ell]$ defined two paragraphs ago, with adhesion~$k$, bags in~$\Vc$, and the leftmost bag being~$V(K)$, which in particular contains~$f_j$.
Restricting each bag of~$P'$ to its intersection with~$Y_j$ yields a path-decomposition of~$\hH[Y_j]$ satisfying~\ref{item: linearW}.

Combining the previous three paragraphs, we conclude that when $q>0$, then~$\hHbb$ is indeed $(\Vc,k)$-thin.
It remains to prove that it is also the case when $q=0$. For this special case, we can conclude in a similar way as in previous paragraph: assume that $q=0$, and let $X_0$ denote the union of all $(e_{\ell-1},e_{\ell+1})$-bridges in $\hHbb$.
As $\Hbb$ is $(\Vc,k)$-thin, we only have to show that $\hH[X_0]$ has a path-decomposition satisfying \ref{item: linearV}. As $\Hbb$ is $(\Vc,k)$-thin, there exist respectively some path-decompositions $P_1, P_2, P_3$ of $H[V_{\ell-1}], H[W_{\ell}]$ and $H[V_{\ell}]$, such that $P_1,P_3$ satisfy \ref{item: linearV}, and $P_2$ satisfies \ref{item: linearW}. We let $P'_2$ be the path-decomposition obtained from $P_2$ after adding the vertices of $e_{\ell}$ in all bags, and now consider the path-decomposition of $\hH[X_0]$ obtained by concatenating $P_1$, then $P'_2$, then one bag equal to $V(K)$, and finally $P_3$. As $V(K)$ belongs to $\Vc$, and $e_{\ell}=V(K)\cap V(H)$, this path-decomposition indeed satisfies \ref{item: linearV}.
\end{proof}
This completes the proof of \cref{lem: 58-general}, and with it \cref{thm: main-td}.

\section{Graphs excluding a minor}
\label{sec: minors}
In this section, we prove our main result that graphs excluding a fixed graph as a minor have bounded basis number.
\minorbn*
Using \cite{miraftab2026pathwidth} to obtain polynomial bounds in \cref{thm: main-td},
as well as the recent polynomial bounds for the Graph Minor Structure Theorem of Gorsky, Seweryn and Wiederrecht \cite{GSW25}, we obtain that the bound~$f_{\ref{thm: main-minor}}$ is a polynomial
$$f_{\ref{thm: main-minor}}(t)=O(t^c)$$ for some constant $c\leq 32210$.

\paragraph{Overview}
The proof of Theorem \ref{thm: main-minor} consists of a combination of the Graph Minor Structure Theorem of Robertson and Seymour \cite{RSXVI} (see Theorem \ref{thm: RS-structure}) together with Theorem \ref{thm: main-td}. For every fixed graph $H$, the Graph Minor Structure Theorem allows to find a tree-decomposition of any $H$-minor free graph $G$ with bounded adhesion, whose torsos are \emph{almost-embeddable} in some surface of bounded genus (see Subsection \ref{sec: almost-embeddable} for definitions).
Since almost-embeddability is a monotone property,
by \cref{thm: main-td}, we only need to prove that these almost-embeddable graphs have bounded basis number; this is \cref{thm: bn-almost}.

Let us sketch a false but enlightening proof of this fact.
Roughly speaking, a graph~$G$ is almost-embeddable in a surface~$\Sbb$ if it can be constructed as follows.
First take a graph~$G_0$ embedded in~$\Sbb$.
Pick a bounded number of faces~$F_1,\dots,F_k$ of~$G_0$,
and for each of them add a \emph{vortex}~$G_i$, i.e.\ a graph with bounded pathwidth whose intersection with~$G_0$ is exactly the cycle boundary of~$F_i$.
Finally, add a bounded number of \emph{apices}, i.e.\ vertices with arbitrary edges to the rest of the graph.
The base graph~$G_0$ has bounded basis number by \cref{thm:bn-surface}.
Since the vortices~$G_i$ have bounded pathwidth, they have bounded basis number by \cref{thm:pathwidth-linear},
and since~$G_i$ is glued to~$G_0$ on the cycle boundary of~$F_i$, \cref{lem:bn-connected-separator} shows that $G_0 \cup G_1 \cup \dots \cup G_k$ still has bounded basis number.
Finally, adding apices preserves bounded basis number by \cref{lem:bn-edits}, proving that~$\bn(G)$ is bounded.

The mistake here is to implicitly assume that the embedding of~$G_0$ is \emph{cellular} (i.e.\ that each face is homeomorphic to a disc), and thus that its face boundaries are cycles, or even connected.
When~$G_0$ is connected, one can always find a cellular embedding of~$G_0$ on a surface with genus no larger than that of~$\Sbb$,
but the faces of this new embedding may be different, and might not allow constructing the desired vortices. Moreover, $G_0$ might not even be connected.
It is also always possible to replace~$G$ and~$G_0$ by some supergraphs~$G'$ and~$G'_0$ in the construction so that the desired embedding of~$G'_0$ is cellular,
but since basis number is not monotone under taking subgraphs, proving that~$\bn(G')$ is bounded is not sufficient.
We thus need the more formal and general definition of almost-embeddability,
allowing a non-cellular embedding of~$G_0$, and vortices in faces that are not homeomorphic to disks, see \cref{sec: almost-embeddable}.
In particular, the subgraphs~$G_0 \cap G_i$ on which vortices are glued are not necessarily connected in this definition, preventing us from applying \cref{lem:bn-connected-separator}.
This is the main source of technicalities in the proof of Theorem~\ref{thm: bn-almost}.
To overcome this issue, we will use in particular the fact that any graph embedded in a surface of bounded genus whose dual has bounded diameter has bounded treewidth~\cite{Eppstein,Mazoit},
which we observe still holds for non-cellular embeddings (see \cref{coro: tw-dual}).

We give in Subsection \ref{sec: surfaces} basic definitions and results concerning graphs embedded in surfaces, as well as the aforementioned Corollary \ref{coro: tw-dual}. Subsection~\ref{sec: almost-embeddable} then presents the definition of almost-embeddability, and proves Theorem \ref{thm: bn-almost}. Finally, we conclude the proof of \cref{thm: main-minor} in Subsection~\ref{sec: proof-minor}, combining the Graph Minor Structure Theorem together with \cref{thm: main-td,thm: bn-almost}.

\subsection{Graphs embedded in surfaces}
\label{sec: surfaces}
We gather in this section some known results about graphs embeddable in surfaces of bounded genus, which we will need later in our proof of Theorem \ref{thm: main-minor}, when dealing with graphs almost embeddable in surfaces of bounded genus.

We allow graphs in this section to have loops and multi-edges (with finite multiplicities).
Note that removing multi-edges or loops to make a graph simple changes neither its treewidth, its diameter, nor its embeddability in a given surface.

\subsubsection{Topological embeddings in surfaces}
We briefly introduce here some basic concepts related to graphs embedded in surfaces, which will be used in our proofs, and refer to \cite{MoharThomassen} for a more comprehensive treatment of the topic.
We work in compact arc-connected surfaces~$\Sbb$ without boundary.
A simple \emph{curve} on~$\Sbb$ is an injective continuous mapping $\gamma: [0,1]\to \Sbb$.
The curve~$\gamma$ and its image $\gamma([0,1])$ in~$\Sbb$ are implicitly identified.
An open (resp.\ closed) \emph{disk} in~$\Sbb$ is a subspace homeomorphic to the open (resp.\ closed) unit disk in~$\mathbb R^2$.

Given a fixed surface $\mathbb S$, an \emph{embedding} of a graph $G$ in $\mathbb S$ is an injective mapping $\varphi$ sending vertices of $G$ to points of $\mathbb S$, and edges of $G$ to simple curves in $\mathbb S$ whose extremities are the images of the endpoints of each edge. In particular, the (topological) interiors of the images of edges must be pairwise disjoint, and also disjoint from the images of the vertices.
When we are given a pair $(G,\varphi)$, where $G$ is a graph and $\varphi$ is an embedding of $G$ in $\mathbb S$, we say that~$G$ is \emph{embedded} in $\mathbb S$.
We usually leave the embedding~$\varphi$ implicit, and whenever a graph is embedded in $\mathbb S$, we identify it with its image $\varphi(V(G)\cup E(G))$ in $\Sbb$.

If~$G$ is a (finite) graph embedded via~$\varphi$ in~$\Sbb$, then $\Sbb \setminus G$ is a finite collection of open disjoint arc-connected components, called the \emph{faces} of~$G$.
The topological boundary of a face~$F$ is denoted by~$\partial F$.
The boundary~$\partial F$ coincides with the image~$\varphi(H)$ of some subgraph $H \subseteq G$, and we also use~$\partial F$ to denote this subgraph.
The edges and vertices in~$\partial F$ are said to be \emph{incident} to~$F$.
A graph embedding in which all faces are open disks is called a \emph{cellular embedding}.
We stress out that we will need to consider non-cellular embeddings too.

Let~$G$ be embedded in~$\Sbb$ via~$\varphi$, and~$H$ a subgraph of~$G$ implicitly embedded by the restriction of~$\varphi$.
Then every face of $G$ is contained in some face of $H$.
If an embedded graph has at least one edge, then all its face boundaries contain at least one edge.

\subsubsection{Facial walks}
\label{sec: facial}
This subsection is dedicated to the proofs of Lemmas \ref{lem: boundaries-components} and \ref{lem: connecting-path-surface} below, which are very basic results on graphs embedded in surfaces. To prove them, we will first need some more notions about facial walks on surfaces, and use for this the formalism from \cite{MoharThomassen}.

\paragraph{Combinatorial embeddings}
A central concept introduced by Mohar and Thomassen \cite[Section 4]{MoharThomassen} when working on graphs in surfaces is the notion of \emph{combinatorial embedding}, which allows to describe topological embeddings of graphs on a surface in a purely combinatorial way, up to homeomorphism. We stress out that \cite{MoharThomassen} focuses on cellular embeddings of graphs on surfaces, while our proof will require to consider general embeddings. However, the notions and results we will use here also hold in this settings, and we refer to \cite{HR} for a more complete and formal treatment of general embeddings.

A \emph{combinatorial embedding} of a graph $G$ is a pair~$(\Pi, \varepsilon)$, where:
\begin{itemize}
 \item $\Pi=(\pi_{v})_{v\in V(G)}$ is a \emph{rotation system}, i.e.\  for each vertex $v\in V(G)$, $\pi_v:~E(G)\to E(G)$ is a cyclic permutation of the edges from $E(G)$ incident to $v$;
 \item $\varepsilon: E(G)\to \sg{-1,+1}$ is a \emph{signature} mapping.
\end{itemize}
For each vertex $v\in V(G)$ and edge $e\in E(G)$ incident to $v$, we call the cyclic sequence $(e,\pi_v(e),\pi_v^2(e),\ldots)$ the \emph{clockwise ordering} around $v$ in $(\Pi, \varepsilon)$,
and its inverse sequence the \emph{anticlockwise ordering} around $v$ in $(\Pi, \varepsilon)$.
Intuitively, if~$G$ is a graph embedded in a surface $\mathbb S$ and $v$ a vertex of $G$, then the circular ordering $\pi_v$ correspond to the clockwise/anticlockwise ordering induced in a small neighbourhood of $v$ on $\Sbb$ on the edges incident to $v$,
and for each edge $e\in E(G)$, the sign~$\varepsilon(e)$ indicates whether when travelling through the edge~$e$ on~$\Sbb$, the ``left and right'' sides around $e$ are twisted or not.
Each graph $G$ embedded in a surface~$\Sbb$ induces a natural canonical combinatorial embedding, which is unique, up to performing the following operation an arbitrary number of times: choose a vertex $v\in V(G)$, replace $\pi_v$ by $\pi_v^{-1}$ and all signatures of edges $e$ incident to $v$ by their inverse $-\varepsilon(e)$.

\paragraph{Facial walks}
Given a combinatorial embedding $(\Pi, \varepsilon)$ as above of a graph $G$, the \emph{face traversal procedure} is a way to produce (closed) walks starting from a vertex $v$ as follows.
For this definition, we consider edges as bi-oriented, i.e. if $e=uv\in E(G)$, the edge $e'=vu$ is also an edge of $E(G)$, distinct from $e$.
We always start the procedure in the ``clockwise mode''. Choose an initial edge $e=vu$ incident to $v$, and traverse $e$ from $v$ to $u$.
If $\varepsilon(e)=1$, then choose~$\pi_u(e)$ to be the next edge to traverse starting from $u$. If $\varepsilon(e)=-1$, then switch to the ``anticlockwise mode'' and choose $\pi_u^{-1}(e)$ to be the next edge to traverse starting from $u$.
Repeat then this procedure with the rule that traversing any edge~$e$ with signature $\varepsilon(e)=-1$ switches the mode between clockwise and anti-clockwise,
and the next edge to be traversed after~$e$ is the successor of~$e$ in the clockwise ordering~$\pi_v$ or the anticlockwise ordering~$\pi^{-1}_v$, depending on the mode.
The procedure stops when the initial configuration is reproduced, i.e.\ as soon as we traverse a second times the initial edge $e$, in the same direction and in clockwise mode.
This procedure is easily reversible, hence we never repeat twice the same configuration (i.e.\ we never traverse a given oriented edge in the same mode twice) before looping back to the initial configuration.

A \emph{facial walk} in $(\Pi, \varepsilon)$ is a walk $L=e_1\ldots e_k$ obtained after starting a face traversal from an initial edge $e_1$ (recall that the edges $e_i$ are oriented here). As facial walks are by definition of the procedure closed walks, we consider them up to inverse, and cyclic rotations, i.e.\ for every $m\geq 1$, $L$ is the same walk than the walks $e_k\ldots e_1$ and $e_{1+m}\ldots e_{k+m}$ (where indices are taken modulo $k$).
We also follow the convention that an isolated vertex in the graph gives rise to a trivial facial walk.

The crucial property that we will use is that if $G$ is a graph embedded on a surface $\Sbb$, and if $(\Pi, \varepsilon)$ denotes its associated canonical embedding, then every face boundary (in the topological sense) of $G$ is a disjoint union of facial walks (with respect to the above definition).
In the special case of cellular embeddings, the boundary of a face is a single facial walk instead (and in particular is connected).
Moreover, every edge $e$ of $G$, when considered as unoriented, appears twice in the collection of all facial walks of~$G$,
corresponding to the two sides of~$e$.
Precisely, one of the following three cases occurs:
\begin{itemize}
   \item Either $e$ appears exactly once in two face boundaries $\partial F, \partial F'$ for two distinct faces $F,F'$,
   \item or $e$ appears in two disjoint facial walks inside the boundary~$\partial F$ of a unique face~$F$,
   \item or $e$ appears twice (but in different orientations) within a single facial walk of a unique face~$F$.
     In this last case, $e$ is called \emph{singular}.
\end{itemize}
With this correspondence between facial walks and face boundaries in mind, the proofs of the next two lemmas will follow easily.

\begin{lemma}
 \label{lem: boundaries-components}
 Let $G$ be a graph embedded in a surface $\mathbb S$ with connected components $G_1,\ldots, G_m$.
 Then the set of facial walks in~$G$ is exactly the union of sets of facial walks of the embedded subgraphs~$G_i$.
\end{lemma}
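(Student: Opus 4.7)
The plan is to reduce the statement to the combinatorial description of facial walks recalled just before the lemma. Given the embedding of $G$ in $\Sbb$, fix its canonical combinatorial embedding $(\Pi, \varepsilon)$. The key observation is that $(\Pi, \varepsilon)$ restricts naturally to each connected component $G_i$: for every vertex $v \in V(G_i)$, the cyclic permutation $\pi_v$ permutes the edges of $G$ incident to $v$, and all such edges lie in $G_i$. So $(\Pi|_{G_i}, \varepsilon|_{G_i})$ is a well-defined combinatorial embedding of $G_i$, and it coincides with the canonical combinatorial embedding induced by the restriction of the topological embedding of $G$ to $G_i$.

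Next, I would follow the face traversal procedure. Starting from any oriented edge $e$ incident to a vertex $u \in V(G_i)$, the next edge chosen is either $\pi_u(e)$ or $\pi_u^{-1}(e)$ (depending on the current mode and the signature of $e$); both belong to $E(G_i)$. By induction on the number of traversal steps, every edge visited belongs to $G_i$, and the terminating condition depends only on the combinatorial embedding along the walk. Consequently, the facial walk of $G$ obtained from the oriented edge $e$ is identical to the facial walk produced by running the same procedure in the combinatorial embedding of $G_i$. This gives the inclusion: every non-trivial facial walk of $G$ is a facial walk of a unique $G_i$ (namely the one containing its initial edge), and conversely every facial walk of any $G_i$ can be obtained by running the face traversal procedure in $G$ starting from the same oriented edge, hence is a facial walk of $G$.

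Finally, I would handle the trivial facial walks separately. By the stated convention, each isolated vertex of $G$ contributes one trivial facial walk; such a vertex is also an isolated vertex of exactly one $G_i$, and contributes the same trivial walk there. Combining the two cases yields the claimed set-theoretic equality. The argument is essentially bookkeeping once one notes that the face traversal procedure is a purely local combinatorial process that cannot cross between connected components; the main subtlety to articulate cleanly is simply the compatibility between the canonical combinatorial embedding of $G$ and those of the $G_i$'s, together with the convention about isolated vertices.
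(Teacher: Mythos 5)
Your proposal is correct and takes essentially the same approach as the paper: observing that the canonical combinatorial embedding of $G$ decomposes as the disjoint union of those of the $G_i$, then letting the combinatorial (local) description of facial walks do the rest. You simply spell out the face-traversal bookkeeping and the isolated-vertex convention, which the paper leaves implicit.
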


\begin{proof}
  For a given topological embedding of~$G$, the corresponding combinatorial embedding $(\Pi,\varepsilon)$ of~$G$
  is the disjoint union $\Pi=\biguplus_{i=1}^m \Pi_i$ and $\varepsilon=\biguplus_{i=1}^m \varepsilon_i$ of the corresponding sub-embeddings $(\Pi_i, \varepsilon_i)$ of $G_i$ for each $i \in [m]$.
  The statement then easily follows from the combinatorial description of facial walks.
\end{proof}

\begin{lemma} \label{lem: connecting-path-surface}
  Consider a graph~$G$ embedded in~$\Sbb$, a face~$F$ of~$G$, and two distinct facial walks~$W_1,W_2$ in $\partial F$.
  If $\gamma$ is a simple curve connecting a vertex $x\in V(W_1)$ to a vertex $y\in V(W_2)$, such that the topological interior of $\gamma$ is included in $F$, then $\gamma$ does not disconnect $F$.
  This implies that each facial walk of~$G \cup \gamma$ contains as subset a facial walk of~$G$.
\end{lemma}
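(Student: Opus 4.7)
The approach is to argue combinatorially using the face-traversal procedure described just above. I will fix the canonical combinatorial embedding $(\Pi, \varepsilon)$ of $G$ associated with its topological embedding in $\Sbb$, and extend it to a combinatorial embedding of $G \cup \gamma$ by inserting $\gamma$ into the rotations $\pi_x$ and $\pi_y$ at the positions determined by the germs of $\gamma$ at $x$ and $y$. Since the interior of $\gamma$ lies in $F$, these insertion positions correspond to face-corners of $F$ at $x$ and at $y$ that are traversed by $W_1$ and by $W_2$ respectively in the face-traversal procedure.

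The key combinatorial step is to verify, by a direct local analysis of the face-traversal procedure, that inserting $\gamma$ between two face-corners associated with distinct facial walks $W_1 \neq W_2$ of $G$ merges them into a single new facial walk $W$ of $G \cup \gamma$ whose edge set equals $E(W_1) \cup E(W_2) \cup \sg{\gamma}$. All other facial walks of $G$ persist unchanged as facial walks of $G \cup \gamma$, and the edge $\gamma$ itself appears in exactly one facial walk, namely $W$, contributing both of its sides to it. This is essentially the folklore rule in topological graph theory that edges bridging different face-orbits merge them while edges within a single face-orbit split it; its verification reduces to tracking the transitions $e \mapsto \pi_v(e)$ or $e \mapsto \pi_v^{-1}(e)$ at $x$ and $y$, with appropriate bookkeeping for the traversal mode and the signature of $\gamma$ (which one may normalize to $+1$ by locally adjusting $\Pi$).

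With this combinatorial fact in hand, the first conclusion follows by contradiction. Suppose $\gamma$ separates $F$ into two non-empty open arc-connected components $F^+$ and $F^-$. Since $\gamma$ is a simple arc whose interior lies in $F$, a standard local connectivity argument shows that the two local sides of $\gamma$ at any of its interior points must lie in distinct components, so $\gamma$ belongs to the boundary of both $F^+$ and $F^-$; these are then two distinct faces of $G \cup \gamma$ inside $F$. Each contributes a distinct facial walk of $G \cup \gamma$ using $\gamma$, contradicting the fact established in the previous paragraph that $\gamma$ lies in a unique facial walk of $G \cup \gamma$. For the second conclusion, every facial walk of $G \cup \gamma$ is either an unchanged facial walk of $G$ (and hence contains itself) or equals the merged walk $W$ (and hence contains $W_1$ as an edge-subset), so both cases are covered. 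The main technical obstacle is the local verification in the second paragraph, which is routine but notationally heavy due to the case analysis on signatures and traversal modes.
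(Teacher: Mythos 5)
Your overall setup mirrors the paper's: both arguments pass to the combinatorial embedding induced by the topological one, add the new edge $e = xy$ mapped to $\gamma$ into the rotation system, and reason about how the face-traversal procedure changes. The logic from your ``key combinatorial step'' to the two conclusions is sound: if $\gamma$ lies in a unique facial walk of $G \cup \gamma$ (equivalently, is singular), then $\gamma$ cannot be on the boundary of two distinct faces, so $F$ stays connected; and the second conclusion follows since every facial walk of $G\cup\gamma$ other than the merged one is unchanged.

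The gap is that the ``key combinatorial step'' --- that inserting $\gamma$ between corners of distinct facial walks $W_1 \neq W_2$ merges them into a single walk containing $\gamma$ twice --- is asserted as a folklore rule rather than proven, and this assertion is essentially the entire content of the lemma. Calling it ``routine but notationally heavy due to the case analysis on signatures and traversal modes'' is not a proof, and in fact the paper avoids precisely that case analysis. The paper's argument for singularity of $e$ is a short contradiction: run the face-traversal procedure of $G' = G \cup \{e\}$ starting with $e$ traversed from $x$ to $y$, obtaining a closed facial walk $W = e_1 e_2 \cdots e_k$ with $e_1 = e$. If $e$ were not singular, it would appear only once in $W$, so $W' = e_2 \cdots e_k$ would be a walk in $G$ from $y$ to $x$ which (one checks) is a subwalk of a single facial walk of $G$ inside $\partial F$; but $x \in V(W_1)$ and $y \in V(W_2)$ with $W_1 \neq W_2$, a contradiction. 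This uses the hypothesis $W_1 \neq W_2$ directly and sidesteps any bookkeeping with $\pi_v$, $\pi_v^{-1}$, and signatures. To close the gap in your proposal you would need to actually carry out the local verification you deferred, or replace it by an argument like the paper's contradiction.

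One smaller point: you claim $E(W) = E(W_1) \cup E(W_2) \cup \{\gamma\}$ exactly. The paper is slightly more careful and only argues that $W$ contains $W_1$ and $W_2$ as subsets (having shown that $W$ is the unique new facial walk and that it meets both). Your stronger equality is true, but it is not needed for the statement and would require a bit more justification.
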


\begin{figure}[htb]
  \centering  
  \includegraphics[scale=0.6]{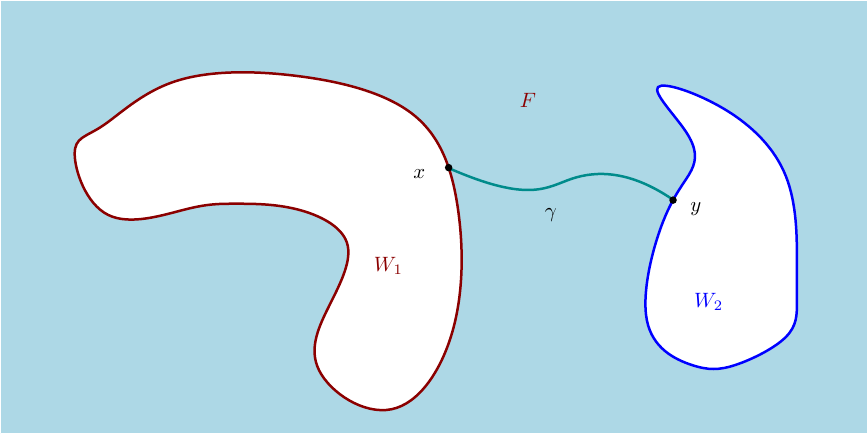}
  \caption{Configuration of Lemma \ref{lem: connecting-path-surface}.}  
  \label{fig: connecting}
\end{figure}

\begin{proof}
We set $e\eqdef xy$ and consider the graph  $G'\eqdef (V(G),E(G)\cup \sg{e})$. We consider the embedding of $G'$ on $\Sbb$, obtained from the one of $G$, and where we map $e$ to $\gamma$, and let $(\Pi', \varepsilon')$ be a some associated canonical combinatorial embedding of $G'$.
We claim that~$e$ is a singular edge in $G'$.

Indeed, let $W=e_1\ldots e_k$ be the closed facial walk in~$G'$ obtained when starting by traversing $e$ from $x$ to $y$ (i.e.\ $e_1=e$).
Suppose for a contradiction~$e$ is not singular, i.e.\ appears only once in~$W$.
Then one may check the subwalk $W'\eqdef e_2\ldots e_k$ is also a subwalk of a facial walk from $\partial F$ in $G$ connecting~$y$ to~$x$,
contradicting the assumption that~$x,y$ belong to distinct components of~$\partial F$.

Thus~$e$ is singular, meaning that its two sides still belong to the same face~$F$, i.e.\ adding~$e$ did not disconnect~$F$.
Finally, since the facial walk~$W$ in~$G'$ contains both occurrences of~$e$,
it is simple to check that any facial walk of~$G'$ other than~$W$ is also a facial walk of~$G$.
Since~$W$ intersects the facial walks~$W_1$ and~$W_2$ of~$G$, this implies that~$W_1$ and~$W_2$ are in fact fully contained in~$W$.
Combining these two facts, each facial walk of~$G'$ contains a facial walk of~$G$, as claimed.
\end{proof}

\subsubsection{Dual and diameter}
\label{sec: dual}
Given a graph $G$ embedded in a surface $\mathbb S$, its \emph{dual} is the graph $G^{\ast}$ whose vertices are the faces of $G$, and where for every edge $e\in E(G)$, if $F,F'$ denote the at most two faces of $G$ incident to $e$, 
we add an edge $e^{\ast}$ between $F$ and $F'$.
The graph $G^{\ast}$ also admits a natural embedding in $\mathbb S$. Since $\mathbb S$ is arc-connected, $G^{\ast}$ is always connected.
Note that while duals are usually considered only for cellular embeddings, the definition remains meaningful for non-cellular embeddings.
In this general setting, we may however have $(G^\ast)^\ast \neq G$.

One can however extend a graph to obtain a cellular embedding without changing the dual graph.
We would like to thank Louis Esperet for this suggestion.
\begin{lemma}
 \label{lem: cellularising}
 Let~$G$ be embedded via~$\varphi$ on a surface~$\Sbb$ of genus~$g$.
 Then there is a supergraph $G'$ of $G$, and a \emph{cellular} embedding $\varphi': G'\to \Sbb'$ for some surface~$\Sbb'$ of genus at most $g$, such that the dual of $G'$ with respect to $\varphi'$ is isomorphic to the dual of $G$ with respect to $\varphi$, plus some loops.
\end{lemma}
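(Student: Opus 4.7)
\emph{Plan.} The idea is to produce $G'$ in two stages. First I add edges inside non-disk faces of $G$ so that every face has a single facial walk, using \cref{lem: connecting-path-surface}. Then I perform a topological surgery on $\Sbb$ that replaces every remaining non-disk face by a genuine disk; this turns the embedding into a cellular one without increasing the genus.

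\emph{Stage 1: merging facial walks.} If a face of $G$ has no vertex on its boundary (possible only if $V(G)=\emptyset$), first add an isolated vertex inside each such face; this does not create new faces and leaves the dual unchanged. Then, while some face $F$ of the current graph has two distinct facial walks $W_1, W_2$ in its boundary, pick vertices $x \in V(W_1)$, $y \in V(W_2)$ and add a new edge realised as a simple arc from $x$ to $y$ drawn through the interior of $F$. By \cref{lem: connecting-path-surface}, this arc does not disconnect $F$, and a short topological check shows that $W_1$ and $W_2$ merge into a single facial walk of the new face while the other facial walks are unchanged. Iterating, one obtains a graph $G_1 \supseteq G$ in which every face has a single facial walk. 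Each added arc is singular (both of its sides lie in the same face), so it contributes one loop to the dual at the corresponding face vertex; hence the dual of $G_1$ in $\Sbb$ is the dual of $G$ together with some loops.

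\emph{Stage 2: surface surgery.} For each non-disk face $F_i$ of $G_1$ in $\Sbb$, let $F_i^*$ be the compact abstract surface with boundary whose interior is $F_i$: since $F_i$ has a single facial walk, $F_i^*$ has one boundary circle and genus $g_{F_i} \geq 1$. The inclusion $F_i \hookrightarrow \Sbb$ extends to an attaching map $\iota_i : \partial F_i^* \to \Sbb$ tracing out the facial walk of $F_i$. Setting $G' \eqdef G_1$, define
\[
 \Sbb' \eqdef \Big(\Sbb \setminus \bigcup_i F_i\Big) \;\cup_\phi\; \bigsqcup_i D_i,
\]
where each $D_i$ is a closed disk and $\phi$ glues $\partial D_i$ to $\Sbb$ via $\iota_i$. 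Every non-disk face $F_i$ is now replaced by the disk $D_i$ while every disk face of $G_1$ is preserved, so $G'$ embeds cellularly in $\Sbb'$. By a standard Euler characteristic computation, $\chi(\Sbb') - \chi(\Sbb) = \sum_i (\chi(D_i) - \chi(F_i^*)) = 2 \sum_i g_{F_i}$, so the genus of $\Sbb'$ equals $g - \sum_i g_{F_i} \leq g$. Finally, the faces of $G_1$ in $\Sbb$ correspond bijectively to those of $G'$ in $\Sbb'$, with no edge changing its incident faces, so the dual of $G'$ in $\Sbb'$ equals the dual of $G_1$ in $\Sbb$, which is the dual of $G$ plus loops.

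\emph{Main obstacle.} The delicate point in Stage 2 is verifying that $\Sbb'$ is genuinely a $2$-manifold, since the attaching map $\iota_i$ is in general not injective: the facial walk may revisit vertices and edges. The resolution relies on the following key observation: for any vertex $v \in \iota_i(\partial F_i^*)$, the number of preimages of $v$ under $\iota_i$ equals the number of angular sectors of $F_i$ at $v$ in $\Sbb$. Since $F_i^*$ is replaced by $D_i$ with the same attaching map, these sectors are swapped bijectively with sectors of $D_i$, preserving the local Euclidean structure around every point on $\partial F_i$.
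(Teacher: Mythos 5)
Your proof is correct and reaches the same conclusion, but by performing the two operations in the \emph{opposite} order to the paper. The paper first does the topological surgery --- replacing each face $F$ having $m$ facial walks by a sphere with $m$ holes glued along those walks, which leaves the dual graph completely untouched --- and only \emph{then} adds a connecting forest inside each face (via \cref{lem: connecting-path-surface}) to turn each sphere-with-holes into a disk, adding one dual loop per added arc. You instead first add arcs inside faces (via the same \cref{lem: connecting-path-surface}) until every face has a single facial walk, so each loop in the dual appears at this first stage; then you do the surgery, replacing each one-boundary non-disk face by a genuine disk, which at that point does not touch the dual. Both routes are sound, and the genus and dual-isomorphism claims come out the same. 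The paper's order has a small conceptual advantage: after its surgery the face is already planar (a sphere with holes), so the subsequent applications of \cref{lem: connecting-path-surface} and the ``face becomes a disk'' conclusion are slightly more elementary; in your version the attaching maps $\iota_i$ are still applied to faces that may carry genus, and the surgery must be done for a one-boundary surface of arbitrary genus, which is why you need the Euler-characteristic bookkeeping and the ``angular sector'' discussion. That discussion is the right idea, and roughly at the same level of rigour as the paper's own one-sentence description of its surgery, so I don't consider it a gap; but be aware that, as written, the sentence ``the number of preimages of $v$ under $\iota_i$ equals the number of angular sectors'' degenerates when a facial walk is the trivial walk around an isolated vertex (then $\iota_i$ collapses the whole boundary circle to a point), so a careful write-up should handle trivial facial walks separately, as the paper's convention implicitly does.
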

\begin{proof}
  Consider a face~$F$ of~$G$ with facial walks $W_1,\dots,W_m$.
  In~$\Sbb$, one can always replace the face~$F$ by a copy of the $2$-dimensional sphere with~$m$ holes, where the holes are glued to the walks $W_1,\ldots,W_m$ respectively --- in general~$F$ will be this sphere with holes, plus possibly some handles or crosscaps added.
  Call~$\Sbb'$ and~$\varphi'$ the surface and embedding of~$G$ obtained after performing this replacement for all faces.
  Then~$\Sbb'$ has genus at most~$g$, and the dual of~$G$ with respect to~$\varphi$ and to~$\varphi'$ are isomorphic.

  Assuming now that all faces are homeomorphic to spheres with holes, we construct~$G'$ as follows: for each face $F$ of $G$ with respect to $\varphi'$ whose boundary is not connected, we draw in $F$ a forest $T_F$ such that $F\setminus T_F$ is arcwise connected, that the union of $T_F$ with the facial walks of $F$ is connected, 
  and such that the leaves of $T_F$ belong pairwise to distinct components of $\partial F$. Such a forest can be constructed after iteratively applying Lemma \ref{lem: connecting-path-surface}.
  After adding this forest to~$G'$, the face~$F$ becomes homeomorphic to a disk,
  and the dual only changes in that a loop is added to~$F$ for each edge of~$T_F$.
  Repeating this for all faces, we obtain~$G'$ embedded cellularly in~$\Sbb'$, whose dual is isomorphic to the dual of~$G$ except for the additional loops.
\end{proof}

We can now state the two important results we will use on treewidth of graphs embedded in surfaces.
The first one was proved by Eppstein, based on Baker's technique, and states that graphs with bounded diameter and bounded genus have bounded treewidth.
\begin{theorem}[\cite{Eppstein}]
 \label{thm: diam-dual}
 If $G$ is a graph embedded in a surface of genus~$g$, then
 \[ \tw(G)=O(g\cdot\diam(G)). \]
\end{theorem}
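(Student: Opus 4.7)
The plan is to reduce the genus-$g$ case to the planar case via a topological cutting argument, and then invoke the classical Baker-style bound that a planar graph of diameter $d$ has treewidth $O(d)$.

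First, I may assume $G$ is connected (otherwise $\diam(G)$ is infinite, or the bound is obtained componentwise) and set $d \eqdef \diam(G)$. I may also assume the embedding of $G$ is cellular in a surface of genus at most $g$ by first applying \cref{lem: cellularising}: the resulting supergraph $G'$ satisfies $\diam(G') \le \diam(G)$ (adding edges only shortens distances between original vertices, and the added forest vertices lie within faces at bounded distance from the face boundary), while $\tw(G) \le \tw(G')$ since restricting a tree-decomposition of $G'$ to $V(G)$ yields one of $G$ of no greater width. Fix a root $r \in V(G)$ and build a BFS spanning tree $T$ of depth at most $d$, with BFS layers $L_0, \ldots, L_d$.

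The key topological input is the classical tree--cotree--handle decomposition: for a cellularly embedded graph of genus $g$ with spanning tree $T$, one can partition $E(G) = E(T) \sqcup E^\star \sqcup X$ where the duals of $E^\star$ form a spanning tree of the dual $G^\star$ and $|X| = O(g)$, by Euler's formula for surfaces. The subgraph $G - X$ is then planar, since the fundamental cycles of the edges of $X$ generate the first homology of the surface, and cutting along these cycles collapses the surface to a sphere containing $G - X$. For the planar graph $G - X$, Baker's classical result yields a tree-decomposition of width $O(d)$.

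To promote this into a tree-decomposition of $G$, I reintroduce the $O(g)$ edges of $X$ by enlarging every bag with the vertices on the $T$-paths from $r$ to each endpoint of each edge of $X$. Each such path has length at most $d$, so the total bag enlargement is $O(g \cdot d)$, after which both endpoints of every edge of $X$ appear in every bag, making the family a valid tree-decomposition of $G$ of width $O(d) + O(g \cdot d) = O(g \cdot \diam(G))$.

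The hard part will be justifying the $O(g)$ bound on handle edges rigorously, together with the fact that $G - X$ is indeed planar after their removal; this rests on Euler's formula and a careful distinction between orientable and non-orientable genus, and is classical but slightly delicate. A secondary point is verifying that the planar case actually gives the clean $\tw = O(d)$ bound rather than, say, $O(d^2)$, which is standard but must be invoked with the right constants so that the final bound is genuinely linear in both $g$ and $d$.
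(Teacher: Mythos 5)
Your reduction has a genuine gap at its central step: the claim that $G - X$ is planar when $X$ is the set of $O(g)$ leftover edges in a tree--cotree decomposition is false. The tree--cotree decomposition does give $|X| = 2 - \chi(\Sbb) = O(g)$, but removing only those \emph{edges} does not planarize the graph. For a concrete counterexample, take $K_7$ cellularly embedded on the torus ($V=7$, $E=21$, $F=14$, $\chi = 0$): a spanning tree has $6$ edges, a spanning cotree $13$, so $|X| = 2$, yet $K_7$ minus any two edges still has $19 > 3\cdot 7 - 6 = 15$ edges and is not planar. The same computation works for $K_{4,4}$ on the torus. What the tree--cotree decomposition actually gives is that cutting the \emph{surface} along the cut graph $T \cup X$ yields a disk; this does not imply anything about the planarity of the edge-deleted subgraph $G - X$, because $G-X$ is still embedded in the original genus-$g$ surface.

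The object one must delete (or cut along) to planarize is the set of \emph{vertices} on the fundamental cycles of the $X$-edges with respect to the BFS tree $T$; this is precisely the set of $T$-paths you later re-add to every bag, which is the right size, $O(g\cdot d)$. In the correct argument one cuts the surface open along these $O(g)$ fundamental cycles to obtain a planar graph $\widehat{G}$ (with duplicated boundary vertices), observes that $\widehat{G}$ inherits the BFS layering of $G$ into $d+1$ layers, and then applies the layering version of Baker's bound to get $\tw(\widehat{G}) = O(d)$. Glueing the duplicates back together costs an extra $O(g\cdot d)$ per bag, since each vertex on a fundamental cycle has $O(1)$ copies and there are $O(g\cdot d)$ such vertices. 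Note that the layering argument is important here: the cut-open graph $\widehat{G}$ may be disconnected and its BFS radius from a single root may be much larger than $d$, so one cannot naively say ``$\widehat{G}$ has small diameter, hence small treewidth''. Your final step of adding $T$-paths to every bag is over-engineered under your own (false) premise --- if $G - X$ really were planar, adding only the $O(g)$ endpoints would suffice --- but it is essentially the correct ingredient for the repair, just deployed in the wrong place in the argument.

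A smaller issue: the paper cites this theorem to Eppstein and gives no proof of its own, so there is nothing to compare against on the paper's side; but as a self-contained justification, the planarity step needs to be replaced as above before the proof stands.
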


The second is due to Mazoit, and relates the treewidth of a graph and its dual, in the case of cellular embeddings.
\begin{theorem}[\cite{Mazoit}]
 \label{thm: mazoit}
 Let $G$ be a graph cellularly embedded in a surface $\Sbb$ of genus at most $g$. Then
 $$\tw(G^{\ast})\leq \tw(G)+g+1.$$
\end{theorem}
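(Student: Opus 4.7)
The plan is to transform a tree-decomposition of $G$ into one of $G^*$ by assigning faces of $G$ to bags via the cellular embedding. Fix an optimal tree-decomposition $(T,\beta)$ of $G$ with width $k=\tw(G)$, root $T$ arbitrarily, and construct $(T,\beta^*)$ where $\beta^*(t)\subseteq V(G^*)$ is a carefully chosen set of faces of $G$ for each $t$. The starting observation is that, because the embedding is cellular, every face boundary $\partial F$ is a single connected closed walk, so by the Helly property of subtrees (see Remark \ref{rmk:td-connected-subgraph}) the set $T_F \eqdef \{t\in V(T) : V(\partial F)\cap \beta(t)\neq \emptyset\}$ is a connected subtree of $T$. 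For each face $F$ I would pick a subtree $U_F\subseteq T_F$ containing the topmost node $r_F$ of $T_F$, and declare $F\in \beta^*(t)$ iff $t\in U_F$. This directly ensures the connectedness axiom for the candidate decomposition of $G^*$.

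The edge-covering axiom for $G^*$ is then the first constraint: an edge $F_1F_2 \in E(G^*)$ is dual to some $e=ab\in E(G)$, which sits in a bag $\beta(t)$ containing both $a$ and $b$; I need to arrange that $t\in U_{F_1}\cap U_{F_2}$. The width bound rests on a topological counting ingredient: if $H$ is a subgraph of $G$ with $h$ vertices drawn on $\Sbb$ of genus $g$, Euler's formula applied to $H$ (together with the fact that every face of $G$ is contained in a component of $\Sbb\setminus H$) bounds the number of such components by roughly $h + O(g)$. Applied to $H=G[\beta(t)]$ this controls how many faces $F$ can have $r_F=t$, and this is where the additive $g+1$ correction enters: it is exactly the slack from cellular Euler's formula on a genus-$g$ surface.

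The hard part is reconciling the two requirements: $U_F$ must be small enough to keep $|\beta^*(t)|\le k+g+2$, but large enough that every dual edge has its endpoints together in some bag. I would define $U_F$ to be the minimal subtree of $T_F$ that contains $r_F$ and at least one node assigned to each edge of $G$ lying on $\partial F$. The delicate point is then a charging argument showing that an extra face $F$ appearing at a non-root node $t$ of $U_F$ can be amortised against either a vertex of $\beta(t)$ on $\partial F$ or a ``topological transition'' controlled by Euler's formula on $\Sbb$. A naive alternative, namely routing through branch-decompositions and planar/surface duality for branchwidth, also yields a bound on $\tw(G^*)$ in terms of $\tw(G)$ and $g$, but only with a multiplicative factor of $3/2$ coming from $\tw \leq \lfloor 3\,\text{bw}/2\rfloor - 1$; obtaining the sharp additive bound $\tw(G)+g+1$ seems to genuinely require the direct embedding-based construction sketched above.
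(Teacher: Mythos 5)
The statement in the paper is a cited external result (attributed to Mazoit), with no proof given in the paper, so there is nothing to compare your attempt against on that front; I can only assess the attempt itself.

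Your construction as stated has a genuine gap in the width bound, traceable to the choice of anchoring every face $F$ at the \emph{topmost} node $r_F$ of the subtree $T_F=\{t : V(\partial F)\cap\beta(t)\neq\emptyset\}$. Since $r_F\in U_F$, you always put $F$ into $\beta^*(r_F)$, so the bag at a node $t$ has size at least the number of faces with $r_F=t$, and this quantity is \emph{not} controlled by Euler's formula applied to $G[\beta(t)]$. Concretely, take the fan $F_n$: apex $v$ adjacent to a path $c_1\cdots c_n$, embedded in the plane, with the optimal path-decomposition $\beta(t_i)=\{v,c_i,c_{i+1}\}$, $i\in[n-1]$, rooted at $t_1$. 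Here $\tw(F_n)=2$ and $g=0$, so the target bound is $\tw(F_n^*)\le 3$ (and indeed $F_n^*$ is itself a fan, with treewidth $2$). But since $v$ lies in every bag and on the boundary of every internal triangle $T_i=vc_ic_{i+1}$, we get $T_{T_i}=V(T)$ and hence $r_{T_i}=t_1$ for all $i$, so $\beta^*(t_1)\supseteq\{T_1,\dots,T_{n-1}\}$: width $\Omega(n)$. The Euler-formula count you invoke bounds the number of connected components of $\Sbb\setminus G[\beta(t)]$, which is $O(k+g)$, but many faces of $G$ with $r_F=t$ can (and in this example do) lie in the \emph{same} such component, so this does not bound $|\beta^*(t)|$. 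The charging argument you gesture at handles occurrences of $F$ at non-root nodes of $U_F$, but the failure is already at the root node $r_F$, and it is not repaired by the minimality choice of $U_F$.

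The fix requires a different anchor. For instance, anchoring $F$ at the topmost node whose bag contains an \emph{entire edge} of $\partial F$ (rather than merely a vertex) repairs the fan example and, more generally, prevents a single high-degree vertex that lives in all bags from dragging every incident face to the root. Even so, establishing the sharp additive bound $\tw(G)+g+1$ from there is delicate: Mazoit's argument goes through a carefully chosen tree-decomposition and a genuinely topological accounting of how faces distribute along a branch, and it is not captured by the ``one component of $\Sbb\setminus G[\beta(t)]$ per new face'' heuristic. Your observation that the branch-width route (Seymour--Thomas duality plus $\tw\le\lfloor 3\,\mathrm{bw}/2\rfloor-1$) only gives a multiplicative bound is correct and a good sanity check on why the direct construction is needed.
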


Combining the previous three results, we obtain the following statement, which will be used in the next section.
\begin{corollary}
 \label{coro: tw-dual}
 Let~$G$ be a graph embedded in a surface $\mathbb S$ of genus $g$. Then
 \[ \tw(G)= O(g\cdot \diam(G^{\ast})). \]
\end{corollary}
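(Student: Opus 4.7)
The plan is to combine the previous two theorems through the cellularising lemma. Since \cref{thm: mazoit} requires a cellular embedding, I would first apply \cref{lem: cellularising} to obtain a supergraph $G'$ of $G$ with a cellular embedding on a surface $\Sbb'$ of genus at most $g$, whose dual $(G')^*$ is isomorphic to $G^*$ together with some loops. Since loops affect neither treewidth nor shortest-path distances, we have $\tw((G')^*) = \tw(G^*)$ and $\diam((G')^*) = \diam(G^*)$.

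Next, I would apply \cref{thm: diam-dual} to the embedded graph $(G')^*$, which lies on $\Sbb'$ of genus at most $g$. This yields
\[ \tw((G')^*) = O(g \cdot \diam((G')^*)) = O(g \cdot \diam(G^*)). \]
Since $G'$ is cellularly embedded on $\Sbb'$, the classical identity $((G')^*)^* = G'$ holds and $(G')^*$ is itself cellularly embedded on $\Sbb'$. Applying \cref{thm: mazoit} to $(G')^*$ therefore gives
\[ \tw(G') = \tw(((G')^*)^*) \leq \tw((G')^*) + g + 1 = O(g \cdot \diam(G^*)). \]
Finally, since $G$ is a subgraph of $G'$ and treewidth is monotone under taking subgraphs, we conclude $\tw(G) \le \tw(G') = O(g \cdot \diam(G^*))$.

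The main subtlety lies in dealing with the non-cellular case: Mazoit's theorem only relates $\tw(G)$ and $\tw(G^*)$ under cellularity, and without it the double dual need not even coincide with $G$. This is exactly what \cref{lem: cellularising} is designed to circumvent, by providing a cellular supergraph of $G$ in a surface of no larger genus whose dual is the same as $G^*$ up to harmless loops. Once this reduction is in place, the statement follows by a direct two-step composition of \cref{thm: diam-dual} applied to the dual and \cref{thm: mazoit} applied to the primal.
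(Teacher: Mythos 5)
Your proof is correct and follows essentially the same route as the paper's: apply Lemma~\ref{lem: cellularising} to get a cellular supergraph $G'$ with the same dual (up to loops), apply Theorem~\ref{thm: mazoit} to $(G')^*$ using the double-dual identity to relate $\tw(G')$ to $\tw((G')^*)$, bound $\tw((G')^*)$ via Theorem~\ref{thm: diam-dual}, and conclude by monotonicity of treewidth under subgraphs. The paper compresses this into a single chain of inequalities, but the logical content is identical.
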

\begin{proof}
  Let $G'$ and $\Sbb'$ be given by Lemma \ref{lem: cellularising}. Then
  \[ \tw(G)\leq \tw(G')
           \leq \tw(G'^{\ast}) + g +1        =O(g\cdot\diam(G'^{\ast}))
         =O(g\cdot\diam(G^{\ast})), \]
  where the first inequality holds as~$G$ is a subgraph of~$G'$,
  and the second and third follow from Theorems~\ref{thm: mazoit} and \ref{thm: diam-dual} respectively.
\end{proof}

\subsection{Almost embeddability}
\label{sec: almost-embeddable}
In this section, we give a formal definition of the notion of \emph{almost embeddability in a surface}, which plays a central role in the Graph Minor series of papers. Our main result is Theorem \ref{thm: bn-almost}, which states that graphs almost embeddable in surfaces of bounded genus have bounded basis-number. Our proof of \cref{thm: main-minor} will then immediately follows from a combination of this result with the Graph Minor Structure Theorem (see Theorem \ref{thm: RS-structure}) and Theorem \ref{thm: main-td}.
 
\paragraph{Apices}
We recall the definition of apices, which was already given in \cref{sec:hypergraph-defs}.
For class of graphs $\Gc$ and every $k\geq 0$, we let $\Gc^{+k}$ denote the class of those graphs $G$ for which there exists a subset $A$ of vertices (called \emph{apices}) of size at most $k$, such that $G-A\in \Gc$.

\paragraph{Vortices}
Let $G_0$ be a graph embedded in a surface $\Sbb$, let $F$ be a face of $G_0$, and $D$ a closed disk contained in~$\overline{F}$, such that~$D$ and~$G_0$ only intersect in points of $\partial D \cap V(G_0)$.
Denote by $U \eqdef \partial D \cap V(G_0)$ these intersection points,
and order them as $U \eqdef \{u_1, \ldots, u_n\}$ following one of the two cyclic orderings~$\preceq_D$ induced by~$\partial D$.
A \emph{vortex} attached to~$U$ is any graph~$G_U$ that intersects~$G_0$ only in~$U$.
There are many ways to define the \emph{width} of a vortex that are equivalent up to constant factors, and we choose here to use the one related to pathwidth:
the width of the vortex~$G_U$ is the smallest~$k$ such that~$G_U$ has a path-decomposition of width $k$, with bags $B_1,\dots,B_n$ such that $u_i \in B_i$ for each $i\in [n]$,
with again~$u_1,\dots,u_n$ the vertices of~$U$ ordered cyclically along~$D$ (the choice of the first vertex~$u_1$ may affect the width with this definition, but only by a multiplicative factor of at most~2).

Let $U_1,\ldots, U_k$ be subsets of $V(G_0)$, and $D_1,\ldots, D_k$ be pairwise disjoint closed disks on $\Sbb$ such that for each 
$i\in [k]$, there exists a face $F_i$ of $(G_0, \varphi)$, such that $D_i\subseteq \overline{F_i}$, and $\partial D_i \cap V(G)=U_i$.
If $G_{U_1},\ldots, G_{U_k}$ are vortices such that each $G_{U_i}$ attaches to $U_i$, then we similarly say that the graph $G \eqdef G_0\cup G_{U_1}\cup \cdots\cup G_{U_k}$ is obtained 
after attaching to $G_0$ the vortices $G_{U_1}, \ldots, G_{U_k}$.

\paragraph{Almost embeddability}
Let $\mathbb S$ be a surface and $a,k\in \mathbb N$. A graph $G$ is \emph{$(a,k)$-almost embeddable} in $\mathbb S$ if $G$ if there exists some subset $A\subseteq V(G)$ of vertices (apices) of size at most $a$, and some subgraphs $G_0, \ldots, G_k$ such that $V(G)\setminus A=G_0\cup \cdots \cup G_k$, and such that:
\begin{itemize}
 \item[$(a)$] $G_0$ admits an embedding $\varphi$ in $\mathbb S$;
 \item[$(b)$] the graphs $G_1,\ldots,G_k$ are pairwise vertex disjoint;
 \item[$(c)$] for each $i\in [k]$, there exists a face $F_i$ of $(G_0, \varphi)$, and a closed disk $D_i\subseteq \overline{F_i}$ such that the disks $D_i$ are pairwise disjoint,
   each disk~$D_i$ intersects~$G$ only in $U_i\eqdef \partial D_i\cap V(G_0)$,
   and~$G_i$ is a vortex of width at most~$k$ attached to~$U_i$.
\end{itemize}
 
For each $a,k,g\in \mathbb N$, we let $\mathcal G_{a,k,g}$ denote the class of graphs which are $(a,k)$-almost embeddable in a surface of genus at most $g$. Note that $\mathcal G_{a,k,g}$ is monotone. 
The Graph Minor Structure Theorem (see Theorem \ref{thm: RS-structure} later) states that for every fixed graph~$H$, there exist $a,k,g$ such that every graph $G$ excluding $H$ as a minor can be decomposed using the graphs of $\Gc_{a,k,g}$ as basic bricks.
We prove that~$\Gc_{a,k,g}$ has bounded basis number:
\begin{theorem}
 \label{thm: bn-almost}
 Let $a,k,g\in \mathbb N$ and $G\in \Gc_{a,k,g}$. Then 
 $$\bn(G)= f_{\ref{thm:bn-treewidth}}(\alpha_{\ref{coro: tw-dual}}\cdot (g+1) \cdot (6k^2+5k-1)) + O(\log^2g)+2a,$$
 where $\alpha_{\ref{coro: tw-dual}}$ denotes the multiplicative constant from Corollary \ref{coro: tw-dual}.
\end{theorem}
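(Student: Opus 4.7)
The plan is to decompose the proof into three contributions---the apices, the surface-embedded base graph $G_0$, and the vortices---handling each in turn with the machinery developed earlier in the paper, with the main technical work concentrated on the interaction between $G_0$ and the vortices in the presence of a (possibly non-cellular) embedding.

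First I would peel off the apices: iterating \cref{lem:bn-edits} case~\eqref{eq:bn-vertex-add} $a$ times gives $\bn(G) \le \bn(G-A) + 2a$, reducing the goal to bounding $\bn(H)$, where $H \eqdef G - A = G_0 \cup G_1 \cup \dots \cup G_k$. The heart of the argument is to produce an auxiliary embedded graph $\tilde G_0 \supseteq G_0$ in $\Sbb$ to which the vortices can be glued along connected subgraphs, bypassing the failure of \cref{lem:bn-connected-separator} pointed out in the overview. Concretely, for each vortex $G_i$ with attachment set $U_i$ on $\partial D_i$, I would use the width-$k$ path-decomposition of the vortex to design a small auxiliary graph $W_i$ drawn inside $D_i$, with $O(k^2)$ vertices and edges, such that (i) after adding $W_i$ to $G_0$ the set $U_i$ becomes connected, and (ii) the vertices of $W_1 \cup \dots \cup W_k$ together with at most one representative per remaining face of $G_0$ form an $O(k^2)$-vertex dominating set in the dual of $\tilde G_0 \eqdef G_0 \cup W_1 \cup \dots \cup W_k$. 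When the face $F_i$ containing $D_i$ has a disconnected boundary meeting $U_i$ in several components, I would first apply \cref{lem: connecting-path-surface} to add arcs in $F_i$ linking these components without disconnecting the face. By \cref{rem: dominating-diameter}, the dominating set yields $\diam(\tilde G_0^{\ast}) \le 6k^2 + 5k - 1$, so \cref{coro: tw-dual} implies $\tw(\tilde G_0) \le \alpha_{\ref{coro: tw-dual}}(g+1)(6k^2+5k-1)$.

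I would then consider $\tilde H \eqdef \tilde G_0 \cup G_1 \cup \dots \cup G_k$. Since each $W_i$ makes $\tilde G_0 \cap G_i$ connected, iteratively gluing the bounded-pathwidth vortices onto $\tilde G_0$ via a standard tree-decomposition construction preserves the treewidth bound up to additive terms in~$k$; \cref{thm:bn-treewidth} then yields $\bn(\tilde H) \le f_{\ref{thm:bn-treewidth}}(\alpha_{\ref{coro: tw-dual}}(g+1)(6k^2+5k-1))$. To go back from $\tilde H$ to $H$, I would decompose $H = G_0 \cup \bigl(\bigcup_i G_i \cup F\bigr)$, where $F$ is a forest of $G_0$ chosen so that every connected component of $\bigcup_i G_i \cup F$ has connected intersection with $G_0$, and invoke \cref{lem:bn-almost-connected-separator}. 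The $G_0$ piece contributes $O(\log^2 g)$ via \cref{thm:bn-surface}, while $\bigcup_i G_i \cup F$ is a subgraph of $\tilde H$ (for an appropriate choice of $F$), so by treewidth monotonicity and \cref{thm:bn-treewidth} it inherits the $f_{\ref{thm:bn-treewidth}}(\alpha_{\ref{coro: tw-dual}}(g+1)(6k^2+5k-1))$ bound. Summing these contributions with the apex cost $2a$ gives the claimed inequality.

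The main obstacle is the construction of the $W_i$'s. They must simultaneously be embeddable inside $D_i$ compatibly with the (possibly non-cellular) embedding of $G_0$, provide a connected attachment for each $G_i$ in $\tilde G_0$, and dominate the entire dual $\tilde G_0^{\ast}$ uniformly enough to reach the stated diameter bound $6k^2 + 5k - 1$. This is where the width-$k$ assumption on the vortices is used most heavily, through the combinatorial structure given by their path-decompositions and the cyclic order of $U_i$ along $\partial D_i$.
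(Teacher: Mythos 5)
Your general plan --- peel off apices, then split the remaining graph into the surface piece and the vortex piece via a connected-separator lemma, bounding the vortex piece by treewidth --- is the same high-level strategy as the paper. But there is a genuine gap at the heart of the argument.

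You want to bound $\diam(\tilde G_0^*)$ for $\tilde G_0 \eqdef G_0 \cup W_1 \cup \dots \cup W_k \supseteq G_0$ and hence, via \cref{coro: tw-dual}, bound $\tw(\tilde G_0)$. This cannot work: $G_0$ is an arbitrary graph embedded in $\Sbb$, so it can have treewidth (and dual diameter) as large as you like, and adding $O(k^2)$ extra vertices and edges inside the disks $D_i$ does nothing to change that. For a concrete counterexample, take $g=0$, one tiny vortex, and let $G_0$ be an $n \times n$ grid drawn on the sphere: almost all faces of $G_0$ are far from $D_1$ in the dual, and subdividing $D_1$ only creates more faces near $D_1$. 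So your claim that ``the vertices of $W_1 \cup \dots \cup W_k$ together with at most one representative per remaining face of $G_0$ form an $O(k^2)$-vertex dominating set'' either has unbounded size (one representative per face, and the number of faces is unbounded) or is false as stated. (There is also a type confusion: a dominating set in the dual should be a set of faces of $\tilde G_0$, not vertices.) Once $\tw(\tilde G_0)$ is unbounded, so is $\tw(\tilde H)$, and the claimed bound on $\bn(\tilde H)$ collapses, as does the subsequent argument that $\bigcup_i G_i \cup F$ inherits a treewidth bound from $\tilde H$.

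The paper avoids this exactly by \emph{not} trying to bound the treewidth of anything containing all of $G_0$. It instead extracts a much smaller subgraph $\Gamma \subseteq G_0$: the union, over components $W$ of $G_0$, of the facial walks of the vortex faces $F_i$ contained in $W$, joined up by a connecting forest $T_W$ inside $G_0[W]$. Then \cref{lem:bn-almost-connected-separator} is applied to $G_X = G_0$ and $G_Y = \Gamma \cup \bigcup_i G_i$, with $G_X$ handled by \cref{thm:bn-surface}. The crucial point you are missing is Claim \ref{clm: union-Gamma-W}: because $\Gamma$ consists \emph{only} of material near the vortex faces, \emph{every} face of the auxiliary graph $H_\Gamma = \Gamma \cup C_1 \cup \dots \cup C_k$ is within dual distance $2$ of one of the $k$ disk faces $D_i$, so the diameter of $(H_\Gamma)^*$ is at most $6k-2$ by \cref{rem: dominating-diameter}. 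This is what makes \cref{coro: tw-dual} applicable. The vortices are then absorbed via a ``blow-up'' of the tree-decomposition of $H_\Gamma$ (Claim \ref{clm: almost-embedd-blowup}), where the cycles $C_i$ ensure that the bags of the vortex path-decompositions are inserted consistently (they are adjacent along $C_i$, so intervals of the path-decomposition sit in connected subtrees). Your $W_i$'s could plausibly play the role of the $C_i$'s in this step, but you must restrict attention to $\Gamma$ rather than all of $G_0$ before invoking the dual-diameter argument.
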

The rest of this section is dedicated to the proof of this result.

Firstly, by \cref{lem:bn-edits}, adding~$a$ apices increases the basis number by at most~$2a$. We can thus focus on the case $a=0$.
Throughout this section, we fix a graph $G \in \Gc_{0,k,g}$, obtained from a graph~$G_0$ embedded in a surface~$\Sbb$ of genus at most~$g$
after adding vortices~$G_1,\dots,G_k$ of width at most~$k$.
Furthermore, we denote by~$F_i,D_i$ the face and disk containing~$G_i$, and $U_i \eqdef V(G_0) \cap V(G_i)$  for each $i\in [k]$.
We will also assume that $G_0$ has at least one edge, as otherwise $G$ consists in the disjoint union of the vortices $G_i$. As each of the graphs $G_i$ has pathwidth at most $k$, \cref{thm:pathwidth-linear} then implies that in this special case, $\bn(G)= O(k)$.

The first step of the proof is to apply Lemma \ref{lem:bn-almost-connected-separator} for two subgraphs~$G_X,G_Y$ with $G = G_X \cup G_Y$,
where $G_X \eqdef G_0$, and~$G_Y$ is a supergraph of $G_1 \cup \dots \cup G_k$ that we will now define.

Let $\mathcal C_0$ denote the set of connected components of~$G_0$, seen as vertex subsets.
For each component $W \in \Cc_0$, and each face~$F_i$ containing a vortex,
we define the (possibly empty) subgraph $\Theta_{W,i} \eqdef G_0[W] \cap \partial F_i$,
i.e.\ the union of those facial walks of~$F_i$ that are contained in~$W$.
Note that $\Theta_{W,i}$ is not necessarily connected, and that  $U_i\cap W\subseteq V(\Theta_{W,i})$ for each $i\in [k]$ and $W\in \Cc_0$.
Next, taking the union over all vortex faces, we define $\Theta_W \eqdef \bigcup_{i\in [k]} \Theta_{W,i}$ for each $W\in \Cc_0$.
Choose now a connecting subforest~$T_W$ in~$G_0[W]$ for the family of connected components of~$\Theta_W$, given by \cref{rem: connecting-forest} (with~$T_W$ being possibly empty, when~$\Theta_W$ is already connected).
Finally, we define $\Gamma_W \eqdef \Theta_W \cup T_W$, which is a connected subgraph of~$G_0[W]$, and
$$\Gamma \eqdef \bigcup_{W\in \Cc_0}\Gamma_W.$$
See Figure \ref{fig: Gamma} for an illustration.
Clearly, $\Gamma$ is an embedded subgraph of $G_0$, whose connected components are the $\Gamma_W$'s.
Observe that by construction, for each $i\in [k]$, the boundary $\partial F_i$, when considered as a subgraph of $G_0$, is a subgraph of $\Gamma$. In particular $U_i \subseteq V(\Gamma)$.

\begin{figure}
  \centering  
  \includegraphics[scale=0.7]{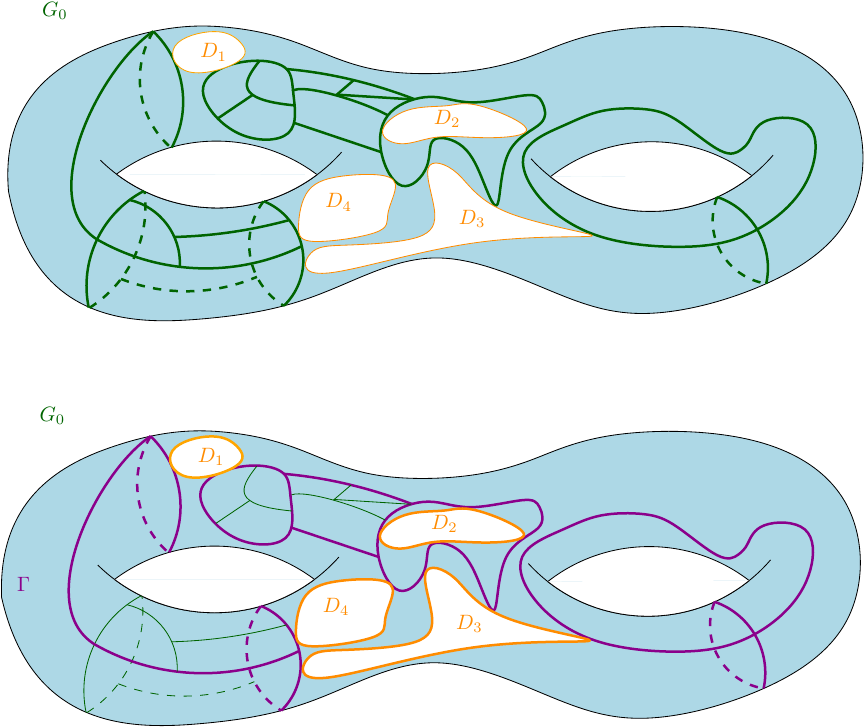}
  \caption{Top: In dark green, a graph $G_0$ embedded in the orientable surface of genus $2$. In orange a collection of pairwise disjoint closed disks which only intersect $G_0$ in $V(G_0)$. Note that $D_1,D_3$ and $D_4$ are contained in the same face of~$G_0$.
  Bottom: The graph $\Gamma$ constructed with respect to the disks $D_1,D_2,D_3,D_4$ is represented in thick, magenta. The graph $H_\Gamma$ defined later in the proof corresponds to the union of the magenta graph and the orange cycles.}
  \label{fig: Gamma}
\end{figure}

We call a connected component, respectively a facial walk \emph{trivial} if it consists only of a single vertex.
\begin{claim}\label{clm: Gamma-W}
  For any $W \in \Cc_0$, any non-trivial facial walk of~$\Gamma_W$ contains an edge incident to one of the faces $F_1,\ldots,F_k$.
\end{claim}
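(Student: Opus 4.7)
The plan is to argue by contradiction: suppose some non-trivial facial walk $L$ of $\Gamma_W$ contains no edge incident to any vortex face $F_i$. Since $\Gamma_W = \Theta_W \cup T_W$ and $\Theta_W \subseteq \bigcup_i \partial F_i$, and since we may assume $T_W$ is chosen edge-disjoint from $\Theta_W$ and minimal (so that each leaf of $T_W$ lies in $V(\Theta_W)$), every edge of $L$ must lie in $T_W$. The first observation I would exploit is that each edge of $T_W$ is a bridge of $\Gamma_W$: by the defining property of the connecting forest (\cref{rem: connecting-forest}), every cycle of $\Gamma_W$ is already a cycle of $\Theta_W$, so removing any $T_W$-edge disconnects $\Gamma_W$. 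Consequently both sides of each $T_W$-edge lie in the same face of $\Gamma_W$, and each such edge is traversed exactly twice by the facial walk containing it.

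Let $T'$ be the set of edges of $T_W$ actually used by $L$. Since $L$ is a connected closed walk, $T'$ is a connected subgraph, and being contained in the forest $T_W$ it is itself a tree. The core of the argument is then to analyse $L$ at a leaf $v$ of $T'$, with its unique incident $T'$-edge $e$. As $L$ must traverse $e$ twice at $v$, the face traversal procedure forces the rotation $\pi_v$ to send $e$ to itself, which in turn forces $\deg_{\Gamma_W}(v) = 1$. Hence $v$ has no $\Theta_W$-edge and is a leaf of $T_W$; by the minimality assumption, $v$ lies in $V(\Theta_W)$, making $v$ an isolated vertex of $\Theta_W$.

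The final step is a topological observation about isolated vertices of $\Theta_W$, which will provide the contradiction: I claim that if $v \in V(\Theta_W)$ has any incident $G_0$-edge, then $v$ also has an incident $\Theta_W$-edge. Indeed, $v \in V(\Theta_{W,i}) \subseteq \partial F_i$ for some $i$, and in the topological embedding the $G_0$-edges at $v$ partition a small disk around $v$ into sectors. Since $v \in \partial F_i$, at least one sector lies in $F_i$; the two $G_0$-edges bounding it are then incident to $v$ and lie on $\partial F_i$, and because $v \in W$ and $W$ is a component of $G_0$, both endpoints of these edges lie in $W$, so these edges belong to $\Theta_{W,i} \subseteq \Theta_W$. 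Applied to our $v$, which is incident to the $T_W$-edge $e$, this yields a $\Theta_W$-edge at $v$, contradicting that $v$ is isolated in $\Theta_W$.

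The step I expect to require the most care is this last topological observation, because it must cover non-cellular embeddings on surfaces of positive genus, where $\partial F_i$ need not be connected and $F_i$ may touch $v$ through multiple sectors. The key point is that the statement is purely local at $v$, relying only on the fact that a small neighbourhood of any vertex is a disk partitioned by its incident edges into sectors, each lying in a single face; once this local picture is fixed, the rest of the argument is a direct combinatorial chase through the definitions of $\Theta_W$, $T_W$, and the face traversal procedure.
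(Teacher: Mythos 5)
Your proof is correct, and it takes a genuinely different route from the paper's. The paper first establishes that $\Theta_W$ has no isolated vertices (since $G_0[W]$ is non-trivial, every facial walk of $G_0[W]$, in particular those constituting $\Theta_W$, is non-trivial), and then exploits the iterative construction of $T_W$ — each step adding a path internally disjoint from the current graph — to apply \cref{lem: connecting-path-surface} repeatedly, concluding directly that every facial walk of $\Gamma_W$ contains as a subset a facial walk of $\Theta_W$. Your proof is instead by contradiction: a non-trivial facial walk $L$ of $\Gamma_W$ avoiding all $\partial F_i$-edges consists entirely of $T_W$-edges (which are bridges of $\Gamma_W$ by \cref{rem: connecting-forest}), forms a subtree $T'\subseteq T_W$, and a local analysis of the face traversal at a leaf $v$ of $T'$ forces $\pi_v$ to fix the unique $T'$-edge at $v$, hence $\deg_{\Gamma_W}(v)=1$; by minimality of $T_W$, $v\in V(\Theta_W)$, and the sector argument then produces a $\Theta_W$-edge at $v$, a contradiction. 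Your ``topological observation'' is precisely the local form of the paper's ``$\Theta_W$ has no isolated vertices'', so the two proofs share that topological ingredient, but they diverge in how they handle the $T_W$-edges: where the paper reuses the facial-walk-containment lemma (also used in \cref{lem: cellularising}), you replace it with a self-contained combinatorial chase through the face traversal formalism. Your approach is somewhat longer but avoids \cref{lem: connecting-path-surface} entirely, which is a reasonable trade-off; one small presentational point is that edge-disjointness of $T_W$ from $\Theta_W$ is not strictly needed (edges of $L$ are automatically outside $\Theta_W$ by the contradiction hypothesis), though it is a harmless simplifying assumption.
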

\begin{claimproof}
  First note that~$G_0[W]$ is non-trivial, since we consider a non-trivial walk in it.
  It follows that any facial walk of~$G_0[W]$ is non-trivial,
  which is in particular the case of the facial walks constituting~$\Theta_W$.
  Thus~$\Theta_W$ has no isolated vertices, and by definition consists solely of edges incident to $F_1,\dots,F_k$.

  Observe now that the connecting forest~$T_W$ is obtained by starting from~$\Theta_W$, choosing iteratively two distinct components of a subgraph $H$ of $\Gamma$, and connecting them with a path of $G_0[W]$, whose associated simple curve $\gamma$ in $\mathbb S$ is internally disjoint from $H$.
  By repeated applications of \cref{lem: connecting-path-surface},
  it follows that each facial walk of~$\Gamma_W$ contains as a subset a facial walk of~$\Theta_W$, which itself contains an edge incident to one of $F_1,\dots,F_k$, proving the result.
\end{claimproof}

\begin{claim}
  \label{clm: union-Gamma-W}
  For each face $F$ of $\Gamma$, there is an edge in the face boundary $\partial F$ which is incident to one of the faces $F_1,\ldots, F_k$.
\end{claim}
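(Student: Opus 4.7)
My plan is to show that $\partial F$ contains at least one edge of $\Gamma$, so that some facial walk of $\Gamma$ lying on $\partial F$ is non-trivial, and then to conclude by applying \cref{lem: boundaries-components} together with \cref{clm: Gamma-W}.

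The key step, which I will use twice, is a general topological observation: if $H$ is any graph embedded in the connected surface $\Sbb$ and $H$ has at least one edge, then every face of $H$ has at least one edge on its boundary. I would prove this by contradiction: suppose some face $F^{*}$ has $\partial F^{*}$ consisting only of isolated vertices $v_1,\dots,v_s$ of $H$. For each such $v_i$, take a small open neighbourhood $N_i$ of $v_i$ in $\Sbb$ that meets $H$ only at $v_i$; then the punctured neighbourhood $N_i \setminus \{v_i\}$ is connected and lies in $\Sbb \setminus H$, so it is contained in a single face of $H$. Since $v_i \in \partial F^{*}$, that face must be $F^{*}$, so $N_i \subseteq \overline{F^{*}}$. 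Combined with $F^{*}$ being open, this makes $\overline{F^{*}}$ open, hence clopen in~$\Sbb$; by connectedness of $\Sbb$, $\overline{F^{*}} = \Sbb$, which forces $H$ to reduce to a discrete vertex set, contradicting the hypothesis that $H$ has an edge.

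I would then apply this observation first to $G_0$, which has at least one edge by the standing assumption of this section, to produce an edge on $\partial F_j$ for every vortex face $F_j$. Any such edge lies in $\Theta_{W,j} \subseteq \Gamma_W \subseteq \Gamma$ for the appropriate component $W \in \Cc_0$, so $\Gamma$ itself has at least one edge. A second application of the observation, now to $\Gamma$, then yields an edge of $\Gamma$ on $\partial F$.

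To finish, I would pick any such edge $e \in \partial F$ and let $L$ be the facial walk of $\Gamma$ on $\partial F$ that contains $e$; since $L$ has an edge it is non-trivial. By \cref{lem: boundaries-components}, $L$ is a facial walk of $\Gamma_W$ for some $W \in \Cc_0$, and \cref{clm: Gamma-W} then supplies an edge of $L$ incident to one of the faces $F_1,\dots,F_k$; this edge lies on $\partial F$, as required. The real obstacle is the topological clopen step; once it is established, the rest of the argument is just stitching the previous results together.
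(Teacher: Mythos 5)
Your proof is correct and follows essentially the same route as the paper: produce an edge of $\Gamma$ on $\partial F$, observe the containing facial walk is non-trivial, localise it to a component $\Gamma_W$ via Lemma~\ref{lem: boundaries-components}, and finish with Claim~\ref{clm: Gamma-W}. The paper compresses the first step to a single sentence because it already records, in Section~6.1.1, the background fact that every face boundary of an embedded graph with at least one edge itself contains an edge; your ``clopen'' argument supplies a proof of exactly that fact, so it is a correct filling-in of an omitted detail rather than a different approach.
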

\begin{claimproof}
  Recall that we assumed~$G_0$ to be non-trivial, hence the face boundary~$\partial F$ must contain some edge from some non-trivial component $W \in \Cc_0$.
  In particular, by \cref{lem: boundaries-components}, $\partial F$ also contains a non-trivial facial walk of $\Gamma_W$.
  The result follows by \cref{clm: Gamma-W}.
\end{claimproof}

We now set $G_X \eqdef G_0$, and $G_Y \eqdef \Gamma \cup \bigcup_{i=1}^k G_i$, and claim that~$(G_X,G_Y)$ satisfies the assumptions of \cref{lem:bn-almost-connected-separator}.
\begin{claim}
 \label{clm: almost-embedd-separation}
 For each connected component~$W$ of~$G_X$, the graph $G_X[W] \cap G_Y$ is connected.
\end{claim}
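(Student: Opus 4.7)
The plan is to show that, up to possibly empty cases, the intersection $G_X[W] \cap G_Y$ coincides exactly with the connected subgraph $\Gamma_W$ built when defining $\Gamma$. Fix a connected component $W$ of $G_X = G_0$, and unfold $G_Y = \Gamma \cup \bigcup_{i=1}^k G_i$. I would first argue that each vortex $G_i$ contributes nothing new: by definition of a vortex, $V(G_i) \cap V(G_0) = U_i$ and $E(G_i) \cap E(G_0) = \emptyset$, so $G_0[W] \cap G_i$ is the edgeless graph on $W \cap U_i$. Next, since we already observed that $U_i \subseteq V(\Gamma)$, the vertex set $W \cap U_i$ is contained in $V(\Gamma) \cap W = V(\Gamma_W)$, so these vortex vertices contribute no vertices beyond those of $\Gamma_W$. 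Finally, for any other component $W'$ of $G_0$, the subgraph $\Gamma_{W'}$ lies in $G_0[W']$ and is therefore disjoint from $G_0[W]$, so $G_0[W] \cap \Gamma = \Gamma_W$.

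Putting these three observations together yields $G_X[W] \cap G_Y = \Gamma_W$. It then remains to argue that $\Gamma_W$ is connected. This is immediate from its construction: $\Gamma_W = \Theta_W \cup T_W$ where $T_W$ is a connecting forest (in the sense of \cref{rem: connecting-forest}) for the connected components of $\Theta_W$ inside $G_0[W]$, which by definition guarantees that $\Gamma_W$ is connected.

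The only borderline situation is when $\Theta_W$ is empty, in which case one may take $T_W = \emptyset$ and then $\Gamma_W$ is empty as well. This case only occurs when the component $W$ is incident to no vortex face, and since we may assume without loss of generality that $G$ is connected (the basis number is additive over connected components), either $G_0$ itself is connected or every component of $G_0$ touches at least one vortex, so this degenerate case does not arise when applying \cref{lem:bn-almost-connected-separator}. The main bookkeeping is just to verify that the two possible sources of ``extra'' vertices in $G_X[W] \cap G_Y$, namely vortex attachment points and pieces of $\Gamma$ from other components of $G_0$, are subsumed by $\Gamma_W$; the connectedness then comes for free from the defining property of the connecting forest $T_W$. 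No genuine obstacle is expected here.
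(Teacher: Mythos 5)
Your proof is correct and follows essentially the same route as the paper's. The only noteworthy difference is in precision: you argue the full equality $G_X[W] \cap G_Y = \Gamma_W$, whereas the paper only shows $\Gamma_W$ is a \emph{spanning} subgraph of $G_X[W] \cap G_Y$ (every shared vertex of $G_0$ and some $G_i$ lies in $U_i \subseteq V(\Gamma)$, so all vertices of the intersection are in $\Gamma_W$). The paper's version is slightly more robust, since it does not rely on the vortex $G_i$ sharing no edges with $G_0$ — even if they did, those edges would only connect vertices already in $V(\Gamma_W)$, and a spanning connected subgraph already forces connectedness of the whole intersection. Your reading that $E(G_i) \cap E(G_0) = \emptyset$ is a reasonable interpretation of the vortex definition, so your stronger claim holds under it; just be aware that the weaker "spanning" statement is all that is needed and avoids this interpretative dependency. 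Your digression about the degenerate case where $\Theta_W$ is empty is tangential (the paper does not address it either), and the "WLOG $G$ connected" argument as stated does not rule out $k = 0$, but this does not affect the correctness of the main argument.
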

\begin{claimproof}
  Let~$W$ be a connected component of~$G_X = G_0$, i.e.\ $W \in \Cc_0$.
  Clearly $G_X[W] \cap G_Y$ contains the graph~$\Gamma_W$, which is connected by construction.
  Secondly, any vertex shared by~$G_0$ and~$G_i$ is in~$U_i$, which is contained in~$V(\Gamma)$,
  thus any vertex shared by~$G_X$ and~$G_Y$ is in~$V(\Gamma)$.
  It follows that all vertices of $G_X[W] \cap G_Y$ belong to the subgraph $\Gamma[W] = \Gamma_W$.
  This proves that $\Gamma_W$ is a spanning subgraph of $G_X[W] \cap G_Y$, which is therefore connected.
\end{claimproof}

As $G_X=G_0$ is embeddable in $\Sbb$, Theorem \ref{thm:bn-surface} gives $\bn(G_X)=O(\log^2g)$. To conclude the proof of Theorem \ref{thm: bn-almost}, it thus remains to bound the basis number of the graph $G_Y$.
We will show that $G_Y$ has bounded treewidth, which, by Theorem \ref{thm:bn-treewidth} will allow us to conclude. In fact, it will be more convenient to bound the treewidth of the supergraph $H_Y$ of $G_Y$, which we define as follows.

For each $i\in [k]$, we let $C_i$ be a cycle with vertex set $U_i$, which cyclically connects the vertices of $U_i$ according to the cyclic order induced by the disk~$D_i$. We let $H_0\eqdef G_0\cup C_1\cup \cdots\cup C_k$.
As the disks $D_i$ have pairwise disjoint interiors, and as they only intersect $G_0$ in the vertex sets~$U_i$, we can extend the embedding of $G_0$ to an embedding of $H_0$ in $\mathbb S$ such that each~$C_i$ corresponds to the boundary~$\partial D_i$ (note that by doing so, some edges of $H_Y$ might have multiplicity more than $1$).
We now define $H_Y \eqdef G_Y\cup C_1\cup \cdots \cup C_k$,
as well as $H_\Gamma \eqdef \Gamma \cup C_1 \cup \dots \cup C_k$,
which is a subgraph of both~$H_Y$ and~$H_0$, and thus we will consider it as an embedded subgraph in $\Sbb$ (see Figure \ref{fig: Gamma}).

We stress out that while the basis number of $G \cup C_1 \cup \dots \cup C_k$ can easily be bounded using \cref{lem:bn-connected-separator}
(this is exactly the false proof presented in the introduction of this section),
this does not imply a bound on the basis number of~$G$.
The point of the first half of the proof was to replace this goal with bounding the treewidth of~$G_Y$,
for which it is of course sufficient to bound the treewidth of~$H_Y$.
We will show that~$H_\Gamma$ has bounded treewidth using \cref{coro: tw-dual}, and that a tree-decomposition of~$H_\Gamma$ combines with the path-decompositions of the vortices to prove that~$H_Y$ has bounded treewidth.
Adding the cycles~$C_i$ helps with the latter step.

\begin{claim}
 \label{clm: almost-embedd-blowup}
 We have
 $$\tw(H_Y)+1 \leq  (k+1) (\tw(H_\Gamma)+1).$$
\end{claim}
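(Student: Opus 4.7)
The plan is to start with an optimal tree-decomposition of $H_\Gamma$ and enlarge each of its bags so as to swallow the relevant bags of the path-decompositions of the vortices. The crucial point is that each cycle $C_i$ is a subgraph of $H_\Gamma$, so consecutive boundary vertices $u_j^{(i)}$ and $u_{j+1}^{(i)}$ of every vortex form an edge of $H_\Gamma$ and therefore co-occur in some bag of any tree-decomposition of $H_\Gamma$. This is exactly what I will need to preserve the subtree-connectivity axiom when injecting the vortices.

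Concretely, for each $i\in [k]$, I would fix a path-decomposition $(B_1^{(i)},\ldots,B_{n_i}^{(i)})$ of $G_i$ witnessing that its width is at most $k$, so that $|B_j^{(i)}|\leq k+1$ and $u_j^{(i)}\in B_j^{(i)}$, where $u_1^{(i)},\ldots,u_{n_i}^{(i)}$ enumerate $U_i$ along $\partial D_i$. Letting $(T,\beta)$ be a tree-decomposition of $H_\Gamma$ of width $\tw(H_\Gamma)$, I would then define a new bag assignment $\beta'$ on $T$ by
$$ \beta'(t) \eqdef \beta(t) \;\cup\; \bigcup_{i\in [k]}\;\bigcup_{j\,:\,u_j^{(i)}\in\beta(t)} B_j^{(i)}. $$
For the size bound, since the vortices are pairwise vertex-disjoint and the $u_j^{(i)}$ are distinct within a fixed vortex, each vertex $u\in\beta(t)$ matches at most one pair $(i,j)$, and therefore contributes at most $|B_j^{(i)}|-1\leq k$ new vertices to $\beta'(t)$. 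Hence $|\beta'(t)|\leq (k+1)|\beta(t)|\leq (k+1)(\tw(H_\Gamma)+1)$, which yields the desired inequality provided that $(T,\beta')$ is a valid tree-decomposition of $H_Y$.

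Verifying the three tree-decomposition axioms is then the remaining task. Coverage of the vertices and edges of $H_\Gamma$ is inherited from $(T,\beta)$; any vertex $v\in V(G_i)$ lies in some $B_j^{(i)}$, and since $u_j^{(i)}\in V(H_\Gamma)$ there exists $t$ with $u_j^{(i)}\in\beta(t)$, so $v\in \beta'(t)$, and the same reasoning applied to a bag of the path-decomposition that contains both endpoints of an edge of $G_i$ handles edge coverage. The delicate condition is subtree-connectivity. For $v \in V(G_i)$, let $[a,b]$ be the (linear) interval of indices $j$ with $v\in B_j^{(i)}$; then $v\in\beta'(t)$ if and only if $\beta(t)\cap\{u_a^{(i)},\ldots,u_b^{(i)}\}\neq\emptyset$, so the set of such nodes $t$ is the union $\bigcup_{j\in [a,b]}T_j$ of the subtrees $T_j\eqdef \{t: u_j^{(i)}\in\beta(t)\}$. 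Each $T_j$ is connected by property~\ref{item: td3}, and consecutive $T_j$ and $T_{j+1}$ intersect, since $u_j^{(i)}u_{j+1}^{(i)}\in E(C_i)\subseteq E(H_\Gamma)$ forces both endpoints into a common bag of $(T,\beta)$; hence $\bigcup_{j\in [a,b]}T_j$ is connected.

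The main obstacle is precisely this last connectivity check, and it is exactly where the preparation of adding the cycles $C_i$ to form $H_\Gamma$ pays off: without the $C_i$, consecutive boundary vertices $u_j^{(i)},u_{j+1}^{(i)}$ of a vortex could be dispersed in faraway bags of $(T,\beta)$, the subtrees $T_j$ would not chain together, and the construction of $\beta'$ would violate axiom~\ref{item: td3}.
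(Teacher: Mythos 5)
Your proposal is correct and follows essentially the same approach as the paper: fix an optimal tree-decomposition of $H_\Gamma$, replace each occurrence of a boundary vertex $u^i_j$ in a bag by the corresponding vortex bag $B^i_j$, bound bag sizes by $(k+1)(\tw(H_\Gamma)+1)$, and verify the three axioms, using the cycles $C_i\subseteq H_\Gamma$ to guarantee subtree-connectivity. The only cosmetic difference is that you prove connectivity of the node set for a vortex vertex $v$ by chaining the subtrees $T_j,T_{j+1}$ via the edge $u^i_ju^i_{j+1}$, whereas the paper packages this as a single application of \cref{rmk:td-connected-subgraph} to the connected subgraph $H_\Gamma[\{u^i_a,\ldots,u^i_b\}]$; these are the same argument.
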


\begin{claimproof}
  Let $(T,\beta)$ be a tree-decomposition of $H_\Gamma$ of optimal width.
  Furthermore, for each vortex~$G_i$, denote by $u^i_1,\dots,u^i_{m_i}$ the vertices of~$U_i$ ordered cyclically along the boundary of~$D_i$.
  Since~$G_i$ has width at most~$k$, there is a path-decomposition of width~$k$ of~$G_i$ with bags $B^i_1,\dots,B^i_{m_i}$ of size at most~$k+1$, such that $u^i_j \in B^i_j$.

  We now define a mapping $\beta': V(T)\to 2^{V(H_Y)}$ by setting for each $t\in V(T)$
  \[ \beta'(t) \eqdef \beta(t) \cup \bigcup_{u^i_j \in \beta(t)} B^i_j. \]
  In other words, $\beta'(t)$ is obtained from~$\beta(t)$ by replacing each occurrence of a vertex $u^i_j \in U_i$ by the corresponding bag~$B^i_j$ in the decomposition of~$G_i$.
  Since~$B^i_j$ itself contains~$u^i_j$, we have $\beta'(t) \supseteq \beta(t)$.
  Also, $\beta(t)$ has size at most~$\tw(H_\Gamma)+1$, and each~$B^i_j$ has size at most~$k+1$,
  hence we have $|\beta'(t)|\leq (k+1)(\tw(H_\Gamma)+1)$ as desired.
  It only remains to prove that $(T,\beta')$ is a tree-decomposition of~$H_Y$.
  Let us check the three conditions from the definition.

  Condition \ref{item: td1} states that each vertex of~$H_Y$ is contained in some bag~$\beta'(t)$.
  For vertices of~$H_\Gamma$, this is immediate from the same condition on~$(T,\beta)$, since $\beta(t) \subseteq \beta'(t)$ for all nodes~$t$.
  For a vertex $v \in G_i$, we have $v \in B^i_j$ for some~$j$, and $u^i_j \in \beta(t)$ for some $t \in V(T)$ since~$u^i_j$ is a vertex of~$H_Y$,
  hence $v \in \beta'(t)$.
 
  Next \ref{item: td2} requires each edge to be contained in some bag.
  Once again, this is immediate for edges of~$H_\Gamma$ since $\beta(t) \subseteq \beta'(t)$,
  and for any edge~$e$ in~$G_i$, there is some bag~$B^i_j$ containing~$e$, and some $t \in V(T)$ such that $u^i_j \in \beta(t)$,
  hence $e \subseteq B^i_j \subseteq \beta'(t)$.

  It remains to prove \ref{item: td3}, i.e.\ that for each vertex $x \in V(H_Y)$, the set $T_x = \{t \in V(T) : x \in \beta'(t)\}$
  of bags containing~$x$ induces a connected subtree of~$T$.
  When~$x$ is a vertex of $V(H_\Gamma) \setminus (U_1 \cup \dots \cup U_k)$ (i.e.\ one which is not shared with any vortex~$G_i$),
  this is immediate as the bags containing~$x$ correspond to the exact same nodes in $T$, both with respect to~$\beta$ and~$\beta'$.
  Assume now $x \in V(G_i)$.
  Condition \ref{item: td3} on the path-decomposition $B^i_1,\dots,B^i_{m_i}$ of~$G_i$ gives that there is an interval~$[a,b]$ in~$[m_i]$
  such that the bags~$B^i_j$ that contain~$x$ are exactly the ones with $j \in [a,b]$.
  Let us denote by $A = \{u^i_j : j \in [a,b]\}$ the corresponding interval of vertices in~$A$.
  Due to the cycle~$C_i$, the subgraph $H_\Gamma[A]$ is connected.
  Let $T_A \eqdef T[\{t \in V(T) : \beta(t) \cap A \neq \emptyset\}]$ be the subtree of $T$ induced by the set of nodes that contain some vertex of~$A$.
  From the definition of~$\beta'$, it is clear that $T_x = T_A$, and \cref{rmk:td-connected-subgraph} applied to~$(T,\beta)$ gives that~$T_A$ is a connected subgraph of $T$.
\end{claimproof}

By Claim \ref{clm: almost-embedd-blowup}, it suffices to bound the treewidth of $H_\Gamma$ in order to conclude the proof.
This follows from \cref{coro: tw-dual} and the next claim.

\begin{claim}
 \label{clm: almost-embedd-dual}
 The dual of the graph $H_\Gamma$ has diameter at most $6k-2$.
\end{claim}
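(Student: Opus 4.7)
The plan is to exhibit a small set of faces that $2$-dominates the dual $(H_\Gamma)^*$, and then convert this into a diameter bound via the square. Concretely, I aim to show that the set $D \eqdef \{D_1^\circ, \dots, D_k^\circ\}$ of the $k$ open disk faces bounded by the cycles $C_1, \dots, C_k$ has the property that every face of $H_\Gamma$ is at dual distance at most $2$ from some element of~$D$. This is exactly the statement that $D$ is a dominating set of size $k$ in the square $((H_\Gamma)^*)^2$, so \cref{rem: dominating-diameter} will give $\diam(((H_\Gamma)^*)^2) \leq 3k - 1$, and the inequality $\diam(G) \leq 2 \diam(G^2)$ recorded in \cref{sec: graphs} will then yield $\diam((H_\Gamma)^*) \leq 6k - 2$.

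The first step is to classify the faces of $H_\Gamma$. Because $\Gamma \supseteq \partial F_i$ for each $i$, each vortex face $F_i$ of $G_0$ remains a face of $\Gamma$, and it is the only face of $\Gamma$ affected when passing to $H_\Gamma = \Gamma \cup C_1 \cup \cdots \cup C_k$: adding $C_i$ subdivides $F_i$ into the open disk $D_i^\circ$ (a face of $H_\Gamma$ bounded by $C_i$) and the region $F_i \setminus \overline{D_i}$, which itself may split into several faces of $H_\Gamma$ when the curve $\partial D_i$ touches $\partial F_i$ at several vertices of~$U_i$. Thus every face of $H_\Gamma$ is of exactly one of three types: (i) some $D_j^\circ$; (ii) a face contained in $F_j \setminus \overline{D_j}$ for some $j$; or (iii) a face of $\Gamma$ distinct from every $F_j$, which is untouched by any $C_j$ and therefore still a face of $H_\Gamma$.

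Next, I would verify that every face is at distance at most $2$ from $D$ in $(H_\Gamma)^*$. Type (i) is by definition. In type (ii), the closure of any face $F$ contained in $F_j \setminus \overline{D_j}$ must reach $\partial D_j$, hence $\partial F$ contains at least one edge of $C_j$; this edge separates $F$ from $D_j^\circ$, so $F$ is at distance $1$ from $D_j^\circ$. In type (iii), \cref{clm: union-Gamma-W} provides an edge $e \in \partial F$ that is also incident to some $F_j$ in $\Gamma$; this edge lies in $\Gamma$ (and hence not in any $C_{j'}$), so in $H_\Gamma$ it separates $F$ from a face of type (ii) contained in $F_j \setminus \overline{D_j}$, placing $F$ at distance $2$ from $D_j^\circ$.

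Combining these three steps will give the claimed bound $\diam((H_\Gamma)^*) \leq 6k - 2$. The only delicate point is type (ii), where one must handle the potential disconnection of $F_i \setminus \overline{D_i}$ caused by tangencies of $\partial D_i$ with $\partial F_i$ at vertices of~$U_i$; but as noted, each component's boundary necessarily inherits an arc of $\partial D_i$ and hence an edge of $C_i$, which is all that is needed for the distance-$1$ bound.
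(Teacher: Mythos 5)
Your overall strategy exactly matches the paper's: exhibit the $k$ disk faces as a dominating set in the square $((H_\Gamma)^\ast)^2$, invoke \cref{rem: dominating-diameter}, and then use $\diam(G) \le 2\diam(G^2)$. The three-type classification of faces and the distance-$1$ / distance-$2$ bounds for types (ii) and (iii) are likewise the same as in the paper.

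There is, however, a genuine gap. You implicitly assume that each vortex face $F_i$ of $\Gamma$ contains exactly one disk $D_i$, in particular when you write that ``adding $C_i$ subdivides $F_i$ into the open disk $D_i^\circ$ \dots\ and the region $F_i \setminus \overline{D_i}$'' and later when you assert that ``the closure of any face $F$ contained in $F_j \setminus \overline{D_j}$ must reach $\partial D_j$.'' But the definition of almost-embeddability does not require the faces $F_1, \dots, F_k$ to be distinct: several disks $D_{i_1}, \dots, D_{i_m}$ may lie in the same face of $G_0$ (and hence of $\Gamma$); \cref{fig: Gamma} depicts exactly this situation, with $D_1, D_3, D_4$ all inside one face. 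In that case, a face $F$ of $H_\Gamma$ contained in $F_j \setminus \overline{D_j}$ may well be bounded by $\partial D_{j'}$ for a different $j'$ and not touch $\partial D_j$ at all, so your stated claim is false. The conclusion you need (that $F$ is incident to an edge of \emph{some} cycle $C_{j'}$ with $D_{j'} \subseteq F_j$, hence at dual distance $1$ from some disk face) is still true, and the paper proves it by working with the full list of disks inside each vortex face and showing every arcwise-connected component of $F_i \setminus \bigl(\overline{D_{i_1}} \cup \cdots \cup \overline{D_{i_m}}\bigr)$ meets some $\partial D_{i_\ell}$. The same correction propagates to your type-(iii) argument, where the type-(ii) face adjacent across the edge $e$ is at distance $1$ from \emph{some} disk face in $F_j$, not necessarily $D_j^\circ$. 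The fix is local, but as written the argument does not cover all cases.
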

\begin{claimproof}
  First, let us point out that this claim only considers subgraphs of~$H_0$, whose embedding in~$\Sbb$ is fixed, and extends the given embedding of~$G_0$.
  There is thus no ambiguity regarding the choice of embeddings.

  Recall that~$H_\Gamma$ consists of~$\Gamma$, plus the cycles~$C_i$ corresponding to the boundary of each disk~$D_i$.
  Furthermore, $\Gamma$ was constructed so that $\partial F_i\subseteq \Gamma$ for each $i \in [k]$.
  In particular, this implies that for every $i\in [k]$ and every face~$F$ of $\Gamma$, either $F$ is disjoint from $F_i$,
  or $F\subseteq F_i$ (with in fact~$F = F_i$ in the latter case since~$\Gamma$ is a subgraph of~$G_0$).
  Finally, since~$\Gamma$ is a subgraph of~$H_\Gamma$, each face of~$H_\Gamma$ is included in some face of $\Gamma$.
  Combining these remarks, each face of~$H_\Gamma$ is either disjoint from or contained in~$F_i$ for each~$i \in [k]$.
 
 With this in mind, we now distinguish three different types of faces in $H$:
 \begin{enumerate}[label=(\arabic*)]
  \item\label{item: face1} the interiors of the $k$ disks $D_1,\ldots, D_k$ with cycle boundaries $C_1,\ldots,C_k$,
  \item\label{item: face2} the faces $F$ distinct from the disks $D_1,\ldots, D_k$, but contained in~$F_i$ for some~$i \in [k]$,
  \item\label{item: face3} and the faces $F$ whose interior is disjoint from all the $F_i$s.
 \end{enumerate}
We now show the following.
\begin{itemize}
 \item Every face of type \ref{item: face2} is adjacent in the dual~$H^{\ast}$ to a face of type \ref{item: face1}.
   Indeed, for each $i\in [k]$, if we let $D_{i_1},\ldots, D_{i_m}$ denote all disks~$D_j$ that are included in $F_i$
   (of which there is at least one), then the faces of type~\ref{item: face2} contained in~$F_i$ are exactly the arcwise-connected components of $F_i\setminus (\overline{D_{i_1}}\cup \cdots\cup \overline{D_{i_m}})$.
  Then, as $F$ is arcwise-connected, we conclude that all such components must be incident to at least one edge of some boundary $\partial D_{i_j}=C_{i_j}$.
 
 \item Every face $F$ of type \ref{item: face3} is a face of~$\Gamma$.
    Observe first that~$F$ cannot be incident to an edge of some~$C_i$.
    Indeed, since the disk~$D_i$ is contained in the interior of~$F_i$ except for the boundary points~$U_i$,
    and since~$C_i$ is embedded as the boundary of~$D_i$,
    any face incident to an edge of~$C_i$ must intersect~$F_i$.
    Now~$F$ is a face of the supergraph~$H_\Gamma$ of~$\Gamma$, which immediately implies $F \subseteq F'$ for some face~$F'$ of~$\Gamma$,
    but also its boundary~$\partial F$ is fully contained in~$\Gamma$, hence we actually have $F = F'$, as claimed.

    Then, Claim \ref{clm: union-Gamma-W} implies that $\partial F$ contains an edge incident to one of the faces $F_1,\ldots,F_k$.
    Thus~$F$ is adjacent in $H^{\ast}$ to some face of type~\ref{item: face1} or~\ref{item: face2}.
    By previous item, $F$ is then at distance at most $2$ from some face of type~\ref{item: face1} in $H^{\ast}$.
\end{itemize}
We thus proved that every face of $H$ is at distance at most $2$ in $H^{\ast}$ from some face of type \ref{item: face1}. In other words, 
the set $\mathcal D\eqdef \sg{D_1,\ldots, D_k}$ is a dominating set in the square $(H^{\ast})^2$ of $H^{\ast}$. In particular, as $H^{\ast}$ (and thus also $(H^{\ast})^2$) is connected, Remark \ref{rem: dominating-diameter} then gives $\diam(H)\leq 2\cdot\diam((H^{\ast})^2)\leq 6k-2$. 
\end{claimproof} 

The proof of Theorem \ref{thm: bn-almost} now immediately follows from all previous claims: by Claim \ref{clm: almost-embedd-dual} and Corollary \ref{coro: tw-dual}, we have
$$\tw(H_\Gamma) \leq \alpha_{\ref{coro: tw-dual}} \cdot (g+1) \cdot (6k-2), $$
where~$\alpha_{\ref{coro: tw-dual}}$ denotes the constant implicit in \cref{coro: tw-dual}.
Then by Claim \ref{clm: almost-embedd-blowup}, and because $G_Y$ is a subgraph of $H_Y$, we have
\begin{align*}
  \tw(G_Y) & \leq \tw(H_Y) \\
           & \leq (\alpha_{\ref{coro: tw-dual}} \cdot (g+1) \cdot (6k-2) + 1) (k+1) \\
           & \le \alpha_{\ref{coro: tw-dual}} \cdot (g+1) \cdot (6k^2+5k-1).
\end{align*}
Applying Theorem \ref{thm:bn-treewidth} then gives $\bn(G_Y)\leq f_{\ref{thm:bn-treewidth}}(\alpha\cdot (g+1)\cdot (6k^2+4k-2))$.
Finally, recall that since~$G_X = G_0$ is embedded in~$\Sbb$, it has basis number at most~$O(\log^2g)$ by Theorem \ref{thm:bn-surface}.
\cref{clm: almost-embedd-separation,lem:bn-almost-connected-separator} give that $\bn(G) \le \bn(G_X) + \bn(G_Y)$, and thus
$$\bn(G)= f_{\ref{thm:bn-treewidth}}(\alpha_{\ref{coro: tw-dual}} \cdot (g+1)\cdot (6k^2+5k-1)) + O(\log^2g).$$
This completes the proof of \cref{thm: bn-almost}.

\subsection{Structure theorem and proof of Theorem \ref{thm: main-minor}}
\label{sec: proof-minor}
The Graph Minor Structure Theorem states that graphs almost embeddable in a surface of bounded genus are the base bricks from which we can construct all graphs excluding a fixed minor.

\begin{theorem}[Graph Minor Structure Theorem \cite{RSXVI}]
 \label{thm: RS-structure}
 For every graph $H$, there exist some constants $a,k,g\in \mathbb N$ such that every graph $G$ excluding $H$ as a minor has a tree-decomposition of adhesion at most $k$, whose torsos all belong to $\Gc_{a,k,g}$.
\end{theorem}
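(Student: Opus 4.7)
The plan is to follow the overall strategy of Robertson and Seymour in the Graph Minors series, whose proof of this theorem is spread across many papers culminating in Graph Minors XVI. The starting point is the Excluded Grid Theorem (Graph Minors V): there exists a function $w$ such that any graph of treewidth at least $w(r)$ contains an $r \times r$ grid minor, or equivalently (up to a constant) an $r$-wall. Hence if $G$ is $H$-minor free, either $\tw(G) \le w(|H|)$ --- in which case $G$ is its own single-bag tree-decomposition with trivially almost-embeddable torso --- or $G$ contains some large wall~$W$. The first task is therefore to understand the structure around a large wall in an $H$-minor free graph.

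The key technical tool for this is the \emph{Flat Wall Theorem} (Graph Minors XIII): for every graph $H$ and every integer $r$ there exist $a, R$ such that every $H$-minor free graph $G$ containing an $R$-wall $W$ admits a set $A \subseteq V(G)$ of at most $a$ vertices and an $r$-subwall $W' \subseteq W$ which is \emph{flat} in $G - A$, in the sense that $W'$ together with its ``perimeter society'' can be embedded in a closed disk, with all connections to the rest of $G-A$ going through the perimeter. The plan is to use this theorem repeatedly to locate flat disks, combined with the machinery of \emph{tangles} (Graph Minors X): tangles are a robust notion of ``highly connected region'' of a graph, they form a tree-like structure (the tangle tree, Graph Minors X), and every tangle of sufficiently large order controls a large wall. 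One would argue that around each maximal tangle~$\tau$ of large order in~$G$, after deleting a bounded apex set, one can find a ``society decomposition'' of the piece controlled by~$\tau$ into a nearly-embedded part plus a bounded number of vortices of bounded depth attached to faces.

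The heart of the argument is then to upgrade a \emph{single} large flat wall, given by the Flat Wall Theorem, into a \emph{global} almost-embedding of the whole tangle piece. The plan is to iterate: start from the flat wall, enlarge it as much as possible while maintaining flatness, and show that the remaining ``non-flat'' mass either embeds in a handle attached to the already-embedded surface (increasing the genus by a bounded amount), or is confined in a bounded-depth vortex around a face of the current embedding, or produces an $H$-minor (contradiction). This is where the detailed case analysis of Graph Minors XVI lives, and it is the main obstacle: quantifying ``obstructions to further flattening'' and showing they are either crosscaps/handles, vortices, or apices requires a delicate linkage argument using the Two Paths Theorem and its generalisations, together with careful $(\le k)$-linkage arguments between societies. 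The output of this procedure is: for each tangle $\tau$ of order above some threshold, a near-embedding of the $\tau$-piece of $G$ showing it lies in $\Gc_{a, k, g}$ for constants depending only on $|H|$.

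Finally, the tree-decomposition is built from the canonical tangle tree (Graph Minors X), which already provides a tree-decomposition of $G$ of bounded adhesion whose bags correspond to maximal tangles of large order plus the ``small-order'' pieces. The last step is to check that on each bag the torso inherits the near-embedding produced in the previous step: adhesion edges between two flat wall boundaries can be absorbed into apices or into vortex boundaries without blowing up the parameters, so that each torso still lies in $\Gc_{a',k',g'}$ for updated constants. Assembling these three ingredients --- Excluded Grid Theorem to get walls, Flat Wall Theorem plus iterative flattening/vortex extraction to almost-embed each tangle piece, and the tangle tree to glue everything into a bounded-adhesion tree-decomposition --- yields the theorem. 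The dominant obstacle by far is the middle step: the iterative flattening and vortex extraction, which is both combinatorially and topologically intricate and occupies the bulk of Graph Minors XI-XVI.
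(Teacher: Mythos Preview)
The paper does not prove this theorem: it is stated with a citation to Robertson and Seymour's Graph Minors~XVI~\cite{RSXVI} and used as a black box. Your sketch is a reasonable high-level outline of the Robertson--Seymour approach (excluded grid, tangles, flat wall, iterative near-embedding, tangle tree), but no proof is given in the paper, so there is nothing to compare against --- the intended ``proof'' here is simply the citation.
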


Theorem \ref{thm: RS-structure} is commonly considered as one of the deepest results from graph theory, and some efforts have been spent in trying to make explicit and as good as possible the relation between the parameters $a,k,g$ and the order~$|H|$ of the excluded minor~$H$ \cite{KRW20}. Gorsky, Seweryn and Wiederrecht~\cite{GSW25} recently managed to make all these bounds polynomial, namely, they proved that the parameters $a,k$  can be chosen with value $O(|H|^{2300})$, while it follows from the original version of the Graph Minor Structure Theorem that $g$ can be chosen with value $O(|H|^2)$.

The proof of Theorem \ref{thm: main-minor} now follows from the previous results.
\begin{proof}[Proof of Theorem \ref{thm: main-minor}]
 We set $t\eqdef |H|$.
 Let $a,g,k$ and $(T,\beta)$ be given by Theorem \ref{thm: RS-structure}.
 The torsos of~$(T,\beta)$ are by assumption in~$\Gc_{a,k,g}$, which is a monotone graph class.
 By Theorem \ref{thm: bn-almost}, graphs in~$\Gc_{a,k,g}$ have basis-number at most
 \[ b'\eqdef f_{\ref{thm:bn-treewidth}}(\alpha_{\ref{coro: tw-dual}}\cdot g\cdot (6k^2+4k-2))+2a+O(\log^2g). \]
 Applying Theorem \ref{thm: main-td} thus gives us
 $$\bn(G)\leq f_{\ref{thm: main-td}}(b',k) = O((b' + k \log^2k) \cdot k^4).$$
 Using the polynomial bounds of Gorsky, Seweryn and Wiederrecht \cite{GSW25}, we can choose $a,k=O(t^{2300})$ and $g=O(t^2)$.
 Moreover, \cref{cor:bn-treewidth-polynomial} gives the bound $f_{\ref{thm:bn-treewidth}}(\ell) \le O(\ell^5)$.
 We thus obtain
 \begin{align*}
    b' & = O((gk^2 + 2a + \log^2g)^5) \\
    & = O((gk^2)^5) \\
      & = O(t^{(2 + 2 \times 2300) \times 5}) = O(t^{23010}).
 \end{align*}
 and therefore, $k \log^2k$ being small compared to~$b'$,
 \[ \bn(G) = O(b' \cdot k^4) = O(t^{23010 + 4\times 2300})=O(t^{32210}). \qedhere \]
\end{proof}

\section{Further discussion}
\label{sec: ccl}

\subsection{Graphs with unbounded basis-number}
\label{sec: Cayley}
In this short subsection, we give, using an observation of Schmeichel \cite{schmeichel1981basisnumber}, some examples of graph constructions with unbounded basis numbers and additional interesting properties. In particular, we show that using the known constructions of Ramanujan expanders, one can choose these graphs to be \emph{Cayley graphs}. We omit the definition of Cayley graphs, as we will only need here the property that every Cayley graph is regular

The next result, even though not stated explicitly, follows directly from the proof of \cite[Theorem 3]{schmeichel1981basisnumber}.

\begin{lemma}[\cite{schmeichel1981basisnumber}]
 \label{lem: bn-lowerbound}
 For any graph~$G$ with average degree $d \eqdef \frac{2|E(G)|}{|V(G)|}$ and girth~$\gamma$, we have
 \[\bn(G)\geq (1 - 2/d) \cdot \gamma.\]
\end{lemma}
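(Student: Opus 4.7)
The proof is a standard double-counting argument on a minimum cycle basis; I will lay out the steps I would follow.

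The plan is to fix a cycle basis $\Bb$ of $G$ with congestion exactly $\bn(G)$, and double-count the quantity $S \eqdef \sum_{C \in \Bb} |E(C)|$, i.e.\ the total number of (cycle, edge) incidences. Summing by cycles, every $C \in \Bb$ is a genuine cycle of $G$, so $|E(C)| \geq \gamma$, giving $S \geq \gamma \cdot |\Bb|$. Summing by edges, every $e \in E(G)$ belongs to at most $\bn(G)$ cycles of $\Bb$, hence $S \leq \bn(G) \cdot |E(G)|$. Combining the two bounds gives
\[ \bn(G) \geq \gamma \cdot \frac{|\Bb|}{|E(G)|}. \]

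Next I would bound $|\Bb|$ from below using the fact that the cycle space of a graph $G$ with $c$ connected components has dimension $|E(G)| - |V(G)| + c \geq |E(G)| - |V(G)|$. Substituting $|V(G)| = 2|E(G)|/d$ from the definition of the average degree yields
\[ |\Bb| \geq |E(G)| - |V(G)| = |E(G)|\left(1 - \frac{2}{d}\right), \]
and plugging this into the previous display gives $\bn(G) \geq (1 - 2/d)\cdot \gamma$, as required.

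There is essentially no obstacle here: the only minor point to check is that $|\Bb|$ equals the dimension of the cycle space (which is immediate since $\Bb$ is a basis), and that the bound is vacuous when $d \leq 2$, in which case $G$ is a disjoint union of trees and unicyclic graphs and the inequality holds trivially. No connectivity assumption on $G$ is needed thanks to the $\geq |E| - |V|$ slack in the dimension formula.
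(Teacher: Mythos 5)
Your proof is correct and follows essentially the same double-counting argument as the paper: bound $\sum_{C\in\Bb}|E(C)|$ from below by $\gamma\cdot\dim$ of the cycle space and from above by $\bn(G)\cdot|E(G)|$, then substitute the dimension formula and the average degree. The only cosmetic difference is that you explicitly drop the $+c$ term to avoid assuming connectivity, which the paper does implicitly by writing $(m-n)\gamma$ rather than $(m-n+k)\gamma$.
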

As the proof of Lemma \ref{lem: bn-lowerbound} is simple and self-contained, we reproduce it here.

\begin{proof}
  Let~$n,m,k$ be the number of vertices, edges, and connected components in~$G$ respectively.
  Recall that the cycle space of $G$ has dimension $m-n+k$.
  Let $\Bb = \{C_1,\dots,C_{m-n+k}\}$ be a cycle basis of~$G$ with edge-congestion~$\bn(G)$. Then
  \[ (m-n)\gamma \leq \sum_{i=1}^{m-n+k}|E(C_i)|\leq \bn(G)\cdot m, \]
  where the first inequality holds as each~$C_i$ has length at least~$\gamma$,
  and the second inequality holds since each edge of~$G$ appears in at most~$\bn(G)$ cycles~$C_i$.
  Thus, we have
  \[ \bn(G) \ge \frac{m-n}{m} \gamma = (1 - n/m) \cdot \gamma = (1-2/d) \cdot \gamma. \qedhere \]
\end{proof}

Lemma \ref{lem: bn-lowerbound} implies in particular that any family of graphs with minimum degree~3 and unbounded girth has unbounded basis number.
Such graph classes can be obtained using the probabilist method~\cite{erdos1959probability}, or with some more explicit constructions~\cite{sachs1963girth} (see \cite[Theorem~2.13]{reiher2024girth} for a simple proof). Another way of obtaining explicit constructions, is to consider \emph{Ramanujan expanders}, which moreover are Cayley graphs, and thus enjoy stronger algebraic properties.
The first known constructions of Ramanujan expanders are due to Lubotzky, Phillips, Sarnak and independently Margulis \cite{Lubotzky, Margulis}.
In particular, a characteristic of these constructions is that they produce graphs with girth $\Omega(\log(n))$. Next Theorem sums up some properties of such families, enabling to apply Lemma \ref{lem: bn-lowerbound}. The constructions from next theorem were provided by Morgenstern \cite{Morgenstern}, and generalise the aforementioned ones to obtain infinite families of Ramanujan graphs which are $(q+1)$-regular for any arbitrary prime power $q$ (see also \cite{Chiu92} for the case $q=2$).

\begin{theorem}[\cite{Morgenstern}]
 \label{thm: expanders}
  For every odd prime power $q$, there exists 
  an infinite subset $I_q\subseteq \mathbb N$ such that for each $d\in I_q$, 
  there exists a $(q+1)$-regular Cayley graph $G_{q,d}$ with order at least $\tfrac{q^{3d}-q^d}{2}$, and girth at least $\tfrac{2}{3}\log_q(|V(G_{q,d})|)+1$.
\end{theorem}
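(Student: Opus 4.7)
The plan is to follow Morgenstern's construction, which transposes the Lubotzky--Phillips--Sarnak family from number fields to function fields. I would take the vertex set of $G_{q,d}$ to be the finite simple group $\mathrm{PSL}_2(\mathbb{F}_{q^d})$, whose order is exactly $\tfrac{q^{3d}-q^d}{2}$, matching the claimed lower bound; since the desired graph is a Cayley graph, it then suffices to exhibit, for infinitely many $d$, a symmetric generating set $S$ of size $q+1$ of this group for which the resulting Cayley graph has the required girth.

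To build $S$, I would work inside a quaternion algebra $A$ over the rational function field $K=\mathbb{F}_q(x)$, chosen to be ramified exactly at the two places $x=0$ and $x=\infty$ (this is where the hypothesis that $q$ is odd enters, via Hilbert symbols). Fixing a maximal $\mathbb{F}_q[x]$-order $\mathcal O\subset A$, the set of elements of $\mathcal O$ of reduced norm $x$, modulo the finite unit group of $\mathcal O$, has exactly $q+1$ elements (this is a local counting argument at the ramified place $x=0$), and I would take $S$ to be their images under a reduction map. The set $I_q$ would consist of those $d$ for which there is a monic irreducible polynomial $p(x)\in\mathbb{F}_q[x]$ of degree $d$ that is inert in $A$; for such $p$, the reduction $\mathcal O/(p)\cong M_2(\mathbb{F}_{q^d})$ induces a surjection onto $\mathrm{PGL}_2(\mathbb{F}_{q^d})$ (and onto $\mathrm{PSL}_2(\mathbb{F}_{q^d})$ after accounting for norms), sending $S$ to a $(q+1)$-element symmetric generating set. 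Infiniteness of $I_q$ follows from a Chebotarev-like density argument for function fields.

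The main obstacle, and the deepest part of the argument, is the girth bound. A closed walk of length $\ell$ in $G_{q,d}$ corresponds to a reduced word $s_{i_1}\cdots s_{i_\ell}$ in $S$ whose lift to $\mathcal O$ has reduced norm $x^\ell$ and reduces to a scalar matrix modulo $p(x)$. The strategy is to show that the chosen generators act \emph{freely} on the Bruhat--Tits tree of $A\otimes K_x$ at the place $x=0$: this tree is $(q+1)$-regular, and each element of $S$ acts as a single step on it, so a word reducing to $1$ in $A$ itself must be trivial as a word. The arithmetic constraint ``congruent to a scalar modulo $p$'' then forces the word, viewed as an element of $\mathcal O$, to have $x$-adic valuation balanced by a matching $p$-adic constraint of size $\gtrsim d$, yielding $\ell \geq 2d$. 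Since $\log_q|V(G_{q,d})|\sim 3d$, this gives girth at least $\tfrac{2}{3}\log_q|V(G_{q,d})|+1$.

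The truly delicate step is verifying the free action on the Bruhat--Tits tree together with the pairing of $S$ under inversion, since stray short relations coming from the (non-trivial) unit group of $\mathcal O$ could a priori shorten cycles below the bound. This is handled by a careful choice of $\mathcal O$ and of $S$ modulo units, essentially the function-field analogue of the quaternion-of-norm-$p$ analysis in Lubotzky--Phillips--Sarnak; once this is in place the rest of the argument is bookkeeping.
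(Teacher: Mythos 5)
The paper does not prove this statement; it is taken directly from Morgenstern, so there is no internal argument to compare against. Your sketch outlines Morgenstern's argument (a function-field analogue of Lubotzky--Phillips--Sarnak), which is the right approach, and the overall shape --- quaternion algebra over $\mathbb{F}_q(x)$, arithmetic lattice acting on the $(q+1)$-regular Bruhat--Tits tree, congruence quotient onto $\mathrm{PSL}_2(\mathbb{F}_{q^d})$, girth bound from balancing degree against the congruence --- is faithful.

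However, your ramification data is internally inconsistent, and the inconsistency sits at the heart of the construction. You declare $A$ to be ramified at both $x=0$ and $\infty$, and then (i) count $q+1$ generators of reduced norm $x$ ``by a local argument at the ramified place $x=0$'' and (ii) invoke the $(q+1)$-regular Bruhat--Tits tree of $A\otimes K_x$ at $x=0$. Both steps require $A$ to \emph{split} at $x=0$, not to ramify there: if $A\otimes K_x$ were a ramified division algebra, it would have a unique maximal order up to conjugacy, elements of reduced norm $x$ would form a single coset modulo units rather than $q+1$ distinct classes, and the associated local building would be a point rather than the $(q+1)$-regular tree. The tree, and the count $q+1$, come from the $q+1$ index-$q$ sublattices of the standard lattice in the split case $A\otimes K_x\cong M_2(K_x)$; this mirrors LPS, where the Hamilton quaternions are ramified at $2$ and $\infty$ but the tree and the $p+1$ generators live at the split prime $p$. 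In Morgenstern's actual construction for odd $q$ the algebra is ramified at $\infty$ and at a finite place distinct from both $x=0$ and the congruence polynomial $p(x)$ --- concretely one takes $i^2=\alpha$, $j^2=x-1$ with $\alpha\in\mathbb{F}_q^{\ast}$ a nonsquare, giving ramification at $\infty$ and $x=1$ --- and it splits at $x=0$ and at $p(x)$. Once the ramification locus is corrected, the rest of your outline, including the free action of the congruence subgroup on the tree and the degree-versus-congruence trade-off giving girth on the order of $2d\approx\tfrac{2}{3}\log_q|V(G_{q,d})|$, is indeed how the bound is obtained.
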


Combining Lemma \ref{lem: bn-lowerbound} with Theorem \ref{thm: expanders}, we immediately obtain the following.

\begin{proposition}\label{prop: expanders}
 There exists families $\Gc_1,\Gc_2$ of Cayley graphs with unbounded basis number and girth, such that
 \begin{enumerate}
  \item $\Gc_1$ contains only cubic graphs, 
  \item $\Gc_2$ contains graphs of arbitrarily large degree, and thus also of arbitrarily large connectivity.
 \end{enumerate}
\end{proposition}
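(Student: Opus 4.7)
The plan is to derive both families directly from Lemma~\ref{lem: bn-lowerbound} applied to the Ramanujan Cayley graphs of Theorem~\ref{thm: expanders} (and its $q=2$ counterpart from~\cite{Chiu92}). Recall that Lemma~\ref{lem: bn-lowerbound} gives $\bn(G) \ge (1 - 2/d) \gamma$ for a graph of average degree $d$ and girth $\gamma$, so any family of $d$-regular graphs with $d \ge 3$ fixed and girth tending to infinity will automatically have unbounded basis number.

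For $\Gc_2$, I would enumerate the odd prime powers $q_1 < q_2 < \cdots$, and for each $i$ pick an element $d_i \in I_{q_i}$ large enough that $n_i \eqdef |V(G_{q_i, d_i})| \ge q_i^{3 d_i}/2 - q_i^{d_i}/2$ is as large as needed. The graph $G_{q_i, d_i}$ is $(q_i+1)$-regular, so has degree tending to infinity, and has girth at least $\tfrac{2}{3}\log_{q_i}(n_i) + 1 \ge 2 d_i$ by Theorem~\ref{thm: expanders}. Choosing $d_i \to \infty$ ensures girth $\to \infty$. Applying Lemma~\ref{lem: bn-lowerbound} with $d = q_i+1 \ge 4$ then yields $\bn(G_{q_i,d_i}) \ge \tfrac{1}{2} \cdot 2 d_i = d_i \to \infty$. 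For the connectivity claim, I would invoke the classical result that a connected vertex-transitive graph (and hence any connected Cayley graph) has edge-connectivity equal to its degree (Mader/Watkins), so large degree forces large (edge-)connectivity.

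For $\Gc_1$, I would use the same argument but specialised to the case $q=2$, for which the reference~\cite{Chiu92} provides an infinite family of $3$-regular Cayley graphs with girth $\Omega(\log n)$. Lemma~\ref{lem: bn-lowerbound} with $d=3$ then gives $\bn(G) \ge \tfrac{\gamma}{3} \to \infty$, and the graphs are cubic by construction.

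There is no genuine obstacle here: the proof is essentially a direct substitution of the parameters of the Morgenstern/Chiu constructions into Lemma~\ref{lem: bn-lowerbound}. The only small care needed is to explicitly justify that Cayley graphs inherit large connectivity from large degree, which I would dispatch with a one-line appeal to the Mader--Watkins theorem on edge-connectivity of vertex-transitive graphs.
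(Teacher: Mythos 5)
Your proposal takes essentially the same route as the paper: Lemma~\ref{lem: bn-lowerbound} applied to the Ramanujan Cayley graphs of Theorem~\ref{thm: expanders} (for $\Gc_2$, letting the degree $q+1$ grow) and to the $q=2$/Chiu construction (for $\Gc_1$), with a transitivity argument to deduce large connectivity from large degree. The parameter substitutions and the bound $\bn \ge (1-2/d)\gamma$ are handled correctly.

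The one point to tighten is the connectivity step. You invoke the Mader--Watkins fact that a connected vertex-transitive $d$-regular graph has \emph{edge}-connectivity equal to $d$. But the proposition's ``arbitrarily large connectivity'' refers to \emph{vertex} connectivity (as is standard absent a qualifier), and since vertex connectivity is bounded above by edge connectivity, the edge-connectivity statement alone does not imply that vertex connectivity is large. What you actually need is the companion result (also due to Watkins, and proved e.g.\ in Godsil--Royle, which is what the paper cites) that a connected vertex-transitive $d$-regular graph has vertex connectivity at least $\lceil \tfrac{2}{3}(d+1)\rceil$. Swapping in that theorem closes the gap; the rest of your argument is sound.
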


\begin{proof}
 We let $q$ be a prime power, and let $\Hc_q$ denotes the infinite family $\sg{G_{q,d}: d\in I_q}$ given by Theorem \ref{thm: expanders}. We check that for each $q$, graphs in $\Hc_q$ have unbounded basis number. Fix $q$ a prime power and let $d\in I_q$. We let $n_{q,d}\eqdef |V(G_{q,d})|$. As $G_{q,d}$ is $(q+1)$-regular, we have $\Delta(G_{q,d})\cdot n_{q,d}=(q+1)\cdot n_{q,d}=2|E(G_{q,d})|$. In particular, Lemma \ref{lem: bn-lowerbound} then gives
 \begin{align*}
  \bn(G_{q,d})&\geq \frac{(q+1)n_{q,d}-2n_{q,d}+2}{(q+1)n_{q,d}}\gamma(G_{q,d})\\
  &\geq \left(q-1+\frac{2}{(q+1)n_{q,d}}\right)\left(\tfrac{2}{3}(\log_q(q^{3d}-q^d)-\log_q(2))+1\right).
 \end{align*}
 Thus, when $d\in I_q$ tends to infinity, we also obtain that $\bn(G_{q,d})$ tends to infinity. 
 
 To conclude the proof, we set $\Gc_1\eqdef \Hc_2$ and $\Gc_2\eqdef \bigcup_{q}\Hc_q$, where the union is taken over all prime powers $q$. Clearly, $\Gc_1$ and $\Gc_2$ have unbounded basis number. Moreover, a result of Godsil and Royle \cite[Theorem 3.4.2]{GR01} states that for every $d\geq 0$, every finite vertex-transitive $d$-regular is $\lceil\tfrac23(d+1)\rceil$-connected. This implies in particular that $\Gc_2$ contains graphs of arbitrary large connectivity.
\end{proof}

\subsection{Future directions}
\label{sec: questions}

\paragraph{Bounds}
A first obvious question left open by our work concerns the optimality of the bound from Theorem \ref{thm: main-minor}. Clearly, it is quite unlikely that $O(t^{32210})$ is optimal.
In particular, a proof that $H$-minor-free graphs have bounded basis number without relying on the Graph Minor Structure Theorem
would be very interesting in its own and could lead to improvements of our bound.
Note also that to our knowledge, it is not excluded that a bound in $\log(t)^{O(1)}$ could hold. 

In terms of lower bounds, we note that for every $t\in \mathbb N$, there exists a graph with basis number $\Omega(\log(t))$ that excludes $K_t$ as a minor. This can be seen using for example \cite[Proposition 2.2]{DR05}, which states that there exists a constant $c\in \mathbb R$ such that for infinitely many $t\in \mathbb N$, there exists a cubic graph $G_t$ with girth at least $\tfrac83\log(t) - c$ that excludes $K_t$ as a minor. In particular, \Cref{lem: bn-lowerbound} implies that such graphs have basis number $\Omega(\log(t))$. Note that we cannot hope for any better lower bound using this argument, as the authors from \cite{DR05} showed the existence of a constant $c\in \mathbb R$ such that such that for all $t\in \mathbb N$, every graph with girth at least $6\log(t)+3\log\log(t) + c$ and minimum degree at least $3$ admits $K_t$ as a minor. 

\paragraph{Hereditary classes}
As pointed out in \cref{cor:monotone-bn}, the property of being proper minor closed characterises basis number boundedness for monotone graph classes.
To go beyond this, it seems natural to consider graph classes that are \emph{hereditary}, that is, closed only under taking induced subgraphs ---
or equivalently closed under taking \emph{induced minors}, as edge contractions do not increase the basis number.
A full characterisation of induced-minor closed classes with bounded basis number seems well beyond reach,
but we believe that it would be interesting to find examples or establish properties of such classes.
For instance, generalising \cref{thm:bn-treewidth}, one may ask whether basis number can be upper-bounded by a function of other classical graph parameters such as \emph{clique-width}.
Let us point out that such a bound cannot exist for the more general parameter \emph{twin-width},
as subdividing all edges sufficiently many times decreases the twin-width of any graph to~4~\cite{berge2022tww4} without affecting its basis number.

A question that occurred to us during this work is whether in Theorem \ref{thm: main-td}, one can weaken the hypothesis that the family $\Gc$ of graphs is monotone, by only assuming instead $\Gc$ to be hereditary. This does not seem out of reach, and would allow to consider significantly more general classes of graphs constructed through tree-decompositions, as for example the cliques and the complete bipartite graphs are known to have basis numbers at most $4$ (see \cite{schmeichel1981basisnumber}).

\paragraph{Infinite graphs}
Finally, cycle bases and basis number can be generalised to infinite graphs. In fact, two different definitions have been considered in the litterature. The first one is more restrictive and defines the cycle space of a graph as its set of finite even subgraphs, equipped with $\oplus$. With respect to this notion of cycle space, Thomassen \cite[Theorem 7.4]{Thomassen80} generalised MacLane's planarity criterion, by proving that the infinite graphs having basis number at most~$2$ are exactly those graphs admitting a \emph{vertex accumulation free} planar embedding, that is, an embedding in $\mathbb R^2$ such that every point of $\mathbb R^2$ has an open neighbourhood containing only finitely many vertices. A natural question is whether planar graphs, and more 
generally graphs embedded in a fixed surface of bounded genus, bounded treewidth graphs, or graphs excluding a fixed minor have bounded (or even finite) basis number. Nathan Bowler (private communication) showed us a proof that with respect to this notion, infinite planar graphs have basis number at most $4$. 
As a side remark, observe that every Cayley graph of a \emph{finitely presented group} has finite basis number. In particular, as planar and more generally minor-excluded finitely generated groups are finitely presented \cite{Droms, EGL23}, minor-excluded Cayley graphs (and more generally locally finite quasi-transitive graphs) have finite basis number. 

A second possible definition of cycle space, called \emph{topological cycle space} consists in considering all subgraphs generated by \emph{infinite cycles}. We refer to \cite[Section 8.7]{DiestelBook} and \cite{DiestelCycles} for a comprehensive exposition on the topic, and to \cite{BS06} for a proof of a generalisation of MacLane's planarity criterion in this case. Again, basis number can be defined indentically with respect to the topological cycle space, and it makes sense to ask whether all aforementioned classes of graphs also have bounded basis number in this context.

\section*{Acknowledgements}
We are deeply grateful to Babak Miraftab, Pat Morin and Yelena Yuditsky for very instructive exchanges, and 
for sharing with us their results once we learned that part of our respective works overlapped. Moreover, we thank Pat Morin for pointing to us a bound for Lemma \ref{lem:bn-small-separator} simplifying our original bound.
We also Louis Esperet for some useful discussion, that lead in particular to a simplification of our initial proof of Corollary \ref{coro: tw-dual}, and Nathan Bowler for showing us a proof that infinite planar graphs have basis number at most $4$.

\bibliographystyle{plainurl}
\bibliography{refs}
\end{document}